\newcounter{contador}
\newcounter{teoA}
\newtheorem{teoa}[teoA]{Theorem}
\newtheorem{propo}[contador]{Proposition}
\newtheorem{teo}[contador]{Theorem}
\newtheorem{lem}[contador]{Lemma}
\newtheorem{defi}[contador]{Definition}
\newtheorem{corol}[contador]{Corollary}
\newtheorem{nota}[contador]{Remark}
\newcounter{ex}
\newcommand{\rec}{\noindent}    
\newcommand{\R}{{\mathbb R}}
\newcommand{\N}{{\mathbb N}}
\newcommand{\Q}{{\mathbb Q}}
\newcommand{\Z}{{\mathbb Z}}
\newcommand{\A}{{\cal{A}}}
\title{Invariant graphs and dynamics of a family of continuous piecewise linear planar maps}
\author{Anna Cima$^{(1)}$, Armengol Gasull$^{(1)}$, V\'{\i}ctor Ma\~{n}osa$^{(2)}$ and Francesc Ma\~{n}osas$^{(1)}$
    \\*[.1truecm]
    {\small \textsl{$^{(1)}$ Departament de Matem\`{a}tiques, Facultat
            de Ci\`{e}ncies,}}
    \\*[-.25truecm] {\small \textsl{Universitat Aut\`{o}noma de Barcelona,}}
    \\*[-.25truecm] {\small \textsl{08193 Bellaterra, Barcelona,
    Spain}}
    \\*[-.25truecm] {\small \textsl{anna.cima@uab.cat,
            armengol.gasull@uab.cat, francesc.manosas@uab.cat}}\\
    \\*[-.25truecm] {\small \textsl{$^{(2)}$ Departament de Matem\`{a}tiques,}}
     \\*[-.25truecm] {\small \textsl{Institut de Matem\`{a}tiques de la UPC-BarcelonaTech (IMTech),}}           
    \\*[-.25truecm] {\small \textsl{Universitat Polit\`{e}cnica de Catalunya}}
    \\*[-.25truecm] {\small \textsl{Colom 11, 08222 Terrassa, Spain}}
    \\*[-.25truecm] {\small \textsl{victor.manosa@upc.edu}}}
\date{\today}
\begin{document}

\maketitle
\begin{abstract}   
We consider the family of piecewise linear maps
$$F_{a,b}(x,y)=\left(|x| - y + a, x - |y| + b\right),$$
where $(a,b)\in \R^2$. This family belongs to a wider one that has deserved some interest in the recent years as it provides a framework for generalized Lozi-type maps. 
Among our results, we prove that for  $a\ge 0$ all the orbits are eventually periodic and moreover that there are at most three  different periodic behaviors formed by at most seven points. For $a<0$ we prove that for each $b\in\R$ there exists a compact graph $\Gamma,$ which  is invariant under the map $F$, such that for each $(x,y)\in \R^2$ there exists $n\in\mathbb{N}$ (that may depend on $x$) such that $F_{a,b}^n(x,y)\in \Gamma.$ We give explicitly all these invariant graphs and we characterize the dynamics of the map restricted to the corresponding graph for all $(a,b)\in\mathbb{R}^2$ obtaining, among other results, a full characterization of when $F_{a,b}|_{\Gamma}$  has positive or zero entropy. 
\end{abstract}

%
%

\noindent {\sl  Mathematics Subject Classification:} 37C05, 37E25, 37B40, 39A23. 

\noindent {\sl Keywords:} Continuous piecewise linear map, invariant graph, Markov partition, periodic orbit,  topological entropy, rotation number, one-dimensional chaotic dynamics.

\newpage

\tableofcontents

\newpage
 
\section{Introduction and main results}
In this paper we consider the family of piecewise linear maps of the form
\begin{equation}\label{e:F}
F_{a,b}(x,y)=\left(|x| - y + a, x - |y| + b\right),
\end{equation}
where $(a,b)\in \R^2$. In all the paper, when
no confusion is possible we will write $F$ instead of~$F_{a,b}.$

Piecewise linear maps appear in the study of power electronics, neural networks, mechanical systems with friction or in economy \cite{ban1999,B,S19,ZM}, and nowadays they are a subject of intense research both from the theoretical point of view and from the point of view of applications, see \cite{GSM18,GQ,L23,MS18} and other references cited below.

A particular case of piecewise linear maps are the continuous ones defined by absolute values. Their interest, due to its dynamic richness, is clear to the scientific community, at least, since 1978 when R.~Lozi introduced the celebrated map that bears his name \cite{L78}, and when a particular case of which, known as the Gingerbread map, was studied by R.~Devaney in \cite{D84}. This type of maps holding some absolute value function also have been studied by M.~R.~Herman \cite[Chap. VIII]{H} in 1986 and by  J.-M.~Strelcyn \cite[Sec. 3]{St} in 1991 and are still the object of much interest, both for themselves \cite{GT20,TG20}, and for the fact that they are conjugated with other types of maps, such the as max-type ones \cite{GL,LN22}.

We remark that many of the maps treated in the above papers are conservative, while our map~\eqref{e:F} is strongly dissipative. This property provokes that the tools used in our work are different from the ones utilized in the quoted papers. 

In \cite{GL}, Grove and Ladas introduced the family of maps $G(x,y)=(|x|+\alpha y+\beta,x+\gamma|y|+\delta)$ with $\alpha,\beta,\gamma\in\mathbb{R}$ and $\delta\in\{-1,0,1\}$, in the spirit of generating a broader framework for studying generalized Lozi-type maps. The family of maps $F_{a,b}$ intersects this general family. In the last fifteen years, some works have appeared that analyze different particular cases of the Grove-Ladas family, see for example \cite{ABK21,TLL13, TLS17}, just to cite those works that include subcases of the family $F_{a,b}$. Essentially, these works characterize cases in which  every orbit  converges to a fixed point, to a periodic orbit, or it is eventually periodic (that is, the points in it reach a periodic orbit in a finite number of iterations). In fact, our starting interest in the study of the family $F_{a,b}$ was I.~Bula and 
A.~S\={\i}le's a talk, at the 26th International Conference on Difference Equations and Applications that took place in Sarajevo in 2021, \cite{BS21} (see also \cite{BK}). In that talk, it was expressed the hypothesis that for $a,b<0$ all orbits are eventually periodic, and also was proved the existence of periodic orbits of different periods for some values of the parameters. One of our  motivations was to explore this hypothesis. 

As we will see, the global dynamics of the family $F_{a,b}$ is substantially richer. In fact, the dynamics generated by $F$ strongly depends on the parameters $a$ and $b$, and  there are some cases where the dynamics is extremely simple while other ones present chaotic behaviors in $1$-dimensional objects (graphs) that capture the final dynamics of the map, see Theorems~\ref{t:teoA}, \ref{t:teoB} and \ref{t:teoD}.  We notice, however, that there are reasons why numerical evidences seem to support the initial hypothesis. More concretely, the results of Theorem~\ref{t:teoC} give a partial explanation of why numerically it is difficult to see complicated dynamics although, as we will prove in this paper, they actually do exist for some values of the parameters when $a<0.$

Before starting, let us see that the family of maps $F_{a,b}$ has only one essential parameter.  Indeed, note that for any $\lambda>0,$  \begin{equation}\label{conj}\lambda F_{a,b}(x/\lambda,y/\lambda)=F_{\lambda a,\lambda b}(x,y).\end{equation} 
This equality implies that for any $(a,b)\in \R^2$ and for any $\lambda>0$ the maps $F_{\lambda a,\lambda b}$ and $F_{a,b}$ are conjugate. Hence in our proofs we can restrict our attention to three cases $a\in\{-1,0,1\}.$

As usual, for a map $G:\R^2\longrightarrow\R^2$ we  will denote by $\operatorname{Per}(G)$ the set  of all periodic points of $G.$ Given a periodic orbit, unless otherwise stated, by period is meant minimal period. Next result characterizes completely the  dynamics of $F$ when $a\ge0.$

\begin{teoa}\label{t:teoA}
	If $a\ge 0,$  for each $\mathbf{x}\in\R^2$   there exists $n\ge 0,$ that may depend on $\mathbf{x},$ such that $F^n(\mathbf{x})\in \operatorname{Per}(F).$ Moreover, the set $\operatorname{Per}(F)$ is formed by a fixed point and, depending on $a$ and $b,$ either two or none 3 periodic orbits.
	\end{teoa}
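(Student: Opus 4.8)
The plan is to exploit the fact that, of the four affine branches of $F$, two are degenerate. By the conjugacy \eqref{conj} we may assume $a\in\{0,1\}$. Writing $x^{+}=\max(x,0)$ and $x^{-}=\max(-x,0)$ and setting $(x_{n+1},y_{n+1})=F(x_n,y_n)$, a direct computation gives the identities
\begin{equation*}
x_{n+1}-y_{n+1}=2x_n^{-}+2y_n^{-}+(a-b),\qquad x_{n+1}+y_{n+1}=2x_n^{+}-2y_n^{+}+(a+b).
\end{equation*}
On the quadrant $Q_{I}=\{x\ge 0,\,y\ge 0\}$ the linear part of $F$ has rank one, so $F$ maps $Q_{I}$ onto the line $L_1=\{u-v=a-b\}$; moreover $L_1\cap Q_I$ is collapsed to the \emph{single} point $(2a-b,a)$, which is exactly the fixed point of $F$ when $b\le 2a$. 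The analogous statement holds on $Q_{III}=\{x\le 0,\,y\le 0\}$, whose image is a ray of $L_3=\{u+v=a+b\}$, collapsing (when $a+b<0$) to the point $(-b,a+2b)$. All dissipation is thus concentrated on $Q_I\cup Q_{III}$: an orbit entering either quadrant is pushed onto a line and, one step later, onto a finite set.

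First I would prove the main dynamical lemma: \emph{every orbit meets $Q_I\cup Q_{III}$ in finitely many steps.} Suppose instead that an orbit remains forever in the ``opposite sign'' quadrants $Q_{II}=\{x<0,\,y\ge 0\}$ and $Q_{IV}=\{x\ge 0,\,y<0\}$, so $x_ny_n\le 0$ for all $n$. Using the identities and the sign constraints, a step $Q_{II}\to Q_{IV}$ forces $x_{n+2}=2|x_n|+(2a-b)$, while a step $Q_{IV}\to Q_{IV}$ forces $x_{n+1}=x_n+|y_n|+a$ with $|y_{n+1}|=|y_n|-x_n-b$; in both situations the controlling coordinate grows, which strongly suggests that perpetual confinement in $Q_{II}\cup Q_{IV}$ forces $\|(x_n,y_n)\|\to\infty$. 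The step I expect to be the main obstacle is then to rule out escape to infinity: I would seek a Lyapunov-type function, nondecreasing along excursions in $Q_{II}\cup Q_{IV}$, whose growth is incompatible with the sign condition $x_ny_n\le 0$ being maintained, thereby forcing the orbit back into $Q_I\cup Q_{III}$. Once the lemma holds, the collapse above shows the orbit reaches $L_1\cup L_3$ in finitely many steps, and eventual periodicity follows from a short direct analysis of the induced one-dimensional dynamics on these two lines.

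For the structure of $\operatorname{Per}(F)$ I would solve $F(x,y)=(x,y)$ branch by branch: this yields exactly one fixed point for every $(a,b)$, namely $(2a-b,a)\in Q_I$ when $b\le 2a$ and $((2a-b)/5,(a+2b)/5)\in Q_{II}$ when $b>2a$. For higher periods I would enumerate the finitely many admissible quadrant itineraries; the strong restriction imposed by the collapse of $Q_I$ and $Q_{III}$ onto lines cuts this list down sharply. Along any surviving itinerary $F^{k}$ is affine, so its periodic points solve a linear system $(I-M)z=c$, where $M$ is a product of the two invertible blocks $M_{II}=\left(\begin{smallmatrix}-1&-1\\1&-1\end{smallmatrix}\right)$ and $M_{IV}=\left(\begin{smallmatrix}1&-1\\1&1\end{smallmatrix}\right)$ (with eigenvalues $-1\pm i$ and $1\pm i$, both of modulus $\sqrt2$), and one retains only the solutions obeying the quadrant constraints.

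Carrying out this bookkeeping for $k=2$ should show that no itinerary produces an admissible $2$-cycle, while for $k=3$ the itineraries compatible with $x_ny_n\le 0$ yield either two period-$3$ orbits or none, according to explicit inequalities in $(a,b)$; the one-dimensional reduction on $L_1\cup L_3$ then excludes all periods $\ge 4$. The delicate point in this second half is the systematic elimination of inadmissible itineraries and the verification that the quadrant constraints reduce the count precisely to $0$ or $2$.
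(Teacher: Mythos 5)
Your plan contains two genuine gaps, one structural and one that would make the second half of the proof fail outright.

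First, your ``main dynamical lemma'' (every orbit meets $Q_1\cup Q_3$ in finitely many steps) is left unproven --- you yourself flag it as the main obstacle and only propose to search for a Lyapunov-type function --- and, as stated, it is false: when $b>2a$ the fixed point $\left((2a-b)/5,(a+2b)/5\right)$ lies in $Q_2$ and never leaves it, so the lemma needs an explicit exceptional set. More importantly, the difficulty you set up (orbits alternating forever between $Q_2$ and $Q_4$) does not exist when $a\ge 0$: on $Q_3\cup Q_4$ the first coordinate of $F$ is $|x|-y+a=|x|+|y|+a\ge a\ge 0$, so $F(Q_3\cup Q_4)\subset Q_1\cup Q_4$ and no orbit can ever pass from $Q_4$ back to $Q_2$ or $Q_3$. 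Hence avoiding $Q_1$ forever means eventual confinement to the single quadrant $Q_2$ or $Q_4$, which the unstable-focus argument (eigenvalues $-1\pm i$ and $1\pm i$, modulus $\sqrt2$) excludes except at fixed points. This is exactly how the paper disposes of $Q_2$, $Q_3$, $Q_4$ in Proposition \ref{aiguala1}; the alternating-confinement problem you pose is genuinely hard, but it is the $a<0$ problem (Proposition \ref{primera reduccioamenys1}), not the one at hand.

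Second, you look for the $3$-periodic orbits in the wrong place. You restrict the linear systems $(I-M)z=c$ to itineraries with $x_ny_n\le 0$, i.e.\ to $M$ a product of the invertible blocks $M_{II}$, $M_{IV}$. But for $a\ge 0$ the two $3$-periodic orbits of Theorem \ref{t:teoA} do not have such itineraries: for $a=1$, $b>2$ they are $\mathcal{P}=\{(b-2,1),(b-2,2b-3),(2-b,1)\}$ and $\mathcal{Q}=\left\{\left(\frac{b-2}{3},\frac{2b-1}{3}\right),\left(\frac{2-b}{3},\frac{2b-1}{3}\right),\left(\frac{2-b}{3},1\right)\right\}$, each containing points of $Q_1$, so each cycle passes through the degenerate branch $F_1$. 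Your enumeration would therefore return \emph{no} admissible $3$-cycles (indeed, for $a\ge 0$ there are no periodic orbits confined to $Q_2\cup Q_4$ besides the fixed point), contradicting the very statement to be proved. Relatedly, your claim that an orbit entering $Q_1$ is ``one step later'' in a finite set is wrong: only $L_1\cap Q_1$ collapses to the point $(2a-b,a)$, while the rest of $L_1$ maps onto further segments, so $F^n(Q_1)$ stays a polygonal curve. The ``short direct analysis of the induced one-dimensional dynamics'' you defer is in fact the bulk of the proof and the source of everything you are missing: the paper shows $F^3(Q_1)\subset Q_1$ is a polygonal $L$ and that $F^3\vert_L$ consists of constant pieces (whose images are points of $\mathcal{P}$) and expanding affine pieces (whose unique non-escaping point is the point of $\mathcal{Q}$ in $Q_1$); this simultaneously produces the two $3$-cycles, rules out period $2$ and all periods $\ge 4$, and yields the eventual periodicity of every orbit.
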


The proof of Theorem \ref{t:teoA} is done in Section 3. In fact, in Propositions \ref{aiguala1}, \ref{azerobu}, \ref{azerobmenysu} and \ref{azerobzero}, there is  a more detailed description of the dynamics in subcases that cover the whole case $a\ge 0$. 

Much more interesting dynamics appear when $a<0.$ In order to perform this study we begin by proving that the dynamics is concentrated in an one-dimensional compact set of the plane.

\begin{teoa}\label{t:teoB}
	 If $a<0,$ for each $b\in\R$  there is a compact graph $\Gamma=\Gamma_{a,b},$ which is invariant under the map $F,$ such that for every $\mathbf{x}\in \R^2$ there exists a non-negative integer $n,$ that may depend on $\mathbf{x},$ such that $F^n(\mathbf{x})\in \Gamma.$  
\end{teoa}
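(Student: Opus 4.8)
The plan is to normalise first and then treat separately the construction of $\Gamma$ and the proof that it absorbs every orbit. By the conjugacy \eqref{conj} I may take $a=-1$ and fix $b\in\R$, writing $F=F_{-1,b}$. The computational engine is a pair of identities for the output $F(x,y)=(u,v)$, obtained by splitting each coordinate into positive and negative parts $t^{+}=\max(t,0)$ and $t^{-}=\max(-t,0)$: since $|x|\pm x=2x^{\pm}$ one gets $u+v=2(x^{+}-y^{+})+a+b$ and $u-v=2(x^{-}+y^{-})+a-b$. The second identity gives $u-v\ge a-b$ for every $(x,y)$, so after a single iterate every orbit lies in the half-plane $H=\{u-v\ge a-b\}$, and in fact $F(\R^{2})=H$. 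More importantly, on the quadrant $\{x\ge0,\,y\ge0\}$ one has $x^{-}=y^{-}=0$, so the image collapses onto the line $u-v=a-b$, while on $\{x<0,\,y<0\}$ one has $x^{+}=y^{+}=0$ and the image collapses onto the line $u+v=a+b$. This degeneracy (the Jacobian determinant vanishes on these two quadrants) is what forces the asymptotic dynamics to be one-dimensional.

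With this in hand I would build $\Gamma$ explicitly. Taking the bounded portions of the two collapsing lines that can actually be reached inside a suitable absorbing polygon, and iterating them forward, the images are again segments because $F$ is piecewise linear; the claim is that this forward orbit closes up into a finite union of segments. I would then verify $F(\Gamma)\subseteq\Gamma$ by a direct, though tedious, piecewise-linear computation, checking each segment against the four linear branches and using the identities above to locate the images. The output of this step is the explicit description of $\Gamma_{-1,b}$ promised in the statement, together with a bounded region $K$ (a polygon depending on $b$) with $F(K)\subseteq K$ and $\Gamma\subseteq K$.

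The analytic heart is showing that every orbit reaches $\Gamma$ in finite time; this splits into proving that orbits enter the bounded region $K$ and then that, once inside, they land on $\Gamma$. The subtlety is that the per-step estimates are expansive on the quadrants $\{x<0,\,y\ge0\}$ and $\{x\ge0,\,y<0\}$ (there the Jacobian determinant equals $2$), so boundedness cannot follow from a naive norm estimate and must instead be extracted from the itinerary: I would track the sequence of quadrants visited and use the identities to show that a run of consecutive expanding steps is always of bounded length and is necessarily followed by a visit to one of the two collapsing quadrants, after which the point sits on a collapsing line and its subsequent images remain on the bounded graph. Ruling out escape to infinity this way, and then checking that the first collapse lands the orbit precisely on a segment of $\Gamma$, completes the argument. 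I expect the main obstacles to be exactly these two: proving that no orbit escapes to infinity despite the local area expansion, and proving that the forward images of the collapsing segments genuinely close up into a compact graph rather than accumulating; both will require a case analysis according to the position of $b$ (which governs where the two collapsing lines meet and hence the combinatorics of $\Gamma$).
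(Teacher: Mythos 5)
Your overall strategy matches the paper's: normalize to $a=-1$ via the conjugacy, exploit the rank-one collapse of $F$ on $Q_1$ and $Q_3$ onto the lines $y=x+b+1$ and $y=-x+b-1$ (your identities $u+v=2(x^{+}-y^{+})+a+b$ and $u-v=2(x^{-}+y^{-})+a-b$ are correct and express this cleanly), generate $\Gamma$ by iterating these one-dimensional images until they close up, and then show that every orbit eventually enters $Q_1\cup Q_3$ and lands on $\Gamma$ after a bounded number of further iterates.

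The gap is in the ``analytic heart''. Your key claim --- that a run of iterates in the expanding quadrants ``is always of bounded length and is necessarily followed by a visit to one of the two collapsing quadrants'' --- is false on both counts. First, near the unstable foci of $F_2$ and $F_4$ the escape time from $Q_2$ (resp.\ $Q_4$) is unbounded, so there is no uniform bound on such runs. Second, and fatally for the plan as written, there exist orbits that \emph{never} visit $Q_1\cup Q_3$: the fixed point of $F_2$ in $Q_2$ (for $b>1/2$), the fixed point of $F_4$ in $Q_4$ (for $b<0$), and, for $3/4<b<2$, a $3$-periodic orbit with itinerary $Q_4\to Q_2\to Q_2\to Q_4\to\cdots$. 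Since your $\Gamma$ is built only from iterates of the collapsed lines, these orbits would never reach it and the conclusion of the theorem would fail for them; the paper must adjoin them to $\Gamma$ as isolated points (which preserves compactness and invariance, since they are periodic). Moreover, excluding all \emph{other} itineraries confined to $Q_2\cup Q_4$ is not a matter of counting expanding steps: the actual mechanism, in Proposition \ref{primera reduccioamenys1}, is that the transition set $K=\{p\in Q_4:\,F(p)\in Q_2\}$ is a bounded polygon whose finitely many forward images can be tracked explicitly, and the only candidate infinite itineraries, $(422)^{\infty}$ and $(4224)^{\infty}$, are excluded because the compositions $F_2\circ F_2\circ F_4$ and $F_4\circ F_2\circ F_2\circ F_4$ are expansive, so only their fixed points could realize them, and one must check case by case whether those fixed points actually follow the prescribed itinerary (sometimes they do, producing precisely the exceptional $3$-periodic orbit). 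Finally, note that $F(Q_1)$ is the \emph{entire} unbounded line $y=x+b+1$, so no bounded absorbing polygon can contain $F(\R^2)$; the compactness of $\Gamma$ comes instead from the fact that the unbounded ends of $F(Q_1)$ and $F(Q_3)$ carry the directions $(1,1)$ inside $Q_1$ and $(1,-1)$ inside $Q_3$, which are crushed to single points at the next iterate.
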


 As we have explained, to study the case $a<0$ it suffices to consider the case $a=-1$ by using the conjugation \eqref{conj}. Then, for  $a=-1$  and each value of $b$ all the graphs $\Gamma_{-1,b}$ are the ones given in the Appendix adding to them, if necessary, 
 a fixed point and a three periodic orbit that are given explicitly in Proposition \ref{primera reduccioamenys1}. The conjugation can be used afterwards to obtain each $\Gamma_{a,b}$ from its corresponding $\Gamma_{-1,-b/a}.$
 
We prove  that, apart from the isolated points eventually added following the results of Proposition~\ref{primera reduccioamenys1}, the graph $\Gamma$ is a connected set formed by the union of at most 23 compact segments, see Figure~\ref{f:F} for an example of graph with 23 segments. These segments have  one of the four  slopes $0$, $1$, $-1$ and $\infty$.  Moreover, we will prove that there appear exactly  37 topologically different graphs, according the  values of $a<0$ and $b$, see again the Appendix.
 
 It is also remarkable that when the initial condition $\mathbf{x}=(x,y)$ is such that $xy\ge0,$ the value $n$ in Theorem \ref{t:teoB} is at most 11, and this upper bound is reached for some $x,y,a<0$ and $b,$ see Proposition~\ref{p:lemanou}.  On the other hand for $xy<0$ and some values of $a<0$ and $b$ this number is unbounded. 
 
 From Theorem \ref{t:teoB} we get that all $\omega$-limit sets of $F$ are contained in $\Gamma.$ Thus to study the dynamics of $F$ it suffices to study the dynamics of $F\vert_{\Gamma}.$   Before giving a more detailed description of these dynamics we first prove in next theorem a  result that provides a partial explanation of the reason why only simple dynamical behaviors are mostly the ones that can be numerically detected, see also Section \ref{s:future}.

 \begin{teoa}\label{t:teoC} Set $a<0,$ $b\in\R$ and  let $\Gamma$ be  the corresponding invariant graph for $F$ given by Theorem \ref{t:teoB}. Then, for  an open and dense set of  initial conditions $\mathbf{x}\in \Gamma$    there are  at most three possible   $\omega$-limit sets.  Moreover, if  $b/a\in \Q$ these  $\omega$-limit sets are periodic orbits.

\end{teoa}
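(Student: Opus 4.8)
The plan is to reduce the restricted dynamics $F|_{\G}$ to a one-dimensional circle map and to extract the $\omega$-limit structure from rotation-number theory. After normalising $a=-1$ through the conjugacy~\eqref{conj}, I would exploit the explicit description of $\G$ from Theorem~\ref{t:teoB} and the Appendix, together with the rank structure of $F$: on the quadrants $xy>0$ the map is rank one (collapsing, with kernel of slope $1$ in the first quadrant and slope $-1$ in the third), while on $xy<0$ it is invertible and area doubling. Tracking the induced action on the at most $23$ segments of $\G$ — whose slopes lie in $\{0,1,-1,\infty\}$ and are cyclically permuted by $F$ while arc lengths are rescaled by $\sqrt{2}$, by $2$, or collapsed — I would show that $\G$ decomposes into finitely many transient segments and a recurrent core $C$ that is a topological circle. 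The transient segments map into $C$ in a uniformly bounded number of steps off a nowhere-dense set, and any invariant set lying off $C$ (which is where the positive-entropy behaviour, when it occurs, is confined) is nowhere dense and therefore disjoint from the open dense set we are after.

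This leaves the continuous degree-one circle map $f:=F|_{C}$, which — being orientation-preserving, as the structural analysis shows — possesses a well-defined rotation number $\rho=\rho(a,b)$ and, by the Poincar\'e theory, a monotone degree-one semiconjugacy $h\colon C\to\su$ with $h\circ f=R_{\rho}\circ h$, where $R_{\rho}$ is the rigid rotation by $\rho$. I would then compute $\rho$ explicitly from the piecewise-linear data as a function of the single essential parameter $b/a$, the crucial qualitative output being that $b/a\in\Q$ forces $\rho\in\Q$. The $\omega$-limit analysis now splits according to the arithmetic of $\rho$. If $\rho\notin\Q$, the map $f$ has a unique minimal set — all of $C$ when $h$ is a homeomorphism and a Cantor set in the Denjoy case — and this set is the $\omega$-limit of every point of $C$, hence of every point of the open dense set eventually reaching $C$; there is thus a single $\omega$-limit set. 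If $\rho=p/q\in\Q$, every periodic point of $f$ has period $q$, the complementary arcs are wandering with periodic endpoints, and every orbit converges to a periodic orbit; pulling this back through $h$ and through the transient segments shows these limit sets are genuine periodic orbits of $F$, which yields the second assertion.

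It remains to bound by three the number of \emph{attracting} periodic orbits of $f$ in the rational case, since only these are reached from an open dense set, and this is where I expect the real work to lie. The bound should follow from the limited number of monotone laps of $f$, which in turn is governed by how often the circle $C$ crosses the two collapsing directions; I would verify that this yields at most three attractors uniformly across the $37$ topological types of $\G$ listed in the Appendix. The routine ingredients are the semiconjugacy and the dichotomy on $\rho$; the main obstacles are this uniform combinatorial bound and the explicit determination of $\rho$ as a function of $b/a$ needed for the rationality implication. In every case the open dense set is the complement of the nowhere-dense grand orbit of the repelling periodic points, together with the finitely many vertices and breakpoints of $\G$ and the nowhere-dense positive-entropy invariant set, so openness and density come for free once the combinatorial picture is in place.
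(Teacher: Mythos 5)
Your proposal rests on reducing $F|_{\G}$ to a degree-one circle map on a ``recurrent core'' $C\subset\G$, and this is where it breaks down. The graph $\G$ is a topological circle only for a narrow range of parameters (e.g.\ $b\le -1$ after normalising $a=-1$); for most values of $b$ it is a tree or a more complicated graph with no invariant circle inside it, and for $c=-b/a$ in the ranges identified in Theorem~\ref{t:teoD} the map $F|_{\G}$ has \emph{positive} topological entropy. Since every map in $\lin$ has zero entropy (Proposition~\ref{rot}(v)), no semiconjugacy of the essential dynamics to a rigid rotation can exist there, and the chaotic invariant set is not some nowhere-dense appendage ``off $C$'' that you can discard: it sits inside $\G$ itself and is precisely where the $\omega$-limits of the plateaus may live. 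Relatedly, your plan to show the three $\omega$-limits are attracting periodic orbits of a circle map contradicts the statement being proved — Theorem~\ref{t:teoC} only claims periodicity when $b/a\in\Q$, and the paper explicitly notes that in general these limit sets can be Cantor sets or worse. The correct mechanism, which your proposal misses entirely, is the collapsing of plateaus: the open dense set is $\mathcal U=\bigcup_{i\ge0}F^{-i}(\mathcal W)$, the union of all preimages of the plateau interiors, and every point of $\mathcal U$ eventually lands on a single point (the image of a plateau), so its $\omega$-limit equals that of one of finitely many plateau images; a case check shows these share at most three distinct $\omega$-limits.

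The second serious gap is your remark that ``openness and density come for free once the combinatorial picture is in place.'' Density of $\mathcal U$ is the main technical content of the proof (Proposition~\ref{interior}): it requires decomposing $\G\setminus\mathcal U$ into disjoint closed trees (Lemma~\ref{tree}) and then running a length-expansion argument — using that non-plateau edges have their length multiplied by $\sqrt2$ or $2$ under $F$ (Lemma~\ref{edges}) — to show that a nondegenerate interval disjoint from all preimages of plateaus would have iterates of unbounded length, a contradiction with compactness. Finally, for the rationality claim your route (computing $\rho$ as a function of $b/a$) is unavailable outside the circle cases; the argument that actually works is arithmetic: for $b=p/q$ all vertices and plateau images lie in the finite, $F$-invariant set $\G\cap(\Z_q\times\Z_q)$, so every plateau's $\omega$-limit is a periodic orbit.
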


As we will see, in each $\Gamma$ there are segments that collapse by $F$ to single points in $\Gamma.$ In a few words, what we will prove is that the open and dense set of the statement is formed by the union of all the preimages by $F$ of these segments.  We will show that, depending on the values of the parameters, these three $\omega$-limit sets can be periodic orbits, Cantor sets or other much more complicated subsets of $\Gamma.$
 
 In fact, if a property in a topological space is satisfied for all elements in an open and dense subspace it is usually said that it is {\em generic}.  What we suspect is that, rather of being only generic, the property that we have proved in~Theorem~\ref{t:teoC} is satisfied by a full 
  Lebesgue measure set of initial conditions in $\Gamma.$ We have been able to prove this fact only for some values of $b$  in \cite{CGMM}, but we do not consider this question in this work.

  As we can see in the Appendix, to describe with more detail the dynamics on each~$\Gamma,$ a lot of cases arise.  Recall that there are 37 different graphs. For most of them, we define a suitable partition of $\Gamma$ and we consider the oriented graph associated to the partition, which allow us to study the \emph{topological entropy} of $F\vert_{\Gamma}$ in each case and, moreover, to elucidate its dynamics.  In other cases, for instance, when the graph is homemorphic to a circle we use another approach. 
See Section~\ref{s:prelim} for more details on the tools that are used. 
  
  Shortly, the entropy $h$ is a non-negative real number associated to a map such that when $h=0$ then the dynamics is ``simple'' and when
     $h>0$ it is ``complicated''. More specifically, when $F\vert_{\Gamma}$ has entropy $h>0$ then it  has periodic orbits with infinitely different periods and the orbits have many different combinatorial behaviors. Indeed, by using the ideas of the book of Alsed\`a, Llibre and Misiurewicz~\cite{ALM} it can be
     seen that when the map  $F|_{\Gamma}$ has positive entropy, then it is chaotic in the
     sense of Li and Yorke, see ~\cite{LY}. In particular, this means that it has periodic points with arbitrarily large periods and, there exists an uncountable set $\mathcal{S}\subset\Gamma$, called \emph{scrambled set}, so that for any $p,q\in\mathcal{S}$ and each periodic point $r$ of $F\vert_{\Gamma},$ 
     \begin{align*}
     	&\limsup_{n\to\infty} |F^n(p)-F^n(q)|>0, \quad\liminf_{n\to\infty} |F^n(p)-F^n(q)|=0\quad \mbox{and}\\
     	&\limsup_{n\to\infty} |F^n(p)-F^n(r)|>0. 
     \end{align*}
  We will give more details about the entropy and how to compute it in Section~\ref{s:prelim}. 
  
 All our results about this matter are summarized in the following theorem.

\begin{teoa}\label{t:teoD}

 Set $a<0, b\in\R$ and define $c=-b/a$. Consider the map $F$ given in~\eqref{e:F}, restricted to its corresponding invariant graph $\Gamma$ given in Theorem~\ref{t:teoB}. Then there exist $\alpha$ and $\beta$ such that $F|_{\Gamma}$  has positive entropy if and only if $c\in (\alpha,-1/36)\cup(\beta,1)\cup(1,8),$ where $\alpha\in (-112/137,-13/16)\approx(-0.8175,-0.8125),$  $\beta\in(603/874,563/816)\approx(0.6899,0.6900),$ and in these two intervals the entropy of $F\vert_{\Gamma}$ is non-decreasing in $c.$ Moreover,
	the entropy as a function of $c$ is discontinuous at $c=-1/36$.

\end{teoa}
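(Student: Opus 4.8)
The plan is to reduce the statement to a finite family of transition-matrix computations, exploiting the extreme rigidity of the invariant graphs. First I would use the conjugation \eqref{conj} to fix $a=-1$, so that the essential parameter becomes $c=b$, and invoke Theorem~\ref{t:teoB} together with the explicit list in the Appendix: as $c$ ranges over $\R$ there are only the announced $37$ topological types of $\Gamma$, each a union of at most $23$ segments with slopes $0,\pm1,\infty$, and the bifurcation values of $c$ separate these types. On each type I would describe the action of $F$ segment by segment. Computing $DF$ off the axes shows that, according to the quadrant, a slanted segment is either collapsed to a point (this is the mechanism behind the collapsing segments used in Theorem~\ref{t:teoC}), mapped isometrically, or stretched by the factor $2$; refining at the branch points then produces, for each type, a Markov partition of $\Gamma$ and an associated oriented graph with transition matrix $M=M(c)$. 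By the theory recalled in Section~\ref{s:prelim} (following \cite{ALM}), the topological entropy of $F|_{\Gamma}$ equals $\log\rho\big(M(c)\big)$, where $\rho$ is the spectral radius on the relevant irreducible block; hence $F|_{\Gamma}$ has positive entropy if and only if $\rho\big(M(c)\big)>1$.

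Next I would separate the zero-entropy types from the rest and locate the thresholds. The graphs homeomorphic to a circle on which $F$ acts as a homeomorphism carry zero entropy and are governed by a rotation number (the alternative approach mentioned in the introduction, which also yields the rational/irrational dichotomy of Theorem~\ref{t:teoC}); together with the types whose transition matrix is a disjoint union of cycles (spectral radius $1$), these should account exactly for the complementary set of $c$. I expect the thresholds to split into two kinds: the \emph{rational} ones $-1/36$, $1$, $8$, which are genuine topological bifurcation values of $\Gamma$ read off from the Appendix (so the change of entropy there is caused by a change of the combinatorial type of $M$), and the \emph{irrational} ones $\alpha,\beta$, at which $\rho\big(M(c)\big)$ crosses $1$ \emph{within} a fixed type. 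For the latter I would reduce positivity of the entropy to a polynomial sign condition in $c$ — on an irreducible block whose Perron root is the only root outside the closed unit disk this is simply $p_{M(c)}(1)<0$ — and then bracket $\alpha$ and $\beta$ by evaluating that polynomial at the stated rational endpoints $-112/137,-13/16$ and $603/874,563/816$, checking the sign changes, and invoking monotonicity of $\rho\big(M(c)\big)$ in $c$ to guarantee that the crossing is unique and lies in the claimed interval.

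For the non-decrease of the entropy on each of the two intervals $(\alpha,-1/36)$ and $(\beta,8)$ (the degenerate value $c=1$ requiring separate treatment), I would not rely on a single fixed matrix but rather order the successive graph types encountered as $c$ increases and exhibit, for $c_{1}<c_{2}$ in the same interval, an embedding of the subshift of $M(c_{1})$ into that of $M(c_{2})$ — equivalently a semiconjugacy of $F|_{\Gamma_{c_{1}}}$ onto a subsystem of $F|_{\Gamma_{c_{2}}}$. This forces $\rho\big(M(c_{1})\big)\le\rho\big(M(c_{2})\big)$ and hence the monotonicity of the entropy. Finally, the discontinuity at $c=-1/36$ would follow by comparing the two sides of this bifurcation: on $(\alpha,-1/36)$ the spectral radius stays bounded away from $1$ up to the endpoint, so $\lim_{c\to(-1/36)^{-}}h(c)=\log\rho_{0}>0$, whereas at $c=-1/36$ the graph changes type — the segment carrying the nontrivial transition collapses — and $F|_{\Gamma}$ becomes one of the zero-entropy (circle-homeomorphism) models, giving $h(-1/36)=0$ and a downward jump.

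The main obstacle throughout is the exhaustive case analysis: producing the correct Markov partition and transition matrix for each of the $37$ graphs and then controlling $\rho\big(M(c)\big)$ across all the bifurcations. The genuinely delicate points are proving \emph{monotonicity} of $\rho\big(M(c)\big)$ (rather than merely computing it at sample parameters), which needs the nesting/semiconjugacy argument above, and rigorously \emph{isolating} $\alpha$ and $\beta$, which demands a careful sign-and-monotonicity analysis of the characteristic polynomial near the given rational bounds together with a proof that no other parameter value yields spectral radius equal to $1$.
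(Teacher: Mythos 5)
Your overall architecture (reduce to $a=-1$, run through the $37$ graphs of the Appendix, build oriented transition graphs from the segment dynamics, and compute entropy via spectral radii and romes) matches the paper's case analysis for most parameter ranges. But there is a genuine gap at the heart of the theorem: your treatment of the thresholds $\alpha$ and $\beta$. You propose to locate them by fixing a Markov matrix $M(c)$ of constant combinatorial type on each transition interval and reducing positivity of entropy to a sign condition on its characteristic polynomial at $\lambda=1$. This cannot work: on $[-112/137,-13/16]$ and on $[603/874,563/816]$ the orbit of the image of the relevant plateau (the ``turning point'') is not eventually periodic for generic $c$, so there is \emph{no} finite Markov partition of fixed type, and the entropy is not the root of a single polynomial in $c$ — it behaves like the entropy of a one-parameter unimodal family (a continuous, devil's-staircase-type function). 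The paper's essential idea, which your proposal is missing, is to show that a suitable power of $F$ restricted to an invariant subinterval is semiconjugate to the trapezoidal family $T_{X_0,Y_0,Z}$ of \cite{BMT} with $Z$ a monotone function of $c$; the results of \cite{BMT} (fullness of the family and monotonicity of entropy in $Z$) then simultaneously give the existence of a unique threshold, the non-decrease of the entropy on those two intervals, and — via lower semicontinuity of entropy, Lemma~\ref{entroo}$(ii)$ — the value $h=0$ at $c=\alpha$. Your alternative monotonicity argument (embedding subshifts of $M(c_1)$ into $M(c_2)$) again presupposes finite Markov structures that do not exist here.

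Two smaller problems. First, you read the monotonicity claim as holding on all of $(\alpha,-1/36)$ and $(\beta,8)$; the theorem only asserts it on the two short intervals containing $\alpha$ and $\beta$, and the paper's computed values (e.g.\ $h_1$ on $(-3/4,-1/5]$, a weaker bound $h_2$ on $[-1/5,-1/9]$, then $h_1$ again on $[-1/9,-1/16]$) strongly suggest global monotonicity is false, so you would be trying to prove an unprovable strengthening. Second, your description of $c=-1/36$ is off: for $c\in[-1/36,0)$ the graph is not a circle and $F|_{\Gamma}$ is not a homeomorphism; the paper obtains zero entropy there from a non-Markov partition via the upper-bound matrix of Remark~\ref{grafcota}, while the jump comes from the uniform lower bound $(\ln 2)/6$ on $[-1/16,-1/36)$. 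These last points are repairable; the missing trapezoidal/kneading argument for $\alpha$ and $\beta$ is not, within the framework you describe.
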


As we will see in the proof, the transition from zero entropy to chaos, which is present in the two intervals for the parameters $\alpha, \beta$ appearing in the theorem, can be described by means of two associated  one-parameter families of unimodal maps and taking advantage of the results of \cite{BMT}. We will prove that these families are ``full families'' in the sense that all possible unimodal dynamics are present in the family. Moreover, the behavior of the entropy with respect to the parameter is monotonic.

We remark that although we have not been able to obtain explicit expressions of the values  $\alpha$ and $\beta$ of the statement, the property described in the above paragraph allows to obtaining rational upper and lower bounds for them, as sharp as desired. We compute more accurate bounds in~\cite{CGMM}.

 It is worth noting that the discontinuity of the entropy as a function of the parameters is not so rare in the context of continuous maps with zero entropy.

Moreover, in the proof of Theorem \ref{t:teoD}, for each value of $c$ we either obtain the exact value of the entropy or we give explicit lower bounds for it. 
When the entropy is zero we also detail all the dynamics of $F.$ In fact, in the majority of cases, there are only a finite number of periodic orbits, some of them repelling and some other attracting, and the dynamics is pre-periodic, that is periodic after finite number of iterates. However, when the graph is a topological circle, the dynamics will be determined by the associated rotation number of $F|_{\Gamma}$. When this number is rational, we will characterize all the possible periods. When it is irrational, we will prove that the  $\omega$-limit sets are Cantor sets. Another totally different situation with zero entropy corresponds to the cases where $c$ is either $\alpha$ or $\beta.$

Theorems \ref{t:teoB} and \ref{t:teoC} will be proved in Section~\ref{sec:4}  and  Theorem \ref{t:teoD}  in Section~\ref{sec:5}. A more detailed description of the dynamics and the values of the entropy is given in Propositions \ref{primer}--\ref{sis}, \ref{sotasota1},  \ref{sota1}, \ref{exac1},  \ref{b12},  \ref{b24}, \ref{b48} and \ref{bmes8}.

 We believe that the study of the maps $F_{a,b}$ considered in this work, and the discovery of the invariant graphs in Theorem \ref{t:teoB}, is interesting because provides a natural continuous two dimensional discrete dynamical system for which all the final dynamics is one dimensional. Moreover, this one dimensional dynamics  presents all the richness of the one dimensional setting.  Finally, we believe that it is also valuable
  to the extent that the study that we have made can be extended to other families of continuous piecewise linear maps. More concretely, for maps such that there is an open region on which the linear part of the map has rank one (and therefore there is a collapse of the dynamics to a line, ray or segment) and such that, except for a controllable set of points, the positive orbit of the rest of the points visits this region,  see Section \ref{s:future}
for some additional comments.

\section{Preliminary results}\label{s:prelim}

\subsection{Topological entropy}

The {\em topological entropy} of a continuous selfmap of a compact
space was introduced in 1965 by Adler, Konheim, and McAndrew (\cite
{AKM}). It measures  the combinatorial complexity of the map. Bowen,
Misiurewicz and Ziemian (\cite {Bo3,MZ}) generalized this
notion for non necessarily continuous maps. We do not describe here
the general definition and we will only introduce the definitions for
a particular class of one-dimensional maps and state some basic
properties that we will use along the paper. We address the reader
to the original articles and also to the chapter dedicated to the
entropy in \cite{ALM}. The results in  this reference state and prove
the results for the interval or the circle maps. However, the proofs
are easily adaptable and also valid for general one-dimensional
spaces like compact graphs. A {\em graph} is a pair $(X,V)$ where $X$ is a compact
Hausdorff space and $V\subset X$ is finite and such that $X\setminus V$ is the disjoint union of a finite number of open subsets of X, called {\em edges}, with the property that each of them is homeomorphic to an open interval of the real line.

Let $G$ be a compact graph and let $f:G\longrightarrow G$ be a map. We
will say that $f$ is {\em piecewise monotone} if there exists
$\mathcal {A},$ a finite cover of $G$ by intervals (i.e. segments of the edges), such that
for all $A\in \mathcal {A}$,
 $f(A)$
is an interval and $f$ restricted to $A$ is continuous and monotone.
 Note that in this definition $f$ is not
necessarily a continuous map. Now we will use a particular type of
covers: the ones corresponding to partitions formed by closed intervals. Note that two intervals
of a partition can only intersect at one point and, in this case, the
intersection is a common endpoint of both intervals.

Let $f:G\longrightarrow G$ be a piecewise monotone map on a compact
graph $G.$ Let $\mathcal {P}=\{I_1,\ldots,I_n\}$ be a finite
partition of $G$ by closed intervals. We say that $\mathcal {P}$  is
a {\em mono-partition} if $f(I_i)$ is homeomorphic to an interval and $f\vert_{I_i}$
is continuous and monotone for all $i\in\{1,\ldots,n\}.$ We call
{\em turning points} the endpoints of the intervals $I_i$ and we
denote by $C$ the set of all turning points. When $x\in G\setminus
C$ we define the address of $x$ as $A(x):=I_i$ if $x\in I_i.$ When
$x\in G\setminus\left( \cup_{i=0}^{m-1}f^{-i}(C)\right)$ we define
the {\em itinerary of length $m$ of $x$} as the sequence of symbols
$$I_m(x)=A(x)A(f(x))\ldots A(f^{m-1}(x)).$$ Let $N(f,\mathcal {P},m)$ be the
number of different itineraries of length $m.$ Note that $N(f,
\mathcal {P},m)\le n^m$. Then, the following holds.

\begin{lem}\label{grownumber} Let $f:G\longrightarrow G$ be a piecewise monotone map on a compact
	graph $G.$ Let $\mathcal {P}$  be a mono-partition. Then 
	$ \lim\limits_{m} \sqrt[m]{N(f,\mathcal {P},m)}$ exists. Moreover, this limit is independent of the
	choice of the mono-partition $\mathcal P$, and $$h(f):=\ln\left(\lim\limits_{m}
	\sqrt[m]{N(f,\mathcal {P},m)}\right)$$ is the topological entropy of
	$f.$\end{lem}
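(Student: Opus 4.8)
The plan is to split the statement into three parts: existence of the limit, invariance under the choice of mono-partition, and identification of its logarithm with the topological entropy.

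For existence I would first record the basic concatenation property of itineraries. Writing $N_m := N(f,\mathcal{P},m)$, any itinerary of length $m+k$ of a point $x$, namely $A(x)A(f(x))\cdots A(f^{m+k-1}(x))$, splits canonically into the length-$m$ itinerary of $x$ followed by the length-$k$ itinerary of $f^m(x)$. This assignment is injective, so $N_{m+k}\le N_m N_k$; that is, $m\mapsto \log N_m$ is subadditive. Fekete's lemma then gives that $\tfrac1m\log N_m$ converges to $\inf_m \tfrac1m\log N_m$, whence $\sqrt[m]{N_m}$ converges. This part is routine.

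For independence of the mono-partition I would compare two mono-partitions $\mathcal{P}$ and $\mathcal{Q}$ through their common refinement $\mathcal{R}=\mathcal{P}\vee\mathcal{Q}$. Since a finer partition separates at least as many orbits, $N(f,\mathcal{P},m),\,N(f,\mathcal{Q},m)\le N(f,\mathcal{R},m)$, so it suffices to bound $N(f,\mathcal{R},m)$ above by $N(f,\mathcal{P},m)$ up to a subexponential factor. Fix a length-$m$ $\mathcal{P}$-itinerary $\omega$ and let $U_\omega=\bigcap_{i=0}^{m-1}f^{-i}(P_{j_i})$ be the corresponding cell of the join $\bigvee_{i=0}^{m-1}f^{-i}\mathcal{P}$; by the mono-partition hypothesis each $f^i|_{U_\omega}$ is continuous and monotone, so $f^i(U_\omega)$ is a subinterval of the single $\mathcal{P}$-cell $P_{j_i}$. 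The $\mathcal{R}$-itinerary is constant on the pieces into which $U_\omega$ is cut by the sets $\{x:f^i(x)\in\partial\mathcal{R}\}$, $0\le i<m$; since each $P_{j_i}$ contains at most $r-1$ interior $\mathcal{R}$-cut points, where $r$ is the maximal number of $\mathcal{R}$-cells inside a $\mathcal{P}$-cell, there are at most $m(r-1)$ such cuts and hence at most $1+m(r-1)$ distinct $\mathcal{R}$-itineraries extending $\omega$. Summing over the $N(f,\mathcal{P},m)$ choices of $\omega$ yields $N(f,\mathcal{R},m)\le (1+m(r-1))\,N(f,\mathcal{P},m)$. As the $m$-th root of the polynomial factor tends to $1$, the limits for $\mathcal{P}$, $\mathcal{Q}$ and $\mathcal{R}$ coincide.

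Finally, to identify $\ln(\lim_m\sqrt[m]{N_m})$ with the topological entropy $h(f)$, I would follow the lap-number characterization of entropy for piecewise monotone maps in \cite{ALM}. The cells of $\bigvee_{i=0}^{m-1}f^{-i}\mathcal{P}$ are exactly the maximal sets of constant length-$m$ itinerary, and on each of them $f^m$ is monotone; thus $N_m$ differs from the lap number $c(f^m)$ of $f^m$ only by the $\partial\mathcal{P}$-crossings recorded along orbits, a discrepancy that by a counting identical to the one above is at most polynomial in $m$ and therefore invisible to the root limit. The Misiurewicz--Szlenk type theorem of \cite{ALM}, stated there for interval and circle maps, gives $h(f)=\lim_m\tfrac1m\log c(f^m)$, and combining this with the comparison yields $h(f)=\lim_m\tfrac1m\log N_m$. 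The main obstacle is precisely this last step: one must check that the proofs of \cite{ALM}, together with the Bowen--Misiurewicz--Ziemian definition of entropy for possibly discontinuous maps, carry over from intervals and circles to an arbitrary compact graph $G$. This requires handling the vertices $V$ and the finitely many discontinuity points with care ---ensuring that branch points and jumps contribute only boundedly many extra monotone pieces at each iterate--- but introduces no exponential correction, so the growth rate, and hence the entropy, is unaffected.
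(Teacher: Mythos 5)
Your proposal is correct in substance, but note that the paper itself offers no proof of this lemma: it is stated as background, with the reader referred to \cite{AKM}, \cite{Bo3}, \cite{MZ} and the entropy chapter of \cite{ALM}, together with the remark that the proofs there, written for interval and circle maps, ``are easily adaptable'' to compact graphs. Relative to that, your argument actually supplies more detail than the paper does. The subadditivity estimate $N(f,\mathcal{P},m+k)\le N(f,\mathcal{P},m)\,N(f,\mathcal{P},k)$ plus Fekete's lemma correctly gives existence of the limit, and your refinement count is right: the join $\mathcal{R}=\mathcal{P}\vee\mathcal{Q}$ is again a mono-partition, each cell $U_\omega$ of $\bigvee_{i=0}^{m-1}f^{-i}\mathcal{P}$ is an interval on which every $f^i$ is monotone, and since refining an interval partition with $a$ pieces by one with $b$ pieces produces at most $a+b-1$ pieces, the stage-by-stage count gives at most $1+m(r-1)$ distinct $\mathcal{R}$-itineraries over each fixed $\mathcal{P}$-itinerary, a subexponential factor as needed. (Two points worth making explicit in a final write-up: points whose iterate lands exactly on a cut point have undefined itineraries and are excluded from the count, which matters here because the maps of this paper have genuine plateaus, so such preimage sets can be nondegenerate intervals; and on a graph the cells of the join are still intervals because all cells lie inside edges.) The third step --- identifying $\ln\bigl(\lim_m\sqrt[m]{N(f,\mathcal{P},m)}\bigr)$ with topological entropy in the Bowen--Misiurewicz--Ziemian sense for possibly discontinuous maps on a compact graph --- is exactly the part you leave to the lap-number theorem of \cite{ALM} and its adaptation from intervals and circles to graphs, which is also precisely what the paper does; so your proof and the paper's treatment bottom out in the same citation, with yours filling in the elementary two-thirds that the paper omits entirely.
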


We call the number $s(f):=\lim\limits_{m} \sqrt[m]{N(f,\mathcal {P},m)}$ the {\em growth
	number of $f.$}

It is well-known that the entropy is an invariant for conjugation. In the case of interval maps, it is also invariant for a more general notion. Let $f,g$ be a piecewise monotone self maps on the interval $I$. We will say that $f$ and $g$ are {\em semiconjugated} if there exists a non-decreasing map $s:I\longrightarrow I$ such that $s(I)=I$ and $g\circ s=s\circ f.$ In this case it is also well-known that $h(f)=h(g).$

\begin{nota} \label{colapses} For  $f:G\longrightarrow G$, a piecewise monotone map on a compact
	graph $G$, if there is an interval $I\subset G$ such that $f$
	restricted to $I$ is constant, then we can collapse the interval to a
	point $p$ obtaining a new graph $\tilde G.$ We can also define
	$\tilde f$ on $\tilde G$  by $\tilde f(x)= p$ if $f(x)\in I.$ In
	this way we obtain a piecewise monotone map $\tilde f$ on $\tilde
	G.$ Clearly $s(f)=s(\tilde f),$ so this operation does not affect
	the computation of entropies.\end{nota}

Let $f:G\longrightarrow G$ be a piecewise monotone map on a compact
graph $G.$  Let $\mathcal {P}$  be a mono-partition. We will say
that $\mathcal {P}$ is {\em Markov partition} if for all $I\in
\mathcal {P},\,\,f(I)$ is the union of some elements of $\mathcal
{P}.$ Clearly, in this situation, the set of turning points is an
invariant set.

Once again, let $f:G\longrightarrow G$ be a piecewise monotone map on a compact
graph $G$ and let $\mathcal {P}=\{I_1,\ldots,I_n\}$ be a
mono-partition. The {\em associated matrix to $\mathcal P$} is the
$(n\times n)$-matrix defined by
$$m_{i,j}=\left\{
\begin{array}{ll}
	1, & \hbox{if $I_j\subset f(I_i)$;} \\
	0, & \hbox{otherwise.}
\end{array}
\right.$$

We denote it by $M(f,\mathcal {P})$ and also denote by  $r(\mathcal
{P})$ its  spectral radius (i.e., the maximum of the modulus of its eigenvalues). From the Perron-Frobenius Theorem (see
\cite {Gant}) we know that the spectral radius of $M(f,\mathcal
{P})$ is reached in a non-negative real eigenvalue.

We have the following result:

\begin{lem}\label{Markov} Let $f:G\longrightarrow G$ be a piecewise monotone map on a compact
	graph $G$ and let $\mathcal {P}=\{I_1,\ldots,I_n\}$ be a
	mono-partition. Then $r(\mathcal {P})\le s(f).$ Moreover if
	$\mathcal {P}$ is Markov, then $r(\mathcal {P})= s(f).$
\end{lem}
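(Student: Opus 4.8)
The plan is to relate the two quantities through the combinatorics of admissible walks in the transition matrix $M=M(f,\mathcal{P})$. Write $\mathbf{1}$ for the all-ones vector in $\R^n$ and note that, since $M$ is a $0$-$1$ matrix, the number of \emph{admissible walks} of length $m$, i.e. sequences $(j_0,\dots,j_{m-1})$ with $m_{j_k,j_{k+1}}=1$ for every $k$, equals exactly $\mathbf{1}^{\top}M^{m-1}\mathbf{1}$. A first auxiliary step, independent of $f$, is the spectral estimate $\lim_m (\mathbf{1}^{\top}M^{m-1}\mathbf{1})^{1/m}=r(\mathcal{P})$: this follows from Gelfand's formula $\lim_m\|M^{m}\|^{1/m}=r(\mathcal{P})$ together with the fact that, for a nonnegative matrix, the sum of all entries of $M^{m}$ is comparable, up to the factor $n^2$, to $\max_{i,j}(M^m)_{i,j}$, the Perron-Frobenius theorem quoted from \cite{Gant} guaranteeing that $r(\mathcal{P})$ is attained at a nonnegative eigenvalue. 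The degenerate case $r(\mathcal{P})=0$ is trivial, since $s(f)\ge 1>0=r(\mathcal{P})$ is then automatic.

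The core of the argument is the inequality $N(f,\mathcal{P},m)\ge \mathbf{1}^{\top}M^{m-1}\mathbf{1}$, valid for an arbitrary mono-partition; taking $m$-th roots yields $s(f)\ge r(\mathcal{P})$. To prove it I would show that every admissible walk is realized as the itinerary of some point. Fix an admissible walk $(j_0,\dots,j_{m-1})$. Since $\mathcal{P}$ is a mono-partition, each $f|_{I_{j_k}}$ is a monotone continuous bijection of $I_{j_k}$ onto the interval $f(I_{j_k})$, hence a homeomorphism onto its image. I would then build a decreasing chain of nondegenerate closed subintervals backwards: set $J_{m-1}=I_{j_{m-1}}$ and, using $m_{j_k,j_{k+1}}=1$, i.e. $J_{k+1}\subset I_{j_{k+1}}\subset f(I_{j_k})$, define $J_k:=(f|_{I_{j_k}})^{-1}(J_{k+1})\subset I_{j_k}$, so that $f(J_k)=J_{k+1}$. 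Then $J_0$ is a nondegenerate interval with $f^{k}(J_0)=J_k\subset I_{j_k}$ for all $k$. Removing from $J_0$ the finite set $J_0\cap\bigcup_{k=0}^{m-1}f^{-k}(C)$ (finite because $C$ is finite and the relevant restrictions of $f$ are injective), any remaining point $x$ has a well-defined itinerary equal to $I_{j_0}\cdots I_{j_{m-1}}$. As distinct walks produce distinct itineraries, the claimed inequality follows.

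Finally, for the Markov case I would establish the reverse bound $N(f,\mathcal{P},m)\le \mathbf{1}^{\top}M^{m-1}\mathbf{1}$, i.e. that every itinerary \emph{is} an admissible walk; combined with the above this gives $s(f)=r(\mathcal{P})$. Here the Markov hypothesis is essential. Let $x$ have itinerary $I_{j_0}\cdots I_{j_{m-1}}$ with each $f^{k}(x)$ in the interior of $I_{j_k}$. For consecutive indices, $f^{k+1}(x)=f(f^k(x))\in f(I_{j_k})$; since $\mathcal{P}$ is Markov, $f(I_{j_k})$ is a union of partition elements, and as $f^{k+1}(x)$ lies in the interior of $I_{j_{k+1}}$ while the interiors of the $I_i$ are pairwise disjoint, necessarily $I_{j_{k+1}}\subset f(I_{j_k})$, i.e. $m_{j_k,j_{k+1}}=1$. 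Thus the itinerary is an admissible walk, as required.

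The main obstacle, and the crux of the statement, is the asymmetry between the two inequalities: the realizability argument works for any mono-partition, but its converse fails in general because $f(I_{j_k})\cap I_{j_{k+1}}\neq\emptyset$ does \emph{not} force $I_{j_{k+1}}\subset f(I_{j_k})$ unless $f(I_{j_k})$ is a union of partition elements. Recognizing that the Markov condition is precisely what upgrades ``the transition is geometrically possible'' to ``the matrix entry is $1$'' is the key point, and it is exactly what turns the one-sided bound $r(\mathcal{P})\le s(f)$ into an equality.
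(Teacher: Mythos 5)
Your overall architecture is the standard one (note that the paper itself contains no proof of Lemma~\ref{Markov}: it is quoted from the literature, cf.~\cite{ALM,BGMY}, so the natural benchmark is the classical argument): count admissible walks by $\mathbf{1}^{\top}M^{m-1}\mathbf{1}$, identify the exponential growth rate of these counts with $r(\mathcal{P})$ via Gelfand's formula, show that walks inject into itineraries for an arbitrary mono-partition, and show that itineraries inject into walks under the Markov hypothesis. The spectral step and the Markov direction are correct as written.

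There is, however, a genuine gap in the lower-bound direction, precisely at the sentence ``each $f|_{I_{j_k}}$ is a monotone continuous bijection of $I_{j_k}$ onto $f(I_{j_k})$, hence a homeomorphism onto its image.'' In this paper ``monotone'' means weakly monotone: maps constant on subintervals are allowed, and they are in fact the crucial case (the plateaus of $F|_{\Gamma}$ and the collapsing intervals $I\twoheadrightarrow p$ of Remark~\ref{colapses} are at the heart of all the applications). For such maps $f|_{I_{j_k}}$ need not be injective, so your subsequent claim that $J_0\cap\bigcup_{k=0}^{m-1}f^{-k}(C)$ is finite fails: the preimage of a turning point can contain a nondegenerate interval, and a priori these bad intervals could cover all of $J_0$, leaving no point with a well-defined itinerary. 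Your backward construction itself survives weak monotonicity, since $J_k:=(f|_{I_{j_k}})^{-1}(J_{k+1})$ is still a nondegenerate closed interval with $f(J_k)=J_{k+1}$ (the preimages of the two endpoints of $J_{k+1}$ are nonempty and disjoint), but the last step needs a different argument. A correct finish: set $R(j_0,\ldots,j_{m-1})=\{x\in J_0:\ f^k(x)\notin C \mbox{ for } 0\le k\le m-1\}$; this set is relatively open in $J_0$, and it is nonempty by induction on $m$. Indeed, by the inductive hypothesis the analogous set $R(j_1,\ldots,j_{m-1})\subset J_1$ is nonempty and open in $J_1$, hence contains a nondegenerate open interval $(c,d)$; since $f(J_0)=J_1$ and preimages of distinct points of $(c,d)$ under $f|_{J_0}$ are nonempty and disjoint, the set $(f|_{J_0})^{-1}\left((c,d)\right)$ is infinite, so it cannot be contained in the finite set $C$, and therefore $R(j_0,\ldots,j_{m-1})\supseteq (f|_{J_0})^{-1}\left((c,d)\right)\setminus C\ne\emptyset$. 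With this replacement your proof is complete. A minor additional point: in the degenerate case it suffices to note $r(\mathcal{P})=0\le s(f)$; your claim $s(f)\ge 1$ can actually fail in the weakly monotone setting (e.g.\ a map sending all of $G$ to a single turning point has $N(f,\mathcal{P},m)=0$ for $m\ge 2$), so it should not be invoked.
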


\begin{nota} \label{grafcota} There is an alternative matrix, $\bar M(f,\mathcal {P})$ that we can associate to a
	mono-partition $\mathcal {P}=\{P_1,\ldots,P_n\}$ by the rule
	$$\bar m_{i,j}=\left\{
	\begin{array}{ll}
		1, & \hbox{if $f(I_i)$ intersects the interior of $I_j$;} \\
		0, & \hbox{otherwise.}
	\end{array}
	\right.$$
	
	Notice that when $\mathcal {P}$ is a Markov partition we get $\bar
	M(f,\mathcal {P})= M(f,\mathcal {P}).$ Also notice that $\bar
	M(f,\mathcal {P})$ can be thought as the Markov matrix of a map $g$
	(maybe discontinuous) that has $\mathcal {P}$ as a Markov partition.
	Clearly $N(f,\mathcal {P},m)\le N(g,\mathcal {P},m)$ and then $s(f)\le s(g).$ Therefore,
	if we denote by $\bar r(\mathcal {P})$ the spectral radius of $\bar
	M(f,\mathcal {P})$ we will get $s(f)\le \bar r(\mathcal {P}).$
\end{nota}

There is a nice method to compute spectral radius of a square
$(n\times n)$-matrix $M$ with entries $m_{i,j}\in \{0,1\}$, see \cite{BGMY}. Since we will use it extensively in this paper, we briefly recall it. We
construct an abstract oriented graph whose vertices are $I_1,\ldots
,I_n$ and there is an oriented arrow from $P_i$ to $P_j$ if and and
only if $I_j\subset f(I_i).$ Next we introduce the notion of \emph{rome}.

\begin{defi}\label{d:roma}
	Let $M=(m_{ij})_{i,j=1}^n$ be an $n\times n$ matrix  with $m_{i,j}\in\{0,1\}$. For a sequence $p=(p_j)_{j=0}^k$ of elements of $\{1,2,\ldots n\}$ its width $w(p)$
	is defined by $w(p)=\prod_{j=1}^k  m_{p_{j-1}p_j}.$ And $p$ is called a path if $w(p)\ne 0.$
	In this case, $k=\mathit{l}(p)$ is the length of the path $p.$ A subset $R\subset \{1,2\ldots n\}$ is called a rome if there is no
	loop outside $R,$ i.e., there is no path $(p_j)_{j=0}^k$ such that $p_0=p_k$ and $(p_j)_{j=0}^k$ is disjoint from $R.$ For a rome $R$ we call a path
	$(p_j)_{j=0}^k$ simple if $\{p_0,p_k\}\subset R$ and $\{p_1,\ldots ,p_{k-1}\}$ is disjoint from $R.$
\end{defi}

Roughly speaking, a rome $R$ is a collection of vertices of the oriented graphs such that any loop in the graph must pass through an element of $R$.  Clearly, the choice  of the name rome  in the above definition is motivated by the ancient proverb ``all roads lead to Rome''. Note that a path in the matrix associated with the oriented graph corresponds to a path in the graph.

For a rome $R=\{r_1,\ldots ,r_k\},$ where $r_i\ne r_j$ for $i\ne j,$
it is defined a matrix function $A_R$ by $A_R=(a_{ij})_{i,j=1}^k$
where $a_{ij}(x)=\sum_{p} w(p)\,\lambda^{-\mathit{l}(p)}$ where the
summation is over all simple paths originating at $r_i$ and
terminating at $r_j.$ By $E$ we denote the unit matrix (of an
appropriate size).
\begin{teo} \label{rome} (See \cite{BGMY})
	Let $R=\{r_1,\ldots,r_k\}$ (with $r_i\ne r_j$ for $i\ne j$) be a rome.
	Then the characteristic polynomial of $M$ is equal to $(-1)^{n-k}\,\lambda^n\,\mathrm{det}(A_R(\lambda)-E).$
\end{teo}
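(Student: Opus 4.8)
The plan is to exploit the defining property of a rome---that no cycle of the associated oriented graph avoids $R$---to reduce the computation of the characteristic polynomial to a Schur complement. First I would relabel the vertices so that $R=\{r_1,\dots,r_k\}$ occupies the first $k$ indices and write $M$ in block form
$$M=\begin{pmatrix} B & C\\ D & N\end{pmatrix},$$
where the $k\times k$ block $B$ records the arrows inside $R$, the off-diagonal blocks $C$ (of size $k\times(n-k)$) and $D$ (of size $(n-k)\times k$) record the arrows between $R$ and its complement, and the $(n-k)\times(n-k)$ block $N$ is the adjacency block of the vertices outside $R$. The key structural observation is that $N$ is nilpotent: for a $\{0,1\}$-matrix, $N$ fails to be nilpotent exactly when its graph contains a cycle, and a cycle through vertices all lying outside $R$ would be a path $(p_j)_{j=0}^{\ell}$ with $p_0=p_\ell$ and $\{p_0,\dots,p_\ell\}$ disjoint from $R$, which the rome hypothesis forbids. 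Consequently $\det(\lambda E_{n-k}-N)=\lambda^{\,n-k}$, and for $\lambda\neq0$ the Neumann series $(\lambda E_{n-k}-N)^{-1}=\sum_{j\ge0}N^{j}\lambda^{-(j+1)}$ is a finite sum.

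Next I would apply the block-determinant (Schur complement) identity to
$$\lambda E-M=\begin{pmatrix}\lambda E_k-B & -C\\ -D & \lambda E_{n-k}-N\end{pmatrix},$$
using that $\lambda E_{n-k}-N$ is invertible for $\lambda\neq0$. This yields
$$\det(\lambda E-M)=\lambda^{\,n-k}\,\det\!\Big(\lambda E_k-B-C(\lambda E_{n-k}-N)^{-1}D\Big),$$
and substituting the finite Neumann series gives the inner matrix $\lambda E_k-B-\sum_{j\ge0}CN^{j}D\,\lambda^{-(j+1)}$. Since both sides of the claimed identity are, after clearing the $\lambda^{n}$ factor, polynomials, it suffices to establish the equality for $\lambda\neq0$ and then invoke polynomial continuation.

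The combinatorial heart of the argument is to identify the correction term with $A_R(\lambda)$. Here I would read off that, for a simple path from $r_i$ to $r_j$ of length $\ell$, the $\ell-1$ intermediate vertices all lie outside $R$; the first edge contributes a factor from $C$, the last a factor from $D$, and the $\ell-2$ interior edges a factor from $N^{\ell-2}$. Summing over all such paths shows that the length-$\ell$ contribution to $a_{ij}(\lambda)$ is $(CN^{\ell-2}D)_{ij}\lambda^{-\ell}$ for $\ell\ge2$ and $B_{ij}\lambda^{-1}$ for $\ell=1$ (a direct arrow, including self-loops at rome vertices). Therefore $\lambda A_R(\lambda)=B+\sum_{j\ge0}CN^{j}D\,\lambda^{-(j+1)}$, so the inner matrix equals $\lambda E_k-\lambda A_R(\lambda)=-\lambda\big(A_R(\lambda)-E\big)$.

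Combining the pieces, $\det(\lambda E-M)=\lambda^{\,n-k}(-\lambda)^{k}\det(A_R(\lambda)-E)=(-1)^{k}\lambda^{\,n}\det(A_R(\lambda)-E)$, and passing to the characteristic polynomial $\det(M-\lambda E)=(-1)^{n}\det(\lambda E-M)$ produces the factor $(-1)^{n+k}=(-1)^{n-k}$, which is the claimed formula. I expect the main obstacle to be the combinatorial bookkeeping in the third step: carefully verifying that the Neumann-series matrix products $CN^{j}D$ are in exact bijection with the simple paths of the rome (treating the boundary cases $\ell=1$ and closed simple paths $r_i\to\cdots\to r_i$ correctly), together with the nilpotency justification and the sign accounting, are the delicate points; the determinant manipulations themselves are routine once the block structure is in place.
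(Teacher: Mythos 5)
Your proof is correct, but there is nothing in the paper to compare it against: Theorem~\ref{rome} is stated without proof, as a quotation of the classical result of Block, Guckenheimer, Misiurewicz and Young, so the relevant comparison is with the argument in \cite{BGMY} (reproduced in the book \cite{ALM}). The classical proof is an induction on the number $n-k$ of vertices outside the rome: one adjoins or eliminates one non-rome vertex at a time, rewriting simple paths through that vertex as compositions, and checks that each elimination changes $\det(A_R(\lambda)-E)$ only by a sign; each such step is in effect a one-dimensional version of your Schur complement. Your argument compresses the whole elimination into a single linear-algebra step, and all the delicate points are handled correctly: the rome hypothesis is exactly the statement that the non-rome block $N$ has an acyclic graph, hence $N^{n-k}=0$ and $\det(\lambda E_{n-k}-N)=\lambda^{n-k}$; the Schur complement identity is legitimate for $\lambda\ne 0$; the finite Neumann series enumerates paths with all intermediate vertices outside $R$, and since the entries of $M$ are $0$ or $1$ every path has weight $1$, so $(CN^{\ell-2}D)_{ij}$ is precisely $\sum_p w(p)$ over simple paths of length $\ell\ge 2$ from $r_i$ to $r_j$ (the rome condition simultaneously guarantees that there are only finitely many such paths, which is what makes $A_R$ well defined); this gives $\lambda A_R(\lambda)=B+\sum_{j\ge0}CN^jD\,\lambda^{-(j+1)}$, and your sign count $\lambda^{n-k}(-\lambda)^k=(-1)^k\lambda^n$ followed by $\det(M-\lambda E)=(-1)^n\det(\lambda E-M)$ yields $(-1)^{n+k}=(-1)^{n-k}$ as claimed. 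The final continuation step is also fine: after dividing by $\lambda^n$ both sides are polynomials in $\lambda^{-1}$ that agree for all $\lambda\ne0$, hence coincide identically. As for what each route buys: the inductive elimination of \cite{BGMY} is more elementary (no inversion over the field of rational functions) and extends verbatim to matrices with weighted entries, while your block-decomposition proof is shorter, isolates the rome hypothesis as exactly the nilpotency of $N$, and makes the appearance of the negative powers $\lambda^{-\mathit{l}(p)}$ conceptually transparent.
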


We emphasize that the logarithm of the largest root of $\mathrm{det}(A_R(\lambda)-E)$ is just the topological entropy. This fact will be used later when computing entropy of certains graphs. 

From now we will speak indiscriminately about the entropy of a matrix $M$ or the entropy of the associated oriented graph as the logarithm of its spectral radius. In the next remark we collect some easy observations about the oriented graph associated to a matrix $M$ that allows us to know if the  matrix has or not has positive entropy. We will say that two loops in the oriented graph associated to $M$ are conected if there is a path that begins in one element of the first loop and ends in one element of the second one and viceversa.

\begin{nota}\label{romaentr} 	Let $M=(m_{ij})_{i,j=1}^n$ be an $n\times n$ matrix  with $m_{i,j}\in\{0,1\}$ and consider
its associated oriented graph. Assume that  $R=\{r_1,\ldots ,r_k\},$ is a rome. Then the following assertions hold.
\begin{enumerate} \item[(i)] If for some $1\le j\le k$ there are two different loops passing trough $r_j$ then the entropy of $M$ is positive.\item [(ii)] If for some $1\le i<j \le k,$ $r_i$ and $r_j$ are connected then the entropy of $M$ is positive.
	\item[(iii)] If for all $1\le i\le k,$ there are one and only one loop passing through $r_i,$ then  $M$ has zero entropy. Note that in this case for all $1\le i<j \le k,$ $r_i$ and $r_j$ are not connected.
\end{enumerate}
\end{nota}

Lastly we collect in the next lemma some results about topological entropy $h$ that we will use in the rest of the paper.

\begin{lem}\label{entroo} The following statements
hold: \begin{enumerate}[(i)]\item If $f:X\longrightarrow X$ is a continuous map in a compact
space, then $h(f^n)=nh(f).$
\item Let $\mathcal M$ be the space of continuous maps of a compact interval with the topology of uniform convergence
and consider the map $h:\mathcal M\longrightarrow \R.$ Then $h$ is lower semi-continuous, that is, for any $f\in \mathcal M$ 
$$\liminf_{g\to f} h(g)\ge h(f).$$
\end{enumerate}
\end{lem}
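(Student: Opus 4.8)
The plan is to treat the two (classical) assertions separately. For statement (i) I would work with the original Adler--Konheim--McAndrew definition of entropy through open covers, since $X$ is only assumed to be a compact space. Recall that for an open cover $\mathcal U$ one sets $\mathcal U^{(m)}=\bigvee_{i=0}^{m-1}f^{-i}\mathcal U$ and $h(f,\mathcal U)=\lim_m \frac 1m \log N(\mathcal U^{(m)})$, where $N$ denotes the minimal cardinality of a subcover, and $h(f)=\sup_{\mathcal U} h(f,\mathcal U)$. The key observation is the telescoping identity
$$\bigvee_{k=0}^{m-1}(f^{n})^{-k}\big(\mathcal U^{(n)}\big)=\mathcal U^{(nm)},$$
valid because $f$ is continuous, which after passing to the limit yields $h(f^{n},\mathcal U^{(n)})=n\,h(f,\mathcal U)$. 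Taking the supremum over $\mathcal U$ gives $h(f^{n})\ge n\,h(f)$. For the reverse inequality I would use that $\mathcal W^{(n)}$ refines $\mathcal W$ and that $h(g,\cdot)$ is monotone under refinement, so that $h(f^{n},\mathcal W)\le h(f^{n},\mathcal W^{(n)})=n\,h(f,\mathcal W)\le n\,h(f)$ for every cover $\mathcal W$; taking the supremum over $\mathcal W$ closes the argument.

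For statement (ii), the natural route is through the persistence of horseshoes, which is precisely the mechanism that prevents entropy from dropping under perturbation. If $h(f)=0$ the inequality is trivial because $h\ge 0$, so I may assume $h(f)>0$ and fix any $\lambda$ with $0<\lambda<h(f)$. The horseshoe characterization of entropy for interval maps (which I would quote from \cite{ALM}) provides an integer $m\ge 1$ and pairwise disjoint closed subintervals $J_1,\dots,J_s\subset I$ with $f^{m}(J_i)\supset J_1\cup\cdots\cup J_s$ for every $i$ and $\frac 1m\log s>\lambda$. Moreover this covering can be taken to be \emph{strict}, in the sense that the images of the endpoints of each $J_i$ fall strictly beyond $\bigcup_j J_j$ on the appropriate sides, so that the covering relations are captured by finitely many strict inequalities on the values of $f^{m}$ at finitely many points.

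The point I expect to require the most care is turning this strictness into genuine robustness. First I would check that $g\mapsto g^{m}$ is continuous for uniform convergence: on the compact interval $f$ is uniformly continuous, and the telescoping estimate $\|g^{m}-f^{m}\|_\infty\le \|g-f\|_\infty+\omega_f\big(\|g^{m-1}-f^{m-1}\|_\infty\big)$, with $\omega_f$ the modulus of continuity of $f$, gives $\|g^{m}-f^{m}\|_\infty\to 0$ as $\|g-f\|_\infty\to 0$. Since the horseshoe for $f^{m}$ is described by strict inequalities at finitely many points, there is $\delta>0$ such that every $g$ with $\|g-f\|_\infty<\delta$ makes $g^{m}$ satisfy the same covering relations, hence carry the same $s$-horseshoe; consequently $h(g^{m})\ge\log s$ and, by part (i), $h(g)=\frac 1m h(g^{m})\ge\frac 1m\log s>\lambda$. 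This shows $\liminf_{g\to f}h(g)\ge\lambda$, and letting $\lambda\uparrow h(f)$ gives the claim. The only genuinely nontrivial ingredient is the horseshoe characterization of entropy, which I would cite rather than reprove; everything else is the routine persistence of strict covering relations under uniform perturbation.
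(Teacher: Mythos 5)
The paper never actually proves this lemma: it is presented as a compendium of classical facts, with the surrounding text referring the reader to \cite{AKM} and to the entropy chapter of \cite{ALM}, so your proposal can only be measured against the classical arguments, which is indeed the road you take. Your part (i) is complete and correct: the telescoping identity $\bigvee_{k=0}^{m-1}(f^{n})^{-k}\mathcal{U}^{(n)}=\mathcal{U}^{(nm)}$, combined with monotonicity of $h(f^{n},\cdot)$ under refinement applied to the cover $\mathcal{W}^{(n)}$, which refines $\mathcal{W}$, is precisely the Adler--Konheim--McAndrew proof, and it is valid for any continuous self-map of a compact space.

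In part (ii) the strategy (Misiurewicz's horseshoe-persistence argument) is the right one, and the persistence half is done correctly: the estimate $\|g^{m}-f^{m}\|_\infty\le\|g-f\|_\infty+\omega_f\bigl(\|g^{m-1}-f^{m-1}\|_\infty\bigr)$, the survival of finitely many strict inequalities under a $\delta$-perturbation, and the reduction $h(g)=\tfrac1m h(g^{m})\ge\tfrac1m\log s$ via part (i) are all fine. The genuine gap is the sentence ``this covering can be taken to be \emph{strict}'': that is not part of the theorem you propose to quote. Misiurewicz's theorem, in the form it appears in \cite{ALM}, produces closed intervals with pairwise disjoint \emph{interiors} and the non-strict inclusions $f^{m}(J_i)\supset J_1\cup\cdots\cup J_s$, and non-strict coverings do not persist: for the tent map, for instance, the natural horseshoe $J_1=[0,1/2]$, $J_2=[1/2,1]$ has $f(J_i)=[0,1]$, strictness is impossible because images cannot leave $[0,1]$, and the perturbation $g=(1-\varepsilon)f+\varepsilon/2$ destroys both covering relations. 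So upgrading to a strict horseshoe is exactly where the remaining work lies, and it costs a little entropy, which must be accounted for. The standard repair: fix $\lambda<\lambda'<h(f)$, take a horseshoe for $g=f^{m}$ with $\tfrac1m\log s>\lambda'$, pass to $g^{k}$ and to the $s^{k}$ subintervals realizing the itineraries of length $k$, and discard, inside each $J_i$, the leftmost and the rightmost of them. The at least $s^{k}-2s$ surviving intervals lie at a positive distance $\eta$ from the boundary of $\bigcup_j J_j$, while their images under $g^{k}$ still contain $\bigcup_j J_j$; hence they form a horseshoe for $f^{km}$ that is strict with margin $\eta$, and since $\tfrac{1}{km}\log(s^{k}-2s)\to\tfrac1m\log s>\lambda'$, choosing $k$ large keeps the entropy ratio above $\lambda$. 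With this supplement (or simply by citing the lower semicontinuity theorem itself, which is also in \cite{ALM} and matches how the paper treats these facts), your argument closes.
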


In our setting, from Theorem \ref{t:teoB} we know that the dynamics generated by $F$ when $a=-1$ reduces to study the action of $F$ on some one-dimensional invariant graphs. 

For each one of the cases we want to determine the entropy of $F$ restricted to the corresponding graph, say $\Gamma.$ Once we fix a mono-partition we observe that there are some intervals $I$ such that $F$ restricted to $I$ reduces to a single point, say $p.$ We denote this behavior by $I\twoheadrightarrow p.$ Then by using Remark \ref{colapses} we eliminate the intervals such that after a finite number of iterates collapse to a point in the corresponding oriented graph. This procedure will simplify our computations when computing the entropy of $F|_\Gamma$   or some bounds of it.

\subsection{Rotation numbers and set of periods of a rotation interval}

For some values of the parameter $b,$ the associated invariant graph $\Gamma$ is a topological circle and $F\vert_{\Gamma}$ is a degree one circle map such that its liftings are non-decreasing. We denote by $\cal {L}$ this class of maps. For a degree one homeomorphisms of the circle, Poincaré introduced the notion of {\em rotation number } that can be easily generalized for our class of maps (see \cite{ALM}).

\begin{propo} Let $g\in \cal {L}$ and  let $G:\R\longrightarrow\R$ be a lifting of $g.$ Then for all $x \in \R$ the limit $$\lim\limits_{n} \frac{G^n(x)-x}{n}$$ exists and it is independent of $x.$ This limit (mod $\Z$) is also independent of the choice of the lifting.\end{propo}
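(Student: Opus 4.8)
The plan is to establish existence, $x$-independence, and lifting-independence of the rotation number for maps in the class $\cal{L}$ of degree-one circle maps with non-decreasing liftings, following the classical Poincaré argument as adapted in~\cite{ALM}. Let $g\in\cal{L}$ and let $G:\R\longrightarrow\R$ be a lifting, so that $G(x+1)=G(x)+1$ for all $x$ and $G$ is non-decreasing. First I would record the two structural consequences of these hypotheses: the degree-one commutation relation gives $G^n(x+1)=G^n(x)+1$ for every $n$, and since a composition of non-decreasing maps is non-decreasing, each iterate $G^n$ is non-decreasing. These are the only two properties I will actually use.

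The existence of the limit rests on a subadditivity argument. Define $a_n=\sup_{x\in\R}\bigl(G^n(x)-x\bigr)$; by the relation $G^n(x+1)=G^n(x)+1$ the quantity $G^n(x)-x$ is periodic in $x$, so the supremum is finite and in fact attained. The key step is to show that the sequence $\varphi_n(x):=G^n(x)-x$ is, up to a bounded error, subadditive in $n$: writing $G^{n+m}(x)-x=\bigl(G^m(G^n(x))-G^n(x)\bigr)+\bigl(G^n(x)-x\bigr)$ and using periodicity to bound the first bracket by $a_m$, one gets $a_{n+m}\le a_n+a_m$. Fekete's subadditivity lemma then yields that $a_n/n$ converges to $\inf_n a_n/n$. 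The main obstacle, and the part requiring genuine care, is to upgrade this convergence of the suprema into pointwise convergence of $\bigl(G^n(x)-x\bigr)/n$ for \emph{every} $x$ with the same limit: here I would exploit that $0\le a_n-\bigl(G^n(x)-x\bigr)<1$ for all $x$, which follows because $G^n(x)-x$ varies by exactly $1$ over any unit interval and is monotone-compatible, forcing its oscillation to be strictly less than $1$; dividing by $n$ and letting $n\to\infty$ squeezes the pointwise limit to coincide with $\lim a_n/n$.

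Having obtained a limit independent of $x$ as a byproduct of the squeezing in the previous step, it remains to check independence of the choice of lifting. Two liftings of the same $g$ differ by an integer constant: if $\tilde G=G+k$ with $k\in\Z$, then by induction $\tilde G^n(x)=G^n(x)+nk$, so $\bigl(\tilde G^n(x)-x\bigr)/n=\bigl(G^n(x)-x\bigr)/n+k$, and the two rotation numbers differ exactly by the integer $k$. Hence the limit is well defined modulo $\Z$, which is precisely the asserted lifting-independence. I would close by remarking that strict monotonicity of $g$ is not needed—only the non-decreasing hypothesis enters, through the monotonicity of the iterates $G^n$ that controls the oscillation bound—so the statement applies to the possibly non-injective circle maps $F\vert_{\Gamma}$ arising when $\Gamma$ is a topological circle.
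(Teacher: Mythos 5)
Your proof is correct: it is the classical Poincar\'e subadditivity argument --- Fekete's lemma applied to $a_n=\sup_{x}\bigl(G^n(x)-x\bigr)$, the oscillation bound $a_n-\inf_x\bigl(G^n(x)-x\bigr)<1$ (which indeed follows from monotonicity of $G^n$ together with $G^n(x+1)=G^n(x)+1$, squeezing the pointwise limit for every $x$), and the observation that two liftings differ by an integer constant so that $\tilde G^n(x)=G^n(x)+nk$. The paper states this proposition without proof, deferring to \cite{ALM}, and your argument is precisely the standard one found there, so the two approaches coincide.
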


The above limit (mod $\Z$) will be called the {\em rotation number of $g$} and denoted by $\rho(g).$

In the next proposition we summarize the basic properties of the rotation number that we will use in this work. For more details and proofs, see \cite{ALM,Ni}.

\begin{propo} \label{rot} Let $g\in \cal {L}.$ The following assertions hold:
	\begin{enumerate}[(i)] \item The map $g$ has periodic orbits if and only if $\rho(g)\in\Q.$
		\item If $\rho(g)={p}/{q}$ with $(p,q)=1$ then all the periodic orbits of $g$ have period $q.$
		\item If $\rho(g)\in\R\setminus\Q$ then the $\omega$-limit  is the same for all $x\in{\mathbb S}^1$ and either it is ${\mathbb S}^1$ or a closed subset of ${\mathbb S}^1$ without isolated points and empty interior (Cantor set). The first alternative is not possible if $g$ is constant on some interval.
		\item The map $\phi:\cal {L}\longrightarrow \R/\Z$ defined by $\phi(g)=\rho(g)$ is continuous in the $\mathcal{C}^0$-topology. 
		\item All the maps in $\cal{L}$ have zero entropy.
		\end{enumerate} \end{propo}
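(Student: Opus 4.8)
The plan is to derive all five statements from the single structural property that any lifting $G$ of $g\in{\cal L}$ is non-decreasing and satisfies $G(x+1)=G(x)+1$, together with the standard uniform estimate $|G^n(x)-n\rho(g)-x|<1$ valid for all $n$ and all $x$, which follows from monotonicity exactly as in the classical homeomorphism case (see \cite{ALM}). I would record at the outset the two elementary facts that do most of the work: first, $\rho(G^q-p)=q\,\rho(g)-p$; and second, a non-decreasing degree one lift $H$ with $\rho(H)=0$ has a fixed point and, in fact, \emph{all} its periodic points are fixed. The existence of a fixed point holds because $H(x)-x$ is continuous and $1$-periodic, so if it never vanished it would have constant sign and be bounded away from $0$, forcing $\rho(H)\neq0$; and any periodic point is fixed because $H(y)>y$ implies $H^n(y)\ge H(y)>y$ for all $n\ge1$ by monotonicity (symmetrically for $H(y)<y$), so no nontrivial return is possible.

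With these in hand, (i) is immediate: if $g^q(x)=x$ then a lift gives $G^q(\tilde x)=\tilde x+p$, whence $\rho(g)=p/q$; conversely, if $\rho(g)=p/q$ then $H=G^q-p$ has $\rho(H)=0$ and thus a fixed point, which projects to a periodic point of $g$. For (ii), let $x$ have minimal period $q'$, so $G^{q'}(\tilde x)=\tilde x+p'$ and $p'/q'=\rho(g)=p/q$; since $(p,q)=1$ this yields $q\mid q'$, say $q'=mq$ and $p'=mp$. Then $\tilde x$ is a periodic point of $H=G^q-p$ (indeed $H^m(\tilde x)=\tilde x$), so by the remark above $H(\tilde x)=\tilde x$, i.e. $g^q(x)=x$; minimality of $q'$ now forces $q'=q$.

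The main obstacle is part (iii), which requires the full Poincar\'e semiconjugacy machinery adapted to the non-invertible (flat-spot) setting. When $\alpha=\rho(g)\notin\Q$ there are no periodic points by (i), and I would construct a non-decreasing degree one map $h$ with $h\circ g=R_\alpha\circ h$, where $R_\alpha$ is the rigid rotation, by collapsing the complementary arcs of an $\omega$-limit set. The standard order-preservation arguments then show that there is a unique minimal set $\Lambda$ which coincides with every $\omega$-limit set, and that $\Lambda$ is either all of $\su$ or else closed, perfect and nowhere dense, i.e. a Cantor set. Finally, if $g$ is constant on some arc $J$, order preservation together with the absence of periodic points makes the forward images of $J$ pairwise disjoint, so $J$ is wandering and $\Lambda\neq\su$; hence $\Lambda$ must be a Cantor set, which rules out the first alternative.

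For (iv) I would use the characterization $\rho(g)<p/q\iff G^q-p<\mathrm{id}$ everywhere and $\rho(g)>p/q\iff G^q-p>\mathrm{id}$ everywhere, each a strict inequality of continuous $1$-periodic functions and hence bounded away from $0$ by compactness. The comparison then propagates under small $C^0$ perturbations via the monotone composition estimate: if $G_0-\eta\le G\le G_0+\eta$ pointwise then $(G_0-\eta)^{\circ q}\le G^{q}\le(G_0+\eta)^{\circ q}$, and both bounds converge uniformly to $G_0^{q}$ as $\eta\to0$ by uniform continuity of $G_0$. Sandwiching $\alpha$ between two nearby rationals gives a $C^0$-neighbourhood of $g$ on which $\rho$ stays within any prescribed tolerance (working mod $\Z$ to also cover the case $\alpha\in\Q$), which is the asserted continuity. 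For (v), since $G^n$ is again non-decreasing and degree one, each preimage $g^{-i}(c)$ of a turning point $c$ is connected, so the common refinement $\bigvee_{i=0}^{m-1}g^{-i}\mathcal{P}$ of any mono-partition has only $O(m)$ atoms; hence $N(g,\mathcal{P},m)=O(m)$, the growth number is $s(g)=1$, and $h(g)=0$ by Lemma~\ref{grownumber}.
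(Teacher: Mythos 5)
Your argument is correct, but note that the paper does not actually prove Proposition~\ref{rot}: it states these facts as a summary of known results and defers entirely to \cite{ALM,Ni}. What you have written is precisely the classical development found in those references, correctly adapted to the non-invertible monotone class $\cal{L}$: the key reductions $\rho(G^q-p)=q\rho(g)-p$ and ``rotation number zero plus monotonicity forces every periodic point to be fixed'' give (i) and (ii) cleanly; the characterization $\rho(g)<p/q\Leftrightarrow G^q<\mathrm{id}+p$ together with the monotone sandwich $(G_0-\eta)^{\circ q}\le G^q\le(G_0+\eta)^{\circ q}$ gives (iv); and the $O(m)$ count of atoms of $\bigvee_{i=0}^{m-1}g^{-i}\mathcal{P}$ (using that preimages of points under a monotone degree-one map are arcs) gives (v) directly from Lemma~\ref{grownumber}. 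The only place where you defer to machinery rather than argue is the Poincar\'e semiconjugacy in (iii), which is exactly the part for which the paper cites \cite{ALM,Ni}, and your wandering-interval argument ruling out $\omega(x)=\su$ when $g$ has a flat piece is sound (for $n\ge1$ the images $g^n(J)$ are single points, so any intersection among them would produce a periodic point). No gaps.
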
 

 The following results allow us to obtain a simple method to find, constructively, the set of periods that arise in a continuous parametric family of circle maps with a prescribed rotation interval, see the proof of Proposition \ref{segon}.

\begin{lem}\label{l:divisorfunction}
Consider the interval $[a_1,a_2]$ with $0\leq a_1<a_2$ and $a_1,a_2\in\mathbb{R}$. Set $n\in\mathbb{N}$, and let $d(n)$ denote 
\emph{the divisor function}  which gives the number of divisors of $n$. If 
\begin{equation}\label{e:novaeqdn}
d(n)<\lfloor n\,(a_2-a_1)\rfloor-1
\end{equation}
(where $\lfloor\, \rfloor$ is the floor function), then there is at least an irreducible fraction $\ell/n$ with $\ell\in\mathbb{N}\cup\{0\}$ in the interval $[a_1,a_2]$.
\end{lem}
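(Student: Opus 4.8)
The plan is to translate the statement into a question about the existence of an integer coprime to $n$ in a short interval, and then to control the size of the gaps between consecutive such integers. A fraction $\ell/n$ with $\ell\in\N\cup\{0\}$ is irreducible and lies in $[a_1,a_2]$ precisely when $\ell$ is an integer of the interval $[na_1,na_2]$ with $\gcd(\ell,n)=1$; since $a_1\ge 0$ every such $\ell$ is non-negative. Thus it suffices to prove that the closed interval $[na_1,na_2]$ contains an integer coprime to $n$. The integers of $[na_1,na_2]$ form a block of consecutive integers, and an elementary count gives at least $\lfloor n(a_2-a_1)\rfloor$ of them. By the hypothesis $d(n)<\lfloor n(a_2-a_1)\rfloor-1$ we have $\lfloor n(a_2-a_1)\rfloor\ge d(n)+2$, so we are handed a block $B$ of at least $d(n)+2$ consecutive integers and must locate one coprime to $n$.

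The key reduction is to a gap bound. Let $J(n)$ denote the Jacobsthal function, i.e. the least $m$ such that every $m$ consecutive integers contain one coprime to $n$. If we can show $J(n)\le d(n)$, then the block $B$, having length at least $d(n)+2\ge J(n)$, automatically contains an integer coprime to $n$, and we are done. I would emphasize here that the two tempting ``cheap'' routes do not close the argument: bounding the number of \emph{reducible} fractions by summing, over the proper divisors $q\mid n$, the counts of fractions with denominator $q$ introduces the factor $\sigma(n)$ and fails for abundant $n$; and the Möbius/density estimate for the coprime count carries an error of size $2^{\omega(n)}$ that swamps the main term, because the interval is so short (its length is only of order $d(n)/n$). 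So the genuine content lies in the gap bound, not in naive counting.

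For the gap bound I would use that $J(n)$ depends only on the radical of $n$ together with the inequality $J(n)\le 2^{\omega(n)}$, where $\omega(n)$ is the number of distinct prime factors of $n$; combined with $d(n)=\prod_i(e_i+1)\ge 2^{\omega(n)}$ this yields $J(n)\le 2^{\omega(n)}\le d(n)$, exactly as needed. The main obstacle is precisely establishing $J(n)\le 2^{\omega(n)}$: the naive covering argument, which writes the non-coprime integers as the union of the residue classes $0\bmod p$ over the primes $p\mid n$, only produces a useful bound when $\sum_{p\mid n}1/p<1$, and breaks down exactly for the $n$ with many small prime factors, which are the critical cases. I would instead argue by induction on $\omega(n)$, using the Chinese Remainder Theorem to exhibit, inside any block of $2^{\omega(n)}$ consecutive integers, one that avoids every class $0\bmod p$ with $p\mid n$. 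Finally I would dispose of the trivial edge cases: for $n=1$ every fraction is already irreducible, and for $n\ge 2$ the integer $\ell$ produced satisfies $\gcd(\ell,n)=1$ hence $\ell\ge 1$, so the requirement $\ell\in\N\cup\{0\}$ is met.
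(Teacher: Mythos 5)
Your reduction of the lemma to a statement about the Jacobsthal function is correct, and it is a genuinely different route from the paper's: you need the block of at least $d(n)+2$ consecutive integers inside $[na_1,na_2]$ to contain an integer coprime to $n$, and you propose to get this from $J(n)\le 2^{\omega(n)}\le d(n)$. The second inequality is immediate, and your dismissal of the ``cheap'' counting routes is accurate. The gap sits exactly at the step you yourself call the main obstacle: your proof of $J(n)\le 2^{\omega(n)}$ by induction on $\omega(n)$ plus the Chinese Remainder Theorem does not close. The CRT rescaling trick works only for the prime $2$: the integers of a block avoiding $0\bmod 2$ form a single arithmetic progression of half the length, and an affine change of variables converts coprimality to the odd part into a condition of the same shape on a block of consecutive integers, giving $J(2m)\le 2J(m)$ for odd $m$. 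For an odd prime $p$, the integers avoiding $0\bmod p$ split into $p-1$ interleaved progressions, each of length roughly $2^{\omega(n)}/p<2^{\omega(n)-1}$, so the induction hypothesis applies to none of them and their union is not a progression; and the odd case cannot be rescued by counting, since $\sum_{p\mid n}1/p$ already exceeds $1$ for the nine smallest odd primes. The inequality $J(n)\le 2^{\omega(n)}$ is in fact a nontrivial theorem (Kanold, 1967); if you quote it, your argument is complete but rests on machinery far heavier than the lemma, and if you do not, the crucial step is unproven. Note also that the weaker bound $J(n)\le d(n)+2$ that you actually need is not easier, because in the critical case of squarefree $n$ one has $d(n)=2^{\omega(n)}$.

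For comparison, the paper argues directly: there are at least $\lfloor n(a_2-a_1)\rfloor-1>d(n)$ integers in $[na_1,na_2]$, hence more fractions $\ell/n$ in $[a_1,a_2]$ than divisors of $n$, and it concludes that one of these fractions ``must be irreducible.'' That final inference is of exactly the same strength as the Jacobsthal-type statement you isolated (it asserts that any $d(n)+1$ consecutive integers contain one coprime to $n$), and it is not a pigeonhole: the map $\ell\mapsto\gcd(\ell,n)$ is far from injective on a block (for $n=30$ the integers $2,4,8$ all have gcd $2$ with $n$), so ``more fractions than divisors'' does not by itself produce an irreducible one. Your decomposition therefore has the merit of making explicit the difficulty that the paper's one-line conclusion passes over, but as submitted it does not supply the missing proof either: you must either cite Kanold's theorem or find an argument for the key inequality, and the one you sketch would fail.
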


\begin{proof} Note that there are exactly $\lfloor n\,(a_2-a_1)\rfloor+1$ integers in the interval $[0, \lfloor n\,(a_2-a_1)\rfloor] $ (two of them at the ends of the interval). Therefore, there are at least $ \lfloor n\,(a_2-a_1)\rfloor-1$ integers in the interval $[n\,a_1 ,n\,a_2 ]$, since
$$
[n\,a_1 ,n\,\lfloor (a_2-a_1)\rfloor +n\,a_1  ]\subseteq [n\,a_1 ,n\,a_2 ],
$$ and, at most, two integers can be lost at the ends of the first interval.

Hence, if \eqref{e:novaeqdn} is true, there are more integer numbers in $[n\,a_1 ,n\,a_2 ]$ than divisors of $n$. In consequence, there are more fractions in $[a_1,a_2]$ of the type $\ell/n$ with $\ell\in\mathbb{N}\cup\{0\}$, than divisors of $n.$ As a consequence, one of these fractions must be irreducible.
\end{proof}

\begin{corol}\label{c:divisorfunction}
Let $D(n)$ be any upper bound function for $d(n)$, for which there exists $n_0$ such that for all $n\geq n_0$, it holds $D(n)<\lfloor n\,(a_2-a_1)\rfloor-1$. Then, for each $n\ge n_0,$ there exists an irreducible fraction $\ell/n\in[a_1,a_2].$ 
\end{corol}

The proof of the Corollary is very easy by using Lemma~\ref{l:divisorfunction} since by construction, for all $n\geq n_0$ it holds $d(n)<D(n)<\lfloor n\,(a_2-a_1)\rfloor-1$.

There are some different explicit upper bound functions for $d(n)$, see \cite{Nic88,NicRob}. In the proof of Proposition \ref{segon}, we will use the naive one 
\begin{equation}\label{e:cotaded}
D(n)=2\sqrt{n}.
\end{equation}
Indeed, observe that the divisors of $n$ appear in pairs of the form $d$ and $n/d$, plus $\sqrt{n}$ if $n$ is a square number. Hence the largest possible divisor that $n$ could have is $\sqrt{n}$  and, therefore, an upper bound of $d(n)$ is $2\sqrt{n}$.

\subsection{Additional notation}

 To end this section of preliminary results, we introduce the following notation:
for $i=1,2,3,4,$ denote by $F_i$ the expression of the affine map $F$ restricted to each one of the quadrants $Q_1=\{(x,y): x\geq 0, y\geq 0\},Q_2=\{(x,y): x\leq 0, y\geq 0\},Q_3=\{(x,y): x\leq 0, y\leq 0\}$ and $Q_4=\{(x,y): x\geq 0, y\leq 0\}.$ 
Note that since the expressions of $F_1,F_2,F_3,F_4$ are
\begin{equation}\label{e:Fis}
\begin{array}{lcl}
	F_1(x,y)&=&(x-y+a,x-y+b),\\
	F_2(x,y)&=&(-x-y+a,x-y+b),\\
	F_3(x,y)&=&(-x-y+a,x+y+b),\\
	F_4(x,y)&=&(x-y+a,x+y+b),
	\end{array}
\end{equation}
we get that the straight lines of slope $1$ contained in $Q_1$ and also the straight lines of slope $-1$ contained in $Q_3$ collapse to a point. Hence,  when calculating the entropy of the maps $F|_\Gamma$, where $\Gamma$ is the graph that appears in Theorems \ref{t:teoB} and \ref{t:teoD}, the intervals of the associated abstract oriented graph which are the preimages of these points will not be considered.

\section{The case $a\ge 0$}\label{sec:3}

In this section we will prove Theorem \ref{t:teoA}. Recall that, from  \eqref{conj}, we know that to study the case $a\ge 0$ it suffices to consider the cases $a=1$ and $a=0.$

\subsection{The case $a=1$}

From the expressions of $F_i$ for $i=1,2,3,4$ given in \eqref{e:Fis}, we see that the affine maps $F_2,F_4$ are non-degenerate, in the sense that their associate matrix has full-rank $2$, while $F_1,F_3$ are degenerate since its associate matrix has rank $1$. In fact,  $F(Q_1)$ reduces to the straight line $y=x+b-1$ while $F(Q_3)$ reduces to $y=-x+b+1$, $x\geq 1.$

\begin{propo}\label{aiguala1} When $a=1,$ the following statements hold.
	
	\begin{enumerate}\item[(i)] For $b\le 2,$ $F$ has the fixed
		point $p=(2-b,1)\in Q_1.$ For $b\in [-1/2,2]$ and for all
		$(x,y)\in\R^2,$ $F^5(x,y)=p,$ while for
		$b<-1/2,$ $F^6(x,y)=p.$
		
		\item[(ii)] For $b>2,$ $F$ has the fixed
		point $q=\left(\frac{2-b}{5},\frac{1+2b}{5}\right)\in Q_2.$  Also it has two
		3-periodic orbits, namely
		$\mathcal{P}=\{(b-2,1),(b-2,2b-3),(2-b,1)\}$ and
		$$\mathcal{Q}=\left\{\left(\frac{b-2}{3},\frac{2b-1}{3}\right),\left(\frac{2-b}{3},\frac{2b-1}{3}\right),\left(\frac{2-b}{3},1\right)\right\}.$$
		Moreover, for each $(x,y)\in \R^2\setminus\{q\}$ there exists $n\in\N,$
		that depends on $(x,y),$ such that $F^n(x,y)\in \mathcal{P}\cup \mathcal{Q}.$
	\end{enumerate}
\end{propo}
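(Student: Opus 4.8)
===  PROOF PROPOSAL FOR PROPOSITION \ref{aiguala1} ===

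The plan is to exploit the degeneracy of $F_1$ and $F_3$: since the matrices of $F_1,F_3$ have rank one, the images $F(Q_1)$ and $F(Q_3)$ collapse onto single lines, namely $y=x+b-1$ and $y=-x+b+1$ respectively (as computed from \eqref{e:Fis} with $a=1$). This forces any orbit that enters $Q_1$ or $Q_3$ into a thin one-dimensional set, and the strategy throughout is to track where successive iterates land among the four quadrants and reduce the two-dimensional dynamics to the behaviour on these collapsed lines. First I would verify each claimed fixed point and periodic orbit by direct substitution into the appropriate branch $F_i$: for (i) one checks $F_1(2-b,1)=(2-b,1)$ and that $(2-b,1)\in Q_1$ precisely when $b\le 2$; for (ii) one checks that $q$ lies in $Q_2$ and is fixed by $F_2$ when $b>2$, and that the two triples $\mathcal P,\mathcal Q$ are genuine $3$-cycles by composing the relevant branches and confirming the quadrant memberships at each point. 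These are routine verifications, so I would state them compactly rather than expand every coordinate.

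For part (i), the heart is the eventual-collapse claim that $F^5$ (or $F^6$) is \emph{identically} equal to $p$. The approach is to show that after a bounded number of steps every point is mapped into $Q_1$, and then that one further application of $F_1$ sends the whole collapsed line to the fixed point $p$. Concretely, $F_1(x,y)=(x-y+1,x-y+1)+(0,b-1)$ maps everything onto the line $y=x+b-1$; a point already on that line, lying in $Q_1$, is then sent to a single point, and iterating $F_1$ on $Q_1$ converges to $p$ in one or two more steps because on the diagonal $y=x+b-1$ the map $F_1$ acts as an affine contraction to its fixed value. The bookkeeping is to partition $\R^2$ by the quadrant-itinerary of the first few iterates and check, case by case, that within at most $5$ steps (respectively $6$ when $b<-1/2$, where an extra step is needed to funnel points through the left half-plane) the orbit reaches $Q_1$ on the collapse line; the threshold $b=-1/2$ arises from comparing where the line $y=x+b-1$ meets the quadrant boundaries.

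For part (ii), when $b>2$ the fixed point leaves $Q_1$ and reappears as $q\in Q_2$, and the diagonal collapse no longer traps orbits at a single point; instead it funnels them onto the lines $y=x+b-1$ and $y=-x+b+1$, on which the remaining dynamics is essentially one-dimensional and organized around the two $3$-cycles. The plan is to show that, off the fixed point $q$, every orbit reaches one of these collapse lines in finitely many steps (again via a finite quadrant-itinerary analysis), and that the induced map on the union of collapse segments is a piecewise affine map whose only recurrent sets are $\mathcal P$ and $\mathcal Q$, so that every non-fixed orbit is eventually captured by $\mathcal P\cup\mathcal Q$. I would identify $q$ as a repeller for this induced dynamics, which explains the exclusion $(x,y)\neq q$.

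The main obstacle will be the exhaustive quadrant-itinerary bookkeeping needed to prove the \emph{uniform} bounds (the exact ``$5$'', ``$6$'', and the convergence to $\mathcal P\cup\mathcal Q$): one must rule out itineraries that avoid $Q_1$ or $Q_3$ for too long, and the casework near the quadrant axes and near the parameter thresholds $b=-1/2$ and $b=2$ is delicate. The cleanest way to control this is to use the collapse of $F_1,F_3$ to reduce, after the first entry into a degenerate quadrant, to a one-dimensional affine interval map and then count iterates there; the two-dimensional part of the argument is only needed to guarantee that entry happens within a fixed number of steps.
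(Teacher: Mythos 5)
Your overall strategy coincides with the paper's: verify the fixed points and the two $3$-cycles by direct substitution into the branches $F_i$, use the rank-one degeneracy of $F_1$ and $F_3$ to collapse $F(Q_1)$ and $F(Q_3)$ onto the lines $y=x+b-1$ and $y=-x+b+1$, and then do quadrant-by-quadrant itinerary bookkeeping to show that every orbit is funneled onto these lines, where the remaining dynamics is one-dimensional and piecewise affine. This is exactly how the paper proceeds (for part (ii) it analyses $F^3$ restricted to the polygonal $L=F^3(Q_1)$, finding two attracting fixed points on $\mathcal P$ and one repelling fixed point on $\mathcal Q$). One small imprecision: on the line $y=x+b-1$ inside $Q_1$ the map $F_1$ is not merely a contraction, it sends the \emph{entire} line to the single point $p$ in one step (the direction $(1,1)$ lies in the kernel of the linear part of $F_1$); this only strengthens your argument.

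There is, however, one genuine gap in part (ii): your plan to show that ``off the fixed point $q$, every orbit reaches one of these collapse lines in finitely many steps (again via a finite quadrant-itinerary analysis)'' cannot work for initial conditions in $Q_2$. When $b>2$ the fixed point $q$ of $F_2$ lies in the interior of $Q_2$, and the linear part of $F_2$ has complex eigenvalues $-1\pm i$ of modulus $\sqrt2$, so $q$ is an unstable focus: points arbitrarily close to $q$ remain in $Q_2$ for arbitrarily many iterates before escaping. Hence no finite case analysis of itineraries can establish escape from $Q_2$; you need the spectral argument (the orbit of any point of $Q_2\setminus\{q\}$ spirals outward and must eventually leave $Q_2$, landing in $Q_1\cup Q_3\cup Q_4$, after which your bounded bookkeeping applies). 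This is precisely the step the paper supplies in case $(c)$ of its proof, and it must be added to yours; note also that the same phenomenon explains why the exclusion of $q$ is needed, rather than $q$ being ``a repeller for the induced dynamics'' on the collapse lines --- $q$ never reaches those lines at all. With that eigenvalue argument inserted, and the routine (if tedious) verification of the itinerary tables near the thresholds $b=-1/2$ and $b=2$, your proof would be complete and essentially identical to the paper's.
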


Statement $(i)$ for the cases $b=0$ and $b=1$ can also be found in \cite{TLL13}.

\begin{proof}[Proof of Proposition \ref{aiguala1}] 
$(i)$ A direct computation shows that $F(p)=p$. Moreover, straightforward computations prove that for $b\in [-1/2,2]:$  
	\begin{itemize}
		\item $F^3(Q_1)\subset
		Q_1$ and $F^5(Q_1)=p.$
		\item $F^3(Q_2)\subset Q_1$ and
		$F^5(Q_2)=p.$
		\item $F^2(Q_3)\subset Q_1$ and $F^4(Q_3)=p.$

		\item $F(Q_4)\subset Q_1\cup Q_4.$ If $(x,y)\in Q_4$  and $F(x,y)\in Q_1$ then $F^3(x,y)=p.$ If $(x,y)\in Q_4$  and
		$F(x,y)\in Q_4$ then $F^4(x,y)=p.$
		\end{itemize}
	
	From these facts we obtain statement $(i)$ for $b\in [-1/2,2].$ For $b<-1/2$, again some computations give
	\begin{itemize}\item $F^4(Q_1)\subset
		Q_1$ and $F^6(Q_1)=p.$
		\item $F^4(Q_3)\subset Q_1$ and $F^6(Q_3)=p.$
		\item $F(Q_2)\subset Q_3\cup Q_4,\,\,F^4(Q_2)\subset Q_1$ and	$F^6(Q_2)=p.$
		\item $F(Q_4)\subset Q_1\cup Q_4.$ If $(x,y)\in Q_4$  and $F(x,y)\in Q_1$ then $F^3(x,y)=p.$ If $(x,y)\in Q_4$  and
		$F(x,y)\in Q_4$ then $F^4(x,y)\in Q_1$ and
		$F^6(x,y)=p.$
	\end{itemize}
This ends the proof of statement $(i).$
	
\bigskip	
	
	$(ii)$ When $b=2$ the fixed point is $(0,1)$ and when $b>2$ this fixed
	point bifurcates and the $3$-periodic orbits $\mathcal{P},$ $\mathcal{Q}$ and the fixed point $q$ appear. Again routine computations when $b>2$  allow to prove the facts described in points $(a), (b)$ and $(c)$ below. 
	
	 $(a)$ It holds that
	 $F^3(Q_1)\subset Q_1$ and
		$F^3(Q_1)$ is the polygonal that joints the point $(b-2,1)$
		with $(0,b-1),$ the point $(0,b-1)$ with $(b+2,2b+1)$ and ends with
		the horizontal line $(x,2b+1), x\ge b+2.$ We denote by $L$ this
		polygonal and by $A_1,A_2,B,C$ and $D$ the partition described in Figure~\ref{dinL} (a).  It holds that
		$F^3(A_1)=(b-2,1),\,\,F^3(A_2)=A_1\cup A_2\cup B,$ 
		$F^3(B)=F^3(C)=(b-2,2b-3)$ and $F^3(D)=A_1\cup A_2\cup B$, see Figure \ref{dinL} $(b).$
		
	\begin{figure}[H]
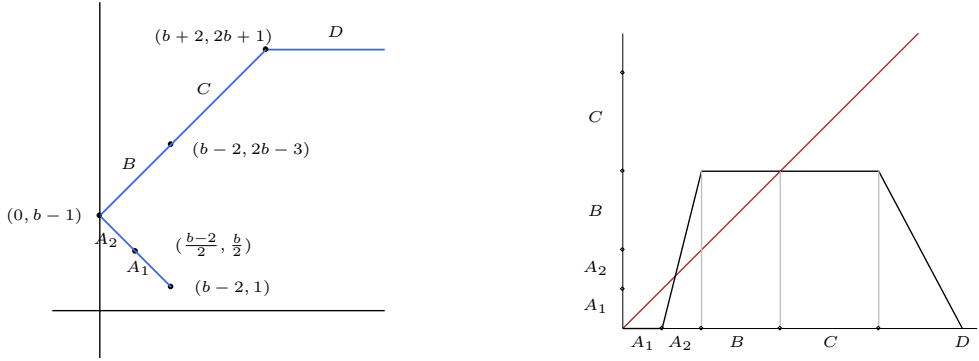


			\qquad\qquad	\begin{subfigure}{0.50\textwidth}
				\begin{lpic}[l(2mm),r(2mm),t(2mm),b(2mm)]{Pol(0.25)}
					\lbl[t]{105,45; {\tiny $(b-2,1)$}}
					\lbl[t]{95,70; {\tiny $(\frac{b-2}{2},\frac {b}{2})$}}
					\lbl[t]{55,55; {\tiny ${A_1}$}}
					\lbl[t]{38,70; {\tiny ${A_2}$}}
					\lbl[b]{5,75; {\tiny $(0,b-1)$}}
					\lbl[t]{50,110; {\tiny ${B}$}}
					\lbl[b]{115,110; {\tiny $(b-2,2b-3)$}}
						\lbl[t]{90,150; {\tiny ${C}$}}
					\lbl[b]{95,170; {\tiny $(b+2,2b+1)$}}
					\lbl[t]{160,180; {\tiny ${D}$}}
				\end{lpic} 
			\end{subfigure} 
			\begin{subfigure}{0.50\textwidth}
				\begin{lpic}[l(2mm),r(2mm),t(2mm),b(2mm)]{GF3(0.25)}
					\lbl[t]{15,15; {\tiny ${A_1}$}}
					\lbl[t]{35,15; {\tiny ${A_2}$}}
					\lbl[t]{65,15; {\tiny ${B}$}}
					\lbl[t]{115,15; {\tiny ${C}$}}
					\lbl[t]{185,15; {\tiny ${D}$}}
						\lbl[t]{-10,35; {\tiny ${A_1}$}}
					\lbl[t]{-10,55; {\tiny ${A_2}$}}
						\lbl[t]{-10,85; {\tiny ${B}$}}
					\lbl[t]{-10,135; {\tiny ${C}$}}
				\end{lpic}
			\end{subfigure}

			\caption{Dynamics of $F^3$ on $Q_1$ when $b>2.$ $(a)$ The polygonal $L=F^3(Q_1)$, left; $(b)$ The graphic of $F^3$ restricted to $L$, right.}\label{dinL}			
		\end{figure}
			
	From these facts, it follows that the map $F^3$ restricted to
	$L$ has two attracting fixed points $(b-2,1),(b-2,2b-3),$ a
	repelling fixed point $r=({(b-2)}/{3},{(2b-1)}/{3})$ and 
	for each $(x,y)\in L\setminus \{r\}$ there exists $n$ that depends
	on $(x,y)$ such that $F^{3n}(x,y)\in \mathcal{P}.$ Observe that the two fixed attracting fixed points correspond to the points of $\mathcal{P}$ which belong to $Q_1$ and that the repelling one is the point of $\mathcal{Q}$ which belongs to $Q_1.$	Thus we get that
	for each $(x,y)\in Q_1$ there exists $n,$ that depends on $(x,y),$ such
	that $F^{3n}(x,y)\in \mathcal{P}\cup \mathcal{Q}.$ So statement~$(ii)$ holds for every
	$(x,y)\in Q_1.$

		$(b)$  If $x\in Q_3\cup Q_4$ then some computations give: either $F(x)\in Q_1$ or $F^2(x)\in Q_1.$ Hence the result $(ii)$ follows from the previous case $(a).$
		
		$(c)$  Lastly, the affine map restricted to $Q_2$ has complex
		eigenvalues with modulus $\sqrt {2}.$ Hence, if $(x,y)\in Q_2\setminus \{q\},$ from $(3)$ it follows
	that there exists $m\in \N$ such that $F^{m}(x,y)\in Q_1\cup
	Q_3\cup Q_4.$ This ends the proof of the proposition.
	\end{proof}
	
	\subsection{The case $a=0.$ Proof of  Theorem \ref{t:teoA}}
	As we noted in Section 1, when $a=0$ it is enough to consider either $b=1$, $b=-1$ or $b=0.$ We begin by considering $b=1.$ The result that we get is the following:
	\begin{propo}\label{azerobu}
		Assume that $a=0$, $b=1.$ Then $F$ has the fixed point $p=\left(-1/5,2/5\right)\in Q_2,$ the two $3$-periodic orbits
		$\mathcal{P}=\{(\pm 1,0),(1,2)\}\}$
		and
		$\mathcal{Q}=\{\left(-1/3,0), (\pm 1/3,2/3) \right)\},$
		and for each $(x,y)\in\R^2\setminus\{p\}$ there exists $n\in\N,$ 	that depends on $(x,y),$ such that $F^n(x,y)\in \mathcal{P}\cup \mathcal{Q}.$
		\end{propo}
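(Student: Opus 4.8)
The plan is to follow the quadrant-by-quadrant scheme of the proof of Proposition~\ref{aiguala1}, taking advantage of the fact that, for $a=0$ and $b=1$, two of the four branches of $F$ in \eqref{e:Fis} are degenerate. First I would check by direct substitution that $p=(-1/5,2/5)$ is fixed and lies in $Q_2$, and that $\mathcal P$ and $\mathcal Q$ are $3$-periodic; with the explicit formulas this is immediate, e.g. $(1,0)\to(1,2)\to(-1,0)\to(1,0)$ using $F_1,F_1,F_2$ and $(1/3,2/3)\to(-1/3,2/3)\to(-1/3,0)\to(1/3,2/3)$. The key structural remark is that $F_1$ and $F_3$ have rank one: by \eqref{e:Fis}, $F(Q_1)\subset\ell_1:=\{y=x+1\}$ and $F(Q_3)\subset\{y=-x+1,\ x\ge0\}$, so one application of $F$ collapses these quadrants onto one-dimensional sets. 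On the remaining quadrants $F$ is an expanding rotation: identifying $(x,y)$ with $x+iy$, one has $F_2(z)=(-1+i)z+i$ and $F_4(z)=(1+i)z+i$, both with multiplier of modulus $\sqrt2$; the only fixed point of these branches that lies inside its own quadrant is $p\in Q_2$ (the fixed point $(-1,0)$ of $F_4$ lies outside $Q_4$). Since each such branch rotates by a fixed nonzero angle ($135^\circ$ for $F_2$, $45^\circ$ for $F_4$) while expanding, no orbit can remain inside its $90^\circ$ cone for more than a few steps, so every point of $Q_2\setminus\{p\}$ and of $Q_4$ reaches $Q_1\cup Q_3$ in a bounded number of steps and hence lands on the one-dimensional set after one further iterate. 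This reduces the whole problem to the dynamics of $F$ on a one-dimensional invariant object.

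Next I would build a compact invariant graph $\Gamma\supset\mathcal P\cup\mathcal Q$ by iterating the collapse lines and checking that their forward images stabilize into finitely many segments of slopes $0,\pm1$. The mechanism that forces orbits to become periodic in finite time, rather than merely asymptotically, is that $F$ is constant on several of these segments: for instance $F_1(x,x+1)=(-1,0)$ for every $x\ge0$, so the entire piece $\ell_1\cap Q_1$ collapses onto the point $(-1,0)\in\mathcal P$, irrespective of how far out it starts; this same collapse makes boundedness of orbits automatic once they reach $\ell_1\cap Q_1$. Tracking how the horizontal and diagonal segments generated in this way map into one another, I would verify $F(\Gamma)\subseteq\Gamma$ and that along $\Gamma$ the orbit $\mathcal P$ is attracting while $\mathcal Q$ is repelling.

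The final step is to show that $F|_\Gamma$ carries every point into $\mathcal P\cup\mathcal Q$ in finitely many iterates. Introducing a Markov-type partition of $\Gamma$ and using the collapse segments as in Remark~\ref{colapses}, I would follow itineraries and argue that the orbit of every point of $\Gamma$ reaches, after finitely many steps, one of the collapse segments, and hence the attracting orbit $\mathcal P$, unless the point belongs to the backward orbit of $\mathcal Q$, in which case it reaches $\mathcal Q$ itself; a short computation solving $F_i(x,y)=p$ in each quadrant then shows that $p$ has no preimage other than itself, so $p$ is the only exceptional point. I expect the main obstacle to be the explicit construction of $\Gamma$ together with the verification that the number of iterates needed to reach it, and then to reach $\mathcal P\cup\mathcal Q$ along it, is finite: this is delicate bookkeeping of how the finitely many segments permute and collapse under $F$, where it is easy to miss a segment or a boundary case (points on the axes, where two branches agree), although the $x$-independent collapse $F_1(x,x+1)=(-1,0)$ makes the otherwise worrying possibility of escape to infinity disappear.
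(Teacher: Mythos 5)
Your proposal is correct in substance and reaches the same conclusion, but it routes the argument through heavier machinery than the paper does. The paper disposes of $Q_3\cup Q_4$ by checking directly that either $F$ or $F^2$ sends such a point into $Q_1$, uses the expanding-focus argument only for $Q_2$, and then simply computes $F^3(Q_1)=K$, a polygonal with four pieces $A_1,A_2,B,C$ on which $F^3(A_1)=(1,0)$, $F^3(B)=(1,2)$, $F^3(A_2)=F^3(C)=A_1\cup A_2\cup B$ (Figure~\ref{dinK}), and reuses verbatim the argument of Proposition~\ref{aiguala1}$(ii)$. You instead propose the rotation argument also for $Q_4$ (fine, though the escape time from $Q_2$ is \emph{not} uniformly bounded near $p$, only finite) and then the construction of an invariant graph with a Markov partition, which is the apparatus the paper reserves for $a<0$; this works but costs more bookkeeping, and your claim that the graph is compact needs care, since $F(Q_1)$ is the whole line $y=x+1$ and its forward images keep producing unbounded rays (the paper's $K$ itself contains the ray $\{(x,2):x\ge 2\}$) — compactness only holds for the eventual core onto which these rays collapse. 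The one step in your plan that carries the real content, and that you leave as ``follow itineraries and argue,'' is why the set of points of the graph that \emph{never} reach a collapse segment is exactly the backward orbit of $\mathcal{Q}$ rather than an uncountable survivor set: for a general expanding Markov structure that set could be a Cantor set and the finite-time conclusion would fail. What saves the day here, and what the paper's computation makes explicit, is that the only interval returning to itself does so through a single affine expanding branch ($F^3|_{A_2}$ has slope $4$ and $F^3(A_2)=A_1\cup A_2\cup B$), so the maximal invariant subset of $A_2$ is the single repelling fixed point $(1/3,2/3)\in\mathcal{Q}$ and every other point exits into $A_1\cup B$, which collapse onto $\mathcal{P}$ in one more application of $F^3$. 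Make that single-branch-return observation explicit and your outline closes.
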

	\begin{proof}	We divide the study in three cases:

		$(a)$ It can be seen that $F^3(Q_1)\subset Q_1$ and
			$F^3(Q_1)$ is the polygonal that joints the point $(1,0)$
			with $(0,1),$ the point $(0,1)$ with $(1,2)$ and ends with
			the horizontal line $(x,2), x\ge 2.$ We denote by $K$ this
			polygonal, see Figure \ref{dinK} $(a).$ Let $A_1,A_2,B$ and $C$  be the segments described in Figure \ref{dinK} $(a).$ Then, 
			$F^3(A_1)=(1,0),\,\,F^3(A_2)=A_1\cup A_2\cup B,$ 
			$F^3(B)=(1,2)$ and $F^3(C)=A_1\cup A_2\cup B,$ see Figure \ref{dinK} $(b).$ 
			\begin{figure}[H]
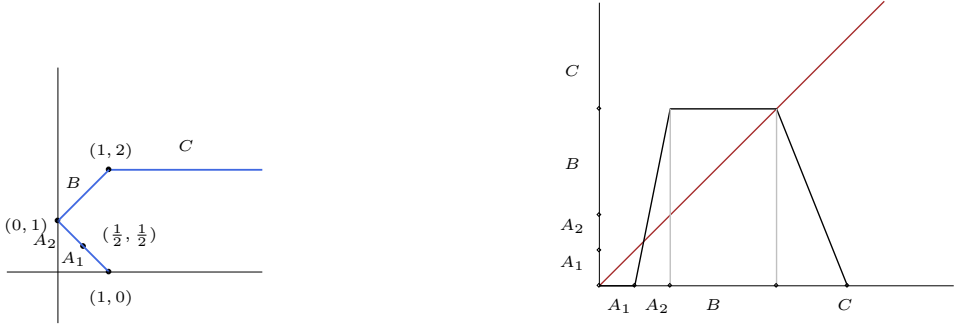

				\centering
				\begin{subfigure}{0.55\textwidth}
					
					\begin{lpic}[l(2mm),r(2mm),t(2mm),b(2mm)]{Pol2(0.25)}
						
						\lbl[t]{85,20; {\tiny $(1,0)$}}
						\lbl[t]{95,55; {\tiny $(\frac{1}{2},\frac {1}{2})$}}
						\lbl[t]{65,40; {\tiny $  {A_1}$}}
						\lbl[t]{50,50; {\tiny $  {A_2}$}}
						\lbl[b]{40,50; {\tiny $(0,1)$}}
						\lbl[t]{65,80; {\tiny $  {B}$}}
						\lbl[b]{85,90; {\tiny $(1,2)$}}
						\lbl[t]{125,100; {\tiny $  {C}$}}
					\end{lpic}
				\end{subfigure} 
				\begin{subfigure}{0.40\textwidth}
					
					\begin{lpic}[l(2mm),r(2mm),t(2mm),b(2mm)]{GF2(0.25)}
						\lbl[t]{15,15; {\tiny $  {A_1}$}}
						\lbl[t]{35,15; {\tiny $  {A_2}$}}
						\lbl[t]{65,15; {\tiny $  {B}$}}
						\lbl[t]{135,15; {\tiny $  {C}$}}
						\lbl[t]{-10,38; {\tiny $  {A_1}$}}
						\lbl[t]{-10,58; {\tiny $  {A_2}$}}
						\lbl[t]{-10,90; {\tiny $  {B}$}}
						\lbl[t]{-10,140; {\tiny $  {C}$}}
					\end{lpic}
				\end{subfigure}
				
				\caption{Dynamics of $F^3$ on $Q_1.$ $(a)$ The polygonal $K=F^3(Q_1)$, left. $(b)$ Graphic of $F^3$ restricted to $K$, right.}\label{dinK}
				\end{figure}
			
Now the result follows by using the same arguments as the ones used in the proof of  $(ii)$ of Proposition \ref{aiguala1}.

		$(b)$  If $(x,y)\in Q_3\cup Q_4$ then either $F(x,y)\in Q_1$ or $F^2(x,y)\in Q_1$, hence the result follows from~$(a).$
			
		$(c)$  The affine map restricted to $Q_2$ has complex
			eigenvalues with modulus $\sqrt {2}.$ Again, the result follows by the same argument given in the proof of case~$(c)$ of Proposition~\ref{aiguala1}.
	\end{proof}

\begin{propo}\label{azerobmenysu}
	Assume that	 $a=0$ and $b=-1.$ Then $(1,0)\in Q_1$ is the fixed point of $F,$ $F^4(\R^2)\subset Q_1$ and for all $(x,y)\in\R^2,$ $F^6(x,y)=(1,0).$
\end{propo}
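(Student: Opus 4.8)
The plan is to mimic exactly the case-by-case strategy used for Proposition~\ref{azerobu} and part~$(i)$ of Proposition~\ref{aiguala1}, but now tracking how the quadrants are visited until everything lands on the fixed point. First I would check directly that $p=(1,0)$ is indeed fixed: since $p\in Q_1$, we use $F_1(x,y)=(x-y+a,x-y+b)$ with $a=0,b=-1$, giving $F_1(1,0)=(1-0+0,1-0-1)=(1,0)$, so $F(p)=p$. Then the real content is to show that after at most four iterates every point of $\R^2$ enters $Q_1$ and stays there, and that once inside $Q_1$ the map collapses everything to $p$ in two more steps.

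The key structural fact I would exploit is the degeneracy of $F_1$ noted in Section~3: on $Q_1$ the image is contained in the line $y=x+b-a=x-1$. In fact $F_1(x,y)=(x-y+0,\,x-y-1)$, so writing $u=x-y$ the image is $(u,u-1)$, which lies on the line $y=x-1$; one more application of $F$ (after checking which quadrant $(u,u-1)$ lies in) should send this whole line to the single point $p$. So the heart of the $Q_1$ analysis is: $F(Q_1)\subset\{y=x-1\}$, and then $F$ of that line equals $\{(1,0)\}$, giving $F^2(Q_1)=\{p\}$. This is the analogue of the ``collapse to a point'' phenomenon, and it is cleaner here than in Proposition~\ref{aiguala1} because no nontrivial periodic orbits survive when $b<0$.

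The remaining work is the routing lemma: showing $F^4(\R^2)\subset Q_1$. I would split $\R^2$ into the four quadrants and, for each of $Q_2,Q_3,Q_4$, compute the explicit affine images $F_2,F_3,F_4$ from~\eqref{e:Fis} with $a=0,b=-1$ and determine into which quadrant (or union of quadrants) each is sent, iterating until reaching $Q_1$. This is the same bookkeeping carried out in the proof of Proposition~\ref{aiguala1}$(i)$, where bullet lists of the form ``$F^k(Q_i)\subset Q_1$'' are assembled; here the claim is that at most four iterates suffice for every starting quadrant. Combining the routing bound $F^4(\R^2)\subset Q_1$ with the two-step collapse $F^2(Q_1)=\{p\}$ yields $F^6(\R^2)=\{p\}$, which is the assertion.

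I expect the main obstacle to be the routing analysis for $Q_2$, exactly as in the earlier propositions: the restriction $F_2$ has full rank with complex eigenvalues of modulus $\sqrt2$ (so it is expanding and does not immediately collapse), meaning a point of $Q_2$ may need several iterates before it is pushed into one of the degenerate quadrants. The care required is to verify that this escape from $Q_2$ happens within the claimed number of steps uniformly over all of $Q_2$, rather than merely eventually; one must check that $F(Q_2)$, and if necessary its further images, are contained in controlled unions of quadrants (e.g.\ $F(Q_2)\subset Q_3\cup Q_4$ or similar), so that the route into $Q_1$ is completed in time for the overall bound $F^4(\R^2)\subset Q_1$ to hold. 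The boundary cases, where images land on the coordinate axes shared between adjacent quadrants, also need a brief consistency check, but since $F$ is continuous and the two affine formulas agree on the shared axis this causes no genuine difficulty.
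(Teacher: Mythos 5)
Your skeleton (fixed-point check, routing into $Q_1$, then a final collapse) is in the spirit of the paper's quadrant-by-quadrant bookkeeping, and two of your ingredients are correct: $F(1,0)=(1,0)$ and $F(Q_1)\subset\{y=x-1\}$. But the step you call the heart of the $Q_1$ analysis is false: $F$ does \emph{not} send the whole line $y=x-1$ to $p$, so $F^2(Q_1)\neq\{p\}$. Writing the points of the line as $(u,u-1)$, only the part inside $Q_1$, i.e.\ $u\ge 1$, collapses: there $F_1(u,u-1)=(1,0)$. For $0\le u<1$ the point lies in $Q_4$ and $F_4(u,u-1)=(1,2u-2)\neq p$, while for $u<0$ it lies in $Q_3$ and $F_3(u,u-1)=(1-2u,2u-2)\in Q_4$. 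Hence $F^2(Q_1)$ contains, besides $p$, a whole segment and a ray inside $Q_4$. Indeed a point of $Q_1$ with $x<y$ needs all six iterates to reach $p$; for example $(1,2)\mapsto(-1,-2)\mapsto(3,-4)\mapsto(7,-2)\mapsto(9,4)\mapsto(5,4)\mapsto(1,0)$, so $F^5(1,2)\neq p$.

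Because of this, your final assembly (``$F^4(\R^2)\subset Q_1$ plus two-step collapse of $Q_1$'') combines the proposition's claim with a false statement, and the conclusion does not follow. What is actually needed is a \emph{stronger} routing fact: $F^4(\R^2)$ must land not merely in $Q_1$ but in the sub-region $\{(x,y)\in Q_1:\ x-y\ge 1\}$, which is exactly the set of points of $Q_1$ satisfying $F^2(x,y)=p$. Verifying that is precisely the six-step orbit tracking through all quadrants that the paper's (terse) proof performs, and the delicate part is not $Q_2$ but $Q_4$: on $Q_2$ the second coordinate of $F_2$ is $x-y-1\le-1<0$, so $F(Q_2)\subset Q_3\cup Q_4$ in a single step, whereas on $Q_4$ the map $F_4$ has eigenvalues $1\pm\mathrm{i}$ and points spiral around $(1,0)$, possibly remaining in $Q_4$ for several iterates (as in the orbit above). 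So the case analysis cannot be shortcut by a global two-step collapse of $Q_1$; you must track the orbits until they enter the region $\{x-y\ge1\}\cap Q_1$ and only then invoke the collapse.
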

	\begin{proof}
		Following the orbits of the points in each one of the quadrants $Q_i$ separately, we see that $F^4(Q_i)\subset Q_1$ and $F^6(Q_i)=\{(1,0)\}$ for $i=1,2,3,4.$ 
	\end{proof}

	\begin{propo}\label{azerobzero}
		Assume that	 $a=0$ and $b=0.$ Then $(0,0)$ is the fixed point of $F$ and $F^5(\R^2)=(0,0).$ More precisely:
		\begin{itemize}
			\item [(i)] If $(x,y)\in (Q_1\cup Q_2)\setminus \{y=0\}$ then $F^5(x,y)=(0,0).$
				\item [(ii)] If $(x,y)\in (Q_3\cup Q_4)\setminus \{y=0\}$ then $F^4(x,y)=(0,0).$
					\item [(iii)] For all $x>0, F^2(x,0)=(0,0).$
					\item [(iv)] For all $x>0, F^4(-x,0)=(0,0).$
			
			\end{itemize}
			\end{propo}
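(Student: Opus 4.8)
The plan is to specialize the branch formulas in \eqref{e:Fis} to $a=b=0$ and to follow the orbit of an arbitrary point explicitly, exploiting the rank-one degeneracy of $F_1$ and $F_3$. First I would record the elementary facts: $F(0,0)=(0,0)$, so the origin is fixed, and with $a=b=0$ one has $F_1(x,y)=(x-y,x-y)$ and $F_3(x,y)=(-x-y,x+y)$. Thus $F$ collapses $Q_1$ onto the diagonal $\{(s,s):s\in\R\}$ and $Q_3$ onto the ray $\{(u,-u):u\ge 0\}\subset Q_4$, and these one-dimensional images are exactly what force orbits to terminate after finitely many steps. The whole argument then funnels points through the half-axis $\{y=0,\,x>0\}$, so I would prove the statements in the order (iii), (ii), (iv), (i), using each to shorten the next.

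Statement (iii) is the base case: for $x>0$ the point $(x,0)$ lies on the common boundary of $Q_1$ and $Q_4$, $F(x,0)=(x,x)\in Q_1$, and hence $F^2(x,0)=F(x,x)=(0,0)$. For (ii), a point of $Q_3\setminus\{(0,0)\}$ is sent by $F_3$ to $(u,-u)$ with $u=-(x+y)>0$, which $F_4$ maps to $(2u,0)$ on the positive $x$-axis, so (iii) gives $F^4=(0,0)$; for a point of $Q_4$ with $y<0$ I would split on the sign of $x+y$, checking that in each sub-case the next one or two iterates reach a diagonal point $(s,s)$ with $s>0$, which dies at the following step, so that $F^3=(0,0)$ when $x+y\ge 0$ and $F^4=(0,0)$ when $x+y<0$. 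Statement (iv) is then immediate, since $F(-x,0)=(x,-x)\in Q_4$ and $F^2(-x,0)=(2x,0)$, to which (iii) applies.

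Finally, for (i) on $Q_1$ with $y>0$ I set $s=x-y$ and note $F(x,y)=(s,s)$: if $s\ge 0$ then $F^2=(0,0)$, while if $s<0$ the diagonal point sits in $Q_3$ and runs through $(-2s,2s)\in Q_4$ and $(-4s,0)$ on the positive axis before (iii) yields $F^5=(0,0)$. On $Q_2$ with $y>0$ I again split on the sign of $x+y$: if $x+y>0$ then $F(x,y)=(-(x+y),x-y)\in Q_3$ with nonzero second coordinate, so (ii) gives $F^5=(0,0)$, whereas if $x+y\le 0$ then $F^2(x,y)=(-2x,-2y)\in Q_4$ with nonnegative coordinate-sum, so the $Q_4$ sub-case already treated gives $F^3=(0,0)$ for this point and hence $F^5(x,y)=(0,0)$. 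Together with the fixed point, the four statements exhaust $\R^2$ and give $F^5(\R^2)=(0,0)$.

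I expect the main obstacle to be organizational rather than conceptual: there is no single change of variables that linearizes the map, so the proof is a disciplined case analysis over the four quadrants and the two half-axes, with sub-cases keyed to the signs of the linear forms $x-y$ and $x+y$. The two delicate points are keeping the chain of reductions acyclic — establishing (iii), then (ii), and only afterwards invoking them in (iv) and (i) — and verifying at every step that the iterate just computed really lies in the quadrant whose affine branch one intends to apply next, since a misplacement there would change both the branch used and the count of iterates needed to reach the origin.
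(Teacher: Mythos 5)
Your proposal is correct and follows essentially the same route as the paper: the paper's proof is exactly an orbit-following argument through the quadrants (it checks $(i)$ and $(ii)$ by tracking each $Q_i$ separately and writes out the explicit orbits $(x,0)\to(x,x)\to(0,0)$ and $(-x,0)\to(x,-x)\to(2x,0)\to(2x,2x)\to(0,0)$ for $(iii)$ and $(iv)$), and your case analysis on the signs of $x-y$ and $x+y$ simply makes explicit the quadrant-by-quadrant verification the paper leaves to the reader. All of your intermediate iterates and quadrant memberships check out.
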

		\begin{proof}
			Following the orbits of the points in each one of the quadrants $Q_i$ separately, we easily check that $(i)$ and $(ii)$ are satisfied. To prove $(iii)$  simply notice that  for all $x>0,$ $(x,0)\rightarrow (x,x) \rightarrow (0,0).$ Similarly, to prove  $(iv)$ observe that it holds that $(-x,0)\rightarrow (x,-x) \rightarrow (2x,0)\rightarrow (2x,2x)\rightarrow (0,0).$
			\end{proof}

\begin{proof}[Proof of Theorem A]
It follows from Propositions \ref{aiguala1} to \ref{azerobzero}.
\end{proof}

\section{The case $a<0$ (I). Proofs of  Theorems \ref{t:teoB} and \ref{t:teoC}}\label{sec:4}

By using once more \eqref{conj}, when $a<0$ we know that it is not restrictive to assume in all the section that $a=-1.$

\subsection{Proof of Theorem \ref{t:teoB}}

 By using the expressions of $F_i$ for $i=1,2,3,4$ given in \eqref{e:Fis} it can be easily seen that $F_1(Q_1)$ reduces to the straight line $y=x+b+1$ while $F_3(Q_3)$ reduces to $y=-x+b-1$ for $x\geq -1.$ See Figure \ref{f:imatgesQ13}.

	\begin{figure}[H]
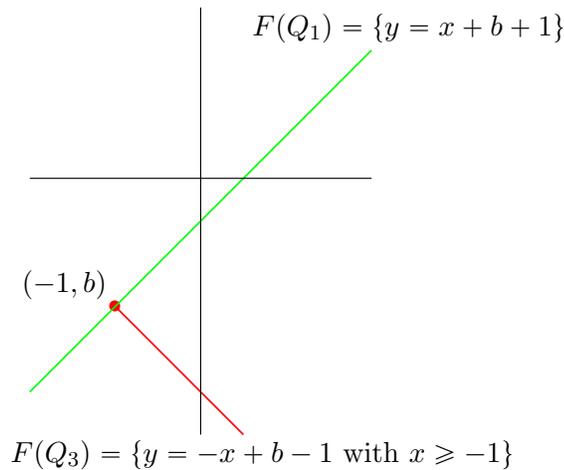

		\centering
		\begin{lpic}[l(2mm),r(2mm),t(2mm),b(2mm)]{bmespetitque-1(0.30),draft}
			\lbl[r]{56,70; $(-1,b)$}
			\lbl[t]{190,190; $F(Q_1)=\{y=x+b+1\}$}
			\lbl[b]{125,-12; $F(Q_3)=\{y=-x+b-1 \mbox{ with } x\geq -1\}$}

		\end{lpic}
		\caption{Images of $F(Q_1)$ and $F(Q_3)$ (in the figure $b<0$).}\label{f:imatgesQ13}
	\end{figure}

 First we will see that except for certain fixed points and a 3-periodic orbit (that only exists for certain values of $b$), the rest of the dynamics is captured by the images of the line and the half line mentioned above. Notice that the fixed point, 
$(-(b+2)/5,(2b-1)/5)$, of the affine map
 $F_2$ has $-1\pm \rm{i}$ as eigenvalues of its linear part  and hence it is an unstable focus. This implies that the positive orbit by $F$ of any point in $Q_2$ different from the fixed point of $F_2$ can not be entirely contained in $Q_2.$ 
 Something similar happens in $Q_4$: the fixed point of $F_4$, $(-b,-1)$, has associated eigenvalues $1\pm \rm{i}$, hence the positive orbit by $F$ of any point in $Q_4$ different of this fixed point  can not be entirely contained in $Q_4.$

\begin{propo}\label{primera reduccioamenys1} For $a=-1$ and for all $b\in\R,$ the orbit of every point in $Q_2\cup Q_4$ meets $Q_1\cup Q_3$ except: the fixed point of $F$ in $Q_2$ (when $b>1/2$);  the fixed points of $F$ in $Q_4,$ (when $b<0$); and a three periodic orbit located at $Q_2\cup Q_4$ (when $3/4<b<2$).

\end{propo}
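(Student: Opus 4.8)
The plan is to split orbits confined to $Q_2\cup Q_4$ according to the quadrants they visit, and to exploit the fact, already recorded before the statement, that the linear parts of $F_2$ and $F_4$ in \eqref{e:Fis} are expanding spirals: the eigenvalues are $-1\pm\mathrm{i}=\sqrt2\,e^{\pm 3\pi\mathrm{i}/4}$ for $F_2$ and $1\pm\mathrm{i}=\sqrt2\,e^{\pm\pi\mathrm{i}/4}$ for $F_4$, both of modulus $\sqrt2$, with $\det L_2=\det L_4=2$. I write $p_2=(-(b+2)/5,(2b-1)/5)$ and $p_4=(-b,-1)$ for the two foci and $L_2,L_4$ for the linear parts.

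First I would dispose of orbits that are eventually confined to a single quadrant. If the forward orbit stays in $Q_2$ from some iterate on, then from that iterate it equals $p_2+w_n$ with $w_n=L_2^n w_0$, so $|w_n|=(\sqrt2)^n|w_0|\to\infty$ while the argument of $w_n$ advances by $3\pi/4$ at each step. Since the recession cone of $Q_2$ has angular width $\pi/2<3\pi/4$, two consecutive displacements cannot both point into that cone; hence for $|w_n|$ large the point $p_2+w_n$ leaves $Q_2$ unless $w_0=0$. Thus the only orbit trapped in $Q_2$ alone is $p_2$, which lies in the open quadrant exactly when $b>1/2$. The identical argument for $F_4$ and $p_4$ gives the second exception, present precisely when $b<0$.

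The substantive case is orbits that switch between $Q_2$ and $Q_4$ forever without meeting $Q_1\cup Q_3$. Here I would partition $Q_2$ (and symmetrically $Q_4$) according to the quadrant into which $F_2$ (resp.\ $F_4$) sends each subregion; the subregions mapped into $Q_1\cup Q_3$ are immediately done, and the remaining \emph{switching set} carries the only potentially trapped orbits. On this set every iterate is a composition of $F_2$'s and $F_4$'s, hence affine with expanding linear part (determinant a power of $2$, complex eigenvalues of modulus a power of $\sqrt2$). Since $F$ doubles area on each of $Q_2,Q_4$, the trapped set is already negligible in measure; the decisive point, however, is that the spiral expansion makes the distance to any periodic orbit grow while the argument keeps rotating, so an orbit confined to the union of two \emph{opposite} quadrants must eventually be pushed across a coordinate axis into $Q_1\cup Q_3$. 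Consequently the only trapped orbits are periodic, and I would identify them by the itinerary pattern in $\{2,4\}$. Strict alternation $(2,4)$ leads, on solving $F_4\circ F_2(\mathbf{x})=\mathbf{x}$, to the candidate $Q_4$-point $(-b/3,-1/3)$, which requires both $b\le 0$ and $b\ge 1/2$, so it is impossible. For period three the only patterns up to rotation are $(2,2,4)$ and $(2,4,4)$; the first, obtained from $F_4\circ F_2^2(\mathbf{x})=\mathbf{x}$, yields the orbit
\[
\left\{\left(\tfrac{b-2}{5},\tfrac{2b+1}{5}\right),\ \left(\tfrac{-3b-4}{5},\tfrac{4b-3}{5}\right),\ \left(\tfrac{-b+2}{5},\tfrac{-2b-1}{5}\right)\right\},
\]
and imposing that the first two points lie in the open $Q_2$ and the third in the open $Q_4$ gives exactly $3/4<b<2$, while the pattern $(2,4,4)$ forces incompatible signs and produces nothing. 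This furnishes the third exception and, together with the spiral argument excluding all higher periods and aperiodic itineraries, completes the list.

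The main obstacle is precisely this second step: ruling out aperiodic (and higher-period) orbits trapped in the \emph{non-convex} set $Q_2\cup Q_4$. The area-doubling observation only yields a measure-zero trapped set, so the expanding-spiral/rotation mechanism must be turned into a quantitative statement that controls the small-radius transient of each orbit and correctly handles the alternation between the two quadrants. In practice I expect this to be carried out exactly through the finite transition graph of the partition above, by checking that every admissible switching sequence is forced into $Q_1\cup Q_3$ after boundedly many steps unless it lands on the $(2,2,4)$ periodic orbit.
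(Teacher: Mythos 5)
Your overall architecture (single-quadrant trapping handled by the unstable-focus argument, then an itinerary analysis of orbits that alternate between $Q_2$ and $Q_4$, with expansivity of composed affine maps used to reduce trapped orbits to fixed points of return maps) is the same as the paper's, and your identification of the exceptional objects is correct: the fixed points, and the $3$-periodic orbit with the exact range $3/4<b<2$ coming from the open-quadrant conditions. However, there is a genuine gap, and it is the one you yourself flag at the end: you never prove that an orbit trapped forever in $Q_2\cup Q_4$ must follow one of your short periodic itineraries. The paper's proof consists almost entirely of this missing step. It introduces the bounded set $K=\{(x,y)\in Q_4:\ F(x,y)\in Q_2\}$, splits the parameter line into five regimes ($b\le -1$, $-1<b\le 0$, $0<b\le 1/2$, $1/2<b\le 1$, $b>1$), and in each regime explicitly iterates the polygon $K\to K_1\to K_2\to\cdots$, showing that after a bounded number of steps every point has entered $Q_1\cup Q_3$ \emph{unless} it returns to $K$, in which case its itinerary is forced to be one specific periodic word. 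Only then does expansivity enter: the composed affine map along the forced word has a unique fixed point, and one checks whether that fixed point actually realizes the word. That polygon-chasing is what forces the itineraries; nothing in your proposal replaces it.

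Moreover, the mechanism you propose to fill the gap cannot work as stated, for two concrete reasons. First, your parenthetical claim that every composition of $F_2$'s and $F_4$'s has ``complex eigenvalues of modulus a power of $\sqrt2$'' is false: the rotations by $3\pi/4$ and $\pi/4$ can cancel, and indeed $F_4\circ F_2\circ F_2\circ F_4(x,y)=(4x+b-2,\,4y+2b+1)$ has linear part $4\,\mathrm{Id}$ --- real eigenvalues and no rotation at all --- so any argument resting on persistent spiralling of the composed dynamics breaks down precisely where it is needed. Second, your list of admissible patterns, $(2,4)$, $(2,2,4)$, $(2,4,4)$, is too short: in the regime $1/4<b\le 1/2$ the forced itinerary is $(4224)^{\infty}$, a period-four word with two consecutive visits to each quadrant, which is exactly the word whose return map is the rotation-free map above. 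The paper eliminates it not by any rotation argument but by expansivity plus the explicit check that the fixed point $r=\bigl((2-b)/3,\,-(2b+1)/3\bigr)$ satisfies $F_4(r)=(b/3,1/3)\in Q_1$, hence fails to follow the word. Until you carry out the itinerary-forcing computation (including at least this period-four case) or find a substitute for it, the statement is not proved.
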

\begin{proof}
 We are going to characterize the points in $Q_4$ such that its orbit never meets $Q_1\cup Q_3.$ 

Since the points in $Q_4$ can not be always in $Q_4$ (see the argument above), it is enough to prove the result for the points in $Q_4$ such that its image by $F=F_4$ is in $Q_2.$ Let us denote this set by
$
K:=\{(x,y)\in Q_4\mbox{ such that } F(x,y)\in Q_2\}.$
Observe that 
\begin{equation}\label{DefiniciodeK} 
K=\{(x,y)\mbox{ such that } x\geq 0,\, y\leq 0;\,x-y-1\leq 0;\, x+y+b\geq 0 \}.
\end{equation}

We will consider 5 different cases according the values of $b.$ 

\smallskip

$(i)$ Assume first that $b\leq -1.$ 
From the inequalities defining $K$ in \eqref{DefiniciodeK} we get
$ -(b+1)/2\leq b\leq 0$. So if $b<-1$ we get a contradiction and $K$ is empty.  If $b=-1$, then $K$ reduces to the point $(1,0)$ whose second iterate is in $Q_3$. In both cases, since the points in $Q_4$ can not be always in $Q_4,$ their orbits have to reach $Q_1$ or $Q_3,$ as desired.

	\begin{figure}[H]
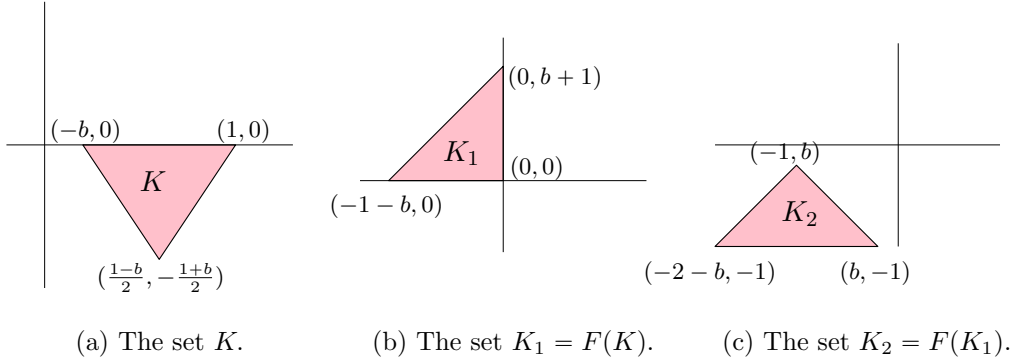

		\centering
		\begin{subfigure}{0.30\textwidth}
			\begin{lpic}[l(2mm),r(2mm),t(2mm),b(2mm)]{seq11(0.20)}
				\lbl[t]{100,80; ${K}$}
				\lbl[t]{55,115; {\footnotesize $(-b,0)$}}\lbl[t]{160,115; {\footnotesize $(1,0)$}}\lbl[b]{105,0; {\footnotesize $(\frac{1-b}{2},-\frac{1+b}{2})$}}
			\end{lpic}
				\caption{The set $K.$}
		\end{subfigure} 
	\begin{subfigure}{0.30\textwidth}
				\begin{lpic}[l(2mm),r(2mm),t(2mm),b(2mm)]{seq12(0.20)}
				\lbl[t]{70,100; {${K_1}$}}
				\lbl[t]{20,65; {\footnotesize $(-1-b,0)$}}
				\lbl[t]{120,90; {\footnotesize $(0,0)$}}
				\lbl[t]{132,150; {\footnotesize $(0,b+1)$}}
			\end{lpic}
			\caption{The set $K_1=F(K).$}
		\end{subfigure}
\begin{subfigure}{0.30\textwidth}
				\begin{lpic}[l(2mm),r(2mm),t(2mm),b(2mm)]{seq13(0.20)}
			\lbl[t]{60,60; {${K_2}$}}
				\lbl[t]{50,100; {\footnotesize $(-1,b)$}}
			\lbl[t]{110,20; {\footnotesize $(b,-1)$}}
			\lbl[t]{0,20; {\footnotesize $(-2-b,-1)$}}
		\end{lpic}
		\caption{The set $K_2=F(K_1).$}
	\end{subfigure}       
		\caption{The evolution of $K$ under the action of $F$ when $-1<b\leq 0.$}\label{f:evolK}
		\end{figure}
	
\smallskip

$(ii)$  Assume next that $-1<b\leq 0.$ From the inequalities \eqref{DefiniciodeK} we easily obtain that $K$ is the triangle defined in Figure \ref{f:evolK}, where also are shown its two first iterates $K_1=F(K)$ and $K_2=F(K_1)$, obtaining that, after two iterates, every point in $K$ arrives to $Q_3,$ as we wanted to prove.

\begin{figure}[H]
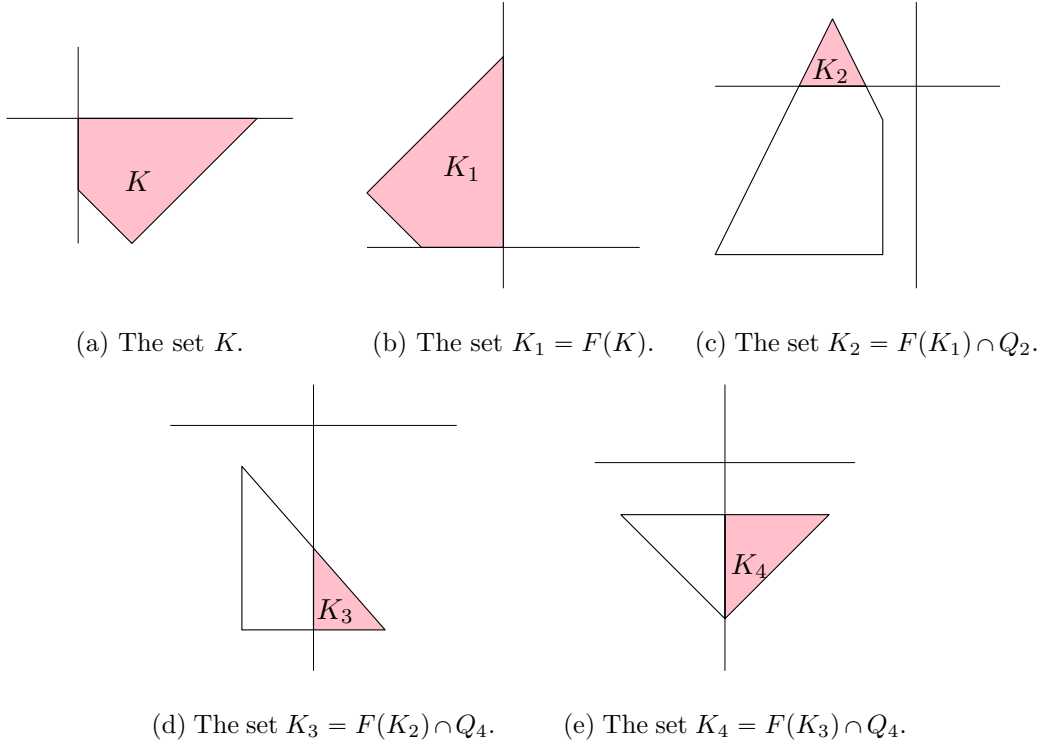

	\centering
	\begin{subfigure}{0.30\textwidth}
		
		\begin{lpic}[l(2mm),r(2mm),t(2mm),b(2mm)]{seq21(0.20)}
			\lbl[t]{90,80; {${K}$}}
		\end{lpic}
		\caption{The set $K.$}
	\end{subfigure} 
	\begin{subfigure}{0.30\textwidth}
		
		\begin{lpic}[l(2mm),r(2mm),t(2mm),b(2mm)]{seq22(0.20)}
			\lbl[t]{70,90; {${K_1}$}}
		\end{lpic}
		\caption{The set $K_1=F(K).$}
	\end{subfigure}
	\begin{subfigure}{0.30\textwidth}
		
		\begin{lpic}[l(2mm),r(2mm),t(2mm),b(2mm)]{seq23(0.20)}
			\lbl[t]{80,155; {${K_2}$}}
		\end{lpic}
		\caption{The set $K_2=F(K_1)\cap Q_2.$}
	\end{subfigure}       
	\newline
	\begin{subfigure}{0.30\textwidth}
		
		\begin{lpic}[l(2mm),r(2mm),t(2mm),b(2mm)]{seq24(0.20)}
			\lbl[t]{112,50; {${K_3}$}}
		\end{lpic}
		\caption{The set $K_3=F(K_2)\cap Q_4.$}
	\end{subfigure}
	\qquad
	\begin{subfigure}{0.30\textwidth}
		\begin{lpic}[l(2mm),r(2mm),t(2mm),b(2mm)]{seq25(0.20)}
			\lbl[t]{114,80; {${K_4}$}}
		\end{lpic}
		\caption{The set $K_4=F(K_3)\cap Q_4.$}
	\end{subfigure}
	\caption{The evolution of $K$ under the action of $F$ when $0<b\leq {1}/{2}.$ }\label{f:P13iii}
	
\end{figure}
	
\smallskip	

$(iii)$  Assume that $0<b<{1}/{2}$. The quadrilateral $K$ is the quadrilateral defined by its vertices
$
	K:=\left\langle (0,0), (0,-b), (1,0), \left({(1-b)}/{2},-{(b+1)}/{2}\right)\right\rangle ,
$
	 see Figure \ref{f:P13iii}. Then $K_1=F_4(K)\subset Q_2$ is the quadrilateral 
$K_1=\left\langle (0,0), (0,b+1), (-1,b), (b-1,0)\right\rangle.$ Since $F_2(K_1)\subset Q_2\cup Q_3$, we define $K_2=F_2(K_1)\cap Q_2$. It is the triangle 
$
K_2=\left\langle (-b-1,0), (-1,b), (b-1,0)\right\rangle.
$
Analogously, we compute the sets $K_3=F(K_2)\cap Q_4$ and $K_4=F(K_3)\cap Q_4$,	which are the triangles 
$
K_{3}=\left\langle (0,b-1), (0,-1), (b,-1)\right\rangle$ and
$K_{4}=\left\langle (0,2b-1), (0,b-1), (b,2b-1)\right\rangle.$
 The image of $K_4$ is a triangle, whose position depends on whether  $b\le{1}/{4}$ or $b>{1}/{4}$, see Figure~\ref{f:P13iiib}. In the first case, it is easy to see that $F_4(K_4)\subset Q_3$ and the propositions follows.

	In the second case, if ${1}/{4}<b\leq {1}/{3}$ we consider the triangle
\begin{equation}\label{e:K5nou}
K_5=F_4(K_4)\cap Q_2=\left\langle (-5b+1,0), (-b,0), (-b,4b-1)\right\rangle .
\end{equation}
If ${1}/{3}<b\leq {1}/{2}$ we consider the quadrilateral (that collapses to a triangle for $b=1/2$):
$$
K_5=F_4(K_4)\cap Q_2=\left\langle (b - 1, 0), (-b, 0), (-b, 4b - 1), (-2b, 3b-1)\right\rangle .
$$

	\begin{figure}[H]
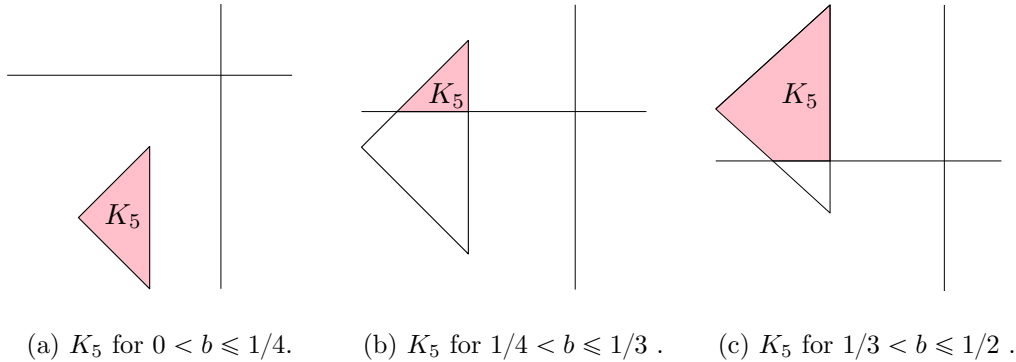

		\centering
		\begin{subfigure}{0.30\textwidth}
			
			\begin{lpic}[l(2mm),r(2mm),t(2mm),b(2mm)]{seq26(0.20)}
				\lbl[t]{80,60; {${K_5}$}}
			\end{lpic}
			\caption{$K_5$ for $0<b\leq {1}/{4}.$}
		\end{subfigure} 
		\begin{subfigure}{0.30\textwidth}
			
			\begin{lpic}[l(2mm),r(2mm),t(2mm),b(2mm)]{seq27(0.20)}
				\lbl[t]{60,140; {${K_5}$}}
			\end{lpic}
			\caption{$K_5$ for ${1}/{4}<b\leq {1}/{3}$ .}
		\end{subfigure}
	\begin{subfigure}{0.30\textwidth}
			
			\begin{lpic}[l(2mm),r(2mm),t(2mm),b(2mm)]{seq27bis(0.20)}
				\lbl[t]{60,140; {${K_5}$}}
			\end{lpic}
			\caption{$K_5$ for ${1}/{3}<b\leq {1}/{2}$ .}
		\end{subfigure}
		\caption{Relative position of $F(K_4)$ depending on $b.$}\label{f:P13iiib}
			\end{figure}

In these last two cases $K_5\subset K_1\subset Q_2.$ Hence, by definition of $K$,  this means that the points in $K_5$ come from points in $K_4$ that are also in $K$. 	This fact implies that if the orbit of a point in $K,$ during its five first iterates, never meets $Q_1\cup Q_3$ then $F^4(p)\in K_4$ and $F^5(p)\in K_5\subset K_1\subset Q_2.$ Therefore, 
its itinerary must be $(4224)^{\infty}$ (This notation means  that the point must repeat indefinitely a trajectory in $Q_4\to Q_2\to Q_2\to Q_4$). Consider
	$$  \widetilde{K}=\{p\in K: F^i(p)\notin Q_1\cup Q_3 \mbox{ for all } i\in \N\}.$$
	Then $  \widetilde{K}$ is a bounded set and the map $G:=F_4\circ F_2\circ  F_2\circ F_4$ leaves invariant the set $\widetilde{K}.$ But this  is a contradiction with the fact that $G$ is the expansive map
	$G(x,y)=(4x+b-2,4y+2b+1).$
	It only remains to avoid the possibility that the fixed point of G gives rise to a 4-periodic orbit. The fixed point of $G$ is $r=\left({(2-b)}/{3},{-(2b+1)}/{3}\right)\in Q_4,$ but $F_4\left(r\right)=\left({b}/{3},{1}/{3}\right)\in Q_1.$ Hence it does not follow the prescribed itinerary. In conclusion,  the set
	$  \widetilde{K}$ is empty.

\smallskip

		\begin{figure}[H]
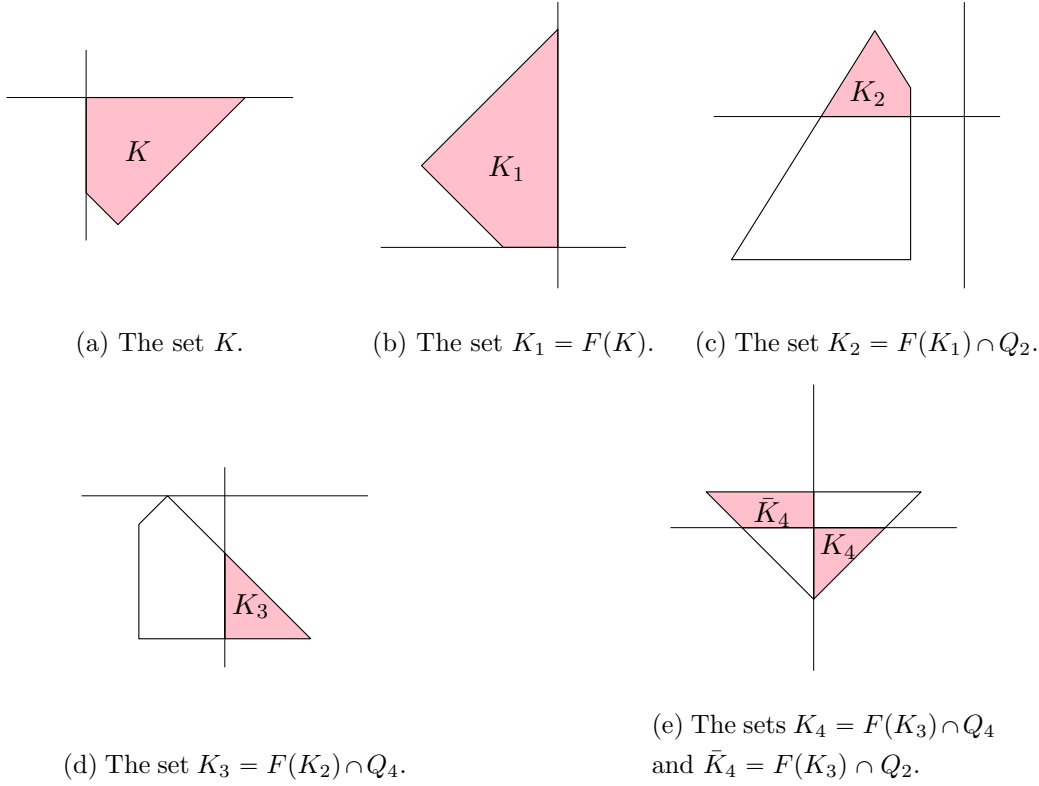

	\centering
	\begin{subfigure}{0.30\textwidth}
		
		\begin{lpic}[l(2mm),r(2mm),t(2mm),b(2mm)]{seq31(0.20)}
			\lbl[t]{90,100; {${K}$}}
		\end{lpic}
		\caption{The set $K.$}
	\end{subfigure} 
	\begin{subfigure}{0.30\textwidth}
		
		\begin{lpic}[l(2mm),r(2mm),t(2mm),b(2mm)]{seq32(0.20)}
			\lbl[t]{100,90; {${K_1}$}}
		\end{lpic}
		\caption{The set $K_1=F(K).$}
	\end{subfigure}
	\begin{subfigure}{0.30\textwidth}
		
		\begin{lpic}[l(2mm),r(2mm),t(2mm),b(2mm)]{seq33(0.20)}
			\lbl[t]{105,140; {${K_2}$}}
		\end{lpic}
		\caption{The set $K_2=F(K_1)\cap Q_2.$}
	\end{subfigure}       
	\newline
	\begin{subfigure}{0.30\textwidth}
		
		\begin{lpic}[l(2mm),r(2mm),t(2mm),b(2mm)]{seq34(0.20)}
			\lbl[t]{115,80; {${K_3}$}}
		\end{lpic}
		\caption{The set $K_3=F(K_2)\cap Q_4.$}
	\end{subfigure}
	\hspace{3cm}
	\begin{subfigure}{0.30\textwidth}
		
		\begin{lpic}[l(2mm),r(2mm),t(2mm),b(2mm)]{seq35(0.20)}
			\lbl[t]{114,93; {${K_4}$}}
			\lbl[t]{70,117; {${\bar{K}_4}$}}
		\end{lpic}
		\caption{The sets $K_4=F(K_3)\cap Q_4$ and $\bar{K}_4=F(K_3)\cap Q_2.$}
	\end{subfigure}
	\caption{The evolution of $K$ under the action of $F$ when ${1}/{2}<b<1.$}\label{f:P13iv}
	
\end{figure}

$(iv)$ Consider  now ${1}/{2}<b\leq 1.$ In this case, $K$ and $K_1$  are the quadrilaterals defined by the same vertices than in the preceding case $(iii).$	We will follow the same procedure as before, see Figure \ref{f:P13iv}. In this case, $K_2=F_2(K_1)\cap Q_2$, $K_3=F_2(K_2)\cap Q_4$, and $K_4=\bar{K}_4\cup K_4,$ where
$\bar{K}_4=F_4(K_3)\cap Q_2$ and $K_4=F_4(K_3)\cap Q_4$. 
In particular,
\begin{align*}
K_2:=&F_2(K_1)\cap Q_2=\left\langle (-b-1,0), (-b,0), (-b,2b-1), (-1,b)\right\rangle ,\\
K_3:=&F_2(K_2)\cap Q_4=\left\langle (0,b-1), (0,-1), (b,-1)\right\rangle ,\\
K_4:=&F_4(K_3)\cap Q_4=\left\langle (0,0), (0,b-1), (-b+1,0)\right\rangle ,\\
\bar{K}_4:=&F_4(K_3)\cap Q_2=\left\langle (0,0), (0,2b-1), (-b,2b-1), (b-1,0)\right\rangle .\\
\end{align*}

Let us follow first $K_4.$ We get that
$	K_5:=F_4(K_4)=\left\langle (-1,b), (-b,2b-1), (-b,1)\right\rangle$ and 
	$K_6:=F_2(K_5)=\left\langle (b-2,-1), (-b,-2b-1), (-b,-1) \right\rangle.$	
Since $K_6$ lies in the third quadrant, we are done in this case. See Figure \ref{f:P13ivb}.
	
	\begin{figure}[H]
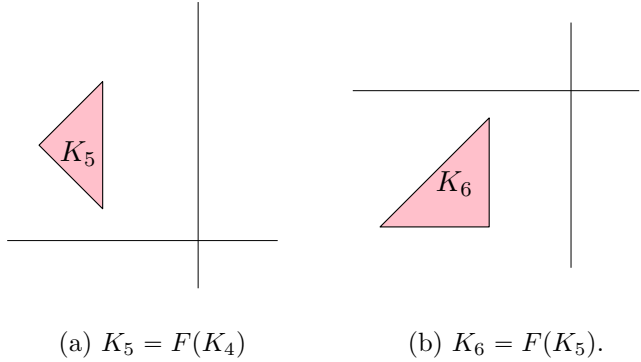

		\centering
		\begin{subfigure}{0.30\textwidth}
			
			\begin{lpic}[l(2mm),r(2mm),t(2mm),b(2mm)]{seq36(0.20)}
				\lbl[t]{55,100; {${K_5}$}}
			\end{lpic}
			\caption{$K_5=F(K_4).$}
		\end{subfigure} 
		\begin{subfigure}{0.30\textwidth}
			
			\begin{lpic}[l(2mm),r(2mm),t(2mm),b(2mm)]{seq37(0.20)}
				\lbl[t]{70,80; {${K_6}$}}
			\end{lpic}
			\caption{$K_6=F(K_5).$}
		\end{subfigure}
		
		\caption{The sets $K_5,K_6$ when ${1}/{2}<b\leq 1.$}\label{f:P13ivb}
	\end{figure}

	Regarding $\bar{K}_4$ we observe that the points of $K_3$ whose image is in $\bar{K}_4,$ are points in $Q_4$ whose image is in $Q_2$ and hence are points in $K.$ In consequence, if the orbit of a point in $K$ never meets $Q_1\cup Q_3,$ the itinerary of this point must be $(422)^{\infty}.$ The map $F_2\circ F_2\circ F_4$ is the expansive map:
	$F_2\circ F_2\circ F_4(x,y)=(2x+2y+b,-2x+2y+1)$
	and its fixed point is $r:=\left({(2-b)}/{5},-{(2b+1)}/{5}\right)\in Q_4.$ When $3/4\leq b \leq 1$ it gives rise to the $3$-periodic orbit of~$F$:
	\begin{equation}\label{orbita422}
		r\rightarrow \left(\frac{b-2}{5},\frac{2b+1}{5}\right)\rightarrow \left(\frac{-3b-4}{5},\frac{4b-3}{5}\right)=:s \rightarrow r.
	\end{equation}
	But when $1/2<b<3/4,$  $s\in Q_3$, so it does not follow the prescribed itinerary.
	
\smallskip	
	
$(v)$	 Finally, we take $b>1$. We follow the same procedure as above, see Figure \ref{f:P13v}. We get  
$
K:=\left\langle (0,0), (1,0), (0,-1)\right\rangle,$ $
K_1:=F_4(K)\cap Q_2=\left\langle (-1,b), (0,b-1), (0,b+1)\right\rangle$ and 
$K_2:=F_2(K_1)\cap Q_2=\left\langle (-b-1,0), (-b,0), (-b,1)\right\rangle.$	
	
		\begin{figure}[H]
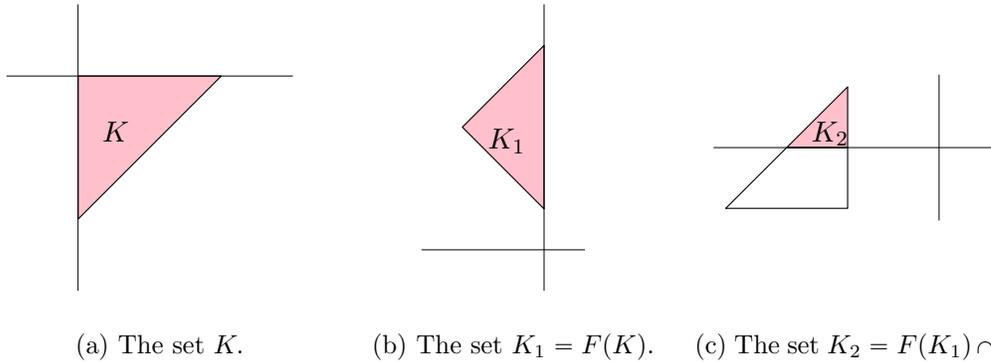

		\centering
		\begin{subfigure}{0.30\textwidth}
			
			\begin{lpic}[l(2mm),r(2mm),t(2mm),b(2mm)]{seq41(0.20)}
				\lbl[t]{75,115; ${K}$}
			\end{lpic}
			\caption{The set $K.$}
		\end{subfigure} 
		\begin{subfigure}{0.30\textwidth}
			
			\begin{lpic}[l(2mm),r(2mm),t(2mm),b(2mm)]{seq42(0.20)}
				\lbl[t]{100,110; {${K_1}$}}
			\end{lpic}
			\caption{The set $K_1=F(K).$}
		\end{subfigure}
		\begin{subfigure}{0.30\textwidth}
			
			\begin{lpic}[l(2mm),r(2mm),t(2mm),b(2mm)]{seq43(0.20)}
				\lbl[t]{80,115; {${K_2}$}}
			\end{lpic}
			\caption{The set $K_2=F(K_1)\cap Q_2.$}
		\end{subfigure}

		\caption{The evolution of $K$ under the action of $F$ when $b>1.$}\label{f:P13v}
	\end{figure}

	Now, we distinguish two cases: $b>2$ and $1<b<2$. When $b>2$, we have
	\begin{align*}
K_3:=&F_2(K_2)=\left\langle (b-1,0), (b-2,-1), (b,-1)\right\rangle \subset Q_4,\\
K_4:=&F_4(K_3)=\left\langle (b-2,2b-1),  (b-2,2b-3), (b,2b-1)\right\rangle \subset Q_1,
\end{align*}	
and, as we can see, the points in $K$ either after two iterates they are in  $Q_3$, or after four they are in $Q_1$, see Figure \ref{f:P13v2}.

	\begin{figure}[H]
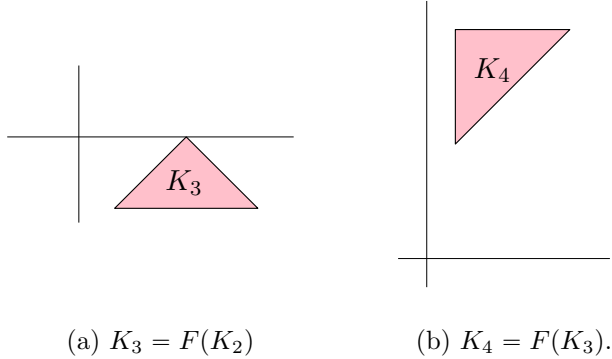

		\centering
		\begin{subfigure}{0.30\textwidth}
			
			\begin{lpic}[l(2mm),r(2mm),t(2mm),b(2mm)]{seq46(0.20)}
				\lbl[t]{120,80; {${K_3}$}}
			\end{lpic}
			\caption{$K_3=F(K_2).$}
		\end{subfigure} 
		\begin{subfigure}{0.30\textwidth}
			
			\begin{lpic}[l(2mm),r(2mm),t(2mm),b(2mm)]{seq47(0.20)}
				\lbl[t]{90,155; {${K_4}$}}
			\end{lpic}
			\caption{$K_4=F(K_3).$}
		\end{subfigure}
		
		\caption{The sets $K_3$ and $K_4$ when $b>2.$}\label{f:P13v2}
	\end{figure}

When $1<b<2$, we obtain:
	\begin{align*}
K_3:=&F_2(K_2)\cap Q_4=\left\langle (0,-b+1), (0,-1), (b,-1), (b-1,0)\right\rangle,\\
K_4:=&F_4(K_3)\cap Q_2=\left\langle  (0,1), (b-2,1), (b-2,2b-1), (0,b-1) \right\rangle,
\end{align*}	
see Figure~\ref{f:P13v3}.
	\begin{figure}[H]
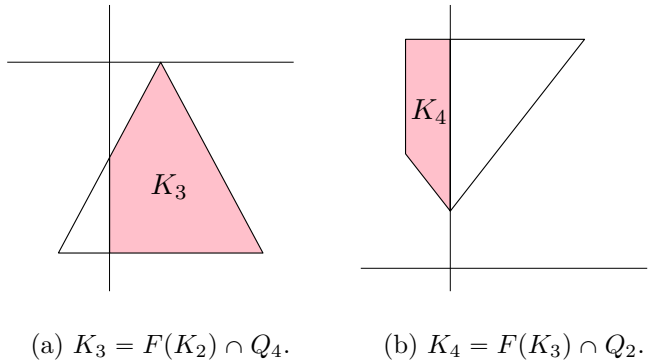

		\centering
		\begin{subfigure}{0.30\textwidth}
			
			\begin{lpic}[l(2mm),r(2mm),t(2mm),b(2mm)]{seq44(0.20)}
				\lbl[t]{110,80; {${K_3}$}}
			\end{lpic}
			\caption{$K_3=F(K_2)\cap Q_4.$}
		\end{subfigure} 
		\begin{subfigure}{0.30\textwidth}
			
			\begin{lpic}[l(2mm),r(2mm),t(2mm),b(2mm)]{seq45(0.20)}
				\lbl[t]{48,130; {${K_4}$}}
			\end{lpic}
			\caption{$K_4=F(K_3)\cap Q_2.$}
		\end{subfigure}
		
		\caption{The sets $K_3$ and $K_4$ when $1<b<2.$}\label{f:P13v3}
	\end{figure}
We observe that the points of $K_3$ whose images belong to $K_4,$ are points of $K.$ 	As in the above case, if some point of $K$ never meets $Q_1\cup Q_3,$ its itinerary must be $(422)^{\infty}$ and it must be a fixed point of $F_2\circ F_2\circ F_4$ which gives the $3$-periodic orbit (\ref{orbita422}).

In short, we have seen that the orbits of the points in $Q_4$ always visit 
$Q_1\cup Q_3,$ except the following particular cases: when the fixed point of $F$, $ (-b,-1),$ belongs to $Q_4$ (this happens only when $b\leq 0$) and when there is a periodic orbit living in $Q_2\cup Q_4$ (which only happens when $3/4\leq b\leq 2$). In this last case, the periodic orbit is the one given in~\eqref{orbita422}.

 Now consider points in $Q_2.$ For any point in $Q_2$ different from a fixed one, its orbit have to leave $Q_2,$ so its iterates arrive to $Q_1\cup Q_3\cup Q_4$ (in fact, in the proof of Proposition \ref{p:P15nova}, below, we will prove that any point in $Q_2$ leaves $Q_2$ in two iterates). Then, the only points in $Q_2$ which do not meet  $Q_1\cup Q_3$ are the fixed point of $F$ in $Q_2, \left(-{(2+b)}/{5},{(2b-1)}/{5}\right),$ which exists when $b \geq 1/2,$ and the two points of the $3$-periodic orbit (\ref{orbita422}) that are in $Q_2.$
\end{proof}

\begin{propo}\label{p:lemanou}
Set $a=-1$. The following statements hold:
\begin{enumerate}[(a)]
\item For all $b\in\R$ there exists a compact graph $\Gamma$ which is invariant under the map $F$. The graphs for each value of $b$ are given in the Figures \ref{ff:1}--\ref{f:23} of the Appendix. 
\item For all $b\in\R$, $F^{11}(x,y)\in \Gamma$ for all $(x,y)\in Q_1\cup Q_3$.
\end{enumerate}
\end{propo}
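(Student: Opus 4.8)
The plan is to exploit the rank-one collapse of $F$ on $Q_1$ and $Q_3$. From \eqref{e:Fis} with $a=-1$, the maps $F_1(x,y)=(x-y-1,x-y+b)$ and $F_3(x,y)=(-x-y-1,x+y+b)$ depend on $(x,y)$ only through $x-y$ and $x+y$, so $F(Q_1)$ is the whole line $\ell_1=\{y=x+b+1\}$ and $F(Q_3)$ is the ray $\ell_3=\{y=-x+b-1,\ x\ge -1\}$. Hence every point of $Q_1\cup Q_3$ lands in $\ell_1\cup\ell_3$ after one iterate, and part (b) reduces to showing that $F^{10}(\ell_1\cup\ell_3)\subseteq\Gamma$; since, by part (a), $\Gamma$ is forward invariant, this yields $F^{11}(Q_1\cup Q_3)\subseteq\Gamma$.

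To control $F^{10}(\ell_1\cup\ell_3)$ I would track the successive images. Each such image is a finite union of segments and rays of slopes $0,1,-1,\infty$; the coordinate axes cut each piece into subpieces lying in a single quadrant, on which $F$ acts as one affine map $F_i$. Two mechanisms keep this process finite and bounded: first, a slope-$1$ segment inside $Q_1$ and a slope-$(-1)$ segment inside $Q_3$ collapse to a single point, as observed after \eqref{e:Fis}; second, an unbounded ray is, after a few iterates, aligned with one of these two collapsing directions and is absorbed into a point. This second mechanism is exactly what forces the eventual image to be bounded, and hence $\Gamma$ compact. The verification then proceeds range by range in $b$: for each range I would compute the images $F^k(\ell_1)$ and $F^k(\ell_3)$, check that every newly created segment has its endpoints on $\Gamma$, and confirm that after at most ten steps no new segment appears, i.e. the configuration already sits inside the graph $\Gamma$ of the Appendix.

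For part (a), $\Gamma$ is the explicit union of at most $23$ segments recorded in the Appendix for each $b$; it is compact, being a finite union of compact segments, and its graph structure is read directly from the figure. The invariance $F(\Gamma)\subseteq\Gamma$ (in fact $F(\Gamma)=\Gamma$) I would verify edge by edge: since $F$ is affine on each quadrant and the edges of $\Gamma$ have slopes $0,1,-1,\infty$, the image of each edge is again a point or a segment of one of those slopes, so it suffices to check that the images of the (finitely many) endpoints lie on $\Gamma$. This reduces the invariance to a finite computation within each topological type.

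The hard part will not be any single computation but the size and uniformity of the case analysis. Both the itinerary of $\ell_1,\ell_3$ under iteration and the shape of $\Gamma$ change as $b$ crosses the critical values that separate the $37$ topological types of the Appendix, so the tracking must be organized carefully along these ranges, and one must check that the folding-in of the unbounded rays always finishes within a number of steps bounded independently of $b$, so that a single constant works everywhere. Finally, the sharpness of the value $11$ is obtained by exhibiting one value of $b$ and one point of $Q_1\cup Q_3$ whose first entry into $\Gamma$ occurs after exactly eleven iterates.
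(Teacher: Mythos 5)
Your proposal is correct and follows essentially the same route as the paper: iterate the line $F(Q_1)=\{y=x+b+1\}$ and the ray $F(Q_3)=\{y=-x+b-1,\ x\ge-1\}$, track the resulting polygonals quadrant by quadrant until they stabilize into the compact graph of the Appendix, verify invariance edge by edge via the images of the finitely many characteristic points, and record the arrival times range by range in $b$ (the paper's Table \ref{tb:tempsarribada} gives $N_1\le 11$ and $N_3\le 9$, matching your reduction of (b) to $F^{10}(\ell_1\cup\ell_3)\subseteq\Gamma$).
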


\begin{proof}
(a) As we mentioned at the beginning of this section, we recall that $F_1(Q_1)$ reduces to the straight line $y=x+b+1$ and $F_3(Q_3)$ reduces to the half-line $y=-x+b-1$ for $x\geq -1.$ See Figure \ref{f:imatgesQ13}.  To obtain the graphs in Figures \ref{ff:1}--\ref{f:23} of the Appendix, we have followed some iterations of these lines until we have found a graph which is invariant by $F$, by joining some of these images. It is a very tedious job, but not at all difficult: it is simply a matter of iterating a polygonal curve. By reasons of space we prefer do not   reproduce here all the cases but,  as an example, we expose the details in a case with intermediate difficulty.

Set $2/3<b\leq 5/7$. Consider the points that are listed in the caption of Figure \ref{f:A}, and the additional points $M_1=(-1, b)$, $M_2=(-b,-1)$, $M_3=(b, -1)$, $M_4=(b,2b-1)$, $M_5=(-b,1)$, $M_6=(b-2,-1)$ and $M_7=(-b+2,2b-3)$.
We will denote $\Gamma_i=F^i(Q_3)$. The points $M_i$ are some endpoints of these graphs. Then:

\begin{description}
\item[$\Gamma_1$:]   With the above notation, $F(Q_3)$ is the half-line 
$\Gamma_1=\cup_{i=1}^3 I_{1,i}$, where $I_{1,1}=\overline{M_1T_2}$, $I_{1,2}=\overline{T_2X_2}$, and $I_{1,3}=F(Q_3)\cap Q_4$ is the half-line whose border is the point $X_2$, see Figure \ref{f:gl}~$(a).$

\item[$\Gamma_2$:] A computation gives that $F(I_{1,1})=I_{2,1}\cup I_{2,2}$, $F(I_{1,2})=X_3$ and $F(I_{1,3})=I_{2,3}\cup I_{2,4}$, where $I_{2,1}=\overline{M_2T_1}$, $I_{2,2}=\overline{T_1X_3}$, $I_{2,3}=\overline{X_3R_1}$, and $I_{2,4}=F^2(Q_3)\cap Q_1$ is the half-line whose border is the point $R_1$. With this notation $\Gamma_2=\cup_{i=1}^4 I_{2,i}$, see Figure~\ref{f:gl}~$(b).$

\item[$\Gamma_3$:] We have that $F(I_{2,1})=I_{3,1}\cup I_{3,2}$, 
$F(I_{2,2})=I_{3,3}$, $F(I_{2,3})=I_{3,4}\cup I_{3,5}$ and $F(I_{2,4})=I_{3,6}\cup I_{3,7}$, where $I_{3,1}=\overline{M_3X_2}$, $I_{3,2}=\overline{X_2T_2}$, $I_{3,3}=\overline{T_2X_4}$,  $I_{3,4}=\overline{X_4W}$, $I_{3,5}=\overline{WR_2}$, $ I_{3,6}=\overline{R_2S}$ and 
$I_{3,7}=F^3(Q_3)\cap Q_1$ is the half-line whose border is the point $S$. With this notation $\Gamma_3=\cup_{i=1}^7 I_{3,i}$, see Figure~\ref{f:gl}~$(c).$

\item[$\Gamma_4$:] In this case $F(I_{3,1})=I_{4,1}\cup I_{4,2}$, 
$F(I_{3,2})=X_3$, $F(I_{3,3})=I_{4,3}\cup I_{4,4}\cup I_{4,5}$, $F(I_{3,4})=X_5$,
$F(I_{3,5})=I_{4,6}$, and $F(I_{3,6})=I_{4,7}\cup I_{4,8}$, where 
$I_{4,1}=\overline{M_4R_1}$, $I_{4,2}=\overline{R_1X_3}$, 
$I_{4,3}=\overline{X_3T_2}$,  $I_{4,4}=\overline{T_2X_2}$, $I_{4,5}=\overline{X_2X_5}$, $ I_{4,6}=\overline{X_5R_3}$, $ I_{4,7}=\overline{R_3X_1}$ and
$ I_{4,8}=\overline{X_1P_1}$. 
With this notation $\Gamma_4=\cup_{i=1}^8 I_{4,i}$, see Figure \ref{f:gl}~$(d).$
It is important to notice that $\Gamma_4\subset\Gamma$, see Figure~\ref{f:gl4gamma}.

\item[$\Gamma_5$:] In this case $F(I_{4,1})=I_{5,1}$, 
$F(I_{4,2})=I_{5,2}\cup I_{5,3}$, $F(I_{4,3})=I_{5,4}\cup I_{5,5}$, $F(I_{4,4})=X_5$,
$F(I_{4,5})=I_{5,6}$, $F(I_{4,6})=I_{5,7}\cup I_{5,8}\cup I_{5,9}$, 
$F(I_{4,7})=I_{5,10}$ and $F(I_{4,8})=I_{5,11}$
where $I_{5,1}=\overline{M_5R_2}$, $I_{5,2}=\overline{R_2W}$, 
$I_{5,3}=\overline{WX_4}$,  $I_{5,4}=\overline{X_4T_1}$, $I_{5,5}=\overline{T_1X_3}$, $ I_{5,6}=\overline{X_3X_6}$, $ I_{5,7}=\overline{X_6Z_1}$, 
$ I_{5,8}=\overline{Z_1Q}$, $ I_{5,9}=\overline{QR_4}$,
$ I_{5,10}=\overline{R_4X_2}$ and $ I_{5,11}=\overline{X_2P_2}$.
With this notation $\Gamma_5=\cup_{i=1}^{11} I_{5,i}$, see Figure~\ref{f:gl}~$(e).$

\item[$\Gamma_6$:] In this case $F(I_{5,1})=I_{6,1}\cup I_{6,2}$, 
$F(I_{5,2})=I_{6,3}$, $F(I_{5,3})=X_5$, $F(I_{5,4})=I_{6,4}\cup I_{6,5}$,
$F(I_{5,5})=I_{6,6}$, $F(I_{5,6})=I_{6,7}$, 
$F(I_{5,7})=I_{6,8}\cup I_{6,9}$, $F(I_{5,8})=Z_2$,  
$F(I_{5,9})=I_{6,10}$, $F(I_{5,10})=I_{6,11}$ and $F(I_{5,11})=I_{6,12}\cup I_{6,13}$, 
where $I_{6,1}=\overline{M_6X_1}$, $I_{6,2}=\overline{X_1R_3}$, 
$I_{6,3}=\overline{R_3X_5}$,  $I_{6,4}=\overline{X_5X_2}$, 
$I_{6,5}=\overline{X_2T_2}$, $ I_{6,6}=\overline{T_2X_4}$, 
$ I_{6,7}=\overline{X_4X_7}$, 
$ I_{6,8}=\overline{X_7Y_1}$, 
$ I_{6,9}=\overline{Y_1Z_2}$,
$ I_{6,10}=\overline{Z_2R_5}$,
$ I_{6,11}=\overline{R_5X_3}$,
$ I_{6,12}=\overline{X_3R_1}$,
and $ I_{6,13}=\overline{R_1P_3}$.
With this notation $\Gamma_6=\cup_{i=1}^{13} I_{6,i}$, see Figure~\ref{f:gl}~$(f).$

\item[$\Gamma_7$:] Finally, we have $F(I_{6,1})=I_{7,1}$, 
$F(I_{6,2})=I_{7,2}$, $F(I_{6,3})=I_{7,3}\cup I_{7,4}\cup I_{7,5}$, 
$F(I_{6,4})=I_{7,6}$,
$F(I_{6,5})=X_3$, $F(I_{6,6})=I_{7,7}\cup I_{7,8}\cup I_{7,9} \cup I_{7,10}$, 
$F(I_{6,7})=X_5$, $F(I_{6,8})=I_{7,7}\cup I_{7,10}$,  
$F(I_{6,9})=I_{7,11}$, $F(I_{6,10})=I_{7,12}$,
$F(I_{7,11})=I_{7,13}$, $F(I_{6,12})=I_{7,14}\cup I_{7,15}$ and
$F(I_{6,13})=I_{7,16}\cup I_{7,17}$
where 
$I_{7,1}=\overline{M_7X_2}$, $I_{7,2}=\overline{X_2R_4}$, 
$I_{7,3}=\overline{R_4Q}$,  $I_{7,4}=\overline{QZ_1}$, 
$I_{7,5}=\overline{Z_1X_6}$, $ I_{7,6}=\overline{X_6X_3}$, 
$ I_{7,7}=\overline{X_3T_2}$, 
$ I_{7,8}=\overline{T_2Y_2}$, 
$ I_{7,9}=\overline{Y_2X_2}$,
$ I_{7,10}=\overline{X_2X_5}$,
$ I_{7,11}=\overline{Y_2Z_3}$,
$ I_{7,12}=\overline{Z_3R_6}$,
$ I_{7,13}=\overline{R_6X_4}$,
$ I_{7,14}=\overline{X_4W}$,
$ I_{7,15}=\overline{WR_2}$,
$ I_{7,16}=\overline{R_2S}$, and
$ I_{7,17}=\overline{SP_4}$.
With this notation $\Gamma_7=\cup_{i=1}^{17} I_{7,i}$, see Figure~\ref{f:gl}~$(g).$
\end{description}

Then, we define the graph $\Gamma:=\Gamma_5\cup\Gamma_6\cup\Gamma_7$.

 This procedure must be done also for the images of $Q_1$ and for all the cases displayed in Appendix. The cases corresponding to the values of $b$ on the border of each interval are obtained either by the collapse of edges to a point, or because some existing edge crosses one of the coordinate axes entering or disappearing from a quadrant.

\vfill
\newpage  

\begin{figure}[H]
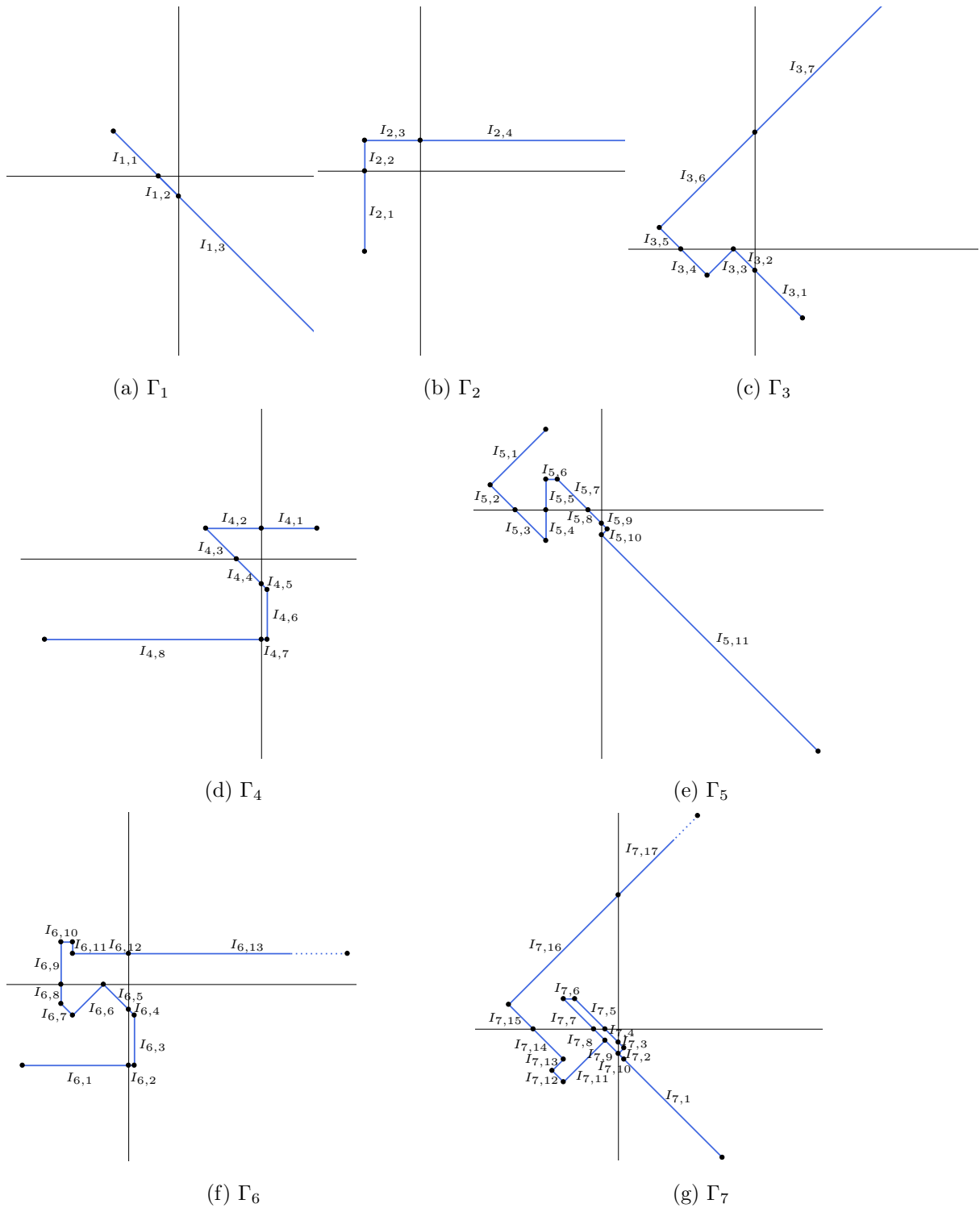

  \begin{subfigure}{0.3\linewidth}
    \begin{lpic}[l(0mm),r(0mm),b(0mm),t(0mm)]{gl1(0.3)}
      \lbl[l]{60,110; {\tiny $I_{1,1}$}}
      \lbl[l]{78,91; {\tiny $I_{1,2}$}}
      \lbl[l]{108,62; {\tiny $I_{1,3}$}}
    \end{lpic}
    \caption{ $\Gamma_1$}
  \end{subfigure}
  \hspace{0.3cm}
  \begin{subfigure}{0.3\linewidth}
    \begin{lpic}[l(0mm),r(0mm),b(0mm),t(0mm)]{gl2(0.3)}
      \lbl[l]{30,80; {\tiny $I_{2,1}$}}
      \lbl[l]{30,110; {\tiny $I_{2,2}$}}
      \lbl[l]{36,125; {\tiny $I_{2,3}$}}
      \lbl[l]{95,125; {\tiny $I_{2,4}$}}
    \end{lpic}
    \caption{$\Gamma_2$}
  \end{subfigure}
 \hspace{0.3cm}
  \begin{subfigure}{0.3\linewidth}
    \begin{lpic}[l(0mm),r(0mm),b(0mm),t(0mm)]{gl3(0.3)}
        \lbl[l]{86,37; {\tiny $I_{3,1}$}}
      \lbl[l]{67,55; {\tiny $I_{3,2}$}}
      \lbl[l]{53,50; {\tiny $I_{3,3}$}}
      \lbl[l]{25,50; {\tiny $I_{3,4}$}} 
      \lbl[l]{10,65; {\tiny $I_{3,5}$}}
      \lbl[l]{30,100; {\tiny $I_{3,6}$}}
      \lbl[l]{90,160; {\tiny $I_{3,7}$}} 
    \end{lpic}
    \caption{$\Gamma_3$}   
  \end{subfigure}
  
  \begin{subfigure}{0.5\linewidth}
    \begin{lpic}[l(0mm),r(0mm),b(0mm),t(0mm)]{gl4(0.3)}
     \lbl[l]{151,133; {\tiny $I_{4,1}$}}
      \lbl[l]{120,133; {\tiny $I_{4,2}$}}
      \lbl[l]{107,117; {\tiny $I_{4,3}$}}
      \lbl[l]{123,102; {\tiny $I_{4,4}$}} 
      \lbl[l]{145,98; {\tiny $I_{4,5}$}}
      \lbl[l]{148,80; {\tiny $I_{4,6}$}}
      \lbl[l]{143,60; {\tiny $I_{4,7}$}}
      \lbl[l]{75,60; {\tiny $I_{4,8}$}}      
    \end{lpic}
    \caption{$\Gamma_4$}
  \end{subfigure}
  \begin{subfigure}{0.5\linewidth}
    \begin{lpic}[l(0mm),r(0mm),b(0mm),t(0mm)]{gl5(0.3)}
      \lbl[l]{12,170; {\tiny $I_{5,1}$}}
      \lbl[l]{2,145; {\tiny $I_{5,2}$}}
      \lbl[l]{19,128; {\tiny $I_{5,3}$}}
      \lbl[l]{43,128; {\tiny $I_{5,4}$}} 
      \lbl[l]{43,145; {\tiny $I_{5,5}$}}
      \lbl[l]{39,160; {\tiny $I_{5,6}$}}
      \lbl[l]{57,150; {\tiny $I_{5,7}$}}
      \lbl[l]{53,134; {\tiny $I_{5,8}$}} 
      \lbl[l]{75,133; {\tiny $I_{5,9}$}}
      \lbl[l]{76,124; {\tiny $I_{5,10}$}}
      \lbl[l]{135,67; {\tiny $I_{5,11}$}}       
    \end{lpic}
    \caption{$\Gamma_5$}
  \end{subfigure}

  \begin{subfigure}{0.5\linewidth}
    \begin{lpic}[l(0mm),r(0mm),b(0mm),t(0mm)]{gl6(0.3)}
     \lbl[l]{35,47; {\tiny $I_{6,1}$}}
      \lbl[l]{70,47; {\tiny $I_{6,2}$}}
      \lbl[l]{75,65; {\tiny $I_{6,3}$}}
      \lbl[l]{72,85; {\tiny $I_{6,4}$}} 
      \lbl[l]{63,93; {\tiny $I_{6,5}$}}
      \lbl[l]{47,85; {\tiny $I_{6,6}$}}
      \lbl[l]{21,82; {\tiny $I_{6,7}$}}
      \lbl[l]{17,94; {\tiny $I_{6,8}$}} 
      \lbl[l]{17,110; {\tiny $I_{6,9}$}}
      \lbl[l]{23,128; {\tiny $I_{6,10}$}}
      \lbl[l]{39,120; {\tiny $I_{6,11}$}} 
      \lbl[l]{58,120; {\tiny $I_{6,12}$}}
      \lbl[l]{125,120; {\tiny $I_{6,13}$}}    
    \end{lpic}
    \caption{$\Gamma_6$}
  \end{subfigure}
   \begin{subfigure}{0.5\linewidth}
    \begin{lpic}[l(0mm),r(0mm),b(0mm),t(0mm)]{gl7(0.3)}
     \lbl[l]{107,37; {\tiny $I_{7,1}$}}
      \lbl[l]{85,60; {\tiny $I_{7,2}$}}
      \lbl[l]{83,67; {\tiny $I_{7,3}$}}
      \lbl[l]{76,73; {\tiny $I_{7,4}$}} 
      \lbl[l]{66,85; {\tiny $I_{7,5}$}}
      \lbl[l]{46,96; {\tiny $I_{7,6}$}}
      \lbl[l]{44,80; {\tiny $I_{7,7}$}}
      \lbl[l]{53,68; {\tiny $I_{7,8}$}} 
      \lbl[l]{64,60; {\tiny $I_{7,9}$}}
      \lbl[l]{70,53; {\tiny $I_{7,10}$}}
      \lbl[l]{58,48; {\tiny $I_{7,11}$}} 
      \lbl[l]{30,47; {\tiny $I_{7,12}$}}
      \lbl[l]{30,57; {\tiny $I_{7,13}$}} 
      \lbl[l]{24,65; {\tiny $I_{7,14}$}}
      \lbl[l]{10,80; {\tiny $I_{7,15}$}} 
      \lbl[l]{32,120; {\tiny $I_{7,16}$}}
      \lbl[l]{85,173; {\tiny $I_{7,17}$}}        
    \end{lpic}
    \caption{$\Gamma_7$}
  \end{subfigure}
  \caption{First iterates of the quadrant $Q_3$ for $a=-1$ and $2/3<b\leq 5/7$. In this case $\Gamma:=\Gamma_5\cup\Gamma_6\cup\Gamma_7$.}\label{f:gl}
\end{figure}

\newpage

{\small
\begin{table}[H]
\centering
\begin{tabular}{|c|c|c|c|c|c|c|c|c}
\hline
 & $b\leq -2$ & $-2<b\leq -1/4$ & $-1/4<b<0$ & $0\leq b\leq 3/16$ \\
\hline
\hline
$N_1$ & 8 & 6 &  5&  6 \\
\hline
$N_3$ & 5 & 5 & 4 &   4 \\
\hline
\hline
 & $3/16<b< 4/15$ &$4/15\leq b\leq 2/3$ &$2/3<b\leq 7/4$  & $b> 7/4$ \\
\hline
\hline
$N_1$ & 11 &6 & 5& 5\\
\hline
$N_3$ & 9& 4& 4&5\\
\hline
\end{tabular}
\caption{Arrival times of points in $Q_1\cup Q_3$ to $\Gamma$.}\label{tb:tempsarribada}
\end{table}
}

The verification of the invariance of the graphs $\Gamma$ for each value of $b$ is again a lot of routine work. To do it, for each of the different graphs, it must be verified that the image of each edge entirely contained in a single quadrant, remains in the graph. To this end, in each graph we have indicated the points that characterize the part of each edge that is entirely contained in each quadrant. In consequence, we only have to check that the images of these points belong to graph. 

$(b)$
For a fixed value of $b$, and once we have proved the invariance of the graph $\Gamma$, we only have to check that there exist  values $N_1$ and $N_3$ such that $F^{N_1}(Q_1)$ and 
$F^{N_3}(Q_3)$ are contained in $\Gamma$. For instance in the above case, for $2/3<b\leq 5/7$,
one can see that $F^4(Q_3)=\Gamma_4\subset \Gamma$, hence $N_3=4$. See Figure \ref{f:gl4gamma}.
In each case, we get the arrival times to $\Gamma$ written in Table \ref{tb:tempsarribada}.
In summary, from the  case-by-case study, for all $b\in\mathbb{R}$ we have $F^{11}(Q_1)\in \Gamma$ and 
$F^{9}(Q_3)\in \Gamma$. Therefore $F^{11}(Q_1\cup Q_3)\in \Gamma$.
\end{proof}

\begin{figure}[H]
 \centerline{ \includegraphics[width=0.5\textwidth]{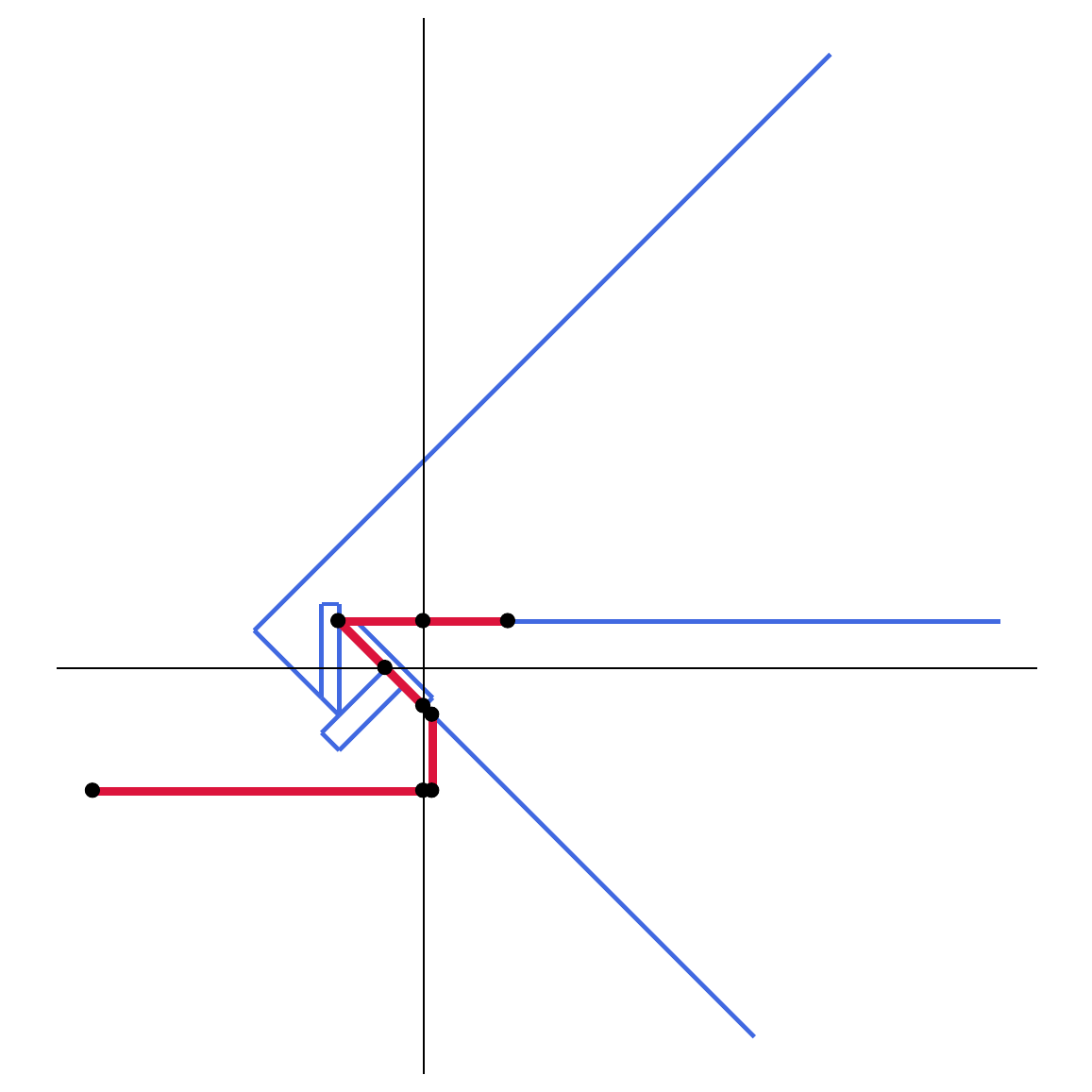}}
 \caption{The graph $\Gamma_4\subset \Gamma$ for $a=-1$ and $2/3<b\leq 5/7$.}\label{f:gl4gamma}
\end{figure}

From the above result, when $3/16\leq b\leq 4/15$,    $(-b,2b-1)\in\Gamma$ is the unique fixed point of $F$ in $\mathbb{R}^2.$ The next result states that for this range of the parameter $b$,  each point in  $Q_2\cup Q_4$ also leaves this region in an uniformly bounded number of iterates. As we  have already comment, this is not the case for other values of $b.$

\begin{propo}\label{p:P15nova}
Assume $a=-1.$ If $3/16\leq b\leq 4/15$, then for any $(x,y)\in Q_2\cup Q_4$ there exists $n\leq 11$ such that $F^n(x,y)\in Q_1\cup Q_3$.  
\end{propo}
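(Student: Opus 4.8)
The plan is to build on the detailed bookkeeping already carried out in the proof of Proposition~\ref{primera reduccioamenys1}. Since $0<3/16\le b\le 4/15<1/2$, we sit inside its case $(iii)$, and in this parameter range none of its exceptional objects can occur: the fixed point in $Q_2$ requires $b\ge 1/2$, the fixed point in $Q_4$ requires $b\le 0$, and the $3$-periodic orbit requires $b\ge 3/4$. Consequently Proposition~\ref{primera reduccioamenys1} \emph{already} guarantees that every $(x,y)\in Q_2\cup Q_4$ has some iterate in $Q_1\cup Q_3$, and the only thing left to prove is the \emph{uniform} bound $n\le 11$. The value $b=1/4$ separates the two possible shapes of $K_5$ recorded in case $(iii)$, so I would split the argument into the two subranges $3/16\le b\le 1/4$ and $1/4<b\le 4/15$. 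In the first subrange one has $F_4(K_4)\subset Q_3$, so any point of $K$ reaches $Q_3$ within five iterates and the bound is immediate; the genuinely delicate subrange is $1/4<b\le 4/15$.

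For that delicate subrange the strategy is to convert the qualitative fact ``$\widetilde K=\emptyset$'' into a quantitative one. Following case $(iii)$, a point of $Q_4$ whose orbit keeps avoiding $Q_1\cup Q_3$ must, once it has entered $K$, be forced onto the itinerary $(4224)^{\infty}$, governed by the affine expansion $G(x,y)=(4x+b-2,\,4y+2b+1)$; its fixed point is $r=((2-b)/3,-(2b+1)/3)$, and since $F_4(r)=(b/3,1/3)\in Q_1$ we have $r\notin K$. Because $G$ multiplies the distance to $r$ by $4$ at each application, while $K$ lies at a positive distance $\delta>0$ from $r$ and within distance $R$ of it, a point can satisfy $p,G(p),\dots,G^{k}(p)\in K$ only as long as $4^{k}\delta\le R$. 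This caps the number of complete $(4224)$-cycles, hence the number of $F$-steps spent inside the pattern, by a constant that is uniform for $b$ in the compact interval.

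The remaining and principal difficulty is the combinatorial bookkeeping needed to reach the sharp constant $11$. To the (few) steps spent inside the $(4224)$-pattern one must add the \emph{transient} steps during which a point wanders in $Q_2\cup Q_4$ before either escaping to $Q_1\cup Q_3$ or first entering $K$; these include runs of consecutive $Q_4\to Q_4$ (respectively $Q_2\to Q_2$) steps, which are finite because $F_4$ (respectively $F_2$) is an expanding spiral whose fixed point lies in $Q_3$. Concretely, I would iterate the whole of $Q_4$ and of $Q_2$ as polygonal regions, quadrant by quadrant exactly as was done for $K$, recording at each step which portion has already reached $Q_1\cup Q_3$ and which portion survives; expansiveness guarantees the surviving portion is exhausted in finitely many steps. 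The hard part will be making this count sharp, so that $11$ (rather than some larger constant) is obtained, and confirming that it holds uniformly across the whole interval $[3/16,4/15]$, the split at $b=1/4$ and the two endpoints included.
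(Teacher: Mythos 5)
Your skeleton is the right one and matches the paper's: reduce to the quantitative question, split at $b=1/4$, and track explicit polygonal regions quadrant by quadrant. But as written this is a plan rather than a proof, and the one place where you propose a concrete mechanism is the place where it does not work well. For the subrange $1/4<b\le 4/15$ you want to bound the number of complete $(4224)$-cycles by comparing the expansion rate of $G$ with the distance from $K$ to the fixed point $r$ and the diameter of $K$. Two problems. First, a crude estimate of the form $4^k\delta\le R$ does not obviously produce a count compatible with the total budget of $11$: near $b=1/4$ the point $r$ lies quite close to the edge $x-y=1$ of $K$, so $\log_4(R/\delta)$ allows on the order of two full cycles, i.e.\ eight $F$-steps inside the pattern alone, before adding the transient steps; you would then have to sharpen the geometry anyway. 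Second, and more importantly, the mechanism is unnecessary: for $1/4\le b\le 4/15$ one extra explicit iterate settles it, namely
$$K_6=F(K_5)=\bigl\langle (5b-2,-4b+1),\,(b-1,0),\,(-3b,-4b+1)\bigr\rangle\subset Q_3,$$
so the $(4224)$ pattern is never even completed once and every point of $K$ reaches $Q_1\cup Q_3$ in at most $6$ iterates. The paper's proof of the proposition consists precisely of this computation together with two further explicit polygon-tracking claims that you only gesture at: a point of $Q_4$ whose image stays in $Q_4$ reaches $Q_1\cup Q_2\cup Q_3$ in at most $4$ steps (via the sets $T_1,\dots,T_4$), and a point of $Q_2$ whose image stays in $Q_2$ reaches $Q_3\cup Q_4$ in at most $2$ steps (via $S_1$). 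Chaining these gives at most $9$ from $Q_4$ and at most $2+9=11$ from $Q_2$. Since the entire content of the proposition is the constant $11$, deferring ``the hard part'' of the count means the statement is not yet proved; you need to actually carry out the three explicit region computations, and you should replace the expansion-rate argument by the direct computation of $K_6$.
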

\begin{proof}
We start stating the following claims:

\noindent \emph{Claim 1:} If  $(x,y)\in Q_4$ is such that $F(x,y)\in Q_2$, then there exists $n\leq 6$ such that $F^n(x,y)\in Q_1\cup Q_3$.

\noindent \emph{Claim 2:} If  $(x,y)\in Q_4$ is such that $F(x,y)\in Q_4$, then there exists $n\leq 4$ such that $F^n(x,y)\in Q_1\cup Q_2 \cup Q_3$.

\noindent \emph{Claim 3:} If  $(x,y)\in Q_2$ is such that $F(x,y)\in Q_2$, then there exists $n\leq 2$ such that $F^n(x,y)\in Q_3\cup Q_4$.

If the claims are true, by collecting them, a point in $Q_4$ reaches $Q_1\cup Q_3$ in, at most, $9$ iterates; and a point in $Q_2$ reaches $Q_1\cup Q_3$ in, at most, $11$ iterates, so the result follows. Now we prove the claims:

In the proof of Proposition \ref{primera reduccioamenys1} we have seen that if $3/16\leq b\leq 1/4$, then any point $(x,y)\in Q_4$ with $F(x,y)\in Q_2$, that is the points in the set $K$ given in \eqref{DefiniciodeK}, reach $Q_1\cup Q_3$ in at most $5$ iterates. Next, let us take 
 $1/4\leq b\leq 4/15.$ As mentioned in the proof of  Proposition~\ref{primera reduccioamenys1}, the set $K_5$ is the triangle given in~\eqref{e:K5nou}. A computation shows that
 $$
 K_6=F(K_5)=\langle (5b-2,-4b+1), (b-1,0), (-3b,-4b+1)\rangle \subset Q_3.
 $$
Hence every point in $K$ reaches $Q_1\cup Q_3$ in, at most, $6$ iterates. Hence Claim 1 is proved.

Now we define $T_1:=F(Q_4)\cap Q_4$. Some computations show that $T_1$ is the unbounded region defined by  the half-line $\{y=0,\,x\geq 0\}$, the segment $\overline{(0,0),(0,b-1)}$ and the half-line $\{y=-x+b-1,\,x\geq 0\}$.

The set $F(T_1)$ is the unbounded region delimited by $\{y=x+b+1,\,x\geq -1\},$ the segment
$\overline{(-1,b),(-b,2b-1)}$  and $\{y=2b-1,\,x\geq -b\}$. Taking into account Claim 1, we only have to keep track of the points in $T_2=F(T_1)\cap Q_4$, which is the unbounded region defined by $\{y=0,\,x\geq 0\}$, the segment
$\overline{(0,0),(0,2b-1)}$  and $\{y=2b-1,\,x\geq 0\}$.

The set $F(T_2)$ is the unbounded region delimited by $\{y=x+b+1,\,x\geq -1\},$ the segment
$\overline{(-1,b),(-2b,3b-1)}\subset \{y=-x+b-1\}$  and $\{y=x+5b-1,\,x\geq -2b\}$. If $b\geq 1/5$ then $F(T_2)\in Q_1\cup Q_2$ and we are done. Conversely, if $b<1/5$ then we define
$T_3=F(T_2)\cap Q_4$, which is the triangle
$
T_3=\langle (0,0), (-5b+1,0), (0,5b-1)\rangle.
$

Finally, the set $T_4=F(T_3)$ is the triangle
$
T_4=\langle (-1,b), (-5b,-4b+1), (-5b,6b-1)\rangle\subset Q_2.
$
So Claim 2 is proved.

It is easy to observe that 
$
S_1=F(Q_2)\cap Q_2=\langle (-1,b), (-b-1,0), (b-1,0)\rangle.
$
A computation shows that 
$F(S_1)=\langle (-b,-1), (b,-1), (-b,2b-1)\rangle \subset Q_3\cup Q_4,$
hence Claim 3 is proved, and therefore, the result follows.
\end{proof}

As a consequence of the above result, we stress that when $3/16\leq b\leq 4/15$,  for every point in $\mathbb{R}^2$ the arrival time to the unique fixed point is uniformly bounded.

\begin{corol}\label{c:c16nou}
Assume $a=-1.$ If $3/16\leq b\leq 4/15$, then $F^{22}(\mathbb{R}^2)=\{(-b,2b-1)\}=\Gamma$.
\end{corol}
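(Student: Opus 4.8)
The plan is simply to chain together the two quantitative results that immediately precede this corollary, exploiting crucially that for $3/16\le b\le 4/15$ the invariant graph $\Gamma$ degenerates to the single point $(-b,2b-1)$, which is the unique fixed point of $F$. The first thing I would record is this last fact: it is read off from the appropriate graph in the Appendix, and is consistent with Table~\ref{tb:tempsarribada}, where $N_1,N_3\le 11$ throughout the range $3/16\le b\le 4/15$, so that $F^{11}(Q_1\cup Q_3)$ collapses onto the fixed point. A direct check with $F_3$ from~\eqref{e:Fis} (note that $(-b,2b-1)\in Q_3$ here, since $-b\le 0$ and $2b-1<0$ for $b\le 4/15$) confirms $F(-b,2b-1)=(-b,2b-1)$. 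Because the target is a genuine fixed point, the key consequence is that once an orbit lands on it, it remains there for all subsequent iterates; this is precisely what will let me replace a bound of the form ``$F^n$ for some $n\le 22$'' by the uniform statement ``$F^{22}$''.

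Next I would split $\mathbb{R}^2=(Q_1\cup Q_3)\cup(Q_2\cup Q_4)$ and treat the two pieces. For $(x,y)\in Q_1\cup Q_3$, Proposition~\ref{p:lemanou}$(b)$ gives directly $F^{11}(x,y)\in\Gamma=\{(-b,2b-1)\}$, so by the fixed-point remark $F^{22}(x,y)=(-b,2b-1)$. For $(x,y)\in Q_2\cup Q_4$, Proposition~\ref{p:P15nova} provides an $n\le 11$ with $\mathbf{z}:=F^n(x,y)\in Q_1\cup Q_3$; applying Proposition~\ref{p:lemanou}$(b)$ to $\mathbf{z}$ yields $F^{11}(\mathbf{z})=(-b,2b-1)$, hence $F^{n+11}(x,y)=(-b,2b-1)$ with $n+11\le 22$, and once more the fixed-point property upgrades this to $F^{22}(x,y)=(-b,2b-1)$.

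Collecting both cases, every point of $\mathbb{R}^2$ reaches the fixed point in at most $22$ iterates and stays there, so $F^{22}(\mathbb{R}^2)=\{(-b,2b-1)\}=\Gamma$, as claimed.

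I do not anticipate any genuine obstacle, since all the substantive work is already contained in Propositions~\ref{p:lemanou} and~\ref{p:P15nova}; the only point requiring a little care is the bookkeeping of the iterate counts ($11+11=22$) combined with the observation that, because $(-b,2b-1)$ is fixed, the non-uniform arrival times coming from the two propositions can be absorbed into the single uniform exponent $22$.
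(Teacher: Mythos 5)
Your proposal is correct and follows essentially the same route as the paper: the published proof likewise combines Proposition~\ref{p:P15nova} (at most $11$ iterates from $Q_2\cup Q_4$ into $Q_1\cup Q_3$) with the arrival times from the proof of Proposition~\ref{p:lemanou} (at most $11$ further iterates into $\Gamma=\{(-b,2b-1)\}$). Your explicit remark that the fixed-point property lets the non-uniform arrival times be absorbed into the single exponent $22$ is a small but welcome clarification that the paper leaves implicit.
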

\begin{proof} 
The result is a consequence of the Proposition \ref{p:P15nova}, which states that each point in $Q_2\cup Q_4$ reaches $Q_1\cup Q_3$ in at most $11$ iterates, and of the  proof of Proposition \ref{p:lemanou} (see the Table \ref{tb:tempsarribada}) which shows that, in this case, each point in $Q_1\cup Q_3$ reaches $\Gamma=\{(-b,2b-1)\}$ in at most~$11$ iterates.
\end{proof}

\begin{proof}[Proof of Theorem \ref{t:teoB}]  
From Proposition  \ref{primera reduccioamenys1}, for all $b\in\R$, the orbit of every point in $Q_2$ or in $Q_4$ meets $Q_1$ or $Q_3,$ except the fixed points of $F$ in $Q_2$ and $Q_4$, that exist when $b>1/2$ and when $b<0$, respectively; and a three periodic orbit located at $Q_2\cup Q_4$ that exists when $3/4<b<2$. On the other hand, from Proposition \ref{p:lemanou}, each point in $Q_1\cup Q_3$ reaches $\Gamma$ in finite time. From  these results the theorem is proven.
\end{proof}

\subsection{Structure of the $\omega$-limits and proof of Theorem \ref{t:teoC}}

Our next objective is to show that almost all  periodic orbits that appear in the dynamics of $F$ are repulsive. To do this we need to compute the action of the  linear parts of $F$ on some specific set of directions. Set $V=\{v_1,v_2,v_3,v_4\}$ where $v_1=(1,0),\,v_2=(0,1),\,v_3=(1,1)$ and $v_4=(1,-1).$ Also for $i=1,\ldots,4$ let $A_i$ be the linear part of $F_i.$ With this notation we have:

\begin{lem}\label {eigen} Direct computations give
\begin{enumerate}\item[(a)] $A_1(v_1)=v_3,\,A_1(v_2)=-v_3,\,A_1(v_3)=(0,0)$ and $A_1(v_4)=2v_3.$
\item[(b)] $A_2(v_1)=-v_4,\,A_2(v_2)=-v_3,\,A_2(v_3)=-2v_1$ and $A_2(v_4)=2v_2.$
\item[(c)] $A_3(v_1)=-v_4,\,A_3(v_2)=-v_4,\,A_3(v_3)=-2v_4$ and $A_3(v_4)=(0,0).$
\item[(d)] $A_4(v_1)=v_3,\,A_4(v_2)=-v_4,\,A_4(v_3)=2v_2$ and $A_4(v_4)=2v_1.$
\end{enumerate}
\end{lem}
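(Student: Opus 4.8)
The statement is a pure linear-algebra verification, so the plan is simply to read off the four linear parts from \eqref{e:Fis} and apply them to the four test vectors. From the expressions of $F_1,F_2,F_3,F_4$ one extracts the matrices
\begin{align*}
A_1=\begin{pmatrix} 1 & -1 \\ 1 & -1 \end{pmatrix},\quad
A_2=\begin{pmatrix} -1 & -1 \\ 1 & -1 \end{pmatrix},\quad
A_3=\begin{pmatrix} -1 & -1 \\ 1 & 1 \end{pmatrix},\quad
A_4=\begin{pmatrix} 1 & -1 \\ 1 & 1 \end{pmatrix},
\end{align*}
obtained by discarding the constant terms $a,b$. Then I would compute $A_i\,v_j$ for each $i\in\{1,2,3,4\}$ and each $v_j\in V$ by a single matrix-vector product, and in each case rewrite the resulting column vector in terms of $V$. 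For instance $A_1(1,0)^{\!\top}=(1,1)^{\!\top}=v_3$, $A_1(1,1)^{\!\top}=(0,0)^{\!\top}$, $A_2(1,1)^{\!\top}=(-2,0)^{\!\top}=-2v_1$, and $A_4(1,-1)^{\!\top}=(2,0)^{\!\top}=2v_1$; the remaining thirteen entries are analogous.

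There is no real obstacle here: the whole lemma is sixteen two-by-two products. The only point requiring a little care is that $V=\{v_1,v_2,v_3,v_4\}$ is an overdetermined set of directions in the plane (with $v_3=v_1+v_2$ and $v_4=v_1-v_2$), so each image must be matched to the \emph{intended} single multiple of some $v_k$ rather than decomposed in the standard basis; keeping the signs straight is the only source of possible slips. The genuinely useful content of the lemma is not the arithmetic but the structural fact it records, namely that every $A_i$ sends each direction of $V$ to a scalar multiple of a direction of $V$ (or to the zero vector). In other words, $V$ is almost invariant, as a set of directions, under all four linear parts, and this is precisely the property that will make the later action on slopes of edges tractable.

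I would also observe, as a consistency check, that the two zero outputs fall exactly where expected: $A_1(v_3)=(0,0)$ and $A_3(v_4)=(0,0)$ reflect that $A_1$ and $A_3$ have rank one, with kernels spanned by the slope-$1$ direction $v_3$ and the slope-$(-1)$ direction $v_4$ respectively. This matches the remark, made just before the additional-notation subsection, that lines of slope $1$ in $Q_1$ and lines of slope $-1$ in $Q_3$ collapse to points under $F$. Noting this alignment is a cheap way to catch any transcription error in the matrices before listing all sixteen results.
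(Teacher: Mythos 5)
Your proposal is correct and coincides with the paper's treatment: the lemma is proved there by exactly the same direct computation (the statement itself reads ``Direct computations give''), and your matrices $A_1,\dots,A_4$ and sample products all check out against \eqref{e:Fis}. Your added observation that the kernels of $A_1$ and $A_3$ are spanned by $v_3$ and $v_4$ respectively is a sound consistency check, matching the collapse of slope-$1$ lines in $Q_1$ and slope-$(-1)$ lines in $Q_3$ noted in the paper.
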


To state our next result we need to introduce some definitions. Set $a=-1, b\in \R$ and let $\mathbf{x}=(x,y)\in \Gamma,$  where $\Gamma$ is the associated invariant graph to $F=F_{-1,b}.$ We will say that $\mathbf{x}$ is regular if $xy\ne 0$ and there exists a neighborhood of $\mathbf{x}$ which is a segment that contains $\mathbf{x}$ in its interior. We call a {\em vertex} of $\Gamma$ to  any point of $\Gamma$ that is not regular, that is, a point that belongs to the axes, or that belongs  at least to two different segments with different directions, or that is an endpoint of the graph. We denote by $W$ be the set of vertices of~$\Gamma.$ By simple inspection it follows that the cardinality of $W$ is finite. Then any connected component of $\Gamma\setminus W$ is an open segment contained in the interior of some quadrant. We call {\em edge} of $\Gamma$ the closure of any connected component of $\Gamma\setminus W.$ Clearly each edge is a segment that has associated a direction which is unique up to scaling. Any edge contained in the first (respectively third) quadrant with associated direction
$v_3$ (respectively $v_4$) will be called  a {\em plateau}. This name is motivated because for real 1-dimensional maps the intervals where these maps are constant collapse to a point and in the graphs of these maps these intervals  look like plateaus in a mountain. 

By abuse of notation we say that a subset of $\Gamma$ is an {\em open interval} (respectively {\em closed interval}) if it is homeomorphic to an open (respectively closed) interval. Given an interval $J\subset \Gamma$ we denote by $l(J)$ its length computed in the usual euclidian metric in $\R^2.$

\begin{lem}\label{edges} Set $a=-1.$ The following assertions hold 
\begin{enumerate} \item[(a)] 	The direction of any edge of the associated graph $\Gamma$ belongs to $V.$
\item[(b)] The image of any plateau is a single point. The image of a non plateau edge $J$ is a nondegenerated interval. Moreover, $l(F(J))=\sqrt 2\,l(J)$  when $J$ is contained in the second or the fourth quadrant or has the horizontal or vertical directions, while $l(F(J))=2\,l(J)$ when it has the direction $v_4$ and it is contained in $Q_1$ or it has the direction $v_3$ and it is contained in $Q_3.$
\item[(c)] Any periodic orbit of $F$ that does not visit any plateau is repulsive.

\end{enumerate}
\end{lem}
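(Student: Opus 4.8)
The plan is to derive all three assertions from the tabulated action of the linear parts $A_i$ on the four directions $V=\{v_1,v_2,v_3,v_4\}$ recorded in Lemma~\ref{eigen}, combined with the inductive construction of $\Gamma$ from Proposition~\ref{p:lemanou}. The unifying observation is that $V$, regarded as a set of \emph{directions} (i.e.\ up to sign and scaling), is invariant under each $A_i$: inspecting Lemma~\ref{eigen}, every vector $A_i(v_j)$ is either $(0,0)$ or a nonzero multiple of some $v_m\in V$. This single fact drives parts (a) and (b), and the accompanying norms drive part (c).

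For part (a) I would argue by induction on the number of iterates used to build $\Gamma$. The graph is assembled from the iterates $F^n(Q_1)$ and $F^n(Q_3)$, whose seeds $F(Q_1)$ and $F(Q_3)$ are segments of directions $v_3$ and $v_4$. A segment of direction $v_j\in V$ lying in the interior of $Q_i$ is mapped by $F=F_i$ to a segment of direction $A_i(v_j)$, and when a segment crosses a coordinate axis it is merely subdivided into pieces of the same direction, each sitting in one quadrant. By the invariance of $V$ noted above, each image again has direction in $V$, so all edges of $\Gamma$ inherit a direction in $V$.

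For part (b) I would read the three claims off the same table. A plateau is an edge of direction $v_3$ in $Q_1$ or $v_4$ in $Q_3$; since $A_1(v_3)=(0,0)$ and $A_3(v_4)=(0,0)$, the affine map $F$ is constant along it, so its image is a point. For every other pair (quadrant, direction) the table gives $A_i(v_j)\neq(0,0)$, hence $F$ restricted to such an edge is affine and injective with nondegenerate image. The length factors follow from $l(F(J))=(\|A_i v_j\|/\|v_j\|)\,l(J)$: a direct check yields ratio $\sqrt2$ in $Q_2$ and $Q_4$ and for the directions $v_1,v_2$ in any quadrant, while $A_1(v_4)=2v_3$ and $A_3(v_3)=-2v_4$ produce ratio $2$ for $v_4$ in $Q_1$ and $v_3$ in $Q_3$. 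These exhaust all non-plateau cases.

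Finally, for part (c) — the step I expect to require the most care — I would interpret ``repulsive'' as repelling for the one-dimensional map $F|_\Gamma$ and exploit that, by part (b), every non-plateau edge is expanded by a factor at least $\sqrt2>1$. Given a periodic orbit $p_0\to p_1\to\cdots\to p_{k-1}\to p_0$ meeting no plateau, I would track a short sub-segment of $\Gamma$ through $p_0$ along its edge direction; by part (b) each application of $F$ carries it to a sub-segment of $\Gamma$ through the next orbit point, with length multiplied by a factor in $\{\sqrt2,2\}$. After one full period $F^k$ returns to a segment through $p_0$ in the same edge direction, stretched by $\prod_j c_j\ge(\sqrt2)^k>1$; since $F^k$ is affine near $p_0$ with $p_0$ fixed, its restriction to $\Gamma$ has derivative of modulus $\ge(\sqrt2)^k>1$ along the tangent to $\Gamma$, which is exactly repulsion. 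The delicate point is the direction bookkeeping that guarantees the returning segment lies on the same edge — automatic at a regular point, where $\Gamma$ is locally a single segment — and the verification that at orbit points which happen to be vertices the expansion still applies to each incident non-plateau branch.
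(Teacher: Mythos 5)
Your treatment of parts (a) and (b) is correct and is essentially the paper's argument: (a) follows from the invariance of the direction set $V$ under the linear parts $A_i$ together with the fact that $\Gamma$ is built by iterating the segments $F(Q_1)$ and $F(Q_3)$ (of directions $v_3$ and $v_4$), and (b) is a direct reading of Lemma~\ref{eigen} and the norms $\|v_1\|=\|v_2\|=1$, $\|v_3\|=\|v_4\|=\sqrt2$. For the regular-point case of (c) your argument also works, and is in fact a small simplification of the paper's: the paper tracks the scalars $k_i$ with $A_{j_i}(v_{j_i})=k_iv_{j_{i+1}}$ and must argue separately that the product $|k_1\cdots k_n|=2^m$ has $m>0$ (because every cycle of directions passes through $v_3$ or $v_4$ and picks up a factor $2$), whereas your observation that each non-degenerate step multiplies arc length by at least $\sqrt2$ gives the lateral slope of the return map a modulus of at least $(\sqrt2)^n>1$ immediately, with no case analysis on the direction cycle.

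The genuine gap is the vertex case of (c), which you flag as ``the delicate point'' but do not prove. A vertex $\mathbf{x}$ of $\Gamma$ (which may lie on a coordinate axis or be a branching point) has a neighborhood that is a $k$-star, and the difficulty is precisely that ``the returning segment lies on the same edge'' is \emph{not} automatic there: $F^n$ need not map the germ of a given branch back into itself, and it need not even be a local homeomorphism at $\mathbf{x}$. The paper resolves this by distinguishing two situations. If $F^n$ is a local homeomorphism at $\mathbf{x}$, it permutes the $k$ branches, so some power $F^{nm}$ fixes the germ of each branch and the regular-case slope computation applies branch by branch, giving lateral slopes $2^{\ell}$ with $\ell>0$. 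If $F^n$ is not a local homeomorphism, then (since no plateau is visited, so no branch collapses) the images of the germs of some branches must coincide; hence $F^n$ permutes a proper subset $W$ of the branches and sends the remaining germs into branches of $W$, and again a power of $F^n$ fixes each branch of $W$ with expanding lateral slope. Without this argument the lemma is only proved for periodic orbits avoiding the vertices of $\Gamma$, which is strictly weaker than the statement and insufficient for some of its later applications.
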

\begin{proof} $(a)$ follows by direct inspection of $\Gamma.$ It is due to the fact that 
$F(Q_1)$ is a straight line with direction $v_3$ and 
$F(Q_3)$ is a straight line with direction $v_4.$ Since from Lemma~\ref{eigen} the set of directions $V$ is invariant by $F$ and the graph $\Gamma$ is obtained iterating $F$ over $Q_1$ and $Q_3,$ the result follows.

$(b)$ Follows directly from Lemma \ref{eigen}.

$(c)$ Let $\mathbf{x}_0$ be a point of the periodic orbit and  for $i=1\ldots n-1$ set $\mathbf{x}_i=F^i(\mathbf{x}_0).$ First we consider the case when all the points of the orbit are regular. In this case, for all $i=0,\ldots n-1,$ $\mathbf{x}_i$ belongs to the interior on an edge, namely $L_i.$  Each of these edges $L_i$ is contained in some quadrant $Q_{j_i}$ and has associated the direction $v_{j_i}\in V.$ In this case, since $F$ acts linearly in a little neighborhood of each  $\mathbf{x}_i,$  there is $U$ a little neighborhood of $\mathbf{x}_0$ contained in $L_0$ satisfying $F^n(U) \subset L_0,$ and $F^n\vert_U$ is affine. 
From Lemma \ref{eigen} we have that $A_{j_i}(v_{j_i})=k_iv_{j_{i+1}}$ where $|k_i|\in \{1,2\}.$ Note that $k_i\ne 0,$ because otherwise  the edge $L_i$ is a plateau contradicting the hypothesis. So after $n$ iterates the direction $v_0$ is maped to $k_1k_2\ldots k_nv_0$ and $|k_1k_2\ldots k_n|=2^m$ with $m\le n.$ Note also that from Lemma \ref{eigen}  any  cycle in the directions at some moment has the factor 2 or $-2.$ This is direct for the directions $v_3$ and $v_4.$ And this occurs in the second step for directions $v_1$ and $v_2,$ because they are always sent to directions $v_3$ and $v_4.$ So $m>0.$ This implies that the absolute value of the slope of the map restricted to the subinterval of $L_0$ which is send to $L_0$ is $2^m$ with $m>0.$ So the fixed point is repulsive. This ends the proof of $(c)$ in this case. 

Now we consider the case when some of the points of the orbit is a vertex. Let $\mathbf{x}$ be a vertex belonging to the periodic orbit, and let $k$ be the number of edges containing $\mathbf{x}.$  Then a small neighborhood of $\mathbf{x}$ is like a $k$-star. If $F^n$ is a local homeomorphism at $\mathbf{x}$ then it permutes the edges. Therefore, some power $m$ of $F^n$ maps the beginning of each edge at $\mathbf{x}$ to itself. Then by the same arguments of the regular case we have that  each lateral slope of $F^{nm}$ at the point is $2^{\ell}$ with $0<\ell\le mn.$ If $F^n$ is not a local homeomorphism at $\mathbf{x},$ since the orbit does not visit any plateau, necessarily the image of the beginning of some of the edges by $F^n$ coincide. Therefore, $F^n$ permutes a subset $W$ of $j<k$ edges and it sends the beginning of the remaining $k-j$ edges to some edges in $W.$ As before, some power $m$ of $F^n$ maps the beginning of each edge in $W,$ to itself with slope $2^{\ell}$ with $0<\ell<mn.$ This ends the proof of the lemma.
\end{proof}

Our next objective is to prove Theorem C. Let $\mathcal W$ be the union of the interiors of the plateaus of $\Gamma$ and let $\mathcal U=\cup_{i=0}^{\infty}F^{-i}(\mathcal W)$ be the union of all its preimages.  Set $\mathcal G=\Gamma\,\setminus\,\mathcal U.$ Then, the following holds.

\begin{propo}\label {interior}
Assume $a=-1.$ For all $b\in\R,$ the set $\mathcal U$ is dense in $\Gamma.$ 
\end{propo}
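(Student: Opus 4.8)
The plan is to prove the equivalent statement that $\mathcal G=\Gamma\setminus\mathcal U$ is nowhere dense, i.e. that every open arc of $\Gamma$ meets $\mathcal U$. First I would record the soft structure. Each plateau is an edge, so $\mathcal W$ (the union of their relative interiors) is open in $\Gamma$; since $F$ is continuous, every $F^{-i}(\mathcal W)$ is open and hence $\mathcal U=\bigcup_{i\ge 0}F^{-i}(\mathcal W)$ is open and $\mathcal G$ is closed. Moreover $\mathcal G$ is forward invariant: if the forward orbit of $\mathbf{x}$ avoids $\mathcal W$, so does that of $F(\mathbf{x})$, whence $F(\mathcal G)\subseteq\mathcal G$. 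Thus ``$\mathcal U$ is dense'' is equivalent to: for every open arc $J\subseteq\Gamma$ there is an $n$ with $F^n(J)\cap\mathcal W\ne\varnothing$.

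The engine of the proof is the uniform expansion from Lemmas \ref{eigen} and \ref{edges}(b) combined with the Markov structure. The vertices $W$ form a finite set which, being built by iterating $Q_1$ and $Q_3$, is invariant; hence the edges form a Markov partition in the sense of Section \ref{s:prelim}, so $F(E)$ is a union of edges for every edge $E$, and $F$ restricted to any non-plateau edge is an affine homeomorphism with expansion factor in $\{\sqrt2,2\}$. Using this, I would establish the covering step: given any arc $J$, take it inside a single edge. If every forward image stayed inside a single edge, then $F^n|_J$ would be injective and $l(F^n(J))\ge(\sqrt2)^n\,l(J)$, which is impossible once this exceeds the finite total length $l(\Gamma)$; therefore some $F^n(J)$ meets a vertex in its interior, and pushing forward once more and using that each edge maps \emph{onto} a union of full edges, one concludes that $F^{n}(J)\supseteq E_0$ for some whole edge $E_0$.

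It then remains to reach a plateau. I would invoke the combinatorial fact that in the oriented (Markov) graph of each $\Gamma$ every edge can reach a plateau, i.e. for each edge $E_0$ there is $m$ with $F^m(E_0)\supseteq P$ for some plateau $P$. Granting this, for an arbitrary arc $J$ the covering step gives $F^n(J)\supseteq E_0$ and then $F^{n+m}(J)=F^m\!\big(F^n(J)\big)\supseteq F^m(E_0)\supseteq P$, so $F^{n+m}(J)\cap\mathcal W\ne\varnothing$ and $J$ contains a point of $\mathcal U$; as $J$ was arbitrary, $\mathcal U$ is dense. The cases in which $\Gamma$ is a topological circle (where $F|_\Gamma$ is a non-decreasing degree-one circle map and the plateaus are its flat spots) I would treat separately: there the conclusion follows from Proposition \ref{rot}, since the complement of the preimages of the flat spots is a periodic orbit or a minimal Cantor set, hence nowhere dense.

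The main obstacle is the loss of injectivity at the vertices. Folding genuinely occurs away from the plateaus — for instance, by Lemma \ref{eigen} both directions $v_1$ and $v_4$ in $Q_1$ are sent to the ray of $v_3$ — so the set-lengths $l(F^n(J))$ need not grow and a one-line length argument is not available. The remedy, as above, is to stop tracking lengths and track \emph{whole edges} through the Markov partition, which is insensitive to folding; this reduces everything to the purely combinatorial assertion that every edge reaches a plateau. That assertion is exactly where the explicit description of the graphs enters: it must be checked on the associated oriented graphs case by case, and one should note that it genuinely uses the geometry (an expanding Markov self-cover may perfectly well preserve a subgraph with interior, as the doubling map of the circle shows, so the statement cannot hold for purely soft reasons and relies on the fact that none of these invariant graphs contains a forward-invariant subgraph of non-plateau edges).
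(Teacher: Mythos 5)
Your proposal rests on the claim that the vertex set $W$ is forward invariant, so that the edges of $\Gamma$ form a Markov partition and ``each edge maps onto a union of full edges.'' This is false, and it is the load-bearing step of your argument. A concrete counterexample already occurs for $-2<b\le -1$ (Figure \ref{ff:2}): the point $R_1=(-b-2,-1)$ is a genuine vertex (the edges $\overline{P_1R_1}$ and $\overline{R_1P_2}$ meet there with directions $v_3$ and $v_2$), but its image $R_2=(b+2,-3)$ lies in the interior of the straight edge through $Q$ and $P_3$ and on no axis, hence is a regular point; the paper itself records this as $\overline{P_1R_1}\to\overline{P_2R_2}=\overline{P_2Q}\cup\overline{QR_2}$, where $\overline{QR_2}$ is a \emph{proper} subinterval of an edge. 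This is why the case analysis in Section \ref{sec:5} must repeatedly refine the partition by adjoining forward iterates of vertices ($R_4,R_5,\dots$, $T_4,T_5,\dots$), why those refinements depend delicately on $b$, and why for irrational rotation number no finite Markov partition exists at all. With the Markov structure gone, both your covering step (an arc eventually contains a whole edge) and your combinatorial step (every edge eventually covers a plateau) lose their meaning: after an image of $J$ passes through a vertex you only know it contains a partial edge, and nothing forces subsequent images to saturate to full edges. The circle cases you defer to Proposition \ref{rot} are the ones where this failure is most visible, and there the set $\Gamma\setminus\mathcal U$ need not be a single periodic orbit or the minimal Cantor set --- it also contains all preimages of those sets --- so even that reduction is not immediate.

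Your diagnosis that folding at non-plateau points defeats a naive length argument is exactly right; it is the cure that differs from the paper's. The actual proof keeps the length argument and tames the folding directly: Lemma \ref{tree} shows $\mathcal G$ sits inside finitely many disjoint trees, which forces any failure of injectivity of $F$ on an interval $J\subset\mathcal G$ to occur at one of the finitely many local configurations of Figure \ref{f:notinjective} (necessarily \emph{not} on a plateau, since $J\subset\mathcal G$); one then checks that even in the worst folding configuration the length of $F^k(I)$ for a small explicit $k$ (at most $4$) exceeds $\sqrt2\,l(I)$, contradicting the choice of $I$ with $\sqrt2\,l(I)>\sup\{l(J):J\subset\mathcal G\}$. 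If you want to salvage your plan, you would have to replace the edge partition by a genuinely Markov refinement in every one of the $37$ cases (impossible uniformly in $b$), or else supply the quantitative control on folding that the paper obtains from Lemmas \ref{edges} and \ref{tree}.
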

 To prove this proposition we will prove an auxiliary technical lemma. We will say that a graph is a {\em tree} if it does not contain any circuit. That is if any closed path is homotopic to a constant path.

 \begin{lem}\label{tree} For all graphs introduced in Theorem~\ref{t:teoB} there exists a finite collection $\{\mathcal T_i\}_{i=1}^n$ of closed and connected trees satisfying $ {\mathcal T}_i\subset \Gamma$ for all $i,\,\,{\mathcal T}_i\cap{\mathcal T}_j=\emptyset $ if $i\ne j$ and $\mathcal G\subset \cup _{i=1}^n\mathcal T_i.$ \end{lem}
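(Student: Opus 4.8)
The plan is to build the trees by cutting $\Gamma$ along the plateaus. First I would record the elementary inclusion $\mathcal G\subset \Gamma\setminus\mathcal W$: the term $i=0$ of $\mathcal U=\bigcup_{i\ge 0}F^{-i}(\mathcal W)$ is $\mathcal W$ itself, so $\mathcal G=\Gamma\setminus\mathcal U$ is disjoint from the interiors of the plateaus. Since $\mathcal W$ is a finite union of open edges, $\Gamma\setminus\mathcal W$ is a compact subgraph of $\Gamma$, and being a finite (hence locally connected) compact graph it has finitely many connected components $C_1,\dots,C_n$, each of which is closed in $\Gamma$, connected, and pairwise disjoint. Taking $\mathcal T_i:=C_i$ we already obtain a finite family of closed, connected, pairwise disjoint subgraphs with $\mathcal G\subset\bigcup_i\mathcal T_i$. (Any isolated point of $\Gamma$ coming from the fixed point or $3$-periodic orbit of Proposition \ref{primera reduccioamenys1} is itself one of these $C_i$ and is trivially a tree.) Thus the whole statement reduces to the single claim that each $C_i$ is a tree.

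A component $C_i$ fails to be a tree exactly when it contains a circuit, and such a circuit is a circuit of $\Gamma$ disjoint from $\mathcal W$, i.e. built solely from non-plateau edges. So it suffices to prove the following claim: no circuit of $\Gamma$ consists entirely of non-plateau edges. To establish it I would combine the direction calculus of Lemma \ref{eigen} with the length estimate of Lemma \ref{edges}. Suppose $\gamma\subset\Gamma$ is a circuit none of whose edges is a plateau. By Lemma \ref{edges}(b), $F$ is affine and injective on each edge of $\gamma$ and collapses none of them, so $F|_\gamma$ is locally injective and its image again carries a circuit; moreover $l(F(J))\ge\sqrt{2}\,l(J)$ on each edge. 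Exactly as in the proof of Lemma \ref{edges}(c), following the directions around $\gamma$ through Lemma \ref{eigen} shows that a factor $2$ is picked up in every turn of the direction cycle (directly for $v_3,v_4$ and after one step for $v_1,v_2$), so the first return of $F$ along $\gamma$ is strictly expanding. Choosing $\gamma$ of maximal length among the finitely many simple circuits of $\Gamma$, its image would then carry a strictly longer simple circuit, contradicting maximality.

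The step I expect to be the genuine obstacle is precisely the caveat hidden in the last sentence: the contradiction requires that $F|_\gamma$ map $\gamma$ with degree one onto its image circuit. A degree-$d$ self-covering with $d\ge 2$ would absorb the $\sqrt{2}$-expansion and escape the length argument, and it would correspond to a hidden plateau-free expanding invariant circle. I would exclude this using the structure already available. On every graph of Theorem \ref{t:teoB} that is a topological circle, $F|_\Gamma$ lies in the class $\mathcal L$ of degree-one maps of Section \ref{s:prelim}, so no degree-$\ge 2$ cover can occur and the circle must carry a flat piece, i.e. a plateau. For the remaining graphs the degree-$\ge 2$ possibility is ruled out directly: since Theorem \ref{t:teoB} yields only the thirty-seven topologically distinct graphs displayed in the Appendix, the claim can in any case be verified graph by graph, checking that deleting the plateaus from each $\Gamma$ leaves a forest.

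With the claim established, every component $C_i$ of $\Gamma\setminus\mathcal W$ is acyclic, hence a tree, and the collection $\{\mathcal T_i\}_{i=1}^n$ satisfies all the required properties. Should one prefer a construction immune to the wrapping subtlety, I would instead use that $\mathcal G$ is forward invariant and closed and remove, in addition to $\mathcal W$, one small open arc $U_\gamma\subset\mathcal U$ through each plateau-free simple circuit $\gamma$ (such an arc exists because the expansion argument, applied to the forward orbit inside $\mathcal G$, shows $\gamma\not\subset\mathcal G$, whence $\gamma\cap\mathcal U\neq\emptyset$); the components of $\Gamma\setminus\bigl(\mathcal W\cup\bigcup_\gamma U_\gamma\bigr)$ are then closed, connected, disjoint trees still containing $\mathcal G$. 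Either way the resulting family is exactly what is needed to run the density argument of Proposition \ref{interior}.
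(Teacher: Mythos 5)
Your central claim --- that no circuit of $\Gamma$ consists entirely of non-plateau edges, so that the components of $\Gamma\setminus\mathcal W$ are already trees --- is false for several of the graphs, and this is precisely the point the paper's proof has to work around. For instance, for $b\in(-1/5,-1/8)$ (Figure~\ref{f:7}) removing the interiors of the two plateaus $\overline{R_2Q}$ and $\overline{X_3Z_2}$ still leaves a circuit through $\overline{X_2Z_1}$, and one must additionally delete the interior of $\overline{X_2Z_1}$, which is only a \emph{preimage} of a plateau; similarly, for $b\in(1,3/2]$ (Figure~\ref{f:21a}) one has to delete first and second preimages of the plateau $\overline{R_2Y_3}$ before a forest appears. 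So the correct construction, and the one the paper uses, removes from $\Gamma$ suitable open subsets of $\mathcal U$ (plateaus \emph{together with} finitely many of their preimages), not merely $\mathcal W$; this is legitimate because only the inclusion $\mathcal G\subset\bigcup_i\mathcal T_i$ is required, and it is carried out by inspection of the 37 graphs. Your main route, and also your proposed ``graph by graph'' check that deleting the plateaus alone leaves a forest, would therefore fail on these cases.

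Your fallback in the last paragraph (remove one arc of $\mathcal U$ through each plateau-free circuit) is the right idea, but the justification that every such circuit meets $\mathcal U$ is not sound as stated. First, $F$ restricted to a circuit with no plateau edges need not be locally injective: by Figure~\ref{f:notinjective} the map also folds at vertices in $Q_1$ (resp.\ $Q_3$) where two edges with certain slopes meet, so the ``image carries a circuit'' step can break down independently of the degree issue you flag. Second, the expansion argument you invoke to show $\gamma\not\subset\mathcal G$ is essentially the content of Proposition~\ref{interior}, whose proof in the paper \emph{uses} Lemma~\ref{tree} to control the case where $F(J)$ is not an interval; running it before the lemma is established risks circularity unless you redo that analysis from scratch. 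The honest conclusion is that the lemma is a finite, case-by-case verification on the explicit graphs, with the set of arcs to be removed chosen ad hoc (plateaus plus finitely many preimages) in each case.
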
 
 
 \begin{proof} To prove the lemma we will need to study
 each of the different topological situations of the graph $\Gamma.$ For brevity we skip here the details and only explain some cases. When $b\in [0,1/2]$ (Figures \ref {f:10}-\ref {f:19}) the graph $\Gamma$ is itself a tree and there is nothing to prove. Moreover, when $b\in (-\infty,-1/5]$ (Figures \ref {ff:1}-\ref {f:6}), or when $b\in [2,\infty)$ (Figures \ref {ff:22}-\ref {f:23}) removing  the interior of one or two plateaus from $\Gamma$ we already obtain a connected tree. The other cases are more complicated but one can easily check that the result holds in each particular case. We explain here two of these cases and skip the others for sake of brevity.  Consider now
 $b\in (-1/5,-1/8)$ (Figure~\ref{f:7}). Removing from $\Gamma$ the interior of the plateaus $\overline{R_2Q}, \overline{X_3Z_2},$ and the interior of $\overline{X_2Z_1}$, which is a preimage of the interior of the plateau $\overline{X_3Z_2},$ we obtain a connected tree and the result follows. The last case we study whith detail is when $b\in (1,3/2]$ (Figure \ref{f:21a}). Here we first remove from $\Gamma$ the interior of the plateau
 $\overline{R_2Y_3}.$   We also remove the interior of $\overline {R_1Y_2}$ which is a preimage of the interior of this plateau. Set $R_0=(b/2,(b-2)/2)\in \overline{Y_1Z_2}$ and note that the interior of $\overline {R_0Y_1}$ is a preimage of the interior of  $\overline {R_1Y_2}.$ Set $X_5=F(X_4)=(2-b,2b-3)\in \overline {Y_1Z_2}.$ In fact, when $b\le  4/3,\,X_5\in \overline{R_0Z_2}$ and therefore, there is an open subinterval  $J\subset \overline{X_4T_1}$ such that $F(J)$ is the interior of $\overline {R_0Y_1}.$ In this case, removing also from $\Gamma \,\,$ the interior $\overline {R_0Y_1}$ and $J,$ we obtain a connected tree ending the proof in this case (see Figure \ref{f:tree}). When $b\in (4/3,3/2),\,X_5\in \overline {R_0Y_1}$  and the corresponding preimage $J$ of the interior of $\overline {R_0Y_1}$ is a tree that  contains $X_4$ and intersects $\overline{T_1X_4},\,\, \overline{X_4Y_3}$ and $\overline{X_1X_4}.$ Thus, removing also $J$ and the interior of $\overline {R_0Y_1}$ from $\Gamma,$ we obtain two disjoint connected trees ending the proof in this case. We finish here the study of the particular cases and the proof of the lemma.
   \end{proof}
   
 \begin{figure}[H]
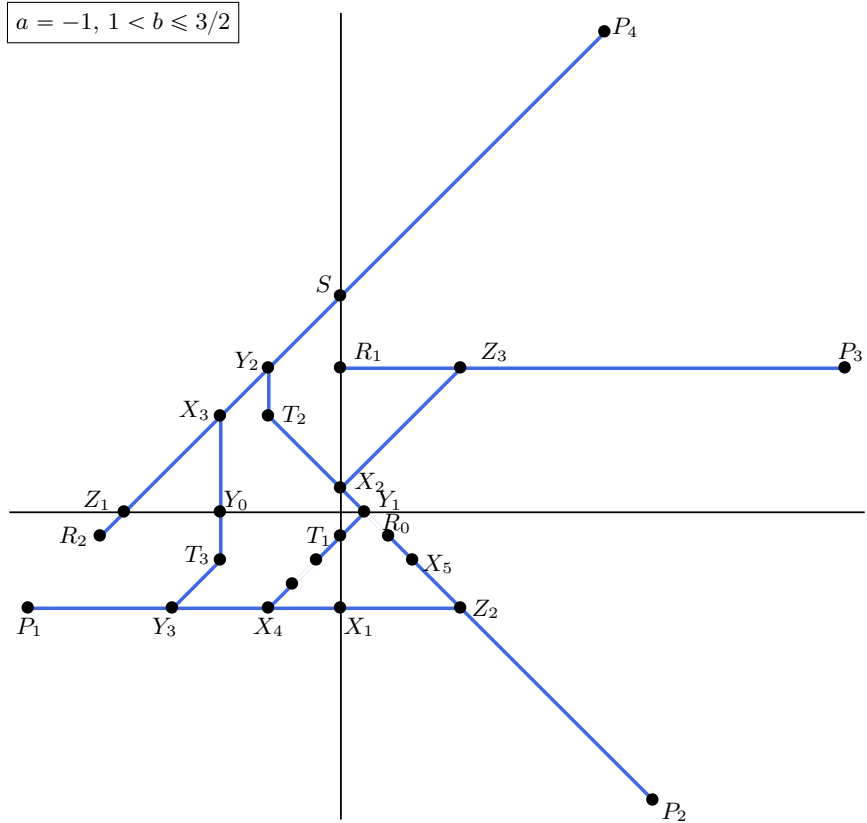

 	\footnotesize
 	\centering
 	
 	\begin{lpic}[l(2mm),r(2mm),t(2mm),b(2mm)]{tree(0.48)}
 		\lbl[l]{3,185; $\boxed{a=-1,\,1< b\leq 3/2}$}
 		
 		\lbl[l]{137,184; $P_{4}$}
 		\lbl[r]{75,127; $S$}
 		\lbl[r]{59,110; $Y_2$}
 		\lbl[r]{48,99; $X_3$}
 		
 		\lbl[r]{26,79; $Z_1$}
 		\lbl[r]{21,71; $R_2$}
 		
 		\lbl[c]{8,51; $P_{1}$}
 		\lbl[c]{38,51; $Y_3$}
 		\lbl[c]{61,51; $X_4$}
 		\lbl[c]{81,51; $X_1$}
 		\lbl[l]{106,55; $Z_2$}
 		
 		\lbl[l]{64,98; $T_{2}$}
 		
 		\lbl[l]{148,10; $P_{2}$}
 		
 		\lbl[l]{80,83; $X_{2}$}
 		
 		\lbl[l]{86,74; $R_0$}
 		
 		\lbl[l]{95,65; $X_5$}
 		
 		\lbl[l]{85,79; $Y_1$}
 		
 		\lbl[l]{51,79; $Y_0$}
 		
 		\lbl[c]{190,112;  $P_{3}$}
 		\lbl[c]{111,112; $Z_3$}
 		\lbl[l]{78,112; { $R_1$}}
 		
 		\lbl[r]{75,71; { $T_1$}}
 		
 		\lbl[r]{48,67; { $T_3$}}
 		
 	\end{lpic}
 	
 	\caption{The tree in the proof of Lemma \ref{tree},  obtained from $\Gamma$ for $a=-1$ and $1/< b\leq 4/3$, when removing some preimages of the plateaus. When $4/3< b\leq 3/2,\,\, X_5\in \overline{R_0Y_1}$ and the preimage of
 	the interior of $\overline {R_0X_1} $ is a subtree of the tree with endpoints $T_1,Y_3$ and $X_1$ that contains $X_4.$ So removing this last piece from $\Gamma$ we obtain two connected and disjoint  trees.}\label{f:tree} 
 
\end{figure}

\begin{proof}[Proof of Proposition  \ref{interior}.]
Since $\mathcal G=\Gamma\,\setminus\,\mathcal U,$ it is equivalent to show that the interior of $\mathcal G$ is empty. Suppose to arrive a contradiction that the interior of $\mathcal G$ is not empty. Then $\mathcal G$ must contain some open interval and hence also must contain a nondegenerate closed interval that we denote by $J.$ First of all we claim that if $F\vert_J$ is not injective then there is a point $\mathbf{z}\in J$ in which $F\vert_J$ is not locally injective. This is clear when $F(J)$ is also an interval. If not, by Lemma \ref {tree} we have that $F(J)\subset \mathcal T_i$ for some $\mathcal T_i\subset \Gamma$ and hence it is also a subtree of $\Gamma.$ Since it is not an interval it has at least three endpoints and therefore, there exists $\mathbf{z}$ belonging to the interior of $J$ such that $F(\mathbf{z})$ is an endpoint of $F(J).$ Clearly $F\vert_J$ is not localy injective at $\mathbf{z}.$ This ends the proof of the claim.

There are only three type of points in which the map $F\vert_J,$ being $J\subset \Gamma$ an interval, is not locally injective. Either the point belongs to a plateau or it belongs to $Q_1$ and it is the intersection of two edges with some prescribed slopes or it belons to $Q_3$ and also is the intersection of two edges with prescribed slopes. We summarize these possible situations in Figure \ref {f:notinjective}.

\begin{figure}[H]
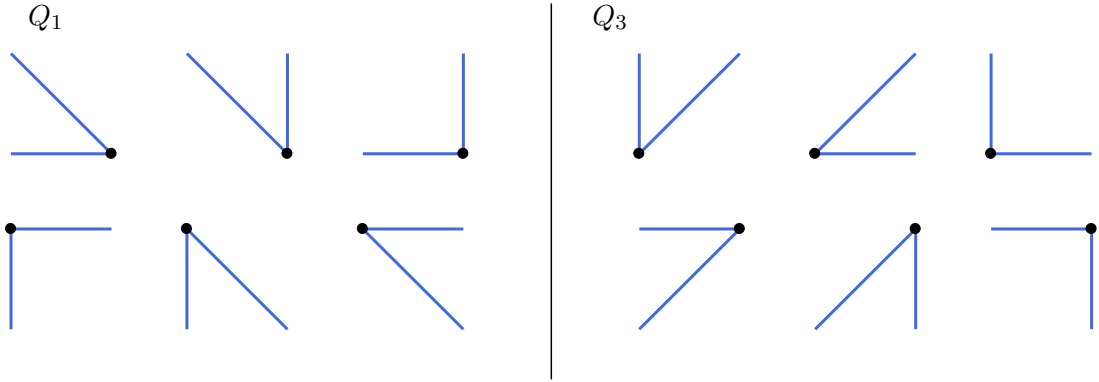

	
	\begin{lpic}[l(2mm),r(2mm),t(2mm),b(2mm)]{notinjective(0.75)}
		
		\lbl[l]{5,67; $Q_1$}
		\lbl[l]{105,67; $Q_3$}

	\end{lpic}
	\caption{Points $\mathbf{z}\in J$, where $J$ is a subinterval of $\Gamma$, for which $F\vert_{J}$ is not locally injective.}\label{f:notinjective}
\end{figure}

Let $\delta:=\sup \{l(J),\,\, J\subset\mathcal G\}$ that is positive because we are assuming that $\mathcal G$ contains open intervals. Choose $I,$ a subinterval of $\mathcal G$ satisfyng that $\sqrt 2l(I)> \delta.$ If $F\vert_I$ is injective it follows that $F(I)$ is an interval and from Lemma \ref{edges} $(b)$ we get that $l(F(I))\ge \sqrt 2l(I)>\delta;$ a contradiction because $\mathcal G$ is positively invariant and hence $F(I)$ would be an interval contained in $\mathcal G$ with length greater than $\delta.$ Therefore, $F\vert_I$ is not injective and from the previous claim  $I$ must contain a point $z$ in which $F\vert_I$ is not locally injective. Then we are in one of the  situations described in Figure \ref {f:notinjective}.

 We will prove that in any case 
some iterated image of $I$ is an interval and its length is greater than $\delta$ obtaining the desired contradiction. We only study the most complicated case because in the first iterate the length of the interval is strongly reduced. This occurs when the cardinality of the set of preimages in $I$ of a point of $F(I)$ is greater than two. This will imply that there are at least two vertices in which $F\vert_I$ is not locally injective belonging to $I.$ This can occur for example when $b\in (-1/5,-1/9)$ (Figures \ref{f:7} and \ref{f:8}) assuming that the interval $I$ begins at a point $\mathbf{x}\in \overline{P_1Y_2},$ joins this point with $Y_2,$ joins $Y_2$ with $T_3$ and also joins $T_3$ with a point $\mathbf{y}\in \overline{T_3R_2}.$ Direct computations show that $F(I)\subset \overline{R_3P_2}$ and $$l(F(I))\ge \frac{4\sqrt 2-2}{7}l(I).$$ Thus we have, assuming that $I\subset \mathcal G,$ that  $F^2(I)\subset \overline{R_4P_3},\,\,F^3(I)\subset \overline{R_5S}$ and $F^4(I)\subset \overline{R_6P_5}$ and $l(F^4(I))\ge 2\sqrt 2 \frac{4\sqrt 2-2}{7}l(I)\ge \sqrt 2 l(I)>\delta$; a contradiction. A similar situation occurs when $b\in (3/4,10/13]$ (Figures \ref{f:E}-\ref{f:J}) when the vertices involved are $X_4$ and $Z_3$ or when $b\in (10/13,11/14]$ (Figure \ref{f:K}) with the vertices $Y_6$ and $Z_3.$ In all these cases we arrive at a contradiction by computing the length of $F^4(I).$ In all the remaining cases where the cardinality of the preimages in $I$ of a point of $F(I)$ is at most two we have $l(F(I))\ge {l(I)}/{\sqrt2 }$ an it suffices to consider  $l(F^3(I))$ to reach a contradiction. This ends the proof of the proposition.
\end{proof}

\begin{proof}[Proof of Theorem C] Clearly $\mathcal U$ is open and by Proposition \ref{interior} it is also dense in $\Gamma.$ Moreover the $\omega$-limit of any point of $\mathcal{U}$ coincides with the $\omega$-limit of some plateau. A simple inspection of all the possible situations for the graph $\Gamma$ shows that there are at most three different possible $\omega$-limits for the plateaus. Although in some cases there are more than three plateaus, there are some of them sharing their $\omega$-limits in such a way we obtain at most three different behaviors. For example in the case $3/4<b\le 154/205$ (see Figure \ref{f:E}) there are six plateaus, namely the segments $\overline{SSX_4},$ $\overline{SQW_6},$ $\overline{QQW_4},$ $\overline{\Pi_2X_2},$ $\overline{QR_4}$ and $\overline{SP_4}.$ However since  $F(\overline{SQW_6})=X_{20},$ $F(\overline{SSX_4})=X_5,$ $F(\overline{\Pi_2X_2})=X_3,$ $F(\overline{QQW_4})=W_5$ and $F^2(W_5)=X_{20}$ it follows that these four plateaus share the same $\omega$-limit, the $18$-periodic orbit of $X_5.$ A similar situation holds in all the cases having more than three plateaus. Then the first assertion of the theorem follows. 
	
Assume now that $b=p/q$ with $(p,q)=1.$ Denote by $$\Z_q:=\{x\in\Q \mbox{ such that } x=r/q \mbox{ for some } r\in \Z \}.$$ Since $\Gamma$ is compact it follows that the cardinality of $\Gamma\cap \Z_q\times \Z_q$ is finite. On the other hand a simple inspection shows that all the vertices of $\Gamma$ belong to $\Z_q\times \Z_q.$ In particular, the image of any plateau belongs to $\Z_q\times \Z_q.$ Since $F(\Z_q\times \Z_q)\subset \Z_q\times \Z_q,$ and this set is finite, we obtain that any $\omega$-limit in $\Z_q\times \Z_q$ is a periodic orbit. This ends the proof of the theorem.
\end{proof}

\section{The case $a<0$ (II). Case analysis and proof of Theorem \ref{t:teoD}}\label{sec:5}

In the following, we use again the conjugation \eqref{conj}, and we will work with the normalized map $F_{a,b}$ with $a=-1$.

\subsection{The case $a=-1$ and $b\leq -2$}\label{ss:primera}

In this section we freely use, without citing it explicitly,  the results stated in Proposition~\ref{rot}.

\begin{propo}\label{primer} Assume that $a=-1$ and $b\leq -2.$ Then the following holds.
	\begin{enumerate}
		\item [(a)] The graph $\Gamma$ is a topological circle and the map $F\vert_{\Gamma}\in \cal{L}.$  In particular, it has zero entropy. Moreover, its rotation number is $1/7.$
		\item [(b)]The map $F$ has the fixed point $p=(-b,-1)\in Q_4$ and two 7-periodic orbits, 	
		\begin{align*}\mathcal{P}=\left\{(-b-2,-1), (-b-2,-3),(-b,-5),
		(-b+4,-5), \right.\\  	\left.(-b+8,-1), (-b+8,7), (-b,1)\right\}\end{align*}
	which is the orbit of the only plateau of $\Gamma,$ and
	 $${\mathcal{Q}=\left\{\left(-b-\frac{16}{15},-\frac{1}{15}\right), \left(-b-2,-\frac{17}{15}\right), \left(-b-\frac{28}{15},-\frac{47}{15}\right), \left(-b+\frac{4}{15},-5\right),\right.}$$
	 $${ \qquad\qquad\qquad\qquad\qquad \left. \left(-b+\frac{64}{15},-\frac{71}{15}\right),\left(-b+8,-\frac{7}{15}\right), \left(-b+\frac{112}{15},\frac{113}{15}\right)\right\}}.$$

\rec Furthermore, for any $(x,y)\in\Gamma\setminus \mathcal{Q}$ there exists some $n$ such that $F^n(x,y)\in \mathcal{P}.$
	\end{enumerate}
\end{propo}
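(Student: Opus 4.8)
The plan is to exploit the one-dimensional structure of $\Gamma$ furnished by Theorem \ref{t:teoB}. For $b\le -2$ the graph produced in Proposition \ref{p:lemanou} (see the Appendix) is a topological circle carrying a single plateau, and I would take this description as the starting point. To prove that $F\vert_{\Gamma}\in\mathcal{L}$ I would fix an orientation of the circle and check, edge by edge, that $F$ carries each edge onto the next one respecting this orientation, using Lemma \ref{edges}(a)--(b) to control the image directions; the unique plateau is the only place where $F\vert_\Gamma$ fails to be injective, and there it is locally constant, so the induced lifting is non-decreasing but not strictly increasing. Together with the fact that $F$ wraps $\Gamma$ around itself exactly once, this yields $F\vert_\Gamma\in\mathcal{L}$, whence $h(F\vert_\Gamma)=0$ by Proposition \ref{rot}(v). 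I expect the genuinely laborious step to be the verification of degree one and of orientation-preservation, since it requires following the entire chain of edges of $\Gamma$ as listed in the Appendix.

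To identify the rotation number I would use the orbit $\mathcal{P}$ directly. A quadrant-by-quadrant computation with the affine pieces $F_1,\dots,F_4$ of \eqref{e:Fis} shows that the seven listed points close up into a single cycle; for instance $(-b,1)\in Q_1$ maps under $F_1$ to $(-b-2,-1)$, which then runs through $Q_4$ as $(-b-2,-1)\rightarrow(-b-2,-3)\rightarrow(-b,-5)\rightarrow(-b+4,-5)\rightarrow(-b+8,-1)\rightarrow(-b+8,7)$ and back to $(-b,1)$. Writing the points in the coordinate $u=x+b$ one reads off their cyclic position on $\Gamma$ and checks that $F$ sends each to its immediate cyclic successor. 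By Proposition \ref{rot}(i)--(ii) the existence of a periodic orbit forces $\rho(F\vert_\Gamma)\in\Q$ with denominator $7$, and the ``advance by one'' combinatorics fixes the numerator, giving $\rho(F\vert_\Gamma)=1/7$.

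For part (b) I would first note that $p=(-b,-1)$ is exactly the isolated fixed point in $Q_4$ guaranteed, for $b<0$, by Proposition \ref{primera reduccioamenys1}; it lies off the circle $\Gamma$, and $F_4(-b,-1)=(-b,-1)$ is immediate. The two orbits $\mathcal{P}$ and $\mathcal{Q}$ are shown to be $7$-periodic by the same explicit iteration, and $\mathcal{P}$ is identified as the orbit of the plateau by observing that $F_1$ depends only on $x-y$ on $Q_1$, so the segment of $\Gamma$ lying on the line $x-y=-b+1$ collapses onto $(-b,1)\in\mathcal{P}$. Since both orbits have period $7$, Proposition \ref{rot}(ii) shows these are the only periodic orbits of $F\vert_\Gamma$, and Lemma \ref{edges}(c) shows that $\mathcal{Q}$, meeting no plateau, is repulsive.

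It remains to prove the finite-time arrival to $\mathcal{P}$. Since $\rho(F\vert_\Gamma)=1/7$, the seven points of the repelling orbit $\mathcal{Q}$ cut $\Gamma$ into seven arcs, each containing a single point of $\mathcal{P}$; passing to $g=F^7$, each arc is mapped into itself with its endpoints and its $\mathcal{P}$-point as the only fixed points. The collapse of the plateau, together with that of its $F$-preimages lying in the other arcs, produces in each arc a non-degenerate subinterval on which $g$ is constant, equal to the interior $\mathcal{P}$-point of that arc. On a non-decreasing self-map of an interval with fixed endpoints, a single interior fixed point, and a flat piece at that point, every non-endpoint iterates strictly monotonically and must enter the flat piece after finitely many steps (otherwise it would accumulate at a spurious fixed point), landing exactly on the $\mathcal{P}$-point. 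This gives $F^n(\mathbf{x})\in\mathcal{P}$ for some $n$ whenever $\mathbf{x}\in\Gamma\setminus\mathcal{Q}$, in agreement with the density statement of Proposition \ref{interior}. The delicate point here, as above, is the figure-dependent check that a flat piece surrounds the attracting point in each arc; the remaining computations are routine.
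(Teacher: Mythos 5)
Your proposal is correct and follows essentially the same route as the paper: read the circle structure and the non-decreasing degree-one property off the Appendix graph, get zero entropy and rotation number $1/7$ from Proposition \ref{rot} together with the explicit $7$-cycle $\mathcal{P}$, and deduce finite-time absorption into $\mathcal{P}$ from the shape of $F^7$ on each fundamental arc (non-decreasing, piecewise linear with slopes $0$ or $2^m$, with a flat piece at the attracting point) — the paper does exactly this after discarding the collapsing intervals and reading the cyclic covering graph $A\to B\to\cdots\to H\to A$. One small slip: Proposition \ref{rot}(ii) only forces every periodic orbit to have period $7$, not that $\mathcal{P}$ and $\mathcal{Q}$ are the only ones; uniqueness comes from your own subsequent observation that $F^7$ on each arc has no fixed points besides the endpoints and the flat-piece image, which is how the paper argues it.
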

\begin{proof}
	
From Proposition \ref{p:lemanou}, we know that the dynamics of $F$ is concentrated in the graph $\Gamma$ of Figure \ref{ff:1}. Note that, in this case, $\Gamma$ is a topological circle and $F\vert_{\Gamma}$ is non-decreasing. Here  we have $F(P_i)=P_{i+1}$ for $i=1\ldots 6,$  $F(R_1)=R_2$ and $F(P_7)=F(R_2)=F(S)=P_1.$

We can see that the interval $\overline{R_2S}\twoheadrightarrow P_1$ and also 
$\overline{P_6R_1}\rightarrow \overline{P_7R_2}\twoheadrightarrow P_1,$ where $\twoheadrightarrow$ means ``collapses to''. Since these intervals collapse, we can neglect them. We denote $A:=\overline{P_{1}P_{2}}$, $B:=\overline{P_{2}P_{3}}$, $C:=\overline{P_{3}P_{4}}$, $D:=\overline{P_{4}P_{5}}$, $E:=\overline{P_{5}R_1}$, $G:=\overline{P_{6}R_2}$ and $H:=\overline{P_{1}S}.$ 

The oriented graph corresponding to the covering $\mathcal{A}=\{A,B,C,D,E,G,H\}$ is the following:

\begin{center}
\begin{tikzcd}
	A \arrow[r] & B \arrow[r] & C \arrow[r] & D \arrow[r] & E \arrow[r] & G \arrow[r] & H \arrow[llllll, bend right]
\end{tikzcd}
\end{center}

We stress the fact that we do not include in this graph the coverings of the plateau and its preimage.

Note that $F^7$ leaves each of these intervals  invariant and in particular, the graphic of $F^7$ restricted to $A$ looks like Figure \ref{Grafic0}. Also note that the subinterval of $A$ where $F^7$ is constant appears because $D$ also covers $\overline{R_1P_6}$ that collapses after two iterates. From these facts we directly obtain items $(a)$ and $(b).$ \end{proof}

	\begin{figure}[H]
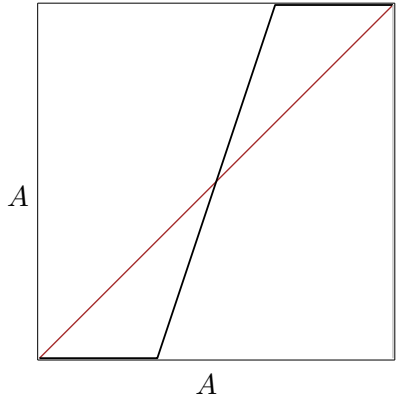

		\centering
		\begin{lpic}[l(2mm),r(2mm),t(2mm),b(2mm)]{G1(0.25)}
			\lbl[c]{90,-10; { $A$}}
			
			\lbl[c]{-10,90; { $A$}}
			
		\end{lpic}
		\caption{Graphic of $F^7$ on $A$ when $ b\leq -2.$}
		\label{Grafic0}
	\end{figure}

\subsection{The case $a=-1$ and $-2< b\leq -1$}

As before, the map $F$ has the fixed point $p=(-b,-1)\in Q_4.$ Consider the graph $\Gamma$ which appears in Figure \ref{ff:2}. It is still a topological circle and $F\vert_{\Gamma}$ is still non-decreasing. Moreover for $i=1,\ldots 6,\,F(P_i)=P_{i+1},$ for $i=1,2,3,\,F(R_i)=R_{i+1},\,F(S_1)=S_2,\,F(Q)=P_3$ and $F(P_7)=F(S_2)=F(T)=R_1.$  It remains to study the image of $R_4=(-b+4,4b+3)$ which is $(-5b,4b+7).$ Then the orbit of $R_4$ depends on the value of $b.$

The following Proposition states all results concerning this range of parameters.

\begin{propo}\label{segon}
	Assume that $a=-1$ and $-2< b\leq -1.$ Then the graph $\Gamma$ is a topological circle and the map $F\vert_{\Gamma}\in \cal{L}.$ In particular, it has zero entropy. Moreover

\begin{enumerate}
	\item [(a)] When $-2<b<-15/8$ the rotation number of $F\vert_{\Gamma}$ is $1/7.$ Furthermore, $F\vert_{\Gamma}$ has  two-periodic orbits of period 7: $\mathcal{P}$ which is the orbit of $(-b-2,-1)$ that is reached by the two plateaus of $\Gamma$, and $\mathcal{Q}$ which is the orbit of $\left(-b-2,-\frac{16b+15}{15}\right)$ and it is repulsive. Lastly, for every $(x,y)\in \Gamma\setminus \mathcal{Q}$ it exists $n\in\N$ such that $F^n(x,y)\in \mathcal{P}.$ When $b=-\frac{15}{8}$ the situation is essentially the same but both 7-periodic orbits coincide.
	\item [(b)] When $-7/4<b\le-1$ the rotation number of $F\vert_{\Gamma}$ is $1/6.$ Furthermore, $F\vert_{\Gamma}$ has two-periodic orbits of period 6: $\mathcal{P}$ which is the orbit of $(-b-2,-1)$ that is reached by the two plateaus of $\Gamma$ and $\mathcal{Q}$ which is the orbit of $\left(-\frac{7b+16}{15},\frac{8b-1}{15}\right)$ and it is repulsive. Also, for every $(x,y)\in \Gamma\setminus \mathcal {Q}$ it exists $n\in\N$ such that $F^n(x,y)\in \mathcal{P}.$ When $b=-7/4$ both orbits coincide.
	\item [(c)] For $-{15}/{8}\leq b \leq -{7}/{4},$ the rotation number of $F\vert_{\Gamma}$ varies continuously from $1/7$
	to $1/6.$   For the values $b$ with irrational rotation number, the only periodic point of $F$ is the fixed point $p$ and the $\omega$-limit of any point $q$ different from $p$ is a   Cantor subset of~$\Gamma,$ which is fixed and independent of $q.$ The set of periods for $F|_\Gamma,$ that only arise for the values of $b$ with rational rotation number, are all the natural numbers \emph{except} $2-5$, $8-12$, $14-17$, $18$, $21-24$, $26$, $28-30$, $35$, $36$, $38-40$, $42$, $50$, $52$, $54$, $57$, $60$, $64-66$, $78$, $96$, $100$, $102$, $138$ and $220$. Moreover,
	 if the rotation number is the rational number $r/s$ with $(r,s)=1$, then 
$s=6m+7n$ for  certain $m,n\in \N$ and  either
	\begin{itemize}
		\item $F$ has exactly two periodic orbits of period $s,$ one which we call $\mathcal{P}$ and it is the orbit of $(-b-2,-1)$, that is visited by both plateaus, and another one that we call $\mathcal{Q},$ which is a repelling periodic orbit. Furthermore, for each $(x,y)\in \Gamma\setminus \mathcal {Q}$ it exists~$n$ such that $F^n(x,y)$ belongs to $\mathcal{P},$ or
		
		\item The two $s$-periodic orbits coincide and $F$ has exactly one periodic orbit of period~$s,$ which we call $\mathcal{P},$ which is the orbit of $(-b-2,-1).$ Moreover, for each $(x,y)\in \Gamma$ it exists~$n$ such that $F^n(x,y)$ belongs to $\mathcal{P}.$
	\end{itemize}
\end{enumerate}
\end{propo}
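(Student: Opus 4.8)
The plan is to lean on the fact, already set up in the paragraph preceding the statement, that $F|_{\Gamma}$ belongs to the class $\mathcal{L}$ of degree-one circle maps with non-decreasing lifting, so that Proposition~\ref{rot} applies verbatim. In particular item~(v) of that proposition gives zero entropy for free, and the entire analysis reduces to computing the rotation number $\rho(F|_{\Gamma})$ as a function of $b$ together with the induced periodic structure. The only combinatorial feature that depends on $b$ is the location of the image $(-5b,4b+7)$ of the vertex $R_4$, so I would first determine, in each of the three sub-ranges, into which edge of $\Gamma$ this point falls, thereby fixing the oriented graph of the covering and hence the symbolic dynamics of $F|_{\Gamma}$.

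For part~(a) I would verify that for $-2<b<-15/8$ the resulting oriented graph is combinatorially identical to the one in Proposition~\ref{primer}, forcing $\rho=1/7$, and then exhibit the two period-$7$ orbits explicitly. By direct substitution one checks that $\mathcal{P}$ is the common image of the two plateaus (hence attracting, since a plateau collapses to a single point) and that $\mathcal{Q}$ avoids every plateau, so Lemma~\ref{edges}(c) makes $\mathcal{Q}$ repulsive. The fact that every point outside $\mathcal{Q}$ eventually lands on $\mathcal{P}$ follows because, $\rho$ being rational, $\mathcal{P}$ and $\mathcal{Q}$ are the only periodic orbits (Proposition~\ref{rot}(ii)) and the complement of the local basin of the repeller is swept into the plateaus. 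Part~(b) is identical in spirit, now with $\rho=1/6$ and period-$6$ orbits, the coincidence of the two orbits at $b=-7/4$ (and, analogously, at $b=-15/8$ in~(a)) being confirmed by evaluating the two listed orbit points and checking that $\mathcal{Q}$ merges into $\mathcal{P}$.

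For part~(c) the engine is continuity of the rotation number, Proposition~\ref{rot}(iv): since $\rho=1/7$ at $b=-15/8$ and $\rho=1/6$ at $b=-7/4$, the intermediate value theorem shows that $\rho$ attains every value of $[1/7,1/6]$, monotonicity being read off from the monotone dependence of the critical orbit on $b$. When $\rho\notin\Q$, Proposition~\ref{rot}(i) rules out periodic points on $\Gamma$, so the only periodic point of $F$ is $p$, and Proposition~\ref{rot}(iii), together with the fact that $F|_{\Gamma}$ is constant on each plateau, forces the common $\omega$-limit to be a Cantor set. When $\rho=r/s$ with $(r,s)=1$, Proposition~\ref{rot}(ii) gives that every periodic orbit has period $s$; writing $\tfrac17\le\tfrac{r}{s}\le\tfrac16$ as $6r\le s\le 7r$ and setting $m=7r-s$ and $n=s-6r$, both non-negative, yields $s=6m+7n$, the claimed representation. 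The dichotomy between two periodic orbits (the plateau-visited $\mathcal{P}$ and the repelling $\mathcal{Q}$) and a single one is then settled by the same plateau/repeller argument as in~(a) and~(b).

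The genuinely laborious step, and the main obstacle, is pinning down the \emph{exact} set of realized periods, namely $\{s:\exists\,r,\ (r,s)=1,\ \tfrac17\le\tfrac{r}{s}\le\tfrac16\}$, whose complement is the explicit list in the statement. Here I would apply Corollary~\ref{c:divisorfunction} with $a_2-a_1=\tfrac16-\tfrac17=\tfrac1{42}$ and the bound $D(n)=2\sqrt n$ from~\eqref{e:cotaded}: solving $2\sqrt n<\lfloor n/42\rfloor-1$ produces an explicit $n_0$ beyond which every denominator carries an irreducible fraction in $[1/7,1/6]$, so that all sufficiently large $s$ occur as periods. The finitely many remaining cases $s<n_0$ are then disposed of by a direct check via Lemma~\ref{l:divisorfunction}, and this finite verification is exactly what produces the excluded values $2$--$5,\ 8$--$12,\dots,138,\ 220$. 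The combination of this number-theoretic bookkeeping with the case-by-case tracking of the orbit of $R_4$ across the two transition values of $b$ is where essentially all of the work lies.
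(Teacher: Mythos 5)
Your proposal follows the paper's route almost exactly: locate the image of $R_4$ in each sub-range of $b$ to fix the covering graph, read off the $7$- and $6$-cycles for (a) and (b), use the plateau collapse for attraction to $\mathcal{P}$ and Lemma~\ref{edges}(c) for repulsiveness of $\mathcal{Q}$, and in (c) combine continuity of the rotation number with Corollary~\ref{c:divisorfunction} and the bound $D(n)=2\sqrt n$ (giving $s_0=7141$) plus a finite check to produce the excluded list. The one genuine divergence is your derivation of $s=6m+7n$: you get it purely arithmetically from $6r\le s\le 7r$ by setting $m=7r-s$, $n=s-6r$, whereas the paper extracts it from the symbolic itinerary of the orbit, which must be a concatenation of the two loops $KMUBEHJ$ (length $7$) and $KMADGI$ (length $6$) in the partition of $\Gamma$. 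Your version is cleaner for that one identity, but note that the paper's itinerary analysis is not dispensable: it is also what proves, for a general rational rotation number $r/s$ in (c), that there are \emph{exactly} two (or one, when they merge) periodic orbits and that every point off $\mathcal{Q}$ reaches $\mathcal{P}$. In (a) and (b) the Markov graph is a single cycle, so $F^7$ (resp.\ $F^6$) preserves each partition interval and the dynamics is read off one monotone graph; in (c) one must first show that any periodic orbit avoiding the plateaus' orbit travels only along the two loops, and then study $F^s$ on each interval between consecutive points of the repelling orbit (non-decreasing, piecewise linear with slopes $0$ or $2^k$ by Lemma~\ref{edges}, with uniqueness of the attracting point coming from the fact that the plateau $\overline{P_2Q}$ is sent by $F^5$ to a single point). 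Your phrase ``settled by the same plateau/repeller argument as in (a) and (b)'' underestimates this step; it is the one place where your outline would need to be fleshed out with the loop analysis you otherwise bypassed. Also, the monotonicity of $\rho$ in $b$ that you invoke is neither needed (the intermediate value theorem suffices) nor established; the paper only uses continuity.
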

\begin{proof}
	
The proof is essentially the same as in Proposition \ref{primer}. We only explain the main differences.
$(a)$	Consider the graph which appears in Figure  \ref{ff:2}. 
	We are going to add the successive images of the point $R_4.$ Set $R_5:=(-5b,4b+7)=F(R_4)\in \overline{P_{5}S_1},$ $R_6:=(-9b-8,7)=F(R_5)\in \overline{P_{6}S_2}$ and $R_7:=(-9b-16,-8b-15)=F(R_6)\in \overline{P_{7}T}.$ Hence $F(R_7)=R_1$ and $R_1$ is a 7-periodic point.
	
	There are some intervals on the graph which collapse to a point, namely:
	$\overline{S_2T}\twoheadrightarrow R_1\,,\,\overline{P_2Q}\twoheadrightarrow P_3.$ We have to consider also its preimages:
		$\overline{QR_2}\rightarrow \overline{P_{3}R_{3}}\rightarrow \overline{P_{4}R_{4}}\rightarrow \overline{P_{5}R_{5}}\rightarrow \overline{P_{6}R_{6}}\rightarrow \overline{P_{7}R_{7}}\twoheadrightarrow R_1,$ $\overline{P_{6}S_1}\rightarrow \overline{P_{7}S_2}\twoheadrightarrow R_1 $ and also $\overline{P_{1}R_1}$, because $\overline{P_{1}R_{1}}\rightarrow \overline{P_{2}R_{2}}=\overline{P_{2}Q}\cup \overline{QR_2}.$
		
		We name the rest of the intervals by $A:=\overline{P_{2}R_1}\,,\,B:=\overline{P_{3}R_{2}}\,,\,C:=\overline{P_{4}R_{3}}\,,\,D:=\overline{P_{5}R_{4}}\,,\,E:=\overline{S_1R_{5}}\,,G:=\overline{S_2R_6}\,,\,H:=\overline{P_{1}T}.$
		
		The corresponding oriented graph, which is of Markov type, is:
		
\begin{center}
		\begin{tikzcd}
			A \arrow[r] & B \arrow[r] & C \arrow[r] & D \arrow[r] & E \arrow[r] & G \arrow[r] & H \arrow[llllll, bend right]
		\end{tikzcd}
\end{center}

As before, $F^7$ leaves A invariant and the graph of $F^7\vert_{A}$ looks like Figure \ref{Grafic0} and statement $(a)$ follows.

$(b)$ To prove this item we use the same arguments applied in the above case. The calculations are the following. For these values of $b$ we consider the same graph as before. Now
	the point $R_5:=(-5b,4b+7)=F(R_4)\in \overline{S_1P_{6}},$ is in the first quadrant, and we denote by $R_6:=(-9b-8,-8b-7)=F(R_5)\in \overline{S_2P_7}.$ Hence, $F(R_6)=R_1.$ Therefore $R_1$ is now a 6-periodic point.
	
	The intervals such that after some iterates reduce to a point are: $\overline{S_2T}\twoheadrightarrow R_1,$ $\overline{P_{2}Q}\twoheadrightarrow P_3,$ $\overline{S_1R_5}\rightarrow \overline{S_2R_6}\twoheadrightarrow R_1,$ $\overline{P_{1}T}\rightarrow\overline{P_{2}R_1}\rightarrow\overline{P_{3}R_2}\rightarrow\overline{P_{4}R_3}\rightarrow\overline{P_{5}R_4}\rightarrow\overline{P_{6}R_5}\rightarrow\overline{P_{7}R_6}\twoheadrightarrow R_1.$ 
	
	Now the names of the intervals are: $A:=\overline{P_{1}R_1}\,,\,B:=\overline{QR_2}\,,\,C:=\overline{P_{3}R_3}\,,\,D:=\overline{P_{4}R_{4}}\,,\,E:=\overline{P_{5}S_{1}}\,,G:=\overline{P_{6}S_{2}},$ and the oriented graph is
	
	\begin{center}
		\begin{tikzcd}
		A \arrow[r] & B \arrow[r] & C \arrow[r] & D \arrow[r] & E \arrow[r] & G \arrow[lllll, bend right]
	\end{tikzcd}
	\end{center}

In this case $F^6$ leaves $A$ invariant, and the graph of $F^6\vert_{A}$ looks like the graph of $F^7\vert_{A}$ in Figure~\ref{Grafic0}. The result follows in the same way of the precedent situations.

 $(c)$ The continuity of the rotation number with respect to the parameter $b,$
follows from Proposition \ref{rot}. As a consequence, when $b$ runs from $-15/8$ to $-7/4$,  the rotation number at least takes all the values between $1/7$ and $1/6.$ Hence, for all $s$ such that $1/7<r/s<1/6$ for a certain $r\in\N$ with $(r,s)=1,$ we have values of $b$ such that $F$ has periodic orbits of period~$s.$ To obtain the  set of periods, we apply Corollary \ref{c:divisorfunction} to the interval $[a_1,a_2]=[1/7,1/6]$ with the upper bound function $D(s)=2\sqrt{s}$ given in \eqref{e:cotaded}. According to this result,  if $$2\sqrt{s}<\lfloor s/42\rfloor -1$$ for all $s\geq s_0$ then there exist periodic orbits of period $s.$ Some straightforward computation show that the above equation holds if
$s>7140.75$, hence if $s\geq s_0=7141$. Now, it is easy to check, for instance with the help of a symbolic computing software, what are the denominators $s$ of the irreducible fractions in $[1/7,1/6]$ with $s<s_0$, obtaining the ones stated in the statement.

The statement concerning the existence of a Cantor set as $\omega$-limit is a consequence of Proposition \ref{rot} and the existence of edges that colapse to a point (plateaus) that prevent the possibility that the whole $\Gamma$ is the $\omega$-limit. The  arguments to prove the statement when the rotation number is rational are very similar to the ones of the previous two cases.

As in case $(a)$ we have $R_5\in\overline{P_5S_1}.$ Consider the following partition on $\Gamma:$ $A=\overline{P_1R_{1}},\,B=\overline{R_{1}P_2},\,C=\overline{P_2Q},\,D=\overline{QR_2},\,E=\overline{R_{2}P_3},\,G=\overline{P_3R_3},\,H=\overline{R_{3}P_4},\,I=\overline{P_4R_4},\,J=\overline{R_{4}P_5},\,K=\overline{P_5S_1},\,L=\overline{S_1P_6},\,M=\overline{P_6S_2},\,N=\overline{S_2P_7},\,O=\overline{P_7T},\,$ and $U=\overline{TP_{1}}.$ Looking at the dynamics of $F$ over $\Gamma$ we see that $F(A)=C\cup D,F(B)=E,F(D)=G,F(E)=H,F(G)=I,
F(H)= J,F(I)\subset K, F(J)\subset K\cup L,F(K)=M,
F(M)=O\cup U\cup A, F(U)=B.$ Moreover $F^6(C)=F(N\cup O)=F^2(L)=R_1.$

Assume that $F$ has a $s$-periodic orbit that does not intersect the orbit which passes trough $R_1=(-b-2,-1).$ From Lemma \ref{edges} $(c)$ we know that this orbit is repulsive. Denote by $r_1,r_2,\ldots,r_s$ its points ordered counter-clockwise. Looking at the dynamics of $F$ on $\Gamma,$ it follows that such an orbit does not visit $C,L,N$,  or $O$, which are forbidden intervals for it, because, at the end, they  collapse to $R_1.$  Hence such orbit always passes through $K$ and when it does it, it follows either, the loop $KMUBEHJ$ or the loop $KMADGI$, since otherwise it should visit the forbidden intervals. Hence the itinerary of the point lying on the periodic orbit which begins in $K,$ let us say   $(-5b,\bar{y}),$ is formed by blocks of $KMUBEHJ$ and $KMADGI.$ 
Then the complete itinerary of this point has the form $$\left((KMUBEHJ)^{n_1}(KMADGI)^{m_1}\cdots (KMUBEHJ)^{n_k}(KMADGI)^{m_k} \right)^{\infty},$$ for certain integer numbers $n_2,\ldots, n_k$, $m_1,\ldots, m_{k-1}\ge 1$ and
$n_1,m_k\ge 0.$ Then we get $s=6(\sum_{i=1}^k m_i)+7(\sum_{i=1}^k n_i)=:6m+7n.$ 

Now the map $F^s\vert_{[r_{i},r_{i+1}]}$ is non decreasing, has $r_i$ and $r_{i+1}$ as fixed points and, from Lemma \ref{edges}, is piecewise linear with all the linear pieces with slope $0$ or $2^k$ for some $k>0.$ Therefore, there is one and only one more fixed point $q\in (r_{i},r_{i+1})$ such that $F^s$ is constant in a neighborhood of $q.$ The unicity is due to the fact that the plateau $\overline{P_2Q}$ is applied by $F^5$ in $P_7\in \overline{P_1T_2}.$  So if the image of the plateaus goes to a periodic orbit, it is unique. So $(-b-2,-1)$ is also a periodic orbit of period $s.$ The graphic of  $F^s$ on each interval $[r_{i},r_{i+1}]$  essentially 
(we mean, modulus some intervals of constancy, that do not cut the diagonal) looks as in Figure~\ref{Falaq}.

\begin{figure}[H]
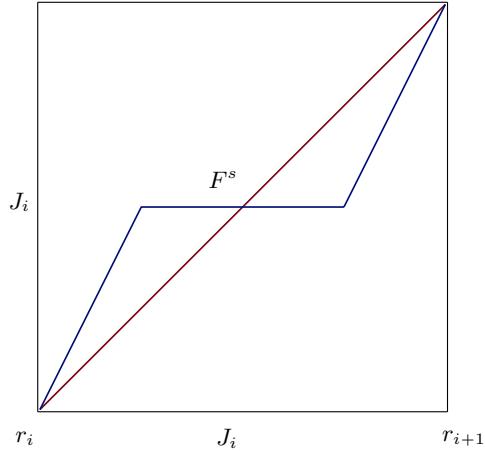

	\footnotesize
	\centering
	\begin{lpic}[l(2mm),r(2mm),t(2mm),b(2mm)]{Falaq(0.25)}
		\lbl[r]{8,100; ${J_{i}}$}
		\lbl[r]{10,-5; ${r_{i}}$}
		\lbl[r]{100,-5; ${J_{i}}$}
		\lbl[r]{210,-5; ${r_{i+1}}$}
		\lbl[r]{100,110; ${F^{s}}$}
	\end{lpic}
	\caption{The graphic of $F^s.$ }\label{Falaq}
\end{figure}

The graphic is as depicted in Figure \ref{Falaq} because the map is non-decreasing, piecewise linear with slopes either 0 or greater than one and the slope at the points $r_i$ is greater than one. Also, from that graphic we see that every point on $\Gamma$ not lying in the repulsive periodic orbit reaches the attractive periodic orbit.

In case that $F$ has a unique periodic orbit, it must be the orbit of the point $(-b-2,-1).$ Also in this case, a similar analysis about the possible periodic itineraries of the plateaus, shows also that when they are periodic points the period must be of the same form $s=6m+7n.$ Denoting now by $J_1,J_2,\ldots, J_s$ the intervals determined by the points of this periodic orbit, the graphic of $F^s$ on each $J_i$ only can cut the diagonal in the points $r_i$ and $r_{i+1}.$ Hence this orbit is semistable, not hyperbolic. Also in this case, every point on $\Gamma$ reaches  this periodic orbit. 
\end{proof}

\subsection{The case $a=-1$ and $-1< b\leq -3/4$}

Before to deal with this range of values of $b,$ we need to introduce the three parametric family of {\it trapezoidal maps} studied in \cite{BMT}. The trapezoidal maps (i.e., maps whose graph is trapezoidal) $T_{X,Y,Z}$ is the family of continuous piecewise affine self maps of $[0,1]$ having a sub-interval $J$  in which the map is constant, with absolute value of the slope on both  sides of~$J$  greater than one and which sends both endpoints to zero. Qualitatively a map of this type looks like the one of forthcoming Figure~\ref{GrafT}.
The family is described by three parameters $(X,Y,Z)\in (0,1)^3$ where $X$ is the inverse of the slope on the left of $J,$ $Y$  is minus the inverse of the slope on the right of $J$ and $Z$ is the length of $J.$  It is proved in the cited paper two basic facts. The first one is that when  we fix $X_0$ and $Y_0$ and consider the uniparametric family $T_{X_0,Y_0,Z}$ then it is a {\it full family} in the sense that all possible unimodal dynamics is represented in  $T_{X_0,Y_0,Z}$ (see Theorem 1 of \cite{BMT}). This fact is proved  using the {\it kneading theory} and showing that the itinerary of the {\it turning point} in the family covers all possible unimodal kneading itineraries. The second fact is that the entropy of $T_{X_0,Y_0,Z}$ monotonically decreases with $Z.$

\begin{figure}[H]
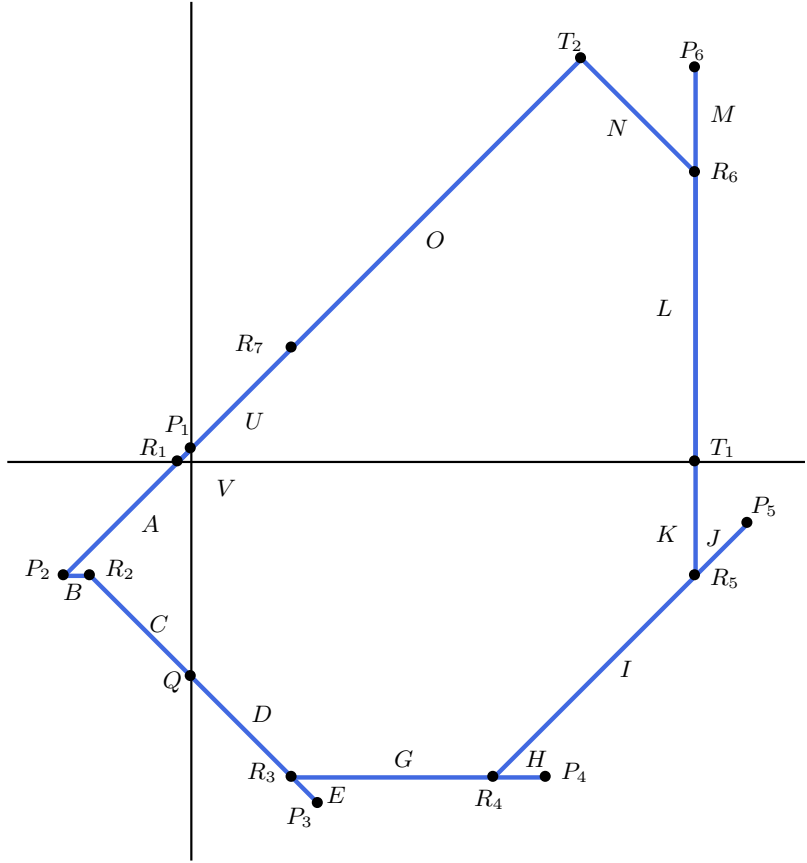

	\footnotesize
	\centering
	\begin{lpic}[l(2mm),r(2mm),t(2mm),b(2mm)]{aamm-cas-3(0.6)}
		\lbl[r]{45,94; $R_1$}
		\lbl[r]{60,85; $V$}
		\lbl[r]{43,77; $A$}
		\lbl[r]{26,62; $B$}
		\lbl[r]{45,55; $C$}
		\lbl[r]{50,99; $P_1$}
		\lbl[r]{19,67; $P_2$}
		\lbl[l]{31,67; $R_2$}
		\lbl[r]{48,42; $Q$}
		\lbl[r]{68,35; $D$}
		\lbl[r]{69,22; $R_3$}
		\lbl[c]{74,12; $P_3$}
		\lbl[l]{80,17;$E$}
		\lbl[l]{95,25;$G$}	
		\lbl[l]{124,25;$H$}
		\lbl[l]{145,45;$I$}		
		
		\lbl[l]{164,74;$J$}
		\lbl[l]{153,75;$K$}		
		\lbl[l]{153,125;$L$}
		\lbl[l]{165,168;$M$}		
		
		\lbl[l]{142,165;$N$}		
		\lbl[l]{102,140;$O$}
		\lbl[l]{62,100;$U$}

		\lbl[c]{116,16; $R_4$}
		\lbl[l]{132,22; $P_4$}
		\lbl[l]{165,65; $R_5$}
		\lbl[l]{165,94; $T_1$}
		\lbl[c]{177,81; $P_5$}
		\lbl[l]{165,155; $R_6$}
		\lbl[c]{161,182; $P_6$}
		\lbl[c]{134,184; $T_2$}
		\lbl[c]{63,117; $R_7$}

	\end{lpic}
	\caption{The graph $\Gamma$ for $a=-1$ and $-1< b\leq -3/4$. }\label{particio-1-3/4}
\end{figure}

For $-1< b\leq -3/4$ the invariant graph is given in Figure \ref {f:3} and we reproduce it here in Figure~\ref{particio-1-3/4} with some additional notation.

In Figure~\ref{particio-1-3/4}, for $i=1,\ldots,6,\,\,P_i=F^{i-1}((0,b+1)).$  Indeed, $P_2=(-b-2,-1),\,\,P_3=(b+2,-3),\,\,P_4=(b+4,2b-1),
\,\,P_5=(-b+4,4b+3),\,\,P_6=(-5b,4b+7).$ Also for $i=1,\ldots,7,\,R_i=F^{i-1}((-b-1,0)).$ We have $R_2=(b,-1),\,\,R_3=(-b,2b-1),\,\,R_4=(-3b,2b-1),\,\,R_5=(-5b,-1),\,\,R_6=(-5b,-4b-1),\,\,R_7=(-b,1).$ Moreover
$Q=(0,b-1),\,\,T_1=(-5b,0)$ and $T_2=F(T_1)=(-5b-1,-4b).$ We have $F(R_7)=F(T_2)=P_2$ and $F(S)=R_3.$

Here the plateau edges are $O\cup U$ and $C.$ Moreover $F(L)=O$ so $F^2(L)=P_2$ and the intervals $O,U,C$ and $L$ are not relevant for finding the entropy of the oriented graph associated to $\Gamma.$ Also for all $b\in (-1,3/4]$ all the coverings between these intervals are fixed, except for~$M.$ The coverings of $M$ depend on the location of $(-9b-8,-8b-7)=P_7=F(P_6).$ Then for $-1<b\leq -3/4$ we have at least the following arrows in the directed graph:

\begin{center}
	{\scriptsize\begin{tikzcd}
			A \arrow[rrrrrrrr, bend left] \arrow[r] & D \arrow[r] & G \arrow[r] & I \arrow[r] & K \arrow[r] & N \arrow[lllll, bend left] \arrow[r] & V \arrow[r] & B \arrow[r] & E \arrow[r] & H \arrow[r] & J \arrow[r] & M
	\end{tikzcd}}
\end{center}

\begin{propo}\label{menys1menys34} Assume $a=-1$ and $b\in(-1,-3/4].$ Then the following holds:
	\begin{itemize} \item[(a)] If $b\in(-1,-8/9],$ $h(F\vert_{\Gamma})=0.$ Furthermore, $F\vert_{\Gamma}$ has two periodic orbits of period $6:$ $\mathcal{P}$ which is the orbit of $P_2=(-b-2,-1)$ that is reached by the two plateaus of $\Gamma,$ and $\mathcal{Q},$ which is the orbit of $\left(-{(7b+16)}/{15},{(8b-1)}/{15}\right)\in A$ that is repulsive.
		\item[(b)]   If $b\in(-8/9,-112/137],$ $h(F\vert_{\Gamma})=0.$ Furthermore, $F\vert_{\Gamma}$ has one periodic orbit of period~$12$ which is the orbit of $P_2$ and it is reached by the two plateaus of $\Gamma,$ and two $6-$periodic repulsive orbits: the previous $\mathcal{Q}$ and  the orbit of $\left(-{(9b+8)}/{9},{1}/{9}\right)\in V=\overline{P_1R_1}.$
		\item[(c)] When $b\in [-13/16,-3/4],\,\,h(F\vert_{\Gamma})>0.$ The two plateaus meet the orbit of $P_2$ which can follow very different itineraries.
		\item[(d)] When $b\in [-112/137,-13/16]$ there exists a subinterval $\Pi\subset \Gamma$ which is invariant by $F^6$ and such that it is visited for all elements of $\Gamma$ except for the points of the repulsive orbit $\mathcal{Q}$ that still is $6-$periodic; the map $F^6\vert_{\Pi}$ is semiconjugated to $T_{1/16,1/8,Z}$ with $Z=\frac{55 b}{16(3b-1)}$ and  there exists $\alpha\in (-112/137,-13/16)$ such that $h(F\vert_{\Gamma})=0$  for $b\in[-112/137,\alpha]$ while $h(F\vert_{\Gamma})>0$ and non-decreasing when $b\in(\alpha,-13/16].$  Moreover the orbit of $P_7=F^5(P_2)\in \Pi$ under $F^6$ runs through all the dynamic situations offered by the maximum of a unimodal application.
		
\end{itemize}\end{propo}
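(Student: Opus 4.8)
The plan is to exploit the directed graph of interval coverings already set up before the statement, in which the only covering depending on $b$ is the image of $M$, controlled by the position of $P_7=F(P_6)=(-9b-8,-8b-7)$ and its subsequent iterates. First I would subdivide $(-1,-3/4]$ according to which quadrants $P_7,F(P_7),\dots$ fall into, which is exactly what produces the breakpoints $-8/9,\,-112/137,\,-13/16$. In each piece the coverings become completely determined, giving an explicit mono-partition (in fact Markov, after discarding via Remark~\ref{colapses} the plateaus $O\cup U,\,C$ and the collapsing preimage $L$). The nature of every periodic orbit that avoids the plateaus is then settled once and for all by Lemma~\ref{edges}(c): such orbits are repulsive, while the orbit of $P_2$, being the common image of the two plateaus, is the only candidate for a non-repelling (semistable) periodic orbit.

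For parts (a) and (b) I would read off the Markov graph and apply Remark~\ref{romaentr}. In the range $(-1,-8/9]$ the graph has a rome through which every vertex carries exactly one loop, so by Remark~\ref{romaentr}(iii) the entropy is zero; the single $6$-cycle $A\to D\to G\to I\to K\to N\to A$ yields the period-$6$ orbit $\mathcal P$ of $P_2$, and the explicit orbit $\mathcal Q$ is found as the fixed point of the corresponding branch of $F^6$, its repulsiveness being guaranteed by Lemma~\ref{edges}(c). The transition at $b=-8/9$ is the bifurcation at which $P_7$ crosses a coordinate axis: this lengthens the plateau orbit from period $6$ to period $12$ and creates the additional repelling $6$-periodic orbit living in $V=\overline{P_1R_1}$; I would verify the stated coordinates by solving $F^{12}(P_2)=P_2$ and the relevant $F^6$-fixed-point equations, and again invoke Remark~\ref{romaentr}(iii) to keep the entropy zero on $(-8/9,-112/137]$.

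Part (d) is the crux and the main obstacle. Here I would first isolate the subinterval $\Pi\ni P_7$ invariant under $F^6$ and show, by tracking the coverings, that every orbit except the still-repelling $6$-cycle $\mathcal Q$ eventually lands in $\Pi$. The key construction is a non-decreasing surjection $s\colon\Pi\to[0,1]$ collapsing the plateau and its $F^6$-preimages and intertwining $F^6|_\Pi$ with the trapezoidal map $T_{1/16,1/8,Z}$; the two branch slopes $16=2^4$ and $8=2^3$ arise as the products of the directional factors $|k_i|\in\{1,2\}$ of Lemma~\ref{edges} and Lemma~\ref{eigen} along the two ways of returning to $\Pi$ in six steps, and $Z=55b/(16(3b-1))$ is the normalized length of the plateau. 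Since entropy is a semiconjugacy invariant and $F|_\Gamma$ is continuous on the compact graph $\Gamma$, Lemma~\ref{entroo}(i) gives $h(F|_\Gamma)=\tfrac16\,h(F^6|_\Gamma)=\tfrac16\,h(T_{1/16,1/8,Z(b)})$, the last equality because all the complexity is concentrated on $\Pi$ (absorption plus Remark~\ref{colapses}). Now \cite{BMT} provides the two facts I need: $h(T_{X_0,Y_0,Z})$ is non-increasing in $Z$, and the family is full. Because $Z(b)$ is strictly decreasing in $b$ (as $dZ/db=-\tfrac{55}{16}(3b-1)^{-2}<0$), the entropy $h(F|_\Gamma)$ is non-decreasing in $b$; comparing the zero value at the left endpoint with the positive value at $-13/16$ produces the threshold $\alpha$, and the full-family property is exactly what makes the turning-point orbit (the orbit of $P_7$) sweep out every unimodal kneading behavior.

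Finally, for part (c), on $[-13/16,-3/4]$ I would exhibit in the Markov graph either two distinct loops through a single rome vertex or two connected loops, so that Remark~\ref{romaentr}(i)--(ii) forces $h(F|_\Gamma)>0$; the plateaus both hit the orbit of $P_2$, which now admits many itineraries. The hardest bookkeeping throughout is in (d): correctly identifying $\Pi$, proving the absorption of all non-$\mathcal Q$ orbits, and pinning down the exact branch slopes and the value of $Z$ so that the semiconjugacy to $T_{1/16,1/8,Z}$ holds on the nose.
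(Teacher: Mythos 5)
Your proposal follows essentially the same route as the paper: subdividing the parameter range by the itinerary of $P_7$, reading zero entropy off rome-free-of-linked-loops Markov graphs in (a)--(b), forcing positive entropy via connected loops in (c), and in (d) building the invariant interval $\Pi$, the semiconjugacy to $T_{1/16,1/8,Z}$ with $Z=55b/(16(3b-1))$, and invoking the monotonicity and fullness results of \cite{BMT} together with Lemma~\ref{entroo} to obtain $\alpha$. The only point to tighten is that $h(F\vert_\Gamma)=0$ at $b=\alpha$ itself requires the lower semicontinuity of the entropy (Lemma~\ref{entroo}(ii)), which the paper uses explicitly and your sketch leaves implicit.
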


\begin{proof}
	We are going to prove that the entropy is zero for $b\in(-1,-112/137]$. $(a)$ We begin with $b\in(-1,-7/8].$  Then $F(M)\subset U\cup V,$ and therefore,  the entropy of $F$ is less or equal than the entropy that we would achieve assuming that the image of $M$ covers exactly $U$ and $V$ (this is the exact situation when $b=-7/8).$ The graph we obtain with this last assumption is

	\begin{center}
		{\scriptsize\begin{tikzcd}
				A \arrow[rrrrrrrr, bend left] \arrow[r] & D \arrow[r] & G \arrow[r] & I \arrow[r] & K \arrow[r] & N \arrow[lllll, bend left] \arrow[r] & V \arrow[r] & B \arrow[r] & E \arrow[r] & H \arrow[r] & J \arrow[r] & M\arrow[lllll, bend left]
		\end{tikzcd}}
	\end{center}
	
	It turns out that the entropy of this graph  is zero because it does not have linked loops,  see Remark \ref{romaentr} (iii). In consequence $h(F\vert_{\Gamma})=0$ for this range of parameters.
	
	$(b)$ When $b\in (-7/8,-14/17]$ the situation is essentially the same because now $M$ covers also a little portion of $A$ but this portion collapses
	after two iterates. And the same holds for $b\in (-14/17,-112/137]$ while the portion of $A$ covered by $M$ collapses after seven iterates.
	
	Trivial calculations prove the assertions on the behaviors of the plateaus.
	
	$(c)$ For $-13/16\le b\leq -3/4$, we need to refine the partition. Let $P_7=F(-5b,4b+7)=(-9b-8,-8b-7)\in A$ and $P_8=F(P_7)=(17b+14,-16b-15).$ When $b\ge -7/9,\,
	P_8\in E.$ We add $P_7$ to the partition and we denote by $A_1$ the interval $\overline{R_1P_7}\subset A.$ Now we have the following oriented graph of coverings: 
	
	\begin{center}
		{\scriptsize\begin{tikzcd}
				A_1\arrow[r] & D \arrow[r] & G \arrow[r] & I \arrow[r] & K \arrow[r] & N \arrow[lllll, bend left] \arrow[r] & V \arrow[r] & B \arrow[r] & E \arrow[r] & H \arrow[r] & J \arrow[r] & M\arrow[lllll, bend left]\arrow[lllllllllll,bend left]
		\end{tikzcd}}
	\end{center}
	
Notice that the above graph does have connected loops. From Remark \ref{romaentr} (ii) we have that $F|_{\Gamma}$ has positive entropy, that moreover could be easily computed. As an example we detail these computations later, in other similar situations.

	When $b< -7/9,\,\,
	P_8\in D$ and we need to refine again the partition. Let $P_9=F(P_8)=(33b+28,2b-1)\in G,\,\,P_{10}=F(P_9)=(31b+28,36b+27)\in I$ and $P_{11}=F(P_{10})= (-5b, 68b+55).$ When $b\ge -55/68$ we get that $P_{11}\in L$ while $P_{11}\in K$ when $b\in (-111/136, -55/68).$ Adding $P_8,P_9,P_{10} $ to the partition, the intervals $D,G,I$ split in two subintervals. We denote by $D_1=\overline{(0,b-1)P_8},\,\,G_1=\overline{R_3P_9},\,\,I_1=\overline{R_4P_{10}}$ and by $A_2,D_2,G_2$ and $I_2$ the corresponding remaining subintervals of $A,D,G,I.$ With this notation and assuming that 
	$b\ge -55/68$ we get the following oriented graph of coverings that also gives positive entropy.
	
	\begin{center}
		{\scriptsize\begin{tikzcd}
				A_1\arrow[r] & D_1 \arrow[r] & G_1 \arrow[r] & I_1 \arrow[r] & K \arrow[r] & N \arrow[lllll, bend left] \arrow[r] & V \arrow[r] & B \arrow[r] & E \arrow[r] & H \arrow[r] &J \arrow[r] & M\arrow[lllll, bend left]\arrow[lllllllllll,bend left]
		\end{tikzcd}}
	\end{center}
	
	Lastly, when $b\in[-13/16,-55/68),\,\,P_{11}\in K.$ Now
	we need to compute two more iterates. We denote by $P_{12}=F(P_{11})=(-73b-56,64b+55)\in N$ and $ P_{13}=F(P_{12})= (-137b-112,-136b-11)\in A.$ We also denote by $K_1= \overline {P_{11}R_5},\, N_1=\overline{P_{12}R_6}$ and by $K_2,N_2$  the corresponding remaining subintervals of $K,N.$ With this notation we obtain the following oriented graph of coverings that still gives positive entropy.
	
	\begin{center}
		{\scriptsize\begin{tikzcd}
				A_1\arrow[r] & D_1 \arrow[r] & G_1 \arrow[r] & I_1 \arrow[r] & K_1 \arrow[r] & N_1  \arrow[r] & V \arrow[r] & B \arrow[r] & E \arrow[r] & H \arrow[r] & J \arrow[r] & M\arrow[lllll, bend left]\arrow[lllllllllll,bend left]
		\end{tikzcd}}
	\end{center}
	This ends the proof of $(c).$

	$(d)$ For these values of $b$ consider the interval $\Pi=\overline {P_7R_7}=U\cup V\cup A_1$ (recall that $P_7\in A$). We have  $F^6(A_1)=F^5(D_1)=F^4(G_1)=F^3(I_1)=F^2(K_1)
	=F(N_1)\subset \Pi$ and $F^5(B)=F^4(E)=F^3(H)
	=F^2(J)=F(M)\subset \Pi.$ Lastly we have $F(A_2)=D_2\cup E$ and $F^4(D_2)=F^3(G_2)=F^2(I_2)
	=F(K_2)=F(N_2)\subset A_2\cup \Pi.$ Collecting these facts it follows that if $F^n(x)\notin \Pi$ for all $n\ge 0$ it must follow infinitely many times the following loop. 
	
	\begin{center}
		{\scriptsize\begin{tikzcd}
				A_2\arrow[r] & D_2 \arrow[r] & G_2 \arrow[r] & I_2 \arrow[r] & K_2 \arrow[r] & N_2  \arrow[lllll,bend left] 
		\end{tikzcd}}
	\end{center}
	
	The map $F_1\circ F_4^4\circ F_3$ has a repelling fixed point, namely
	$\left(\frac{-7b-16}{15},\frac{8b-1}{15}\right)$ which gives the stated 6 periodic orbit for $F$ and the only points in $\Gamma$ that do not visit $\Pi.$

	The map $F^6$ from $\Pi$ to $\Pi$ is the following.
	Since the points of $\Pi$ write as $(x,x+b+1)$ where $x\in [-9b-8,-b],$  $F^6(x,x+b+1)=(\bar g(x),\bar g(x)+b+1)$ where $$\bar g(x)=\begin{cases} 7b+16(x+1) & \mbox{if $x\in [-9b-8,-b/2-1]$},\\
		-b & \mbox{if $x\in [-b/2-1,-b-1]$},\\
		-9b-8x-8 & \mbox{if $x\in [-b-1,0]$},\\
		-9b-8 & \mbox{if $x\in [0,-b]$}.
		
	\end{cases}
	$$
	
	This map linearly conjugates to the map $\tilde g:[0,1]\longrightarrow[0,1]$  defined by $$\tilde g(x)=\begin{cases} 16x-\frac{16b+13}{b+1} & \mbox{if $x\in [0,\frac{17b+14}{16(b+1)}]$},\\
		1 & \mbox{if $x\in [\frac{17b+14}{16(b+1)},\frac{8b+7}{8(b+1)}]$},\\
		-8x+\frac{9b+8}{b+1} & \mbox{if $x\in [\frac{8b+7}{8(b+1)},\frac{9b+8}{8(b+1)}]$},\\
		0 & \mbox{if $x\in [\frac{9b+8}{8(b+1)},1]$}.
		
	\end{cases}
	$$
	
	Since the last piece of the map is constant we can collapse this last interval and we get the map $g^*:[0,\frac{9b+8}{8(b+1)}]\longrightarrow [0,\frac{9b+8}{8(b+1)}]$ defined by $$g^*(x)=\begin{cases} 16x-\frac{16b+13}{b+1} & \mbox{if $x\in [0,u]$},\\
		\frac{9b+8}{8(b+1)} & \mbox{if $x\in [u,v]$},\\
		-8x+\frac{9b+8}{b+1} & \mbox{if $x\in [v,\frac{9b+8}{8(b+1)}]$},
	\end{cases}
	$$
	where $u=\frac{137b+112}{128(b+1)}$ and $v=\frac{7(9b+8)}{64(b+1)}.$
	We note that because of the collapsing, $\tilde g$ and $g^*$ are only semiconjugated.
	
		\begin{figure}[H]
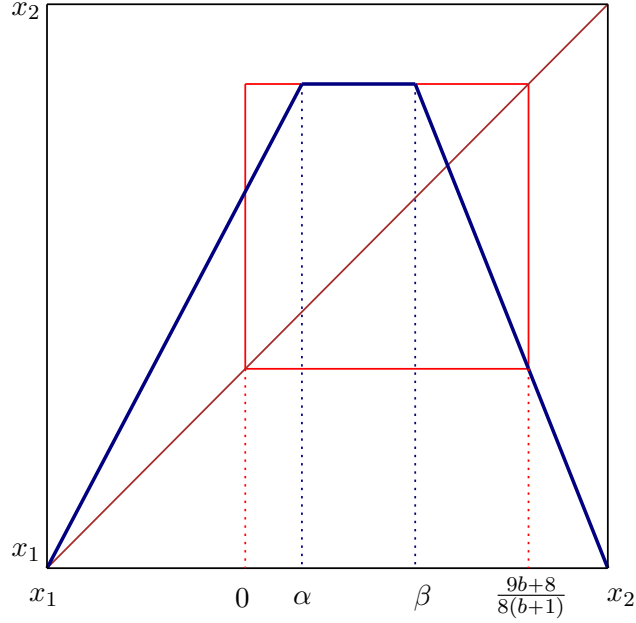

		\centering
		\begin{lpic}[l(2mm),r(2mm),t(2mm),b(2mm)]{grtb(0.40)}
			\lbl[r]{3,10; {$x_1$}}
			\lbl[r]{3,190; {$x_2$}}
			\lbl[c]{4,-5; {$x_1$}}
			\lbl[c]{196,-5; {$x_2$}}
			\lbl[c]{70,-5; {$0$}}
			\lbl[c]{90,-5; {$u$}}
			\lbl[c]{130,-5; {$v$}}
			\lbl[c]{166,-5; {$\frac{9b+8}{8(b+1)}$}}
				\end{lpic}
		\caption{Sketch of the graphic of $\hat {g}(x)$ in blue and the graphic of $g^*(x)$
			inside the red box. The graphic is not to scale.}
		\label{GrafT}
	\end{figure}
	
	Now we extend this map on a certain interval $[x_1,x_2]\supset \left[0,\frac{9b+8}{8(b+1)}\right]$ to get a trapezoidal map $\hat{g}(x):$
	
	$$\hat{g}(x)=\begin{cases} 16x-\frac{(16b+13)}{b+1} & \mbox{if $x\in [x_1,u]$},\\
		\frac{9b+8}{8(b+1)} & \mbox{if $x\in [u,v]$},\\
		-8x+\frac{9b+8}{b+1} & \mbox{if $x\in [v,x_2]$}.
		
	\end{cases}
	$$
	Here $x_1=\frac{16b+13}{15(b+1)}<0$ is the repulsive fixed point of $16x-\frac{(16b+13)}{b+1}$ and $x_2=\frac{119b+107}{120(b+1)}>\frac{9b+8}{8(b+1)}$ satisfies that $\hat{g}(x_2)=x_1.$ See Figure~\ref{GrafT}. We note that since $x_1$ is repulsive, for each $x\in(x_1,x_2)$ there exists  $n$ such that $\hat{g}^n(x)\in (0,\frac{9b+8}{8(b+1)}).$ So the dynamics of $g^*$ can be studied analyzing $\hat{g}.$

	Lastly we scale $\hat{g}$ to get a map from $[0,1]$ to $[0,1],$ getting $g(x):$

	$$g(x)=\begin{cases} 16x, & \mbox{if $x\in [0,\frac{7b+16}{48(1-3b)}]$},\\
		\frac{7b+16}{3(1-3b)} & \mbox{if $x\in [\frac{7b+16}{48(1-3b)},\frac{8-79b}{24(1-3b)}]$},\\
		-8x+8 & \mbox{if $x\in [\frac{8-79b}{24(1-3b)},1]$}.
		
	\end{cases}
	$$
	
	Note that $g=T_{1/16,1/8,Z}$ with $Z=\frac{55b}{16(3b-1)}.$   Hence using item $(i)$ of Lemma \ref{entroo} we know that for  $b\in [-112/137,-13/16],$ 
	$h(F\vert_{\Gamma})=h(T_{1/16,1/8,Z})/6,$
	where $Z=\frac{55b}{16(3b-1)}.$ Since the entropy of
	$T_{1/16,1/8,Z}$  is non increasing in $Z$, and  $Z$ is decreasing with $b,$ we get that $h(F\vert_{\Gamma})$ is nondecreasing in $b.$
	
	Now set $$\alpha =\sup \{b\in [-112/137,-13/16], \mbox{ such that } h(F\vert_{\Gamma})=0\}.$$ It is not difficult to see that $\alpha\in (-112/137,-13/16).$ Moreover from item $(ii)$ of Lemma~\ref{entroo}  it follows that when $b=\alpha,$ $h(F\vert_{\Gamma})=0.$ 
\end{proof}

\subsection{The case $a=-1$ and $-3/4<b<0$}

For this range of values of $b$ there are six different invariant graphs displayed in Figures~\ref{f:4}--\ref{f:9} of the Appendix. To illustrate how to use the approach introduced in Section~\ref{s:prelim} to get the exact entropy of a map, in this section we compute it for some values of $b.$ 
 Denote by $h_1\approx 0.19463$ the logarithm of the positive real root of the polynomial $\lambda^6-\lambda -2$ and by $h_2\approx 0.12639$ the logarithm of the positive real root of the polynomial $\lambda^6-\lambda -1.$ Next proposition summarizes the behavior of the entropy when $b\in (-3/4,0).$ We stress the fact that
the entropy is discontinuous at $b=-1/36.$

 \begin{propo}\label{tercer} Assume $a=-1$ and $b\in (-3/4,0).$ Then the orbit of $(-b-2,-1)$ is $5$-periodic and we denote it by $\mathcal P.$ Also when $b\ge -1/8$ the orbit of $(b,-1)$ is $7$-periodic and we denote it by $\mathcal R.$  Then
 	\begin{itemize}
 	\item[(a)] When $b\in (-3/4,-1/5],$ $h(F\vert_{\Gamma})=h_1.$ Moreover both plateaus go to $\mathcal P.$
 	 \item[(b)] When $b\in[-1/5,-1/9],$ $h(F\vert_{\Gamma})\ge h_2.$  Moreover when $b\in[-1/5,-5/28],$ all plateaus go to $\mathcal P,$ while when $b\in [-1/8,-1/9]$ all plateaus go either to  $\mathcal P,$ or to $\mathcal R.$ When $b\in (-5/28,-1/8)$ the plateau $\overline {SP_4}$ is mapped to to $\mathcal P,$ while the other two plateaus can have different $\omega$-limits.
 	\item[(c)] When $b\in[-1/9,-1/16],$ $h(F\vert_{\Gamma})= h_1.$  Moreover all plateaus go either to $\mathcal P$ or to $\mathcal R.$
 		\item[(d)] When $b\in[-1/16,-1/36),$ $h(F\vert_{\Gamma})\ge  \frac{\ln 2}{6}.$ Moreover all plateaus go either to $\mathcal P$ or to~$\mathcal R.$
 	\item[(e)] When $b\in[-1/36,0),$ $h(F\vert_{\Gamma})=0.$ Moreover all plateaus go either to $\mathcal P$ or to $\mathcal R.$ Also appears two repulsive orbits with periods 5 and 7.
 	\item[(f)] The entropy is discontinuous at $b=-1/36.$
 	\end{itemize}
 \end{propo}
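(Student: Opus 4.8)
The plan is to establish $(a)$--$(e)$ by the same case-by-case method employed in Propositions~\ref{primer}--\ref{menys1menys34}, and then to deduce $(f)$ directly from $(d)$ and $(e)$. For each of the six invariant graphs of Figures~\ref{f:4}--\ref{f:9} I would first check, by direct iteration, that the orbit of $(-b-2,-1)$ is $5$-periodic and (for $b\ge -1/8$) that the orbit of $(b,-1)$ is $7$-periodic, thereby producing the distinguished orbits $\mathcal P$ and $\mathcal R$. On each subinterval of $b$ I would then fix a mono-partition of $\Gamma$, use Lemma~\ref{edges} and Remark~\ref{colapses} to discard the two plateaus together with all of their preimages (which collapse to points of $\mathcal P$ or $\mathcal R$), and record the covering relations of the surviving intervals as an oriented graph.

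For the exact values in $(a)$ and $(c)$ I would exhibit a rome $R$ of the resulting oriented graph and apply Theorem~\ref{rome}; the computation should give that the largest root of $\det(A_R(\lambda)-E)$ is the positive root of $\lambda^6-\lambda-2$, equivalently of $1=\lambda^{-5}+2\lambda^{-6}$ (one simple loop of length $5$ and two of length $6$ through the rome), so that $s(F\vert_\Gamma)$ is that root and $h(F\vert_\Gamma)=h_1$. For the lower bounds I would instead invoke Lemma~\ref{Markov}: in case $(b)$ it suffices to isolate a sub-mono-partition whose matrix has characteristic equation $1=\lambda^{-5}+\lambda^{-6}$, i.e. $\lambda^6=\lambda+1$, giving $h\ge h_2$; in case $(d)$ it suffices to exhibit two distinct loops of length $6$ based at a common vertex, whose associated matrix has spectral radius $2^{1/6}$, giving $h\ge \tfrac{\ln 2}{6}$. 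These remain mere lower bounds because, within the windows $(b)$ and $(d)$, the position of the higher forward images of the plateau endpoints --- and hence the $\omega$-limits of the non-distinguished plateaus --- varies with $b$, so the full covering graph is not everywhere Markov.

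For $(e)$, with $b\in[-1/36,0)$, the key point is that after deleting the collapsing intervals the oriented graph has exactly one loop through each vertex of a rome; by Remark~\ref{romaentr}(iii) this forces $h(F\vert_\Gamma)=0$, while the two surviving disjoint loops (of lengths $5$ and $7$) give rise to the announced repulsive periodic orbits of periods $5$ and $7$, every plateau being eventually mapped into $\mathcal P$ or $\mathcal R$. Statement~$(f)$ is then immediate: by $(e)$ the endpoint value is $h(F\vert_\Gamma)=0$ at $b=-1/36$, whereas by $(d)$ we have $h(F\vert_\Gamma)\ge \tfrac{\ln 2}{6}>0$ for every $b\in[-1/16,-1/36)$; hence $\liminf_{b\to(-1/36)^-}h(F\vert_\Gamma)\ge \tfrac{\ln 2}{6}>0$, which cannot equal the value $0$ attained at $b=-1/36$, so the entropy is discontinuous there (a left jump, consistent with the lower semicontinuity of Lemma~\ref{entroo}$(ii)$).

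The main obstacle is the bookkeeping in the first two steps. Everything hinges on pinning down, as $b$ crosses each of the thresholds $-1/5,\,-1/9,\,-1/16,\,-1/36$, precisely which edge of $\Gamma$ contains each successive forward image of the plateau endpoints, since it is exactly the creation or destruction of a \emph{second} loop through a rome vertex --- equivalently, of a pair of linked loops in the sense of Remark~\ref{romaentr} --- that toggles the entropy between positive and zero. The delicate computation underlying $(f)$ is the verification that at $b=-1/36$ the last such pair of linked length-$6$ loops has just been destroyed (so the graph becomes of the single-loop type of $(e)$), while for $b$ slightly below $-1/36$ two length-$6$ loops still coexist at a common vertex.
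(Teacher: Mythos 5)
Your plan reproduces the paper's strategy almost verbatim: the same six graphs of Figures~\ref{f:4}--\ref{f:9}, the same collapsing of plateaus and their preimages via Remark~\ref{colapses}, the same rome polynomials ($\lambda^6-\lambda-2$ for the exact cases $(a)$ and $(c)$, $\lambda^6-\lambda-1$ and $\lambda^6-2$ for the lower bounds in $(b)$ and $(d)$), and the same deduction of $(f)$ from $(d)$ together with $(e)$. Items $(a)$--$(d)$, the tracking of the plateau orbits, and the reduction of $(f)$ to $(e)$ are all sound as outlined.

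The one genuine gap is in $(e)$, and it is visible from your own reasoning. You correctly note in $(b)$ and $(d)$ that when the partition is not Markov the covering matrix $M(f,\mathcal P)$ only yields a \emph{lower} bound, since Lemma~\ref{Markov} gives $r(\mathcal P)\le s(f)$ with equality only in the Markov case. But the partition needed in $(e)$ is also not Markov (the paper states this explicitly), so the observation that the covering graph has a single loop through each rome vertex gives only the vacuous bound $h\ge 0$; Remark~\ref{romaentr}$(iii)$ applied to $M(f,\mathcal P)$ cannot by itself force $h(F\vert_\Gamma)=0$. The missing ingredient is the intersection matrix $\bar M(f,\mathcal P)$ of Remark~\ref{grafcota}, which supplies the complementary \emph{upper} bound $s(f)\le\bar r(\mathcal P)$: one must build the oriented graph whose arrows record when $F(I_i)$ merely \emph{meets} the interior of $I_j$ (the paper draws these as dashed arrows) and verify that this larger graph still has no linked loops. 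Without that step $(e)$, and hence the discontinuity claim $(f)$, does not close. A milder version of the same issue affects your claim of an exact value in $(c)$: exhibiting a rome with the right loops gives $h\ge h_1$, and you must separately argue (as the paper does, somewhat informally) that no covers outside the displayed subgraph contribute additional entropy.
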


\begin{proof} $(a)$ First we consider the case $b\in (-3/4,-1/4]$ that corresponds to Figure \ref {f:4} in the Appendix. Comparing the graphs of Figures \ref{particio-1-3/4} and \ref{f:4}  we see that the only difference is that the interval which links the points $R_5=(-5b,-1)$ and $P_{5}=(-b+4,4b+3)$ in Figure~\ref{particio-1-3/4} is contained in the fourth quadrant (and named $J$ in Figure~\ref{particio-1-3/4}) while in Figure~\ref{f:4} of the Appendix the interval who links these two points has a part in the fourth quadrant (the interval $\overline{R_5S}$ which we name $J$ again) and the other part which is a plateau contained in the first one, which we name $W$ ($W=\overline{SP_5}$).  So we use the same notation introduced in the previous case adding the points $S$ and the interval $W.$ As before, the intervals $C,O,U$ are plateau edges as the new interval $W.$ We also note that since the interval $L\to O$ and $O$ collapses, it is not rellevant for computing the entropy. The main difference with the previous case is that now $P_7=F(P_6)=P_2$ and therefore, $P_2$ is a $5$-periodic point.  What we get, then, is the following Markov oriented graph:

\begin{center}
{\scriptsize\begin{tikzcd}
	A \arrow[rrrrrrrr, bend left] \arrow[r] & D \arrow[r] & G \arrow[r] & I \arrow[r] & K\arrow[r] & N \arrow[lllll, bend left] \arrow[r] & V \arrow[r] & B \arrow[r] & E \arrow[r] & H \arrow[r] & J \arrow[r] & M\arrow[lllll, bend left]\arrow[lllllllllll,bend left]
\end{tikzcd}}
\end{center}

 According to Definition \ref{d:roma}, $R=\{A,E\}$ is a rome of the above directed graph. Observe that there are only paths of length $6$ connecting $A$ and $E$ with themselves; two paths of length 1 and 8 connecting $A$ with $E$; and a path of length $4$ connecting $E$ with $A$; hence, the associated matrix function $A_R(\lambda)$ that appears in Theorem \ref{rome} is given by
$$
A_R(\lambda)=\begin{pmatrix}
\lambda^{-6} & \lambda^{-1}+\lambda^{-8}\\
\lambda^{-4} & \lambda^{-6}
\end{pmatrix}
$$and, therefore, the characteristic polynomial of the matrix associated with the directed graph is equal to $(-1)^{10}\,\lambda^{12}\,\mathrm{det}(A_R(\lambda)-E)=\lambda^6(\lambda^6-\lambda-2),$ and the entropy is $h_1.$ 

Now consider $b\in [-1/4,-1/5].$ For these values of  $b$ we have to consider the graphs of Figures \ref{f:5} ($b\le -2/9$) and \ref{f:6} $(b\ge -2/9)$ of the Appendix. We only explain the computations for the first case. The second one follows with the obvious adaptations.
We have for $i=1,\ldots,5,\,\, F(P_i)=P_{i+1},\,f(P_6)=P_2, F(T_1)=T_2,\,\,F(X_1)=X_2$ and for $i=1,\ldots,6,\, F(R_i)=R_{i+1}.$ Therefore $P_2$ belongs to a $5$-periodic orbit. Following the orbit of $R_7$ we have $R_8:=(7b,8b+1)=F(R_7) \in \overline{R_1P_2},\,\,R_9:=
(-15b-2,16b+1)=F(R_8)\in\overline{R_3P_{3}},\,\, R_{10}:=(-31b-4,2b-1)=F(R_9)\in\overline{R_4P_{4}}$, $R_{11}:=(-33b-4, -28b-5)=F(R_{10})\in\overline{SP_5}.$ Hence $F(R_{11})=P_6.$

Concerning the plateaus we have the following  collapses : $$\overline{SP_{5}}\twoheadrightarrow P_{6},\,\overline{P_{1}T_2}\twoheadrightarrow P_{2}, \,\overline{R_{2}Q}\twoheadrightarrow R_3 \mbox { and } F^9(R_3)=P_6.$$
Furthermore, $\overline{R_6X_1}\rightarrow\overline{R_{7}X_{2}}\rightarrow\overline{R_{8}P_{2}}\rightarrow\overline{R_{9}P_{3}}\rightarrow\overline{R_{10}P_{4}}\rightarrow\overline{R_{11}P_5}\twoheadrightarrow P_{6},$ and also $\overline{R_6T_1}\rightarrow\overline{R_7T_2}\rightarrow\overline{R_{8}P_{2}}\rightarrow \cdots \rightarrow\overline{R_{11}P_5} \twoheadrightarrow P_{6}.$

We name the rest of the intervals: $A:=\overline{R_{1}R_{8}}$, $ B:=\overline{P_{2}R_{2}}$, $ C:=\overline{QR_3}$, $ D:=\overline{R_{3}R_{9}}$, $ E:=\overline{R_3R_4}$, $ G:=\overline{R_4R_{10}}$, $ H:=\overline{R_4R_{5}}$, $ I:=\overline{R_5S}$, $ J:=\overline{R_5R_6}$, $ K:=\overline{X_1R_7}$, $ L:=\overline{T_1P_{6}}$, $ M:=\overline{P_{1}R_1}.$

The oriented graph for these intervals, which is of Markov type, now is 

\begin{center}
{\scriptsize\begin{tikzcd}
		A \arrow[rrrrrrrr, bend left] \arrow[r] & C \arrow[r] & E \arrow[r] & H \arrow[r] & J \arrow[r] & K \arrow[lllll, bend left] \arrow[r] & M \arrow[r] & B \arrow[r] & D \arrow[r] & G \arrow[r] & I \arrow[r] & L\arrow[lllll, bend left]\arrow[lllllllllll,bend left]
\end{tikzcd}}
\end{center}
and hence its entropy is, as before, $h_1$.

$(b)$ Now we have to consider the graphs of Figures \ref{f:7} ($b\le -1/8$) and \ref{f:8} $(b\ge -1/8)$ of the Appendix. As before we only explain with detail the first case. The second one follows with the natural adaptations.

As usual for $i=1,\ldots,4,\,\, F(P_{i})=P_{i+1},\,\,$  for $i=1,\ldots 6,\,F(R_{i})=R_{i+1}$, for $i=1,2,\, F(X_{i})=X_{i+1},\,\,F(T_i)=T_{i+1}$ and $F(Y_1)=Y_2,\,F(Z_1)=Z_2.$ Moreover $F(P_5)=P_1.$ 

We need to take into account some more points of the orbit of $T_1.$ Namely $T_4=(-9b,10b-1)\in\overline{R_3P_{2}},\,
 T_5:=(-19b,2b-1)\in\overline{R_{4}P_{3}}$ and $T_6:=(-21b,-16b-1)\in\overline{SP_{4}}.$ Note that for $i=1,\ldots,5$ still $F(T_i)=T_{i+1}.$

We consider the intervals $A=\overline{R_1P_{1}}$, $B=\overline{T_3R_{2}}$, $C=\overline{R_{3}T_4}$, $D=\overline{R_4T_{5}}$, $E=\overline{R_5S}$, $G=\overline{T_1P_5}$ and $H=\overline{T_2R_1}.$ With these notations we have that the oriented graph associated to $\Gamma$ has at least the following  coverings:

\begin{center}
{\begin{tikzcd}
		A  \arrow[r] & C \arrow[r] & D \arrow[r] & E \arrow[r] & G \arrow[r] \arrow[llll,bend left] & H \arrow[r] & B\arrow[lllll,bend left].
\end{tikzcd}}
\end{center}

This oriented graph has $\{C\}$ as a rome having two loops of lengths $5$ and $6$ respectively. Then its entropy is the logarithm of the positive root of $\lambda^{-6}+\lambda^{-5}-1.$ That is $h_2.$ This ends the proof of the first part of this item.

Concerning the plateaus, note that when $b\in [-1/5,-5/28],$ $F^{10}(\overline{R_2Q})=P_5\in\mathcal P$ and $F^{4}(\overline{Z_2X_3})=P_5\in\mathcal P.$ When $b\in [-1/8,-1/9],$ we have that $R_8=(7b,-8b-1)$
and $F(R_8)=R_2.$ Therefore $R_2$ is a $7$-periodic point as claimed. Moreover, $F(\overline{Z_2Q})=R_3\in\mathcal R.$

For items $(c),(d)$ and  $(e)$ we have to consider the graph in Figure \ref {f:9}. Note that $F(R_8)=R_2$ and then $R_2$ is a $7$-periodic point. Therefore, in all three cases the orbit of the plateaus goes to the $5$-periodic orbit of $P_1$ or to the $7$-periodic orbit of $R_2.$ There are five different situations 
depending of the orbit of the point $T_3=(9b,-1).$ In general we denote $T_i=F^{i-1}(-5b,0).$

$(c)$ We have $T_4=(-9b,10b-1)\in\overline{R_3P_{2}},$ $T_5=(-19b,2b-1)\in\overline{R_4P_{3}},$
$T_6=(-21b,-16b-1)\in\overline{SP_{4}}.$ Denoting $A=\overline{R_1P_{1}},$ $B=\overline{T_2R_1}$ $C=\overline{R_3T_4},$ $D=\overline{R_4T_5},$ $E=\overline{R_5S},$ $G=\overline{R_6T_1},$ $H=\overline{T_1P_5},$ $I=\overline{YT_2},$ $J=\overline{X_2T_3},$  and $K=\overline{R_2T_3}$ we obtain the following graph:

$$\begin{tikzcd}
	H \arrow[r] \arrow[rd]& A \arrow[r]  & C \arrow[r] & D \arrow[r] & E \arrow[r] \arrow[llll, bend right] & G \arrow[r] & I \arrow[r] & J  \arrow[lllll, bend right]                  \\
	                       & B \arrow[r] & K  \arrow[u]              & & & & &
\end{tikzcd}$$

An accurate analysis of all other covers in the oriented graph, shows that all the entropy is concentrated in the  above graph. As in the previous case $\{C\}$ is a rome having two loops of lenght $6$ and one of length $5.$  So its entropy is the logarithm of the positive root of $2\lambda^{-6}+\lambda^{-5}-1.$ This shows that $h(F\vert_{\Gamma})= h_1$ in this range of parameters.

$(d)$ Using the notation introduced in the proof of statement $(c),$ now we have that  $T_6\in \overline{R_5S}=E$ and we need to compute some more points of the orbit. We have $T_7=(-5b,-36b-1)\in H$ and $T_8=(31b,32b+1).$
When $b\le -1/32$ we have that $T_8\in A.$ Now we define
 $E_1=\overline{R_5T_6},$ 
 $H_1=\overline{T_1T_7},$  and we obtain the following graph of coverings: 

$$\begin{tikzcd}
 E_1 \arrow[r] \arrow[rd]& G \arrow[r]  & I \arrow[r] & J \arrow[r]  & C \arrow[r] &D \arrow[lllll, bend right] \\
	& H_1 \arrow[r] & B  \arrow[r]              &K \arrow[ur]& &
\end{tikzcd}$$ 

Here $\{C\}$ is still a rome having two loops of lenght $6.$
Then the entropy of this subsystem is ${(\ln 2)}/{6}.$

When $b> -1/32$ it is necessary to compute more points of the orbit. Now $T_8\in B,$ $T_9= (-63b-2,-1)\in K,$ $T_{10}=(63b+2,-62b-3)\in C,$ $T_{11}=(125b+4,2b-1)\in D,$ $T_{12}=(123b+4,128b+3)\in E_1,$ $T_{13}=(-5b,252b+7)\in G,$ and
$T_{14}=(-257b-8,248b+7).$ 

When $b\in (-1/32,-7/248]$ we have that $T_{14}\in \overline{R_6Y}.$ In this situation, denoting 
	$C_1=\overline{T_{10}T_{4}},$ $D_1=\overline{T_{11}T_{5}},$ $E_0=\overline{T_{12}T_6},
	$ $G_1=\overline{T_{13}T_{1}},$ $H_1=\overline{T_7T_{1}},$ $I=\overline{YT_2},$ $J=\overline{X_2T_3},$ $B_1=\overline{T_2T_8}$ and $K_1=\overline{T_3T_9}$ we get the following graph 
	
	$$\begin{tikzcd}
		E_0 \arrow[r] \arrow[rd]& G_{1} \arrow[r]  & I \arrow[r] & J \arrow[r]  & C_1 \arrow[r] &D_1 \arrow[lllll, bend right] \\
		& H_1 \arrow[r] & B_1 \arrow[r]              &K_1 \arrow[ur]& &
	\end{tikzcd}$$
that has again entropy ${(\ln 2)}/6.$ This ends the proof of $(d)$ when $b\in (-1/32,-7/248].$

Lastly when $b\in (-7/248,-1/36)$ we have that $T_{14}\in I$ and we need to compute two more points of the orbit of $T_1.$ Now $T_{15}=(9b,-504b-15)\in J,$ $T_{16}=(495b+14,-494b-15)\in\overline {R_3T_{10}}.$
Denoting by $I_1=\overline{T_{2}T_{14}}$ and $J_1=\overline{T_3T_{15}}$ we obtain the graph, with the same entropy as the preceding ones

$$	\begin{tikzcd}
	E_0 \arrow[r] \arrow[rd]& G_1 \arrow[r]  & I_1 \arrow[r] & J_1 \arrow[r]  & C_1 \arrow[r] &D_1 \arrow[lllll, bend right] \\
	& H_1 \arrow[r] & B_1 \arrow[r]              &K_1 \arrow[ur]& &
\end{tikzcd}$$
This ends the proof of $(d).$

$(e)$-$(f)$ For $b\in [-1/36,0)$ we have that $T_7\in G.$ Remember that $F(R_8)=R_2.$ Now we consider the  partition of $\Gamma$ given by all its vertices and also $T_4,T_5$ and $T_6.$  The intervals $\overline{SP_4},\,\overline{X_1Y}$ and $\overline{Z_2Q}$ are plateaus. Note that $F(\overline{SP_4})= P_5\in \mathcal P,\,F(\overline {Z_2Q})= R_3\in \mathcal R$ and $F(\overline{X_1Y})= X_2\rightarrow R_3\in\mathcal R.$

Using the previous notation the resultant intervals are $A,B,G,I,H,J,K$ and $C_0=\overline{QR_3},$ $C_1=\overline{R_3T_4},$ $C_2=\overline {T_4P_{2}},$ $D_0=\overline{R_3R_4},$ $D_1=\overline{R_4T_5},$   $D_2=\overline{T_5P_3},$ $E_0=\overline {R_4R_5},$ $E_1=\overline {R_5T_6},$ $E_2=\overline {T_6S},$  $G_0=\overline {R_5R_6},$
$I_{0}=\overline {R_6R_7},$ $I_1=\overline {R_7X_1},$  $L_0=\overline {R_7Z_1},$ $L_1=\overline {X_2R_8},$ $M=\overline {R_8Z_2}.$ We do not consider
the remaining intervals because they collapse after two iterations. We note that this partition is not a Markov partition. However we can apply the Remark \ref{grafcota} to obtain and upper bound of the entropy of $F.$ Thus we obtain that the entropy of the following graph is an upper bound for $h(F\vert_{\Gamma}):$

\begin{tikzcd}
	 C_2 \arrow[d]& & & C_0 \arrow[r] & D_0 \arrow[r]  & E_0 \arrow[rd] & I_0 \arrow[r]  & L_0 \arrow[r]&M \arrow[lllll, bend right] \\
	 D_2\arrow[d] &  H\arrow[r]\arrow[d] & A\arrow[llu]\arrow[rd] \arrow[ru]       & &L_1\arrow[lu] &I_1\arrow[l] &G_0\arrow[u] & & \\
	  E_2\arrow[ru]\arrow[rrrrrr,bend right,dashed] &B\arrow[r] & K\arrow[r] &C_1\arrow[r] & D_1\arrow[r]&E_1\arrow[r,dashed]&G\arrow[lu]\arrow[r] &I\arrow[r] & J\arrow[lllll, bend left] &&
\end{tikzcd}

All the covers are fully realized by $F$ except $E_2\longrightarrow G$ and $E_1\longrightarrow G,$ and for this reason we plot dashed arrows in these two cases. From Remark~\ref{grafcota}, this graph (taking into account all type of arrows) has zero entropy and hence $h(F\vert_{\Gamma})=0.$ 
Observe that, in particular,  we have encountered a discontinuity in the entropy at $b=-1/36,$ proving item~$(f).$ 

We also note that in this oriented graph there are two loops with lengths $7$ and $5$ that force the periodic points $({113b}/{15},-{(112b+15)}/{15})\in C_0$ and $(-5b,{(-36b+1)}/{7})\in H.$ From Lemma \ref{edges} both periodic orbits are repulsive. 
\end{proof}

\subsection{The case $a=-1$ and $0\leq b\leq 1$}

We separate the analysis in different subcases.
\subsubsection{The case $a=-1$ and $0\leq b\leq 1/2.$}

Next proposition shows that when $b$ belongs to the interval $[0,1/2]$ the entropy of the map is always zero, and describes the dynamics of all points. 

\begin{propo}\label{quart} Assume that $a=-1,\,b\in[0,1/2]$ and let $p=(-b,2b-1)\in Q_3$ be the fixed point of $F.$ 
	\begin{itemize}
		\item [(a)] If $0\leq b < 3/16$ then  $p$ is the image of one plateau and $F$ has the two 5-periodic orbits:
		 $$\!\!\!\!\mathcal{P}=\{(-5b,-4b+1),(9b-2,-1),(-9b+2,10b-3),(-19b+4,2b-1),(-21b+4,-16b+3)\}$$ that contains the image of the other plateau and $\mathcal{Q}$ given by
		 {\footnotesize $$ \left\{\left(\frac{b+2}{7},\frac{6b-9}{7}\right),\left(\frac{4-5b}{7},2b-1\right),\left(\frac{4-19b}{7},\frac{16b-3}{7}\right),
		 	\left(-5b,\frac{4b+1}{7}\right),\left(\frac{31b-8}{7},\frac{-32b-1}{7}\right)\right\}.
		$$} When $b=3/16$ both orbits coincide.
Moreover for $0\leq b \le 3/16$ and each $(x,y)\in\Gamma\setminus \{\mathcal Q\}$ there exists $n\in\N$ such that $F^n(x,y)\in \mathcal{P}\cup \{p\}.$

		\item [(b)] If $3/16< b < 4/15$ then for all $(x,y)\in\R^2,$  $F^{22}(x,y)=p.$
		\item [(c)] If $4/15\le b\leq 1/2$ then  $p$ is the image of one plateau and $F$ has the two 4-periodic orbits:
$$\mathcal{P}=\{(-5b,-4b+1),(9b-2,-8b+1),(17b-4,2b-1),(15b-4,20b-5)\}$$ that contains the image of the other plateau, and 
$$\mathcal{Q}=\left\{\left(\frac{15b-8}{3},-4b+1\right),\left(\frac{2-3b}{3},\frac{6b-5}{3}\right),\left(\frac{4-9b}{3},2b-1\right),\left(\frac{4-15b}{3},\frac{1}{3}\right)\right\}$$
or 
$$\mathcal{Q}=\left\{\left(-b,\frac{6b-5}{3}\right),\left(\frac{2-3b}{3},\frac{6b-5}{3}\right),\left(\frac{4-9b}{3},2b-1\right),\left(\frac{3b-4}{3},\frac{1}{3}\right)\right\}$$
depending on whether $4/15<b<4/9$ or $4/9\leq b \leq 1/2,$ respectively. Furthermore, for each $(x,y)\in\Gamma\setminus \{\mathcal Q\}$ there exists $n\in\N$ such that $F^n(x,y)\in \mathcal{P}\cup \{p\}.$
\end{itemize}
\end{propo}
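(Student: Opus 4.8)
The plan is to treat the three parts separately, with part (b) being essentially immediate and parts (a) and (c) following the constructive scheme already employed in Propositions~\ref{primer}--\ref{tercer}. For part (b), since $3/16<b<4/15$ lies inside $[3/16,4/15]$, Corollary~\ref{c:c16nou} directly gives $F^{22}(\R^2)=\{(-b,2b-1)\}=\{p\}$, which is exactly the claim; nothing further is needed.

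For parts (a) and (c) I would start from the corresponding invariant graph $\Gamma$ given in the Appendix (one topological type for $0\le b<3/16$ and another for $4/15\le b\le 1/2$, with the boundary values $b=3/16$ and $b=4/15$ handled by the collapse of edges, exactly as described in the proof of Proposition~\ref{p:lemanou}). On each such $\Gamma$ I would fix a mono-partition by its vertices and track the image of every edge, recording which edges are plateaus. The key computation is to verify that one plateau is mapped onto the fixed point $p=(-b,2b-1)\in Q_3$, while the orbit of the other plateau lands on $\mathcal{P}$; by Remark~\ref{colapses} all plateaus and their finitely many preimages can then be discarded when building the oriented graph of coverings. In parallel, a direct computation of the compositions $F_{j_k}\circ\cdots\circ F_{j_1}$ along the itinerary of each listed point confirms the explicit coordinates and the periods ($5$ in (a), $4$ in (c)) of the two orbits $\mathcal{P}$ and $\mathcal{Q}$.

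Next I would assemble the oriented graph of the surviving intervals and exhibit a rome $R$ through which every loop passes. To obtain $h(F|_\Gamma)=0$ it suffices, by Remark~\ref{romaentr}(iii), to check that exactly one loop passes through each element of $R$ and that no two elements of $R$ are connected; this reduces the whole entropy question to an inspection of the covering graph, just as in the zero-entropy cases of Proposition~\ref{menys1menys34}(a)--(b). For the convergence statement, since $\mathcal{Q}$ meets no plateau, Lemma~\ref{edges}(c) shows it is repulsive, and the piecewise-linear graphic of the return map $F^5$ (respectively $F^4$) on each fundamental interval between consecutive points of $\mathcal{Q}$ is then non-decreasing with slopes $0$ or $\ge 2$ and has the two endpoints as repelling fixed points; as in Figure~\ref{Falaq}, the plateau collapse forces a single attracting point inside each such interval, so every point other than $\mathcal{Q}$ is driven into $\mathcal{P}\cup\{p\}$.

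The main obstacle will be the bookkeeping across the internal bifurcations. Within part (a) one must track when the successive images of the relevant points cross the coordinate axes (this governs which edges are plateaus and hence the shape of the covering graph), and within part (c) the explicit form of $\mathcal{Q}$ changes at $b=4/9$, so the covering graph and the periodic points have to be recomputed on each of the subranges $4/15<b<4/9$ and $4/9\le b\le 1/2$, with the degenerate values $b=3/16$ (where $\mathcal{P}$ and $\mathcal{Q}$ merge) and $b=4/15$ treated separately. Verifying that in every one of these subcases the covering graph still satisfies the single-loop-per-rome-element condition of Remark~\ref{romaentr}(iii) is the genuinely laborious step; the periodicity and repulsiveness assertions, by contrast, are routine once the explicit orbits are written down.
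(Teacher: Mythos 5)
Your treatment of part (b) via Corollary~\ref{c:c16nou} is exactly the paper's, and your overall strategy for (a) and (c) --- explicit computation of the two periodic orbits, use of the plateaus for attraction and of Lemma~\ref{edges}(c) for the repulsiveness of $\mathcal{Q}$, and case-splitting over the sub-ranges of $b$ where the graph changes --- matches the paper's in spirit. The rome/zero-entropy step you insert is harmless but superfluous here: the proposition makes no entropy claim, and the return-map analysis already determines the full dynamics.

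The one concrete problem is the invariant decomposition you use for the convergence statement. You propose to study the return map $F^5$ (resp.\ $F^4$) on ``each fundamental interval between consecutive points of $\mathcal{Q}$'', with both endpoints repelling and a single attracting point inside, as in Figure~\ref{Falaq}. That is the picture from Proposition~\ref{segon}, where $\Gamma$ is a topological circle and the points of $\mathcal{Q}$ genuinely cut it into arcs permuted by $F$. For $b\in[0,1/2]$ the graph $\Gamma$ is a tree: ``consecutive'' points of $\mathcal{Q}$ are not defined, the arcs joining points of $\mathcal{Q}$ are not invariant under the return map, and each branch of the tree contains only one point of $\mathcal{Q}$. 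The decomposition that works (and is the one the paper uses) is into the arcs $A_i$ joining the fixed point $p$ to the points $P_i=F^{i-1}(P_1)$ of $\mathcal{P}$: one checks $F(A_i)=A_{i+1}$ cyclically, so $F^5(A_1)\subset A_1$, and because $F$ sends the beginning of $A_1$ and the end of $A_5$ to $p$ and $P_1$ respectively, $F^5\vert_{A_1}$ is constant near \emph{both} endpoints. Hence $p$ and $P_1$ are the attracting fixed points and the unique diagonal crossing in the interior is the repelling point of $\mathcal{Q}$ (Figure~\ref{Grafic1}) --- the attracting/repelling roles of the endpoints are the opposite of what you describe. Once you replace your decomposition by this one, the rest of your argument goes through.
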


\begin{proof}
	To prove the proposition we need the graphs given in the Appendix. 
Concerning item $(a)$ we have the four different graphs given in Figures \ref{f:10}--\ref{f:13}. We are going to prove the result in one case; for the other ones the same argument works. Consider the graph given in Figure \ref{f:13}, for $1/6<b\leq 3/16.$ For $i=1,\ldots,5$ we denote by $A_i$ the path in $\Gamma$ joining $p$ with $P_i=F^{i-1}(P_1),$ where $P_1=(-5b,-4b+1).$
Direct computations show that for $i=1,\ldots,4,\,\, F(A_i)=A_{i+1}$ and $F(A_5)=A_1.$ Now we consider the action of $F^5$ on $A_1.$ During the transition we have some plateaus. In particular, $F$ sends the beginning of $A_1$ and the end of $A_5$ to $p$ and $P_1$ respectively. Hence, $F^5$ restricted to $A_1$ is constant at the beginning and also at the end of $A_1,$ 
and the graphic of $F^5$ on $A_1$ is as in Figure~\ref{Grafic1}.
\begin{figure}[H]
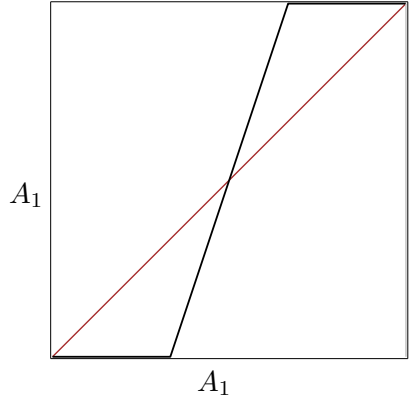

 	\centering
	\begin{lpic}[l(2mm),r(2mm),t(2mm),b(2mm)]{G1(0.25)}
			\lbl[c]{90,-10; {$A_1$}}
			
			\lbl[c]{-10,90; {$A_1$}}
				
		\end{lpic}
\caption{Graphic of $F^5$ on $A_1$ when $1/6 < b\leq 3/16$.}
\label{Grafic1}
 \end{figure}
The three points on the diagonal are $p$ (the fixed point of $F$); the point $P_1,$ which is in~$\mathcal{P},$ and  is an attractive periodic orbit; and it appears another fixed point $q$  that corresponds with a $5g$-periodic orbit  $\mathcal{Q},$ which is repulsive. Clearly, every point $x\ne q$ in $A_1,$ tends either to $p$ or to $P_1,$ with a finite number of iterations.

Statement $(b)$ is a direct consequence of Corollary \ref{c:c16nou}. Notice that this case corresponds precisely to the situation considered in Figure~\ref{f:14} where the graph $\Gamma$ reduces to be the point~$p.$

$(c)$ Lastly for the remaining values of $b$, $4/15<b\leq 1/2,$ we proceed in a similar way. It can be verified that now the announced  orbit $\mathcal{P}$ is, in fact, $4$-periodic.
We choose the range $1/3<b\leq 1/2$ to illustrate one of the cases. Looking at Figure \ref{f:19} of the Appendix we call $A_1,A_2,A_3$ and $A_4$ the paths in $\Gamma$ joining the fixed point $p$ with the points of the periodic orbit as before. Then we get that the graphic of $F^4$ restricted to $A_1$ is exactly the one of Figure \ref{Grafic1}. Now, for $F,$ we have an attractor fixed point, an attractive $4$-periodic orbit and a repulsive $4$-periodic orbit.
\end{proof}

\subsubsection{The case $a=-1$ and $1/2<b\leq 2/3.$}
When $b>1/2$ the point $(-b,2b-1)$ is no longer a fixed point and the new  fixed point of $F$ is $p:=\left(-(2+b)/5,(2b-1)/{5}\right)$ and it is located in $Q_2$ and does not belong to $\Gamma.$

When $1/2<b\leq 2/3$ the invariant graph which contains all the dynamics of $F$ is the one given in Figure \ref{f:20} of the Appendix. With the notation of this figure we have that for $i=1,2,3,\,\,F(P_{i})=P_{i+1}$ and $F(P_4)=P_1;$ for $i=1,2,\,\,  F(R_{i})=R_{i+1},\,F(X_i)=X_{i+1}$ and $F(R_{3})=F(R_1); F(Y_1)=Y_2,\,\,F(Y_2)=F(Q)=R_2,\,\,F(Z)=R_1$ and $F(S)=P_{1}.$ Concerning the point $X_{3}$ we see that $F(X_{3})=(-3b+2,4b-3)\in\overline{QP_2}.$

 The plateaus are $\overline{R_3Z}\twoheadrightarrow R_1$, $\overline{Y_2Q}\twoheadrightarrow R_2$, $\overline{P_4S}\twoheadrightarrow P_1.$ Also $\overline{R_3Y_1}\rightarrow \overline{R_1Y_2}\twoheadrightarrow R_2.$
 
 Let  $A=\overline{SX_2}\cup \overline{X_2Z},$ $B=\overline{P_{1}X_3}\cup \overline{X_3R_1},$ $C=\overline{P_2Q}$ and $D=\overline{P_3R_2}.$ Furthermore, $M=\overline{R_2Y_2},$  $N=\overline{R_2Y_1}$ and $P=\overline{R_3Y_2}.$ The oriented graph associated to this partition is of Markov type and it is given by:

$$\begin{tikzcd}
	A \arrow[r] & B \arrow[r] \arrow[rd] & C \arrow[r] & D \arrow[lll, bend right] &                          \\
	&                        & M \arrow[r] & N \arrow[r]               & P \arrow[ll, bend right]
\end{tikzcd}$$

From Remark \ref{romaentr} (iii) this graph has zero entropy and we get that the map itself has zero entropy.
Next proposition describes the dynamics of $F.$
\begin{propo}\label{cinc}
	Assume that $a=-1$ and $1/2<b\leq 2/3.$ Then $h(F\vert_{\Gamma})=0.$  Let $p$ be the fixed point of $F.$ Then $F$ has two $4$-periodic orbits
	$$\mathcal{P}=\left\{\left(-b-2,-1\right),\left(b+2,-3\right),\left(b+4,2b-1\right),\left(-b+4,5\right)\right\}$$ that captures one plateau,
	and 
	$$\mathcal{Q}_1=\left\{\left(\frac{b-4}{3},\frac{4b-1}{3}\right),\left(\frac{25b}{3},-1\right),\left(\frac{5b-2}{3},\frac{-2b-1}{3}\right),\left(\frac{7b-4}{3},2b-1\right)\right\}$$
	when $b\ge 4/7$ or
	$$\mathcal{Q}_2=\left\{\left(\frac{-13b+4}{3},\frac{4b-1}{3}\right),\left(3b-2,\frac{-14b+5}{3}\right),\left(\frac{5b-2}{3},\frac{-2b-1}{3}\right),\left(\frac{7b-4}{3},2b-1 \right)\right\}$$
	when $b\leq 4/7,$ and two $3$-periodic orbits
	$$\mathcal{R}=\left\{\left(-b,2b-1\right),\left(-b,-2b+1\right),\left(3b-2,-2b+1\right)\right\},$$ that captures the other two plateaus,
	and 
	$$\mathcal{S}=\left\{\left(\frac{b-2}{3},\frac{2b-1}{3}\right),\left(-b,\frac{2b-1}{3}\right),\left(\frac{b-2}{3},\frac{-2b+1}{3}\right)\right\}.$$
	Moreover, when $b\ge 4/7$ (respectively $b\leq 4/7$) for each $(x,y)\in\Gamma$  there exists $n\in\N$ such that $F^n(x,y)\in \mathcal{P}\cup  \mathcal{R}\cup\mathcal Q_1\cup\mathcal S$ (respectively $F^n(x,y)\in \mathcal{P}\cup  \mathcal{R}\cup\mathcal Q_2\cup\mathcal S$). 
\end{propo}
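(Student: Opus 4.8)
The plan is to separate the two assertions: the vanishing of the entropy, and the description of the limit dynamics by the four listed orbits. The first is already in hand — the Markov oriented graph exhibited just before the statement splits into a length-$4$ loop $A\to B\to C\to D\to A$ and a length-$3$ loop $M\to N\to P\to M$, joined only by the single arrow $B\to M$, so that exactly one loop passes through each vertex of a rome and, as observed there, Remark~\ref{romaentr}~(iii) gives $h(F\vert_{\Gamma})=0$. It therefore remains to identify the periodic orbits and to prove the convergence statement.

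For the attracting orbits I would verify $\mathcal{P}$ and $\mathcal{R}$ by direct substitution in the quadrant formulas \eqref{e:Fis}: one checks that the four (resp.\ three) listed points are cyclically permuted with the stated minimal period, and that they receive the plateaus, namely $\overline{P_4S}\twoheadrightarrow P_1\in\mathcal{P}$ for the single plateau captured by $\mathcal{P}$, and $\overline{R_3Z}\twoheadrightarrow R_1$, $\overline{Y_2Q}\twoheadrightarrow R_2$ with $R_1,R_2\in\mathcal{R}$ for the two captured by $\mathcal{R}$. The repelling orbits $\mathcal{Q}_i$ and $\mathcal{S}$ I would obtain as the fixed points of the affine first-return maps along the two loops, i.e.\ of the appropriate compositions of the $F_i$ around $A\to B\to C\to D$ and around $M\to N\to P$. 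Each such composition is expanding, hence has a unique fixed point, and solving the corresponding linear systems produces the stated coordinates. The bifurcation between $\mathcal{Q}_1$ and $\mathcal{Q}_2$ at $b=4/7$ corresponds to the value at which two of the points of this $4$-orbit change quadrant (indeed the two expressions coincide at $b=4/7$), so that the string of $F_i$'s defining the return map switches; similarly for $\mathcal{S}$ around the $3$-loop. Because none of these orbits meets a plateau, Lemma~\ref{edges}~(c) guarantees that they are all repulsive.

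For the convergence I would analyze the two return maps $F^4$ and $F^3$ on the invariant intervals of the respective loops. By Lemma~\ref{edges}~(b) every non-plateau edge is stretched by a factor $\sqrt2$ or $2$, so each return map is piecewise affine, non-decreasing, with all slopes equal to powers $2^k>1$ off the flat pieces that the plateaus create. Its graphic is thus exactly of the shape of Figure~\ref{Grafic1}: a repelling fixed point (a point of $\mathcal{Q}_i$, resp.\ $\mathcal{S}$), an attracting fixed point, and a horizontal segment that a plateau sends in one iterate onto $\mathcal{P}$ (resp.\ $\mathcal{R}$). Since the slopes away from the flat segment exceed $1$, any point other than the repelling fixed point is pushed, after finitely many applications of the return map, into the flat segment, whence it lands \emph{exactly} on the attracting orbit; unwinding through the partition this yields that every $(x,y)\in\Gamma$ reaches $\mathcal{P}\cup\mathcal{R}\cup\mathcal{Q}_i\cup\mathcal{S}$ in finite time, as claimed.

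I expect the second paragraph to be the main obstacle: writing the two loop compositions explicitly, checking in each sub-range of $b$ that their fixed points lie on the prescribed edges of $\Gamma$, and pinning down $b=4/7$ as the exact threshold at which the repelling $4$-orbit changes combinatorial type from $\mathcal{Q}_1$ to $\mathcal{Q}_2$. The convergence argument is conceptually routine, but still requires the care of confirming that the flat segment is genuinely reached in finite time rather than merely approached, which is precisely where the strict expansion $2^k>1$ furnished by Lemma~\ref{edges} is indispensable.
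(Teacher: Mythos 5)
Your proposal is correct and follows essentially the same route as the paper: the entropy claim is delegated to the zero-entropy Markov graph established just before the statement, $\mathcal{P}$ and $\mathcal{R}$ are verified directly as vertex orbits absorbing the plateaus, and $\mathcal{Q}_i$, $\mathcal{S}$ arise as the unique repelling fixed points of the expanding return maps along the loops $ABCD$ and $MNP$, with convergence forced by the expansion together with Lemma~\ref{edges}. The paper states this more tersely (any orbit avoiding $\mathcal{P}\cup\mathcal{R}$ must eventually follow one of the two loops, each of which carries a single repulsive periodic orbit), but the mechanism is the one you describe.
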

\begin{proof}
	
The fact that $h(F\vert_{\Gamma})=0$ is proved in the comments before the statement of the proposition. Also the statements about the beaviour of the plateaus are simple computations. To see the rest of the statement we have to consider the graph given in Figure \ref{f:20} and the partition introduced in the above comments. We notice that the periodic orbits $\mathcal{P}$ and $\mathcal{R}$  are formed for some vertices of the graph: $\mathcal{P}=\{P_{1},P_{2},P_{3},P_{4}\}$ and $\mathcal{R}=\{R_1,R_2,R_3\}.$  

Looking at the oriented graph, it can be seen that if a point in $\Gamma$ satisfies that its iterates never meet $\mathcal{P}\cup \mathcal{R},$ then  after some iterates it has to follow one of the two loops $ABCD$ or $MNP.$ Since from Proposition \ref{edges} these loops have associated only one periodic orbit (which must be $\mathcal Q_1$ or $\mathcal Q_2$ and $\mathcal S$) that is repulsive, we obtain the desired result. 
\end{proof}

\subsubsection{The case $a=-1$ and $2/3<b\leq 5/7.$}

For this range of values of $b$ we have to consider the graph  $\Gamma$ given in Figure \ref{f:A}. 

We notice that the periodic orbit $\mathcal{P}$ introduced in the above proposition still survives when $2/3<b\leq 5/7$, while the orbit $\mathcal{R}$ is transformed in  $$\mathcal{X}=\big\{X_5=(3b-2,1-2b),\, X_6=(5b-4,2b-1),\, X_7=(-7b+4,4b-3)\big\}.$$

As usual the notation of the points in Figure \ref{f:A} has dynamical meaning. That is, for $i=1,2,3,$ $F(P_i)=P_{i+1}$ and $F(P_4)=P_1.$ For $i=1,\ldots,6,\,\,F(X_i)=X_{i+1}$ and $F(X_7)=X_5=F(W).$ For $i=1,\ldots,7,\,\,F(R_i)=R_{i+1}.$ For $i=1,2,\,\,F(Z_i)=Z_{i+1}$ and $F(Z_3)=R_7.$ $F(Y_1)=Y_2, F(T_1)=T_2$ and $F(T_2)=F(Y_2)=X_3.$ Lastly $F(Q)=Z_2$ and $F(S)=P_1.$ Note that $R_{8}=(29b-20,2b-1),$ and its position depends on the sign of $29b-20.$

There are five plateaus. The first one, $\overline{SP_4},$ is captured by the orbit $\mathcal P.$ The second one, $\overline{WX_4},$ is captured by the orbit~$\mathcal X.$ The third one, $\overline{T_2X_2},$ is also  captured by the orbit~$\mathcal X.$ The two remaining plateaus $\overline {R_6Z_3}$ and $\overline {QZ_1}$ are captured by the orbit of $R_8$ and their behaviors will depend on $b.$ We also observe that there are other intervals in the partition that collapse to a point after some iterates. Namely  $$\overline{R_4Q}\rightarrow \overline{R_5Z_2}\rightarrow \overline{Z_3R_6}\twoheadrightarrow R_7\,,\,\overline{X_2X_5}\rightarrow \overline{X_3X_6}\rightarrow \overline{X_4X_7}\twoheadrightarrow X_5$$ and   $\overline{X_7Y_1}\rightarrow \overline{X_2X_5}\cup \overline{X_2Y_2}$ and $\overline{X_4T_1}
\rightarrow \overline{X_5X_2}\cup \overline{X_2T_2}.$

	\begin{propo}\label{sis} Consider $a=-1$ and $2/3<b\leq 5/7,$ and let $p$ be the fixed point of $F.$ Then the following holds:
		
		\begin{itemize}
			\item [(a)] 	For $b\in (2/3,603/874)$ the map $F$ has zero entropy. Moreover the plateaus are absorbed by $\mathcal P,\mathcal X$ and in certain cases by an additional  periodic orbit.
			\item [(b)] For $b\in [563/816,5/7]$ the map has positive entropy.
			
			\item[(c)] For $b\in [603/874,563/816]$ there exists a subinterval $X\subset \Gamma$ invariant by $F^7$ such that any point of $\Gamma$ except a 3-periodic orbit, a 7- periodic orbit, a 4-periodic orbit and the preimages of these orbits, visit $X;$ the map  $F^7\vert_X$ is semi-conjugated to the trapezoidal map $T_{1/16,1/16,Z}$ with $Z= \frac{45-60b}{48b-29}$ and there exists $\beta\in(603/874,563/816)$ such that
			$h(F\vert_{\Gamma})=0$ for $b\in(603/874,\beta]$ while  
			 $h(F\vert_{\Gamma})>0$ and non-decreasing when $b\in(\beta,563/816].$  
		\end{itemize}
		
	\end{propo}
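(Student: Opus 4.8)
The plan is to treat this proposition in close analogy with Proposition~\ref{menys1menys34}, exploiting that the graph $\Gamma$ of Figure~\ref{f:A} now returns to a distinguished subinterval under $F^7$ rather than under $F^6$. First I would fix the mono-partition of $\Gamma$ determined by its vertices together with the relevant points of the orbits of $R_8$, $X_i$ and $Z_i$ listed before the statement, and discard the plateaus and all of their preimages that collapse to a point after finitely many iterates, as allowed by Remark~\ref{colapses} and the slope-one collapse observed in the Additional notation. With the surviving intervals I would write down the associated oriented graph of coverings, exactly as in the earlier propositions.

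For part (a), $b\in(2/3,603/874)$, I expect this oriented graph to admit a rome through which each vertex carries a single loop and no two rome vertices are connected; by Remark~\ref{romaentr}(iii) this forces $h(F\vert_\Gamma)=0$. Tracing the five plateaus through the collapse chains already recorded (for instance $\overline{SP_4}\twoheadrightarrow P_1$, $\overline{WX_4}\twoheadrightarrow X_5$, $\overline{T_2X_2}$ into $\mathcal X$, and $\overline{R_6Z_3},\overline{QZ_1}$ into the orbit of $R_8$) identifies the limiting orbits $\mathcal P$, $\mathcal X$ and, when $29b-20$ has the appropriate sign, one further periodic orbit. For part (b), $b\in[563/816,5/7]$, I would refine the partition by following the orbit of $R_8$ and its successive subdivisions as $b$ grows; the refinement should produce either two loops through a single rome vertex or two connected loops, so that Remark~\ref{romaentr}(i)--(ii) yields $h(F\vert_\Gamma)>0$, exactly as in part~(c) of Proposition~\ref{menys1menys34}.

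The heart of the argument is part (c). Here I would isolate the subinterval $X\subset\Gamma$ that is mapped into itself by $F^7$ (the analogue of the interval $\Pi$ in Proposition~\ref{menys1menys34}), show that every point of $\Gamma$ outside the announced $3$-, $4$- and $7$-periodic orbits and their preimages eventually lands in $X$, and then compute $F^7\vert_X$ explicitly. Parametrizing $X$ by one coordinate, $F^7\vert_X$ should be a piecewise-affine map with one constant (plateau) piece and the two adjacent pieces of slopes $16$ and $-16$; after a linear change of variables, collapsing the terminal constant piece, extending across the repelling fixed point and rescaling to $[0,1]$, this produces the trapezoidal map $T_{1/16,1/16,Z}$ with $Z=\tfrac{45-60b}{48b-29}$. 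Because the passage involves a collapse, $F^7\vert_X$ and the trapezoidal map are only semiconjugated, which is enough since entropy is a semiconjugacy invariant.

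With the semiconjugacy in hand, Lemma~\ref{entroo}(i) gives $h(F\vert_\Gamma)=h(T_{1/16,1/16,Z})/7$. The function $Z(b)=\tfrac{45-60b}{48b-29}$ has derivative $-420/(48b-29)^2<0$, so $Z$ decreases with $b$, while by \cite{BMT} the entropy of $T_{1/16,1/16,Z}$ is non-increasing in $Z$; composing, $h(F\vert_\Gamma)$ is non-decreasing in $b$. Setting $\beta=\sup\{b\in[603/874,563/816]:h(F\vert_\Gamma)=0\}$ and checking that the endpoint entropies are respectively zero and positive places $\beta$ strictly inside the interval, and the lower semicontinuity of entropy from Lemma~\ref{entroo}(ii) forces $h(F\vert_\Gamma)=0$ at $b=\beta$. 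The full-family property of the uniparametric trapezoidal family from \cite{BMT} accounts for the assertion that the critical orbit runs through all unimodal dynamical situations. The main obstacle I anticipate is precisely the explicit identification and control of $X$: verifying $F^7(X)\subseteq X$, establishing the absorbing property up to the three exceptional periodic orbits and their preimages, and carrying out the bookkeeping of the many iterates of $R_8,X_i,Z_i$ that determine the three affine branches of $F^7\vert_X$ — a computation complicated by the fact that the combinatorics of the graph change across the subintervals of $[603/874,563/816]$, much as the partition had to be refined repeatedly in Proposition~\ref{menys1menys34}.
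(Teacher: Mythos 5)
Your proposal follows essentially the same route as the paper: oriented graphs of coverings with romes (Remark~\ref{romaentr}) for the zero/positive entropy claims in (a) and (b), and for (c) the isolation of an interval $X=\overline{R_{15}R_{8}}$ invariant under $F^7$, the explicit three-branch computation of $F^7\vert_X$ with slopes $16$, $0$, $-16$, the semiconjugacy to $T_{1/16,1/16,Z}$, the monotonicity of $Z(b)$ combined with the monotonicity result of \cite{BMT}, the definition of $\beta$ as a supremum, and lower semicontinuity of the entropy to get $h=0$ at $\beta$. The only content you leave implicit is the case-by-case bookkeeping of the orbit of $R_8$ (the points $R_9,\ldots,R_{18}$ and the attendant refinements of the partition), which you correctly flag as the main labor; the paper carries this out explicitly but the strategy is the one you describe.
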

	\begin{proof}
		To prove item $(a)$ first we consider the subcase $b\le 20/29.$ We consider the Figure \ref{f:A} with the partition given by the marked points. As usual we do not consider in the analysis of the oriented graph the intervals of the partition that collapse after some iterations. These intervals are described in the comments before the statement of the Proposition.

		We denote the rest of the intervals as follows: $A=\overline{SR_2},$  $G=\overline{R_2W},$  $B_2=\overline{P_{1}X_1},$  $B_1=\overline{X_1R_3},$  $H=\overline{R_3X_5},$ $C_2=\overline{P_2R_7},$  $C_1=\overline{R_7X_5},$    $D=\overline{X_2R_4},$ $E=\overline{Z_1X_6},$   $I=\overline{Z_2Y_1},$  $K=\overline{Z_3Y_2},$ $L=\overline{T_2X_3},$ $M=\overline{X_3T_1},$  $O=\overline{X_4T_2},$  $V_1=\overline{R_1X_6},$ $V_2=\overline{R_1P_3},$ $U=\overline{X_3R_5}$ and  $N=\overline{X_4R_6}.$

		We have that $R_8=(29b-20,2b-1)\in V_1,$  and then $F(C_2)$ contains $V_2$ and a subset of $V_1$ and $F(C_1)$ is a subset of $V_1.$ Hence $C_1,C_2$ do not cover $V_1.$ We are going to assume that $C_1,C_2$ cover $V_1$ and we will see that the corresponding oriented graph has zero entropy. This graph looks like:
		
		{\footnotesize
			\begin{tikzcd}
				A \arrow[r] \arrow[d] & B_2 \arrow[r] \arrow[rd] & C_2 \arrow[r] \arrow[rd, dashed] & V_2 \arrow[lll, bend right] &               &               &             &             &                                       &             &             &                          \\
				B_1 \arrow[d]           &                          & C_1 \arrow[r, dashed]            & V_1 \arrow[r]               & G \arrow[r] & H \arrow[r] & E \arrow[r] & I \arrow[r] & K \arrow[llllll, bend left] \arrow[d] &  &  &  \\
				D \arrow[d] & &  &  &  & &  & & L\arrow[d] &  &  \\
				U \arrow[r]             & N \arrow[ruu]            &                                  &                             &               &               &             &             &                                 M\arrow[r]      & O\arrow[lu]            &             &                         
		\end{tikzcd}}
		
		\bigskip
		Taking in consideration the rome $\{A,C_1,L\}$ and applying Theorem \ref{rome} we obtain that this graph has zero entropy. So $F$ has zero entropy. 
		
		From the comments before this proposition we know that the orbit of $R_8$ determines the behavior of the plateaus. Moreover, since $R_8\in V_1$ we see from the graph that  its orbit either is captured for some plateau which forces that: either $R_8$ is periodic; or it is absorbed by the orbit $\mathcal X$; or it is absorbed by  the 3-periodic orbit forced by the loop $LMO.$ In any case either $R_8$ is periodic or it is absorbed by $\mathcal X.$ This ends the proof of the first statement in this case.

		Assume now that $b\in (20/29,603/874].$ In this situation, $R_8\in V_2$ and we need to consider some points of its orbit. Some computations give $R_9=(27b-20,28b-19)\in A ,\,\,
		R_{10}=(38-55b,-1)\in B_1,\,\,R_{11}=(38-55b,37-54b)\in D,\,\,R_{12}=(-b,75-108b)\in U ,\,\,R_{13}=(109b-76,108b-75)\in N,\,\,R_{14}=(150-217b,218b-151)\in C_1 ,\,\,R_{15}=(300-435b,2b-1)\in V_1
		,\,\,R_{16}=(433b-300,301-436)\in G
		,\,\,R_{17}=(3b-2,870b-601)\in H
		,\,\,R_{18}=(-867b+598,874b-603)\in\overline{Z_1R_4}.$ Note that  $\overline{Z_1R_4}$ is a collapsing interval and $F^3(\overline{Z_1R_4})=R_7.$ In particular, $R_8$ belongs to a $14$-periodic orbit.
		
		We consider the associated partition and rename the intervals in the following way 
		$A_0=\overline{R_9R_2},$ 
		$A_1=\overline{R_9S},$ 
		$G_0=\overline{R_2R_{16}},$ 
		$G_1=\overline{R_{16}W},$  $B_2=\overline{P_1X_1},$  $B_1=\overline{X_1R_{10}},$ $B_0=\overline{R_{10}R_{3}},$ 
		$H_0=\overline{R_{3}R_{17}},$
		$H_1=\overline{R_{17}X_5},$
		$C_2=\overline{P_{2}R_{7}},$  $C_1=\overline{R_{7}R_{14}},$ 
		$C_0=\overline{R_{14}X_{5}},$   $D_0=\overline{R_{4}R_{11}},$
		$D_1=\overline{R_{11}X_{2}},$ $E=\overline{Z_{1}X_{6}}$   
		$I=\overline{Z_{2}Y_{1}},$  
		$K=\overline{Z_3Y_2},$ 
		$L=\overline{X_3T_2},$ 
		$M=\overline{X_3T_1},$  
		$O=\overline{X_4T_2},$ 
		$V_0=\overline{X_{6}R_{15}},$ $V_1=\overline{R_{15}R_{1}},$ $V_2=\overline{R_{1}R_{8}},$
		$V_3=\overline{R_8P_{3}},$ 
		$U_0=\overline{R_5R_{12}},$
		$U_1=\overline{R_{12}X_{3}},$
		$N_0=\overline{R_{6}R_{13}}$
		and  $N_1=\overline{R_{13}X_{4}}.$ 
		
		In our situation, $F^3(V_1)=F^2(G_0)=F(H_0)\subset \overline {Z_1R_4}$ that collapses. So we collapse $V_1,G_0$ and $H_0.$ Thus, after these collapses, the map is Markov and the associated oriented graph is 
		
		\hspace{-0.5cm}{\scriptsize \begin{tikzcd}
				A_0 \arrow[r] & B_0  \arrow[r]                 & D_0 \arrow[r]        & U_0  \arrow[r]             & N_0 \arrow[r]                & C_1 \arrow[r]                 &  V_2 \arrow[llllll, bend right]           &             &                                        &             &             &                          \\
				A_{1} \arrow[r] \arrow[d] & B_{1} \arrow[r] & D_{1} \arrow[r] & U_{1} \arrow[r]  & N_{1} \arrow[r] & C_{0} \arrow[r] & V_{0} \arrow[r] & G_1 \arrow[r] & H_1 \arrow[r] & E \arrow[r]  & I \arrow[r] & K \arrow[d]\arrow[llllll, bend right] \arrow[llllllu]\\
				B_2 \arrow[d] \arrow [rrrrru]             &                  &                           &                 &                 &                 &             &             &                                        &             &             &    L\arrow[d]                      \\
				C_2 \arrow[d]                &                  &                           &                 &                 &                 &             &             &                                        &             &             &  M\arrow[d]                        \\
				V_3 \arrow[uuu,bend left]                &    &                           &                 &                 &                 &             &             &                                        &             &             &       O\arrow [uu, bend left]             
		\end{tikzcd}}
		
A rome for this graph is $\{C_0,C_1,A_1,L\}$ and all the elements of the rome has one and only one loop associated. From Remark \ref{romaentr} (iii) it follows that it has zero entropy. This ends the proof of $(a).$

		$(b)$ First we consider the case $b\in [563/816,1201/1740].$ We keep the notation used in the previous item. Now $R_{18}\in E$ and we need to compute some  more iterates. Let $R_{19}=(-7b+4,-1740+1201)\in I,\,\,R_{20}=(1747b-1206,1734b-1197)\in K$ and $R_{21}=(-3481b+2402,3482b-2403)\in \overline {X_{3}R_{14}}.$ We also need to split $E$ as $E_0=\overline {R_{18}Z_{1}}$ and $E_1=\overline {X_6R_{18}}$ ; $I$ as $I_0=\overline {Z_2R_{19}}$ and $I_1=\overline {R_{19}Y_1}.$ Lastly $K$ splits as $K_0=\overline {Z_3R_{20}}$ and $K_1=\overline {R_{20}Y_2}.$ With this notation we have the following graph of coverings
		
		\hspace{-0.5cm}{\scriptsize \begin{tikzcd}
				A_0 \arrow[r] & B_0  \arrow[r]                 & D_0 \arrow[r]        & U_0  \arrow[r]             & N_0 \arrow[r]                & C_1 \arrow[r]  \arrow[d]              &  V_1 \arrow[r]           &   G_0    \arrow[r]      &  H_0\arrow[r]                                       & E_0\arrow[r] &  I_0\arrow[r]      &    K_0\arrow[llllll,bend right]                       \\
				&  &  &   &  & V_2 \arrow[lllllu, bend left] &  &  & &  && 
		\end{tikzcd}}
A rome for this graph is $\{C_1\}$ and has two different loops. From Remark \ref{romaentr} (i) it follows that this graph has positive entropy.

		When $b\in (1201/1740,301/436],\,\,R_{19}\in\overline {X_7Y_1}$ and we do not need to consider $R_{20}$ and successive images.  Moreover, we do not split the intervals $I$ and $K.$ Thus we obtain the same graph that in the previous case changing $I_0$ and $K_0$ by $I$ and $K.$ Moreover, in this case, the orbit $\mathcal X$ captures $R_8.$
		
		When $b\in (301/436,38/55],\,\, R_{16}\in \overline{WX_7}$ and we  do not need to consider the orbit after $R_{16}.$  Then we do not split the intervals $G,H,E,I,K.$ Moreover the last graph of coverings, also works putting $G,H,E,I,K$ instead $G_0,H_0,E_0,I_0,K_0.$
		Moreover, in this case, the orbit $\mathcal X$ also captures $R_8.$
		
		Lastly when $b\in (38/55,5/7],\,\, R_{10}\in B_2.$ Then we do not need to consider $R_i$ for $i\ge 11.$ Then rename the intervals as:
		$A_0=\overline{R_9R_2},$ 
		$B_1=\overline{R_3X_1},$  
		$H=\overline{R_3X_5},$
		$C_1=\overline{X_5P_7},$ 
		$D=\overline{X_2R_4},$
		$E=\overline{Z_1X_6}$   $I=\overline{Y_1Z_2},$  $K=\overline{Z_3Y_2},$  $V_1=\overline{X_6R_1},$ $V_2=\overline{R_1R_{8}},$
		$U=\overline{X_3R_5}$
		$N=\overline{X_4R_6}$
		and we get the following oriented graph of coverings
		
		{\scriptsize \begin{tikzcd}
				A_0 \arrow[r] & B_1  \arrow[r]                 & D \arrow[r]        & U \arrow[r]             & N \arrow[r]                & C_1 \arrow[r]  \arrow[d]              &  V_1 \arrow[r]           &   G    \arrow[r]      &  H\arrow[r]                                       & E\arrow[r] &  I\arrow[r]      &    K\arrow[llllll,bend right]                       \\
				&  &  &   &  & V_2 \arrow[lllllu, bend left] &  &  & &  && 
		\end{tikzcd}}\newline
		It also gives positive entropy. In this case the orbit of $R_8$ depends on $b.$

						$(c)$ Assume $b\in [603/874,563/816]$ and let $X=\overline{R_{15}R_{8}}.$ The points of $X$ can be written as $(x,2b-1)$ where $x\in [300-435b,29b-20]$.  It occurs that $F^7(X)\subset X.$ More precisely,
						 we have $F^7(x,2b-1)=(g(x),2b-1)$ where $ g:[300-435b,29b-20]\longrightarrow [300-435b,29b-20]$ is defined by
						
						$$g(x)=\begin{cases} 4-3b+16x &\mbox{if $x\in [300-435b,2b-3/2] $,}\\29b-20 & \mbox{if $x\in [2b-3/2,0] $,}\\ 29b-16x-20 & \mbox{if $x\in [0,29b-20] $.}	
						\end{cases}	$$

An inspection of the dynamics out $X$ shows that the only points in $\Gamma$ that do not visit $X$ are the  4-periodic orbit that is given by the fixed point of $F_1\circ F_4\circ F_3 \circ F_2,$ the 3-periodic orbit given by the fixed point of $F_3\circ F_2^2,$  and the 7-periodic orbit given by the fixed point of $F_3\circ F_2^2\circ F_4 \circ F_2^2\circ F_4,$ all of them repulsive, and its preimages.
							
The rest of the proof of item $(c)$ follows the same steps as the proof of item $(d)$ of Proposition \ref{menys1menys34} and we omit it.\end{proof}

\subsubsection{The case $a=-1$ and $5/7<b\leq 1$.}
We are going to prove that for these values of $b$ the map always has positive entropy (except for $b=1$). From the Appendix we see that we have twelve different graphs, and among them, there are some ones which are really complex.  We resume all results in the next three  Propositions.

\begin{propo}\label{sotasota1}
	When $5/7<b\leq 4/5$ the map has positive entropy.
\end{propo}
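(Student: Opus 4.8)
The plan is to follow the same strategy used in the positive-entropy cases already treated (for instance Proposition~\ref{menys1menys34}(c) and Proposition~\ref{sis}(b)): for each parameter value produce a mono-partition of the invariant graph $\Gamma$, compute the associated oriented graph of coverings, and then exhibit a local structure that forces positive entropy through Remark~\ref{romaentr}. The key simplification is that, by Lemma~\ref{Markov}, one always has $r(\mathcal{P})\le s(f)$; hence to prove $h(F|_{\Gamma})>0$ it suffices to produce a single mono-partition whose covering graph contains a rome with either two distinct loops through one of its vertices (Remark~\ref{romaentr}(i)) or two mutually connected rome vertices (Remark~\ref{romaentr}(ii)). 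This is far cheaper than computing the exact entropy, since it only requires locating one configuration of overlapping loops, i.e.\ a horseshoe, rather than the full combinatorial data.

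First I would split $(5/7,4/5]$ into the subintervals on which $\Gamma$ is topologically constant, reading off the corresponding graphs from the Appendix. On each such piece I would build the partition whose endpoints are the vertices of $\Gamma$ together with the successive iterates of the relevant plateau endpoints, exactly as in the previous sections. Then I would discard the plateaus and their preimages using Remark~\ref{colapses} and the collapse of the slope-$1$ edges in $Q_1$ and the slope-$(-1)$ edges in $Q_3$ recorded after~\eqref{e:Fis}, and record the covering relations $I_i\to I_j$ obtained by tracking $F$ on each surviving interval. Certifying these coverings is routine: the direction set $V$ is invariant by Lemma~\ref{eigen}, and by Lemma~\ref{edges}(b) the lengths dilate by $\sqrt{2}$ or by $2$, so once the quadrant membership of each iterated point is known the arrows of the graph are determined.

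The decisive step is to isolate, inside the resulting oriented graph, a vertex of a rome through which two combinatorially different loops pass, or a pair of mutually connected rome vertices. In the graphs for this range such a configuration is present: a single interval of the partition returns to itself along two different itineraries, which is exactly the signature detected by Remark~\ref{romaentr}. Once one such configuration is exhibited, Lemma~\ref{Markov} together with Remark~\ref{romaentr} yields $h(F|_{\Gamma})>0$, completing the proof.

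I expect the main obstacle to be purely bookkeeping. The graphs in this range are among the most intricate in the paper, the interval $(5/7,4/5]$ subdivides into several topologically distinct subcases, and in each of them the covering relations hinge on the precise location of several iterates of the plateau endpoints, whose quadrant membership switches as $b$ varies. Solving the resulting linear inequalities in $b$ for every subcase, and checking that a two-loop configuration persists throughout the whole range, is the laborious part; the positivity of the entropy is then immediate once one such configuration is found.
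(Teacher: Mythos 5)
Your strategy is exactly the one the paper uses: restrict attention to a partial mono-partition, exhibit a rome vertex carried by two distinct loops, and invoke Remark~\ref{romaentr} together with Lemma~\ref{Markov} to get a positive lower bound on the entropy. The difficulty is that your write-up stops precisely where the proof begins. The entire content of the statement is the assertion ``in the graphs for this range such a configuration is present: a single interval of the partition returns to itself along two different itineraries'' --- and you never identify that interval, the itineraries, or the points of $\Gamma$ whose quadrant membership certifies the coverings. Asserting that the horseshoe exists is the claim to be proved, not a step of the proof; as it stands the argument is a programme, not a demonstration.

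For comparison, the paper's proof names the witnesses explicitly: it follows the orbit of $R_6$ through $R_7=(15b-10,-14b+9)$, $R_8=(29b-20,2b-1)$, $R_9=(27b-20,28b-19)$, forms the intervals $A=\overline{Y_2R_2}$ (resp.\ $A=\overline{R_5R_2}$ for $3/4<b\le 4/5$), $B,C,D,E,I,G,H$, and checks the covering diagram~\eqref{e:diagramap22}, which has a rome $\{A\}$ with one loop of length $4$ and one of length $7$, hence entropy at least the logarithm of the largest root of $\lambda^{7}-\lambda^{3}-1$. The point you also miss --- and which makes the bookkeeping far lighter than you anticipate --- is that this \emph{single} subdiagram is common to all eleven invariant graphs arising for $5/7<b\le 4/5$ (Figures~\ref{f:B}--\ref{f:L}), so there is no need to re-solve the linear inequalities in $b$ separately in each topological subcase; one only verifies that the eight coverings of~\eqref{e:diagramap22} persist, which reduces to checking the positions of $R_7,R_8,R_9$ relative to fixed edges. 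To turn your proposal into a proof you must supply this explicit configuration (or an equivalent one) and verify its coverings.
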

\begin{proof}
We take into account two different cases.

	 Consider first $5/7<b\leq 3/4.$ For these values of $b$ we have three different invariant graphs, the ones given in Figures \ref{f:B}--\ref{f:D}. We take into account three more points of the orbit of $R_6,$	
 $R_7=(15b-10,-14b+9)\in \overline{X_5P_2},R_8=(29b-20,2b-1)\in
 \overline{R_1P_{3}}$ and $R_9=(27b-20,28b-19)\in\overline{Y_{2}P_{4}}.$ 	
	
	Consider the intervals $A=\overline{Y_{2}R_{2}},$ $B=\overline{Y_3X_1},$ $C=\overline{X_1R_3},$ $D=\overline{X_5R_7},$ $E=\overline{R_1R_8},$ $I=\overline{X_2R_4},$ $G=\overline{X_3R_5}$ and  $H=\overline{X_4R_6}.$ The coverings between these sets are illustrated in the following diagram:
	
	\begin{equation}\label{e:diagramap22}
	\begin{tikzcd}
		A \arrow[r] \arrow[rd] & B \arrow[r] & D \arrow[r] & E  \arrow[lll, bend right] &               \\
		& C \arrow[r] & I \arrow[r] & G \arrow[r]                & H \arrow[llu]
	\end{tikzcd}
\end{equation}

This particular graph has two loops of length $4$ and $7$ respectively with a rome consisting of a unique interval, say $\{A\}.$ Hence, by Theorem \ref{rome}, its entropy is the logarithm of the greatest root of $\lambda^{7}-\lambda^{3}-1,$ that is $h_3\approx 0.12943.$  
If we made the oriented graph of the entire partition explicit in the corresponding figures, these two loops would appear, and perhaps others. Therefore, the entropy of F is greater than or equal than $h_3.$

\begin{figure}[H]
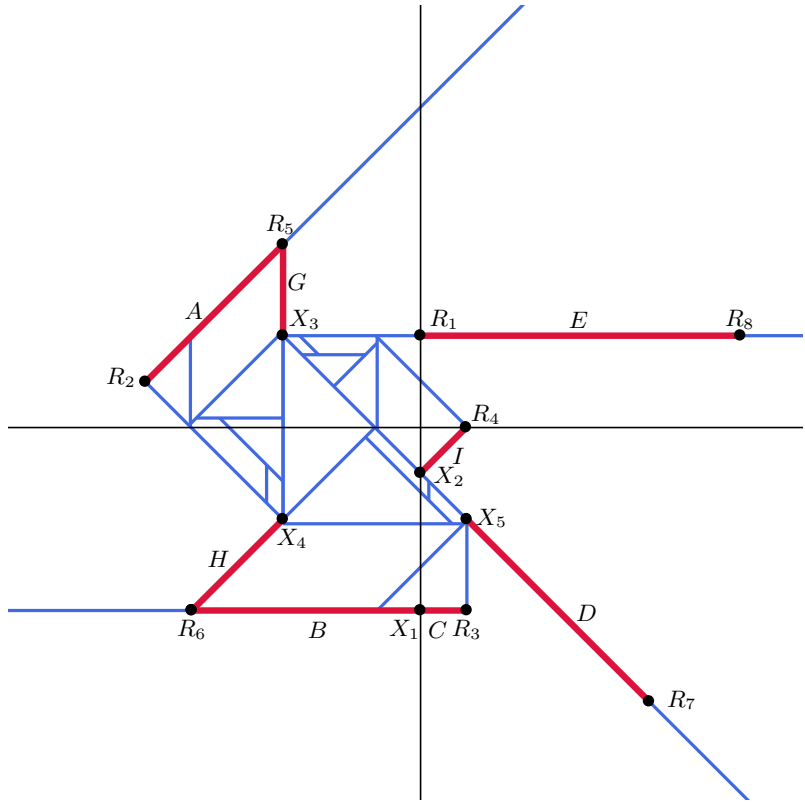

	\footnotesize
	\centering
	\begin{lpic}[l(2mm),r(2mm),t(2mm),b(2mm)]{grafpropo22b(0.55)}
		\lbl[c]{68,141; $R_{5}$}
		\lbl[r]{33,104; $R_{2}$}
		\lbl[l]{43,43; $R_{6}$}
		\lbl[c]{98,43; $X_1$}
		\lbl[c]{113,43; $R_{3}$}
		\lbl[c]{119,70; $X_5$}
		\lbl[c]{165,26; $R_{7}$}
		\lbl[l]{104,118; $R_{1}$}
		\lbl[c]{179,118; $R_{8}$}
		\lbl[l]{105,80; $X_{2}$}
		\lbl[l]{114,95; $R_{4}$}
		\lbl[l]{70,118; $X_{3}$}
		
		\lbl[l]{67,65; $X_{4}$}
		
		\lbl[c]{47,120; $A$}
		\lbl[c]{77,43; $B$}
		\lbl[c]{106,43; $C$}
		\lbl[c]{142,47; $D$}
		\lbl[c]{140,118; $E$}
		\lbl[c]{111,85; $I$}
		\lbl[c]{72,127; $G$}
		\lbl[c]{53,60; $H$}
	\end{lpic}\caption{Partial view of the graph  $\Gamma$ for $a=-1$ and $3/4<  b\leq 154/205$, given in Figure~\ref{f:E}, whose dynamics includes the one described in the diagram 
		\eqref{e:diagramap22}. The same dynamics occurs for the cases and graphs in Figures~\ref{f:F}--\ref{f:L}, with the analogous points $R_i, X_i$ and sets $A$--$H$. 
	}\label{f:figPropo22}
	
\end{figure}

Next, consider $3/4<b\leq 4/5.$ We proceed in the same manner, getting a lower bound of the entropy of $F.$  This case comprehends the $8$ different graphs given in Figures \ref{f:E}--\ref{f:L}. In all the cases we consider the following intervals: $A=\overline{R_5R_2},$ $B=\overline{R_6X_1},$ $C=\overline{X_1R_3},$ $D=\overline{X_5R_7},$
$E=\overline{R_{1}R_{8}},$ $I=\overline{X_2R_{4}},$ $G=\overline{X_3R_{5}},$ $H=\overline{X_{4}R_{6}}$, see Figure \ref{f:figPropo22}. With this notation we get that the dynamics of $F\vert_{\Gamma}$ contains  the oriented graph \eqref{e:diagramap22}, which has positive entropy.\end{proof}

\begin{propo}\label{sota1}
When  $a=-1$ and $4/5<b< 1$  the map $F\vert_{\Gamma}$ has positive entropy.

\end{propo}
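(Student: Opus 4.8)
The plan is to follow verbatim the strategy of Propositions~\ref{sotasota1} and \ref{sis}(b): for each of the topologically distinct invariant graphs $\Gamma$ occurring in the range $4/5<b<1$ (those displayed in the Appendix), fix a mono-partition of $\Gamma$, extract the oriented graph of coverings, and exhibit inside it a sub-configuration that forces positive entropy through Remark~\ref{romaentr}. The structural observation underlying everything is that the growth number of $F\vert_\Gamma$ dominates that of any oriented sub-graph of its covering relation: discarding arrows can only decrease the number $N(f,\mathcal P,m)$ of admissible itineraries, hence also the associated growth number, so any loop configuration that I can certify to be genuinely realized by $F$ yields a valid lower bound for $s(F\vert_\Gamma)$, and consequently for $h(F\vert_\Gamma)$.

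First I would, as in the preceding propositions, exploit the dynamical meaning of the vertex labels — the points are named so that $F(R_i)=R_{i+1}$, $F(X_i)=X_{i+1}$ and $F(P_i)=P_{i+1}$ — and locate, as functions of $b$, in which edge the relevant forward iterates of the distinguished points land. This pins down the covering relations among a small distinguished family of edges (the analogues of the intervals $A,\dots,H$ of \eqref{e:diagramap22}), all of which lie in $Q_2$, $Q_4$ or carry a horizontal or vertical direction, so that by Lemma~\ref{edges}(b) the map $F$ expands them and no collapse occurs along the loops. Before reading off the graph I would use Remark~\ref{colapses} to delete every plateau together with its preimages, since these do not affect the growth number; this is the step that makes the large graphs tractable. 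The goal is to produce, in each subcase, a rome consisting of a single edge through which two loops of different lengths pass, exactly as in the block

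\begin{center}
\begin{tikzcd}
A \arrow[r] \arrow[rd] & B \arrow[r] & D \arrow[r] & E \arrow[lll, bend right] & \\
& C \arrow[r] & I \arrow[r] & G \arrow[r] & H \arrow[llu]
\end{tikzcd}
\end{center}

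used for $3/4<b\le 4/5$. Once such a configuration is certified, Remark~\ref{romaentr}(i) (or, where only two \emph{connected} loops are available, Remark~\ref{romaentr}(ii)) gives $h(F\vert_\Gamma)>0$ at once, and where an explicit lower bound is wanted it follows from Theorem~\ref{rome} by solving the corresponding rome equation.

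The main obstacle is the bookkeeping across the many cases: the Appendix lists several topologically different and, as the text already warns, genuinely complicated graphs for $4/5<b<1$, and the critical iterates $R_i,X_i$ change which edge they occupy as $b$ crosses the internal breakpoints. For each breakpoint I would recompute a few further images $R_i,X_i,\dots$ and re-split the affected edges, exactly as in the proof of Proposition~\ref{sis}, checking that a two-loop sub-configuration survives. The delicate endpoint is $b\to 1$: the statement excludes $b=1$, and indeed near $b=1$ some edges cross the coordinate axes and the graph degenerates, so I must verify that for every $b<1$ the expanding two-loop block persists — that both loops remain genuinely realized and are not destroyed by a collapse — while permitting the block to vanish precisely at $b=1$. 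Since traversing all these subcases requires only the routine location of finitely many iterates together with the monotone lower bound furnished by Remark~\ref{romaentr}, the positivity of the entropy for every $4/5<b<1$ then follows.
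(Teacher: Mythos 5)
Your overall strategy is the right one and matches the paper's: restrict attention to a distinguished family of edges, read off a sub-graph of the covering relation (which lower-bounds the growth number), delete plateaus via Remark~\ref{colapses}, and exhibit a rome with two loops of different lengths so that Remark~\ref{romaentr} applies. For the first part of the range this is exactly what the paper does: it splits $4/5<b\le 6/7$ and $6/7<b\le 12/13$, locates finitely many iterates of $R_1$ and $X_1$, and produces in each case a single-vertex rome carrying loops of lengths $7$ and $4$ (resp.\ $11$ and $4$).

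The gap is in your treatment of the regime $12/13<b<1$. You assert that ``traversing all these subcases requires only the routine location of finitely many iterates,'' but this is precisely what fails near $b=1$: once $X_{10}$ lands in the edge $J$, the orbit of $X_6$ can remain in the cycle $A\to B\to D\to J\to A$ for an arbitrarily long time, and the number of iterates one must follow before the orbit escapes is unbounded as $b\to 1^-$ (the paper itself notes in Remark~\ref{cont1} that the escape time $k\to\infty$ there). Two ingredients are missing from your plan. First, you need an argument that the orbit of $X_6$ does escape at all: the paper shows that if it stayed in $A\cup B\cup J\cup D$ forever, then the $F^4$-orbit of $X_6$ would be confined to $J$, contradicting the fact that $F^4\vert_J = F_4\circ F_3\circ F_2\circ F_1\vert_J$ is affine and expansive with its fixed point outside $J$. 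Second, once the escape time $n$ is known to exist (with $n=4k$ or $n=4k+2$), you need a construction that works \emph{uniformly in $k$}: the paper refines $A,B,D,J$ into $k$ (or $k+1$) subintervals apiece using the points $F^{i}(X_6)$ and verifies that the resulting oriented graph always has a rome $\{A_0\}$ with two loops of lengths $4$ and $4k+7$. Without the expansivity argument and this $k$-parametrized family of graphs, your proof establishes positivity only on a subinterval bounded away from $1$, not on all of $(4/5,1)$.
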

\begin{proof}
	We consider the graph given in Figure \ref{f:M} and we will follow the orbit of $P_{11}.$ To prove the statement we consider three different cases.

		Assume first that $4/5<b\le 6/7.$  We add some new points of the $R_1$ and $X_1$ orbits namely, $X_7=(3b-4,4b-3)\in \overline{R_2R_5},$ $X_8=(-7b+6,-1)\in \overline{X_1R_3},\,\,R_7=(2-b,2b-3)\in\overline{X_5P_2},\,\,R_8=(4-3b,2b-1)\in \overline {X_6P_3}$ and $R_9=(2-b,2b-3)\in \overline{R_5S}.$  
		
		Now we define: $G=\overline{R_{2}T},$ $A_0=\overline{R_5X_7},$ $B=\overline{R_6X_1},$ $D=\overline{X_2X_5},$ $V=\overline{X_5R_7},$ $H=\overline{R_3X_5},$ $U=\overline{R_4X_6},$ $L=\overline{X_3R_1},$
		$E=\overline{X_6R_8}.$ The covers between these sets are:
		
		\begin{center}
			\begin{tikzcd}
				A_0 \arrow[r] & B \arrow[r] \arrow[rd] & D \arrow[r] & L\arrow[r]              & G \arrow[r] & H \arrow[r] & U\arrow[llllll, bend right]\\
				&                        & V \arrow[r] & E \arrow[lllu, bend left=49] &               &             &                       
			\end{tikzcd}
		\end{center}
This graph has $\{B\}$ as a rome with two loops of length $7$ and $4.$ Therefore $h(F\vert_{\Gamma})\ge h_3\approx 0.12943$ where recall that $h_3$ is the logarithm of the positive real root of $\lambda^7-\lambda^3-1.$
	
	Assume now that $6/7< b \le 12/13.$ We shall follow the same steps as the previous case. Now $X_8\in B$ and we have to consider  $X_9=(7b-6,-6b+5)\in D$ and  $X_{10}=(13b-12,2b-1)\in L.$ 
	
The new considered sets are: $A=\overline{R_{2}X_7},$
	 $A_0=\overline{R_5X_7},$  $B_0=\overline{R_6X_8},$ 	 
	  $C=\overline{X_1R_{3}},$ $D_0=\overline{X_5X_9},$ $V=\overline{X_5R_7},$ $O=\overline{X_2Q},$ $N=\overline{R_5X_3},$ 	  
	   $J=\overline{R_1X_6},$ $E=\overline{X_6R_8},$ $I=\overline{X_4R_6}$. With this notation we obtain the following oriented graph of covers for these intervals
	 \begin{center}
	 	\begin{tikzcd}
	 		A_0 \arrow[r] & B_0 \arrow[r] \arrow[rd] & D_0 \arrow[r] & J\arrow[r]              & A \arrow[r] & C \arrow[r] & O\arrow[r]&N \arrow[r]&I \arrow[lllllld] \\
	 		&                        & V \arrow[r] & E \arrow[lllu, bend left=49] &               &             &      &
	 		&                       
	 	\end{tikzcd}
	 \end{center}
 
Now $\{B_0\}$ is a rome having two loops of lengths $11$ and $4.$
	 Therefore the entropy of $F$ is greater than or equal to $h_4,$ the logarithm of the positive root of $\lambda^{11}-\lambda^7-1.$  

Lastly assume that $12/13<b<1.$ Now, the main difference with the previous case is that $X_{10}\in J.$ Then we name $J_0=\overline {X_{10}X_6},$ and we rename $J=\overline {R_1X_{10}},\,\,B=\overline {X_1X_8},\,\,D=\overline {X_2X_9},\,\,V=\overline {X_5R_7}$  and we maintain the previous notation for the rest of the intervals.
Note  that $F(J)\subset A, F(B)=D,$ while $F(A)=C \cup B$ and $F(D)= J\cup L.$ We note that the positive orbit of $X_6$ at some point leaves $A\cup B\cup J\cup D.$ If this does not happen we would have that the orbit by $F^4= F_4\circ F_3\circ F_2\circ F_1$ of $X_6$ is entirely contained in $J$ which has no sense because $\left(F_4\circ F_3\circ F_2\circ F_1\right)\vert_{J}$ is linear, expansive and its fixed point does not belong to~$J.$ Therefore there exists $n>5$ such that $F^n(X_6)\notin A\cup B\cup J\cup D.$ Clearly, either $n=4k+2$ and $F^n(X_6)\in C$ for some $k\ge 1$ or 
$n=4k$ and $F^n(X_6)\in L$ for some $k\ge 2.$ 
Now consider the first case. We add to the graph the points of the orbit of $X_6$ until $F^{4k+1}(X_6).$
This gives a natural partition of the intervals $J,B$ and $D$ into $k$ subintervals and $A$ into $k+1$ subintervals. We write for $i\in \{1,\ldots,k-1\},$ $J_i=\overline{F^{4i}(X_6)
	F^{4(i+1)}(X_6)},$ $A_i=\overline{F^{4i+1}(X_6)
	F^{4(i-1)+1}(X_6)},$ $B_i=\overline{F^{4i+2}(X_6)F^{4(i-1)+2}(X_6)},$ $D_i=\overline{F^{4i+3}(X_6)F^{4(i-1)+3}(X_6)}$ and $J_k=\overline{F^{4k}(X_6)R_1},$ $A_k=\overline{F^{4k+1}(X_6)F^{4k-3}(X_6)},$ $B_k=\overline{F^{4k-2}(X_6)X_1},$  $D_k=\overline{F^{4k-1}(X_6)X_2}$ and $A_{k+1}=\overline{F^{4k+1}(X_6)R_2}.$ With this  notation we have

	\begin{center}
	\begin{tikzcd}
		 &                        & V \arrow[r] & E \arrow[llld, bend right=49] &               &             &  \\       
		A_0 \arrow[r] & B_0 \arrow[r] \arrow[ru] & D_0 \arrow[r] & J_0\arrow[llld]& & & &\\	    
		A_1 \arrow[r] & B_1 \arrow[r] & D_1 \arrow[r] & J_1\arrow[llld]& & & &\\
		
			\vdots \arrow[r] &\vdots \arrow[r]  & \vdots \arrow[r] & \vdots \arrow[llld]& & & &\\	    
		A_k \arrow[r] & B_k \arrow[r] & D_k \arrow[r] & L\arrow[r]              & G \arrow[r] & H \arrow[r] & U\arrow[lllllluuu]\\             
	\end{tikzcd}
\end{center}

This graph has a rome $\{A_0\}$ with two loops of lengths $4$ and $4k+7,$ that gives positive entropy. This ends the proof for the first case.

In the second case we add to the graph the points of the orbit of $X_{6}$ until $F^{4k-1}(X_{6}).$
This gives a natural partition of the intervals $A,B$ and $D$  into $k$ subintervals and a partition of $J$ into $k-1$ subintervals. We write for 
$i\in \{1,\ldots,k-1\},$ $A_i=\overline{F^{4i+1}(X_6)F^{4(i-1)+1}(X_6)},\, B_i=\overline{F^{4i+2}(X_6)F^{4(i-1)+2}(X_6)},$ $D_i=
  \overline{F^{4i+3}(X_6)F^{4(i-1)+3}(X_6)}$ and $ A_k=\overline{R_2F^{4k-3}(X_6)},$ $B_k=\overline{F^{4k-2}(X_6)X_1}$ and $D_k=\overline{F^{4k-1}(X_6)X_2}.$ On the other hand, the interval $J$ splits into $k-1$ subintervals that we will write for $i\in \{1,\ldots,k-2\},$ $J_i=\overline{F^{4i}(X_6)F^{4(i+1)}(X_6)}$ and $
J_{k-1}=\overline {R_1F^{4(k-1)}(X_6)}.$ With this notation we obtain the following oriented graph:
	\begin{center}
	\begin{tikzcd}
		&                        & V \arrow[r] & E \arrow[llld, bend right=49] &               &             &  \\       
		A_0 \arrow[r] & B_0 \arrow[r] \arrow[ru] & D_0 \arrow[r] & J_0\arrow[llld]& & & &\\	    
		A_1 \arrow[r] & B_1 \arrow[r] & D_1 \arrow[r] & J_1\arrow[llld]& & & &\\
		
		\vdots \arrow[r] &\vdots \arrow[r]  & \vdots \arrow[r] & \vdots \arrow[llld]& & & &\\	    
		A_k \arrow[r] & C \arrow[r] & O \arrow[r] & N\arrow[r]              & I \arrow[lluuuu] &  & \\             
	\end{tikzcd}
\end{center}

This graph has also a rome $\{A_0\}$ with two loops of lengths $4$ and $4k+7,$ that gives positive entropy. This ends the proof for the second case and ends also the proof of the proposition.~\end{proof}

\begin{nota}\label{cont1} In the proof of the last item of the above proposition, it is not difficult to see that $k\to \infty$ when $b\to 1^-.$ This fact, together with accurate computations, and the use of Remark \ref{grafcota} allow us to prove $\lim\limits_{b\to 1^-}h(F\vert_{\Gamma})=0.$  We do not give the details here, to do  not unnecessarily prolong the work.  In fact, a similar result can be proved when $b$ tends to 1 from the other side and when $b$ tends to $8.$ See Remark \ref{cont2} and Remark \ref{cont3}.
\end{nota}

\begin{propo}\label{exac1} Assume that $a=-1$ and $b=1.$ Then:
\begin{itemize}
	\item[(a)] The map has zero entropy.
	\item[(b)] The map $F$ has two $4$-periodic orbits
	$$\mathcal{P}=\left\{(-3,-1),(3,-3),(5,1),(3,5)\right\}\,\text{and}\,\,\mathcal{X}=\left\{(-1,1),(-1,-1),(1,-1),(1,1)\right\}$$ that absorb the three plateaus 
	and one $3$-periodic orbit
$$	\mathcal{S}=\left\{\left(-\frac{1}{3},-\frac{1}{3}\right),\left(-\frac{1}{3},\frac{1}{3}\right),\left(-1,\frac{1}{3}\right)\right\}.$$
Furthermore, for each $(x,y)\in\Gamma$ there exists $n\in\N$ such that $F^n(x,y)\in \mathcal{P}\cup \mathcal{X}\cup \mathcal{S}.$
\end{itemize}
\end{propo}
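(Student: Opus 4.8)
The plan is to follow the same graph-theoretic scheme used throughout Section~\ref{sec:5}. First I would take the invariant graph $\Gamma=\Gamma_{-1,1}$ supplied by Theorem~\ref{t:teoB} and listed in the Appendix; since $b=1$ is a boundary value lying between the regime $4/5<b<1$ treated in Proposition~\ref{sota1} and the regime $1<b\le 3/2$ whose graph is Figure~\ref{f:21a}, the graph $\Gamma$ is obtained as the degenerate limit of the neighbouring graphs, with several edges collapsing onto the coordinate axes. On this $\Gamma$ I would fix a mono-partition by the natural break points (the vertices of $\Gamma$ together with the images of the plateaus), discard the plateaus and their finitely many preimages by Remark~\ref{colapses} and the observation in Section~\ref{s:prelim} that segments of direction $v_3$ in $Q_1$ and of direction $v_4$ in $Q_3$ collapse, and write down the associated oriented graph of coverings.

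For part~(a) I would then exhibit a rome $R$ for this oriented graph and check, by inspecting the simple paths, that each vertex of $R$ lies on exactly one loop and that no two loops are connected; by Remark~\ref{romaentr}~(iii) this forces $h(F|_{\Gamma})=0$. The conceptual point, and what makes $b=1$ special, is that the two connected loops of lengths $4$ and $4k+7$ that produced positive entropy for $b$ slightly below $1$ in Proposition~\ref{sota1} (and the analogous connected loops appearing for $b$ slightly above $1$) degenerate at $b=1$: the coverings that joined the short loop to the long one are lost once the relevant edges collapse, so the loops become disjoint and unconnected. This is precisely the mechanism behind Remark~\ref{cont1}, where $\lim_{b\to 1^-}h(F|_{\Gamma})=0$ is asserted.

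For part~(b) I would first verify by direct evaluation of $F$ that $\mathcal{P}=\{(-3,-1),(3,-3),(5,1),(3,5)\}$ and $\mathcal{X}=\{(-1,1),(-1,-1),(1,-1),(1,1)\}$ are $4$-periodic and that $\mathcal{S}=\{(-1/3,-1/3),(-1/3,1/3),(-1,1/3)\}$ is $3$-periodic. Next I would compute the images of the three plateaus of $\Gamma$ and confirm that, after finitely many iterates, each of them lands on $\mathcal{P}$ or on $\mathcal{X}$; since a plateau is a segment on which $F$ is constant, these two orbits are the ones the plateaus are absorbed by. By Lemma~\ref{edges}~(c) the orbit $\mathcal{S}$, which visits no plateau, is repulsive. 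Finally, reading off the loop structure of the oriented graph, the only loops correspond to $\mathcal{P}$, $\mathcal{X}$ and $\mathcal{S}$: every point whose forward orbit eventually meets a collapsing interval is carried into $\mathcal{P}\cup\mathcal{X}$, while a point avoiding all collapsing intervals must follow the unique length-$3$ loop and hence belongs to $\mathcal{S}$. The graph of the return map $F^{s}$ on each edge cut out by the periodic points is, up to intervals of constancy, as in Figure~\ref{Grafic1}, so every $(x,y)\in\Gamma$ reaches $\mathcal{P}\cup\mathcal{X}\cup\mathcal{S}$ in finitely many steps, which gives the last assertion.

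The main obstacle I anticipate is entirely at the level of the degenerate graph: one must set up the mono-partition for $b=1$ carefully enough to see exactly which edges have collapsed relative to the nearby cases, and to confirm that the connecting coverings responsible for positive entropy on both sides of $b=1$ really disappear. Once the correct covering graph is in hand, both the rome computation for~(a) and the orbit-tracking for~(b) are routine verifications of the type already carried out in Propositions~\ref{cinc}--\ref{sota1}.
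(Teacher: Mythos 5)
Your proposal is correct and follows essentially the same route as the paper: the paper also takes the explicit invariant graph for $b=1$, discards the plateaus and their preimages, exhibits a covering graph with rome $\{A,H,N\}$ in which each rome element carries exactly one loop (so Remark~\ref{romaentr}~(iii) gives zero entropy), and then identifies the loops $ACGI$, $HJBD$ and $NMK$ with the orbits $\mathcal{P}$, $\mathcal{X}$ and $\mathcal{S}$, using Lemma~\ref{edges} for repulsiveness and the plateau collapses for the final convergence claim. The only cosmetic difference is that you motivate the $b=1$ graph as a degenerate limit of the neighbouring cases, whereas the paper simply works directly from the figure drawn for $b=1$.
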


\begin{proof}
	Consider the graph in Figure~\ref{f:22}, which is the one corresponding to $b=1$ with the defined partition. We begin by listing the intervals that collapse under the action of $F,$ that is $\overline{SP_4}\twoheadrightarrow P_1\in \mathcal{P},$ $\overline{R_2X_4}\twoheadrightarrow X_5\in \mathcal{X},$ $\overline{R_1X_3}\rightarrow \overline{R_2X_4}\twoheadrightarrow X_5\in \mathcal{X}$ and $\overline{X_1X_5}\rightarrow \overline{X_2X_6}\twoheadrightarrow X_3\in \mathcal{X},$ where the points $R_i$'s and $X_i$'s are defined below.

\begin{figure}[H]
	\centering
	
	\begin{lpic}[l(2mm),r(2mm),t(2mm),b(2mm)]{biguala1-v2(0.50)}	
		\lbl[l]{140,178; $P_4$}
		\lbl[r]{75,122; $S$}
		\lbl[r]{65,113; ${A}$}
		\lbl[r]{55,103; $X_3$}
		\lbl[r]{44,90; ${B}$}
		\lbl[r]{34,82; $R_2$}
			\lbl[c]{14,49; $P_1$}
	\lbl[r]{38,57; ${C}$}
		\lbl[c]{58,49; $X_4$}
		\lbl[r]{71,57; ${D}$}
		\lbl[c]{81,49; $X_1$}
		\lbl[l]{101,58; $X_5$}
		\lbl[r]{122,40; ${G}$}
		\lbl[l]{48,80; $Q$}
			\lbl[r]{62,86; ${K}$}
			\lbl[r]{60,69; ${L}$}
			\lbl[r]{70,92; ${M}$}
			\lbl[c]{78,84; $X_2$}
			\lbl[r]{94,69; ${H}$}
			\lbl[r]{73,65; ${N}$}
			\lbl[r]{76,102; $R_{1}$}
				\lbl[r]{102,102; $X_6$}
		\lbl[r]{138,102; ${I}$}
			\lbl[r]{88,102; ${J}$}
		\lbl[l]{143,14; $P_2$}
				\lbl[c]{184,104;  $P_{3}$}
	\end{lpic}
	\caption{The graph $\Gamma$ for $a=-1$ and $b=1$. Here $S=(0, 2)$, $X_3=(-1,1)$, $R_{2}=(-2, 0)$, $P_1=(-3, -1)$, $X_4=(-1, -1)$, $X_1=(
		0, -1)$, $X_5=(1, -1)$, $Q=(-1,0)$, $X_2=(0,0)$, $R_{1}=(0,1)$, $X_6=(1, 1)$, $P_{2}=(3,-3)$, $P_{3}=(5,1)$, $P_{4}=(3,5).$}\label{f:22} 
\end{figure}
Studying the action of $F$ on these intervals we get the following coverings:

\begin{center}
\begin{tikzcd}
	A \arrow[r] & C \arrow[r] & G \arrow[r] & I \arrow[lll, bend right]
\end{tikzcd}\qquad
\begin{tikzcd}
	L \arrow[r] & H \arrow[r]           & J \arrow[r]                           & B \arrow[r]              & D \arrow[lll, bend right] \\
	& N \arrow[u] \arrow[r] & M \arrow[r] \arrow[llu, bend left=49] & K \arrow[ll, bend right] &                          
\end{tikzcd}
\end{center}

Now the set $\{A,H,N\}$ is a rome and each of its elements has one and only one loop associated. Therefore from Remark \ref{romaentr} (iii)  it follows that  this oriented graph has zero entropy.

Easy computations show that the loop $ACGIA$ gives two fourth periodic orbits $\mathcal P$ and $\mathcal X,$ the first one attractive and the second one repulsive. Notice that the second one coincides with the orbit associated to the loop $HJBDH.$ 

Then  if a orbit does not intersect $\mathcal P\cup \mathcal X,$ it has to follow the loop $NMK$ infinitely times.  From  Lemma \ref{edges} this gives the announced three periodic orbit which is repulsive. 
\end{proof}

\subsection{Case $a=-1$ and $1<b< 2$}

From the Appendix we see that we have four different graphs (Figures \ref{f:21a}--\ref{f:21d}) for different values of $b$ beteween 1 and 2. We observe that all of them have a subgraph which is invariant by $F,$ the one given in the next  Figure \ref{vaja}. 
Apart from the natural vertices, we added in this figure some more points that we need for the computations. Namely, $R_3\in \overline{Z_2P_2},\,\,R_4\in \overline {Z_3P_3}$ from the orbit of $R_1$, $R_0\in\overline{Y_1Z_2}$ a preimage of $R_1,$ and $X_0\in\overline{X_3Z_1}$ a preimage of $X_1.$ Note also that $X_5=(2-b,2b-3)\in \overline{X_2Z_2}.$

We have some intervals which after some iterates reduce to a single point, among them:
$$\overline{P_{4}S}\twoheadrightarrow P_{1},\,\overline{Z_1X_0} \rightarrow  \overline{Z_2X_1} \rightarrow \overline{Z_3X_2}\twoheadrightarrow X_3,\, \overline{Y_1R_0}\rightarrow \overline{Y_2R_1}\rightarrow \overline{Y_3R_2}\twoheadrightarrow R_3.$$

 \begin{figure}[H]
 	\footnotesize
 	\centering
 	
 	\begin{lpic}[l(2mm),r(2mm),t(2mm),b(2mm)]{figesp(0.40)}
 		\lbl[l]{138,184; $P_4$}
 		\lbl[r]{75,127; $S$}
 		\lbl[r]{59,110; $Y_2$}
 		\lbl[r]{48,99; $X_3$}
 		\lbl[r]{27,79; $Z_1$}
 		\lbl[r]{21,71; $R_2$}
 		\lbl[c]{8,51; $P_{1}$}
 		\lbl[c]{38,51; $Y_3$}
 		\lbl[c]{61,51; $X_4$}
 		\lbl[c]{81,51; $X_1$}
 		\lbl[l]{106,55; $Z_2$}
 		\lbl[l]{148,10; $P_{2}$}
 		\lbl[l]{80,82; $X_2$}
 		\lbl[l]{86,74; $Y_1$}
 		\lbl[c]{190,112;  $P_{3}$}
 		\lbl[c]{108,112; $Z_3$}
 		\lbl[l]{78,112; { $R_1$}}
 		\lbl[l]{117,45; $R_3$}
 		\lbl[l]{126,112; $R_4$}
 		\lbl[l]{92,69; $R_0$}
 		\lbl[l]{32,91; $X_0$}
 	\end{lpic}
 	
 	\caption{External part of the graph $\Gamma$ for $a=-1$ and $1< b\leq 2$. Here $S=(0, b +1)$, $Y_{2}=(b -2, 2 b -1)$, $X_{3}=(-b, 1)$, $Z_1=(-b -1, 0)$, $R_2=(-2 b, -b +1)$, $Y_3=(
 		-3 b +2, -1)$, $X_4=(b -2, -1)$, $X_1=(0, -1)$, $Z_2=(b, -1)$, $Y_1=(b -1, 0)$, $X_2=(0, b -1)$, $Z_3=(b, 
 		2 b -1)$, $R_1=(0, 2 b -1)$,
 		$P_{1}=(-b -2, -1)$, $P_{2}=(b +2, -3)$, $P_{3}=(b +4, 2 b -1)$, $P_{4}=(-b +4, 5)$,
 		$R_3=(3b-2,-2b+1)$, $R_4=(5b-4,2b-1)$, $R_{0}=(b/2, b/2 - 1)$ and $X_{0}=(-b/2 - 1, b/2)$.}\label{vaja}
 \end{figure}

We introduce some notation. Set $A=\overline{X_3X_0},$ $B=\overline{X_4X_1},$ $C=\overline{Z_2R_0},$ $D=\overline{Z_3R_1},$ $A_0=\overline{X_3Y_2},$ $B_0=\overline{Y_3X_4},$ $C_0=\overline{Z_2R_3},$ $D_0=\overline{Z_3R_4},$ $E=\overline{Z_1R_2},$ $G=\overline{Y_1X_2}.$

Notice that $F(A)=B$ and $F(C)=D$ while $F(B)\subset C\cup\overline{X_2R_0}$ and $F(D)=A\cup\overline {R_2X_0}.$ 

\begin{propo} \label{b12} Assume that $a=-1$ and $b\in (1,2).$ Then $h(F\vert_{\Gamma})>0.$ \end{propo}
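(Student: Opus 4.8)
The plan is to exhibit, inside the oriented graph associated to $F\vert_{\Gamma}$, two different loops that share a vertex (equivalently, two connected loops) and then to invoke Remark \ref{romaentr} to conclude $h(F\vert_{\Gamma})>0$. I would work with the partition already fixed before the statement, namely $\{A,B,C,D,A_0,B_0,C_0,D_0,E,G\}$ together with the collapsing intervals; by the observations preceding the proposition and by Remark \ref{colapses}, the intervals that collapse after finitely many iterates play no role in the entropy and can be discarded.

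First I would record the covering relations that persist for \emph{every} $b\in(1,2)$. From the explicit images $F(A_0)=B_0$ and $F(C_0)=D_0$, together with the two full covers $F(B_0)\supset C_0$ and $F(D_0)\supset A_0$ (each a genuine cover because, by Lemma \ref{edges}$(b)$, a non-plateau edge is sent to a nondegenerate interval, stretched by a factor $\sqrt2$ or $2$), one obtains the length-four loop
\[
A_0\longrightarrow B_0\longrightarrow C_0\longrightarrow D_0\longrightarrow A_0 ,
\]
which is present uniformly on $(1,2)$ and will serve as the first loop. The key computational step here is just to locate $R_3,R_4,X_5$ on the lines $y=-x+b-1$ and $y=x+b+1$ and check the inclusions of $x$-ranges, which I would do as in Proposition \ref{menys1menys34}.

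Next I would produce a second loop meeting the first one at $B_0$. The natural candidate is
\[
B_0\longrightarrow C\longrightarrow D\longrightarrow A\longrightarrow B\longrightarrow G\longrightarrow A_0\longrightarrow B_0 ,
\]
built from $F(A)=B$, $F(C)=D$, the parts $F(D)\supset A$ and $F(B)\supset G$ read off from $F(D)=A\cup\overline{R_2X_0}$ and $F(B)\subset C\cup\overline{X_2R_0}$ (recall $G\subset\overline{X_2R_0}$), the cover $F(B_0)\supset C$, and $F(G)=A_0$. Since both loops pass through $B_0$, there are two distinct loops through a single vertex; moreover the two loops are connected, as one may travel $C_0\to D_0\to A_0\to B_0\to C$ in one direction and $C\to D\to A\to B\to G\to A_0\to B_0\to C_0$ in the other. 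Either formulation of Remark \ref{romaentr} then gives $h(F\vert_{\Gamma})>0$, and if desired the exact lower bound could be extracted from the associated rome via Theorem \ref{rome}.

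The main obstacle is that several covers in the \emph{second} loop are not uniform in $b$: whether $F(B_0)$ covers all of $C$, and whether $F(B)$ covers all of $G$, depend on the ordering of $X_5,R_0,Y_1$ along $y=-x+b-1$, which changes at the thresholds $b=4/3$ and $b=3/2$; this is exactly why the Appendix splits $1<b<2$ into the four graphs of Figures \ref{f:21a}--\ref{f:21d}. I would therefore treat these thresholds by cases: in each sub-range I refine the partition along the relevant orbit points (splitting $C$ at $X_5$, $D$ at $F(X_5)$, and so on, in the style of Proposition \ref{menys1menys34}), so that the return path to $B_0$ is carried by the subdivided intervals. In every case the combinatorics still display two loops through a common rome vertex, so the conclusion is stable; the difficulty is the bookkeeping of these subdivisions rather than any conceptual point.
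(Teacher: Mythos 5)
Your overall strategy is the paper's: exhibit two connected loops in the oriented graph of coverings and invoke Remark \ref{romaentr}. The length-four loop $A_0\to B_0\to C_0\to D_0\to A_0$ is indeed valid for all $b\in(1,2)$ and is exactly the first loop the paper uses, and for $b\in[4/3,3/2]$ your second loop also closes up, so the argument is complete on that middle range. The problems are at the two ends of the interval.

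For $b\in(3/2,2)$ the cover $B\to G$ fails: $F(B)=\overline{X_5X_2}$ and $X_5$ now lies between $Y_1$ and $X_2$, so $F(B)\subsetneq G$. The clean fix, which the paper uses uniformly on $(4/3,2)$, is to reroute the second loop through $E$ via $D\to E\to C_0$ (since $F(D)=A\cup\overline{R_2X_0}\supset E$ and $F(E)=C_0$), avoiding $B\to G$ altogether; this is a repairable oversight. The genuine gap is in $(1,4/3)$. There $F(B_0)=\overline{R_3X_5}$ meets $C$ only in $\overline{Z_2X_5}$, and your plan of splitting $C$ at $X_5$, $D$ at $F(X_5)$, and so on, is indeed what the paper does, but two things are missing. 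First, the number of subdivision steps $k$ is unbounded as $b\to1^+$ (cf. Remark \ref{cont2}), so the thresholds accumulate at $b=1$ and this cannot be reduced to a finite case analysis in the style of Proposition \ref{menys1menys34}. Second, and more importantly, you never argue that the refinement terminates, i.e., that the orbit of $X_3$ eventually leaves $A\cup B\cup C\cup D$ so that the long path actually reaches $G$ (or $\overline{R_0X_2}$) and closes into a second loop. The paper supplies exactly this: if the orbit stayed in $A\cup B\cup C\cup D$ forever, the orbit of $X_3$ under $F^4=F_1\circ F_4\circ F_3\circ F_2$ would be contained in $A$, which is impossible because $F^4\vert_A$ is affine and expansive with its fixed point outside $A$; the dichotomy on whether the exit occurs at time $4k$ or $4k+2$ then determines which of the two refined graphs one obtains. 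Without this exit argument, the assertion that ``in every case the combinatorics still display two loops'' is unproved, and it is precisely the conceptual point---not mere bookkeeping---on which the hardest sub-case rests.
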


\begin{proof}
First we consider the case when $b\in (1,4/3).$ In this situation, $X_5\in C.$ The proof follows the same ideas used to prove the third case in Proposition \ref{sota1}. We will pursue the orbit of $X_3.$ Note that this orbit cannot be contained in $A\cup B\cup C \cup D,$ because in this situation the orbit by $F^4=F_1\circ F_4\circ F_3\circ F_2$ of $X_3$ is entirely contained in $A.$ This is not possible because the map $F^4$ restricted to $A$ is linear, expansive and its fixed point does not belong to $A.$ So there exists $n$ be such that 
$F^n(X_3)\notin A\cup B\cup C \cup D.$ Note that there are only two possibilities: either $n=4k$  or $n=4k+2$ for some $k\ge 1.$ In the first case, $F^{4k}(X_3)\in\overline {R_2X_0}.$ In the second one, $F^{4k+2}(X_3)\in\overline {R_0X_2}.$

We investigate the first case. We introduce in the partition the orbit of $X_3$ until $F^{4k-1}(X_3).$ This gives the points $F^4(X_3),\ldots ,F^{4k-4}(X_3)\in A,$ $
F^5(X_3),\ldots ,F^{4k-3}(X_3)\in B,$ $F^2(X_3),\ldots ,F^{4k-2}(X_3)\in C$ and $F^3(X_3),\ldots ,F^{4k-1}(X_3)\in D.$ In this way we obtain $k$ subintervals of $A$ that we denote for $0<i<k$ as $A_i=\overline {F^{4(i-1)}(X_3)F^{4i}(X_3)}$ and $A_k=\overline{F^{4(k-1)}(X_3)X_0}.$ With the same spirit we denote for $0<i<k$ the following intervals
$B_i=\overline {F^{4(i-1)+1}(X)F^{4i+1}(X_3)}$ and $B_k=\overline{F^{4(k-1)+1}(X_3)X_1}.$ In the interval $C$ we will consider the intervals $C_1=\overline{Z_2F^2(X_3)}$ and for $i=2,\ldots, k,\,\,C_i=\overline{F^{4(i-2)+2}(X_3)F^{4(i-1)+2}(X_3)}$. Lastly, in the interval $D$ we will consider the subintervals
$D_1=\overline{Z_3F^3(X_3)}$ and for $i=2,\ldots, k,\,\,D_i=\overline{F^{4(i-2)+3}(X_3)F^{4(i-1)+3}(X_3)}$. With this notation we obtain the following graph:

\begin{tikzcd}	
	C_1\arrow[r]&  D_1\arrow[r]& A_1\arrow[r]         & B_1\arrow[llld]&  & & & & & & & \\
	C_2\arrow[r]&  D_2\arrow[r]& A_2\arrow[r]         &B_2\arrow[r]& \ldots\arrow[r] &C_k\arrow[r] &D_k\arrow[r] &A_k\arrow[r] &B_k \arrow[lllld]& & &  	\\
	  & & & &G\arrow[r] &    A_0\arrow[r]&    B_0\arrow[r] \arrow[lllllluu]  & C_0\arrow[r]& D_0\arrow[lll, bend left]    & &                                                                               
\end{tikzcd}

This graph has $\{B_0\}$ as a rome and two loops, one of length $4k+3$ and the other with length 4. From Remark \ref{romaentr} (i) this gives positive entropy  and ends the proof in this case.

In the second case, we add the orbit of $X_3$ until $F^{4k+1}(X_3).$ In this way, each of the intervals
$A,B,C,D$ is subdivided in $k+1$ subintervals that we number in the same spirit of the previous case. Thus we obtain the following graph:

\hspace{-0.7cm}\begin{tikzcd}	
	C_1\arrow[r]&  D_1\arrow[r]& A_1\arrow[r]         & B_1\arrow[llld]&  & & & & & & & \\
	C_2\arrow[r]&  D_2\arrow[r]& A_2\arrow[r]         &B_2\arrow[r]& \ldots\arrow[r] &B_k\arrow[r]&C_{k+1}\arrow[r] &D_{k+1}\arrow[r] &E \arrow[llld] && &   	\\
	& & & & &    C_0\arrow[r]&    D_0\arrow[r]   & A_0\arrow[r]& B_0\arrow[lll, bend left]\arrow[lllllllluu]    & &                                                                               
\end{tikzcd} 

Also from Remark \ref{romaentr} (i) this is also a graph with positive entropy because $\{B_0\}$ is a rome and it has two cycles with lengths $4(k+1)+3$ and $4.$ This ends the proof of this case.

When $b\in (4/3,2)$ we have that $X_5\in\overline {X_2R_0}$ and we obtain the following oriented graph of coverings 

\hspace{3cm}\begin{tikzcd}	
	A_0\arrow[r]&  B_0\arrow[r]\arrow[d]& C_0\arrow[r]         & D_0\arrow[lll,bend right ]  \\ &
	C\arrow[r]&  D\arrow[r]& E\arrow[lu]                                                                                      
\end{tikzcd}

\noindent which, also from Remark \ref{romaentr} (i), gives positive entropy.
\end{proof}

\begin{nota}\label{cont2} Similarly that  in Remark \ref{cont1}, in this case, we can conclude that  $ \lim\limits_{b\to 1^+} h(F\vert_{\Gamma})=0$. \end{nota}

\subsection{The case $a=-1$ and $2\leq b \leq 4$}
From the Appendix we see that we have two different graphs depending on whether $2\leq b<3$ (Figure \ref{ff:22}) or $3\leq b \leq 4$ (Figure \ref{f:23}). The only difference between these two cases is that in the first one the point $P_{4}=(-b+4,5)$ is in the right hand side of $Y_2$ (and hence it generates the interval $\overline{P_{4}Y_{2}}$) while in the second one $P_{4}\in \overline{SY_2}.$ Since the part of the straight line $y=x+b+1$ contained in the first quadrant collapses to the point $P_1,$ this difference will not influence the computation of the entropy. 

In addition, we also note that $F^2(\overline{Z_2X_1})=F(\overline{X_2Z_3})=X_3$ and, since $X_4\in\overline{Z_2X_1}$ the interval $\overline{X_3Z_1}$ also collapses.

The rest of the graph is covered with: $A=\overline{Z_3P_3},$ $B=\overline{Y_2Z_3},$ $C=\overline{SX_3},$ $D=\overline{Z_1P_1},$ $E=\overline{P_{1}X_1},$ $I=\overline{Y_1Z_2},$ $G=\overline{Z_2P_2},$ $H=\overline{Y_1X_2}.$ From this we get the following oriented graph:

\begin{center}
\begin{tikzcd}
	&             &                                   & G \arrow[llld, bend right] &             &                           \\
	A \arrow[r] & C \arrow[r] & E \arrow[r] \arrow[ru] \arrow[rd] & I \arrow[r]                & B \arrow[r] & D \arrow[llu, bend right] \\
	&             &                                   & H \arrow[llu, bend left]   &             &                          
\end{tikzcd}
\end{center}

It turns out that this graph is of Markov type. We have that the set $\{C\}$ is a rome. Three periodic orbits, of periods $3,4,7$ pass through $C.$  Hence we have proved the following:
\begin{propo}\label{b24}
	Let $h_5\approx 0.25344$ be the logarithm of the greatest real root of $\lambda^7-\lambda^4-\lambda^3-1.$ When $a=-1$ and $2\leq b\leq 4,\,\, h(F\vert_{\Gamma})=h_5.$
\end{propo}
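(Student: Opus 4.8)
The plan is to take the oriented graph and Markov partition $\mathcal{P}=\{A,B,C,D,E,G,H,I\}$ already assembled in the paragraphs preceding the statement and extract its spectral radius, invoking Lemma~\ref{Markov} to identify this radius with the growth number $s(F\vert_\Gamma)$ and hence the entropy. First I would confirm that the two topologically distinct graphs (for $2\le b<3$ in Figure~\ref{ff:22} and $3\le b\le 4$ in Figure~\ref{f:23}) yield the same directed graph: the only difference is the position of $P_4$ relative to $Y_2$, and since the portion of the line $y=x+b+1$ lying in $Q_1$ is a plateau that collapses to $P_1$ under $F$ (Lemma~\ref{edges}(b)), this discrepancy lives entirely on collapsing edges and is irrelevant for the entropy by Remark~\ref{colapses}. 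I would likewise record that $\overline{X_3Z_1}$ together with the intervals satisfying $F^2(\overline{Z_2X_1})=F(\overline{X_2Z_3})=X_3$ collapse, so they may be discarded without affecting $s(F\vert_\Gamma)$.

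The core computation is to apply Theorem~\ref{rome} with the rome $R=\{C\}$. Reading off the directed graph, the simple paths from $C$ back to $C$ are exactly the three loops $C\to E\to H\to C$, $C\to E\to G\to A\to C$ and $C\to E\to I\to B\to D\to G\to A\to C$, of lengths $3$, $4$ and $7$. Hence $A_R(\lambda)$ is the $1\times 1$ matrix $\lambda^{-3}+\lambda^{-4}+\lambda^{-7}$, and Theorem~\ref{rome} gives, with $n=8$ and $k=1$, the characteristic polynomial
$$(-1)^{7}\lambda^{8}\big(\lambda^{-3}+\lambda^{-4}+\lambda^{-7}-1\big)=\lambda^{8}-\lambda^{5}-\lambda^{4}-\lambda=\lambda\,(\lambda^{7}-\lambda^{4}-\lambda^{3}-1).$$
Thus the nonzero eigenvalues are the roots of $\lambda^{7}-\lambda^{4}-\lambda^{3}-1$, and by the Perron--Frobenius statement recalled before Lemma~\ref{Markov} the spectral radius $r(\mathcal{P})$ is its largest real root. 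Since $\mathcal{P}$ is Markov, Lemma~\ref{Markov} gives $r(\mathcal{P})=s(F\vert_\Gamma)$, whence $h(F\vert_\Gamma)=\ln r(\mathcal{P})=h_5$, uniformly on the whole range $2\le b\le 4$.

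The only genuinely laborious part, which I would treat as routine verification rather than spell out, is checking that every edge image is as claimed---that $A\to C$, $C\to E$, $E\to G,I,H$, $G\to A$, $I\to B$, $B\to D$, $D\to G$ and $H\to C$ are all the coverings, and that the partition is Markov (each $F(I_i)$ is a union of partition elements)---by tracking the images of the listed endpoints $P_i,X_i,Z_i,Y_i,S$ through the affine pieces $F_1,\dots,F_4$ of~\eqref{e:Fis}. The main obstacle is ensuring the collapse bookkeeping is complete, so that neglecting the collapsing edges genuinely does not lower the entropy; this is exactly what Remark~\ref{colapses} guarantees, via $s(f)=s(\tilde f)$ under collapsing. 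Once the directed graph is pinned down, the entropy value is forced and the computation above yields $h_5$ directly.
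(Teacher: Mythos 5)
Your proposal is correct and follows essentially the same route as the paper: the same partition $\{A,B,C,D,E,G,H,I\}$, the same collapsing intervals discarded via Remark~\ref{colapses}, the same rome $\{C\}$ with the three loops of lengths $3$, $4$ and $7$, and the same conclusion via Theorem~\ref{rome} and Lemma~\ref{Markov}. The only difference is that you write out the characteristic-polynomial computation explicitly, which the paper leaves implicit; your algebra checks out.
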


\subsection{The case $a=-1$ and $4< b< 8$}

\begin{propo}\label{b48}
Assume that $a=-1$ and $b\in (4,8).$ Then $h(F\vert_{\Gamma})>0.$
\end{propo}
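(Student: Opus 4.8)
The plan is to follow exactly the same strategy that was successful in Propositions~\ref{sota1} and \ref{b12}: identify the invariant graph $\Gamma$ for $4<b<8$ in the Appendix, locate a subgraph on which the combinatorial dynamics contains two loops sharing a common rome vertex, and invoke Remark~\ref{romaentr}~(i) to conclude positive entropy. First I would fix the graph $\Gamma$ associated to this range (the relevant figure from the Appendix) and set up the dynamically-labelled vertices, i.e. name the vertices so that $F$ acts as a shift $P_i\mapsto P_{i+1}$, $X_i\mapsto X_{i+1}$, $R_i\mapsto R_{i+1}$ along the natural edges, exactly as in the earlier propositions. The key computational input is to track the orbit of one distinguished vertex — as in the previous cases this will be the endpoint of a plateau-preimage, presumably an $R_i$ or $X_i$ — and determine where it lands as a function of $b\in(4,8)$.

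The main body of the argument is the case analysis forced by the position of that distinguished iterate. Following the pattern of Proposition~\ref{sota1}, I expect that for most of the interval the orbit of the marked vertex stays, for a long time, inside a block of four edges $A,B,C,D$ cyclically permuted by $F^4=F_4\circ F_3\circ F_2\circ F_1$ (or the analogous composition), and the crucial observation is that this $F^4$ restricted to that block is affine and \emph{expansive} with its fixed point lying outside the block. Hence the orbit \emph{cannot} remain trapped there: there is a smallest $n$ with $F^n(\text{vertex})$ escaping the block, and $n$ has the form $4k$ or $4k+2$ for some $k\ge 1$. I would then subdivide each of the four edges into the $k$ (or $k+1$) subintervals cut out by the escaping orbit and write down the resulting Markov-type oriented graph. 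In each subcase this graph carries a rome consisting of a single vertex through which pass two distinct loops — one short loop (length $4$) and one long loop (length $4k+c$) — so Remark~\ref{romaentr}~(i) immediately gives $h(F\vert_{\Gamma})>0$. I would also separately handle the short initial sub-ranges of $b$ near the endpoints, where the marked iterate escapes the block after only one or two passes and no subdivision is needed, yielding directly a two-loop rome (this mirrors the $b\in(1,4/3)$ versus $b\in(4/3,2)$ split in Proposition~\ref{b12}).

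The step I expect to be the genuine obstacle is the bookkeeping that verifies the covering relations of the subdivided graph are exactly as claimed, and in particular confirming that the long loop really closes up through the rome vertex for every admissible $k$. Concretely, one must check that $F(A)\subset B$, $F(C)\subset D$, while $F(B)$ and $F(D)$ split across the block boundary in the way that feeds the escaping orbit into the ``return'' edges $G,H,\ldots$ and back; getting the inclusions right requires computing the images of several explicit points $F^i(\text{vertex})$ and comparing coordinates against the vertex data of $\Gamma$, with the comparisons depending delicately on $b$. This is routine but error-prone linear algebra of the same flavour as the computations carried out in Propositions~\ref{sota1} and \ref{b12}, and I would organise it by recording, for each subcase of $b\in(4,8)$, the list of relevant iterates and the single edge each one lands in. Once those inclusions are established, the existence of the two-loop rome and the appeal to Remark~\ref{romaentr}~(i) finish the proof, and I would note (as in Remark~\ref{cont1}) that $k\to\infty$ as $b\to 8^-$, which is consistent with the expected decay of the entropy toward the transition value $c=8$ appearing in Theorem~\ref{t:teoD}.
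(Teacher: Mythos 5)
Your plan is essentially the paper's proof: the paper tracks the orbit of $P_4$ on the graph of Figure~\ref{f:espe3}, first disposes of $4<b\le 11/2$ (where $P_6=F^2(P_4)$ already lands in $G\cup H$ and a two-loop oriented graph appears immediately), and for $b>11/2$ uses exactly your expansiveness argument to force the orbit out of a cyclically permuted block, then subdivides the block along the escaping orbit and exhibits a rome with two loops, concluding by Remark~\ref{romaentr}. The only details to correct are that the trapped block here is a \emph{three}-edge cycle $A\cup B\cup C$ under $F^3=F_1\circ F_3\circ F_2$ (so the escape time is $3k$ or $3k+2$, not $4k$ or $4k+2$) and the distinguished vertex is $P_4$ rather than an $R_i$ or $X_i$; your closing observation that $k\to\infty$ as $b\to 8^-$ is consistent with Remark~\ref{cont3}.
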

\begin{proof} We follow with the same graph, the one of Figure \ref{f:23}. To perform the analysis we have to add more points to the partition. We add these points in the Figure \ref{f:espe3}.

	 \begin{figure}[H]
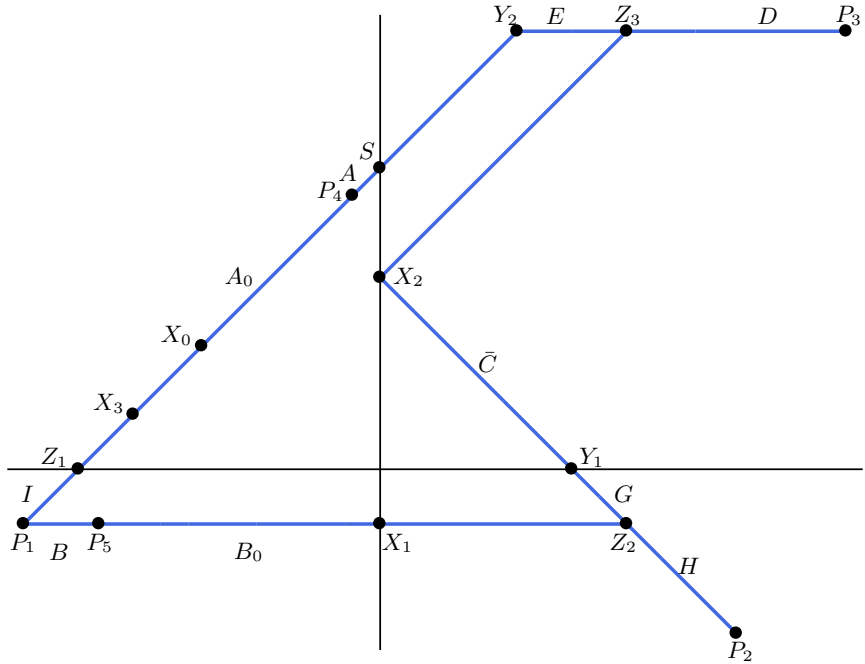
\label{graficespecial3}
	 	\footnotesize
	 	\centering
	 	
	 	\begin{lpic}[l(2mm),r(2mm),t(2mm),b(2mm)]{am-especial-3d(0.60)}
	 		\lbl[r]{85,138; $S$}
	 		\lbl[r]{81,133; $A$}
	 		\lbl[r]{17,70; $Z_1$}
	 		\lbl[c]{8,62; $I$}
	 		\lbl[c]{7,51; $P_1$}
	 		\lbl[c]{90,51; $X_1$}
	 	\lbl[c]{15,49; $B$}
	 		\lbl[c]{24,51; $P_5$}
	 		\lbl[c]{140,51; $Z_2$}
	 		\lbl[l]{163,27; $P_2$}
	 		\lbl[l]{89,110; $X_2$}
	 		\lbl[c]{133,70; $Y_1$}
	 		\lbl[c]{190,168;  $P_3$}
	 		\lbl[c]{141,168; $Z_3$}
	 		\lbl[c]{114,168;  $Y_2$}
	 		\lbl[c]{125,168;  $E$}
	 		\lbl[r]{78,129; $P_{4}$}
	 		\lbl[c]{55,110; $A_0$}
	 		\lbl[c]{26,82; $X_3$}
	 		\lbl[c]{57,49; $B_{0}$}
	 		\lbl[c]{41,97;  $X_0$}
	 		\lbl[c]{172,168;  $D$}
	 		\lbl[c]{110,91;  $\overline{C}$}
	 		\lbl[l]{138,62;  $G$}
	 		\lbl[l]{152,46;  ${H}$}
	 	\end{lpic}
	 	
	 	\caption{The graph $\Gamma$ for $a=-1$ and $4<b<8$. Here $S=(0, b +1)$, $Z_1=(-b -1, 0)$, $P_{1}=(-b -2, -1)$, $X_{1}=(0, -1)$, $Z_2=(b, -1)$, $P_{2}=(b +2, -3)$, $Y_1=(
	 		b -1, 0)$, $X_2=(0, b -1)$, $P_{3}=(b +4, 2 b -1)$, $Z_3=(b, 2 b -1)$, $Y_2=(b -2, 2 b -1)$, $P_{4}=(
	 		-b +4, 5)$, $X_3=(-b, 1)$,  $X_0=\left(
	 		-\frac{b}{2}-1, \frac{b}{2}\right)$. }\label{f:espe3}
	 \end{figure} 

 Note that $F^3(\overline{X_0Z_1})=F^2(\overline{X_1Z_2})=F(\overline{X_2Z_3})=X_3.$ So these three intervals collapse.
 
 We will follow the orbit of $P_4.$ We have $P_{5}=(b-10,-1)\in \overline{X_1P_1}$ and $P_6=(10-b,2b-11).$ We name the intervals in the following way: $A=\overline{SP_4},$
 $A_0=\overline{P_4X_0},$ $B=\overline{P_{1}P_{5}},$ $B_0=\overline{P_{5}X_1},\,\,\overline{C}=\overline{X_2Y_1},$ $D=\overline{Z_3P_3},$ $E=\overline{Y_2Z_3},$ $I=\overline{Z_1P_1},$ $G=\overline{Y_1Z_2}$ and $H=\overline{Z_2P_2}.$
 
 When $b\le 11/2$ then $P_{6}\in G\cup H.$ If $P_{6}\in G$ we have the following graph of covers:
 
 \begin{center}
 	\begin{tikzcd}
 			A_0\arrow[r]& B_0\arrow[r]
 	         & \overline{C}\arrow[r] \arrow[ll,bend left]       &  A\arrow[r]           & B\arrow[r] & H \arrow[r]                                                &D \arrow[llllll, bend right] \\
 \end{tikzcd}
 \end{center}

while when $P_{6}\in H$ we have:

\begin{center}
 	\begin{tikzcd}
	A_0\arrow[r]& B_0\arrow[r]\arrow[rd]
	& G\arrow[r]        &  E\arrow[r]           & I\arrow[r] & H \arrow[r]                                                &D \arrow[llllll,bend  right] \\
	& & \overline{C}\arrow[llu, bend left] & & &\\
\end{tikzcd}
\end{center}

Both oriented graphs give positive entropy and this ends the proof when $b\le 11/2.$
When $b>11/2,$ then $P_{6}\in \overline{C}$ and we name $C=\overline{P_{6}Y_1}$ and $C_0=\overline{P_{6}X_2}.$

Note that  $F(A)=B,$ $F(B)=C\cup G\cup H$ and, since $P_7\in A\cup \overline{SY_2},$ $F(C)\subset A\cup \overline{SY_2}.$

Note also that the orbit of $P_{4}$ cannot be contained in $A\cup B\cup C$ because, in this case, the orbit by $F^3=F_1\circ F_3\circ F_2$ of $P_{4}$ would be  entirely contained in $A$ in contradiction with the fact that this map restricted to $A$ is linear, expansive and its fixed point does not belong to $A.$ So there exists a first $n\ge 3$ such that $F^n(P_{4})\notin A\cup B\cup C.$

Clearly there are two possibilities. Either $n=3k$ for some $k\ge 1$ and $F^{n}(P_{4})\in \overline{SY_2}$ or
$n=3k+2$ for some $k\ge 1$ and $F^{n}(P_{4})\in G\cup H.$
We begin by studying the first situation. In this case, we add to the graph the points of the orbit of $P_{4}$  until $F^{3k-1}(P_{4}). $ Therefore, each of the intervals $A,B,C$ splits into $k$ subintervals, namely for $i=1,\ldots,k-1,\,\,
A_i=\overline{F^{3i}(P_{4})F^{3(i-1)}(P_{4})},$ $B_i=\overline{F^{3i+1}(P_{4})F^{3(i-1)+1}(P_{4})},$ $C_i=\overline{F^{3i+2}(P_{4})F^{3(i-1)+2}(P_{4})},$ $A_k=\overline{F^{3(k-1)}(P_{4})S},$ $B_k=\overline{F^{3(k-1)+1}(P_{4})P_1}$ and $C_k=\overline{F^{3(k-1)+2}(P_{4})Y_1}.$ With this notation we obtain the following oriented graph of coverings:

\hspace{1cm}	\begin{tikzcd}
	A_0\arrow[r]& B_0\arrow[r]
	& C_0\arrow[lld] \arrow [ll,bend right]                              & A_k\arrow[r]&B_k\arrow[r]\arrow[d]& H\arrow[r]& D\arrow[llllll, bend right]&\\A_1\arrow[r]           & B_1\arrow[r]&C_1\arrow[lld] & &G\arrow[r] &E\arrow[r]&I\arrow[lu] &&&\\
	\ldots\arrow[r]&\ldots\arrow[r]&\ldots\arrow[lld]&&&&&&\\
	A_{k-1}\arrow[r]           & B_{k-1}\arrow[r]&C_{k-1}\arrow[ruuu]  & & &&&
\end{tikzcd}

\noindent which, from Remark \ref{romaentr} (ii), has positive entropy.

In the second case, we add to the graph the points of the orbit of $P_{4}$  until $F^{3k+1}(P_{4}).$ Now the intervals $A$ and $B$ split in $k+1$ subintervals:  $
A_i=\overline{F^{3i}(P_{4})F^{3(i-1)}(P_{4})},$ $B_i=\overline{F^{3i+1}(P_{4})F^{3(i-1)+1}(P_{4})}$ for $i=1,\ldots,k,$ and $A_{k+1}=\overline{SF^{3k}(P_{4})},$ $B_{k+1}=\overline{P_1F^{3k+1}(P_{4})}.$ On the other hand, the interval $C$ splits  into $k$ subintervals, that we name as in the previous case, that is, for $i=1,\ldots,k-1,$ $
C_i=\overline{F^{3i+2}(P_{4})F^{3(i-1)+2}(P_{4})}$ and $C_k=\overline{Y_1F^{3k-1}(P_{4})}.$ With this notation we obtain the two different oriented graphs depending on the position of $F^{3k+2}(P_{4}).$ 

More concretely, if $F^{3k+2}(P_{4})\in G$ we obtain 

\hspace{2cm}	\begin{tikzcd}
	A_0\arrow[r]& B_0\arrow[r]
	& C_0\arrow[lld]\arrow[ll,bend right]                                & A_{k+1}\arrow[r]&B_{k+1}\arrow[r]& H\arrow[r]& D\arrow[llllll, bend right]&\\A_1\arrow[r]           & B_1\arrow[r]&C_1\arrow[lld] & & && &&&\\
	\ldots\arrow[r]&\ldots\arrow[r]&\ldots\arrow[lld]&&&&&&\\
	A_{k}\arrow[r]           & B_{k}\arrow[r]&C_{k}\arrow[ruuu]  & & &&&
\end{tikzcd}

\bigskip

On the other hand,  if $F^{3k+2}(P_{4})\in H,$ we get

\hspace{2cm} \begin{tikzcd}
 	A_0\arrow[r]& B_0\arrow[r]
 	& C_0\arrow[lld] \arrow [ll,bend right]                             & E\arrow[r]&I\arrow[r]&H\arrow[r] &D\arrow[llllll, bend right]&\\A_1\arrow[r]           & B_1\arrow[r]&C_1\arrow[lld] & & && &&&&\\
 	\ldots\arrow[r]&\ldots\arrow[r]&\ldots\arrow[lld]&&&&&&&\\
 	A_{k}\arrow[r]           & B_{k}\arrow[r]&G\arrow[ruuu]  & & &&&&
 \end{tikzcd}g

In both cases, from Remark \ref{romaentr} (i), the obtained oriented graphs have positive entropy. This ends the proof of the proposition.
 \end{proof}

\begin{nota}\label{cont3} The same considerations stated in Remark \ref{cont1} are valid in the above situation and hence we get  that $ \lim\limits_{b\to 8^-} h(F\vert_{\Gamma})=0.$
The detailed proof of this fact is written in \cite{CGMM}.\end{nota}

\subsection{The case $a=-1$ and $b\geq 8$}\label{ss:ultima}

We begin by observing that, in this case, we always have two $3-$periodic orbits, namely  $\mathcal{X}=\left\{(-b,1),(b-2,-1),(b-2,2b-3)\right\}$  and $\mathcal{Q}=\left\{\left(\frac{2-b}{3},-1\right),\left(\frac{b-2}{3},\frac{2b-1}{3}\right),\left(-\frac{b+4}{3},\frac{2b-1}{3}\right)\right\}.$ Also there is a fixed point $p\in Q_2.$

Now consider the same graph of the above case, see Figure \ref{f:espe3}. As before we consider the partition given by the intervals $A_0,A,B_0,B,C_0,C,D,E,I,G,H.$ We have the following result:

\begin{propo}\label{bmes8}
	Assume that $a=-1$ and $b\geq 8.$ Then:

	\begin{itemize}
		\item [(a)] The map $F\vert_{\Gamma}$ has zero entropy.
		\item [(b)]  The map $F$ has the two $3$-periodic orbits defined above: $\mathcal{X},$ which absorb both plateaus, and $\mathcal{Q}.$
			Furthermore, for each $(x,y)\in\Gamma $ there exists some $n\in\N$ such that $F^n(x,y)\in \mathcal{X}\cup \mathcal{Q}.$
	\end{itemize}
\end{propo}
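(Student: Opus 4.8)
The plan is to follow the pattern established in the earlier propositions of Section~\ref{sec:5} where the entropy turns out to be zero (such as Propositions~\ref{primer}, \ref{cinc} and \ref{exac1}): exhibit an explicit mono-partition of $\Gamma$, compute the associated oriented graph of coverings, locate a rome, and then invoke Remark~\ref{romaentr}~(iii) to conclude zero entropy. For the dynamical statement~$(b)$ I will then read off the periodic orbits forced by the loops of this graph and use Lemma~\ref{edges}~(c) together with the collapsing behavior of the plateaus to show that every point is eventually captured by $\mathcal{X}$ or $\mathcal{Q}$.

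Concretely, first I would fix the graph $\Gamma$ of Figure~\ref{f:espe3} (the graph is the same as in the case $4<b<8$, only now with $b\ge 8$) and record the action of $F$ on the marked vertices. The crucial simplification is that when $b\ge 8$ the orbit of $P_4$ no longer re-enters the intervals $A,B,C$ in the complicated recurrent way that produced positive entropy in Proposition~\ref{b48}; instead one checks by direct computation that $P_5=F(P_4)$ and $P_6=F(P_5)$ land so that the orbit of $P_4$ exits $A\cup B\cup C$ immediately, i.e.\ the long nested loops disappear. I would verify that the three collapses noted before the statement ($F^3(\overline{X_0Z_1})=F^2(\overline{X_1Z_2})=F(\overline{X_2Z_3})=X_3$, and the two plateaus collapsing into $\mathcal{X}$) allow us to discard those intervals from the entropy computation via Remark~\ref{colapses}. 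The remaining coverings among $A_0,A,B_0,B,C_0,C,D,E,I,G,H$ should then assemble into an oriented graph in which some small set — I expect $\{C\}$ or $\{A_0\}$ — is a rome, and in which each element of the rome lies on exactly one loop.

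For part~$(a)$, once the oriented graph is written down, I would identify the rome $R$ and check hypothesis~(iii) of Remark~\ref{romaentr}: through each vertex of $R$ there passes one and only one loop, and no two rome elements are connected. This immediately gives $h(F|_\Gamma)=0$. The loops present should be exactly two $3$-cycles (giving $\mathcal{X}$ and $\mathcal{Q}$), consistent with the claim that these are the only periodic orbits. For part~$(b)$, the two $3$-periodic orbits are obtained as the fixed points of the appropriate compositions $F_i\circ F_j\circ F_k$ (one of type $Q_4\to Q_2\to Q_1$ and the other governing $\mathcal{Q}$); I would verify by substitution that $\mathcal{X}$ and $\mathcal{Q}$ are indeed $3$-periodic and that $\mathcal{X}$ is the common $\omega$-limit of both plateaus while $\mathcal{Q}$ is repulsive by Lemma~\ref{edges}~(c) (it visits no plateau). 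Finally, reading the oriented graph shows that any orbit not meeting $\mathcal{X}$ must follow the unique loop yielding $\mathcal{Q}$, so every point is eventually absorbed by $\mathcal{X}\cup\mathcal{Q}$.

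The main obstacle I anticipate is the bookkeeping of the coverings: I must correctly determine where $P_5,P_6,P_7$ fall for \emph{all} $b\ge 8$ (checking that no further splitting of intervals occurs, unlike the $4<b<8$ case where the position of $F^{3k}(P_4)$ mattered) and confirm that the resulting graph is genuinely Markov with a one-loop-per-rome-vertex structure. The delicate point is ruling out that the orbit of $P_4$ re-enters the ``active'' intervals to create linked loops; this is precisely what distinguishes the zero-entropy regime $b\ge 8$ from the positive-entropy regime $b<8$, and it must be justified by the explicit inequalities satisfied by the coordinates of $P_5$ and $P_6$ when $b\ge 8$, rather than merely asserted.
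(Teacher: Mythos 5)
Your overall route is the paper's: take the partition $A_0,A,B_0,B,C_0,C,D,E,I,G,H$ of the graph in Figure~\ref{f:espe3}, discard the collapsing chains via Remark~\ref{colapses}, write the oriented graph of coverings, deduce zero entropy from Remark~\ref{romaentr}, and then use Lemma~\ref{edges}~(c) plus an expansivity argument to show every orbit is absorbed by $\mathcal{X}\cup\mathcal{Q}$. However, two steps of your plan, as written, would fail and need repair.

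First, the range $b\ge 8$ is not uniform, and your key checkpoint --- ``confirm that the resulting graph is genuinely Markov'' --- is false on part of it. The paper splits into $b=8$, $b\in(8,10)$ and $b\ge 10$. At $b=8$ the partition is Markov (there $P_4\in\mathcal{Q}$). For $b\ge 10$ one has $F(P_4)\in\overline{X_1Z_2}$, the intervals $B_0,C_0$ disappear, the intervals $A_0,D,E,I,G,H$ all collapse into $\mathcal{X}$, and one is left with the Markov $3$-cycle $A\to B\to C\to A$. But for $8<b<10$ one has $F^{3}(P_4)\in A_0$ and the images of $C$ and $C_0$ only \emph{intersect} $A_0$ without covering it, so the partition is not Markov; the zero-entropy conclusion there comes from Remark~\ref{grafcota} (bounding $h(F\vert_\Gamma)$ by the entropy of the intersection matrix $\bar M$, which still has the one-loop-per-rome-vertex structure), not from the Markov machinery your plan relies on. Second, your expectation that the reduced graph carries ``exactly two $3$-cycles giving $\mathcal{X}$ and $\mathcal{Q}$'' is off: $\mathcal{X}$ is not forced by any loop of the reduced graph --- it is the orbit of the vertex $X_3$, reached by one plateau directly and by the collapsing chains $\overline{X_0Z_1}\to\overline{X_1Z_2}\to\overline{X_2Z_3}\to X_3$ --- while the loop $ABC$ forces only $\mathcal{Q}$. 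The expansivity argument you sketch is indeed needed, but it is applied to the loop $A_0B_0C_0$ (for $8<b<10$): the orbit of $P_1$ either meets $\mathcal{X}$ or follows that loop forever, and the latter is impossible because the fixed point of the expansive composition $F_1\circ F_3\circ F_2$, namely $\left(-\frac{b+4}{3},\frac{2b-1}{3}\right)$, does not lie in $A_0$. With these corrections (the three-way case split, the use of Remark~\ref{grafcota} on the middle range, and the correct identification of where $\mathcal{X}$ comes from) your argument coincides with the paper's.
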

\begin{proof}

First we consider the case $b=8.$ In this case,  $P_{4}\in\mathcal Q$ and the map after collapses is Markov. The associated oriented  graph  is:

\begin{center}
 \begin{tikzcd}
	A\arrow[r]& B\arrow[r]\arrow[d]\arrow[rd]
	& H\arrow[r] & D\arrow[r]&A_0\arrow[r]&B_0\arrow[r]& C_0\arrow[ll, bend right]\\ & C\arrow[lu]&G\arrow[r] &E\arrow[r] &I\arrow[llu] && \\
	\end{tikzcd}
\end{center}

From Remark \ref{romaentr} (iii), this oriented graph has zero entropy and the result follows. 

Now we consider  the case $b\in(8,10).$ We work with the same partition of the previous case.  Simple computations shows that $F^3(P_{4})\in A_0,$ and the partition is not Markov, but we can apply Remark \ref{grafcota} getting the following graph:

\begin{center}
 \begin{tikzcd}
	A\arrow[r]& B\arrow[r]\arrow[d]\arrow[rd]
	& H\arrow[r] & D\arrow[r]&A_0\arrow[r]&B_0\arrow[r]& C_0\arrow[ll, dashed,bend right]\\ & C\arrow[lu]\arrow[rrru,dashed]&G\arrow[r] &E\arrow[r] &I\arrow[llu] && \\
\end{tikzcd}
\end{center}

As usual,  the dashed arrow means that the images of $C_0$ and $C$ only intersect but not necessarily cover $A_0.$ This oriented graph still has zero entropy and from Remark \ref{grafcota} we obtain that $h(F\vert_{\Gamma})=0.$ Concerning the plateaus we see that $F(\overline{X_2Z_3})=X_3\in \mathcal X$ and $F(\overline{SY_2})=P_1.$ We are going to prove that the orbit of $P_1$ always meets $\mathcal X.$ Recall that in our situation, $P_7\in A_0$ and $F(A_0)=B_0,\,\,F(B_0)=C_0$ and $F(C_0)\subset A_0\cup \overline{X_0X_3}.$ Since $F^3(\overline {X_0X_3})=X_3$ we have that the orbit of $P_1$ either meets $\mathcal X$ or alternatively it always follow the loop $A_0B_0C_0.$ But since the map $F_1F_3F_2$ from $A_0$ into itself is expansive the only point in $A_0$ which could always travel following the loop $A_0B_0C_0$ is the fixed point of $F_1F_3F_2,$ namely $(-(b+4)/3,(2b-1)),$ which does not belong to $A_0.$ So the last alternative is not possible and both plateaus are absorbed by $\mathcal X$ in this case. From this analysis and looking at the above  graph it follows that for each $(x,y)\in \Gamma\setminus\mathcal Q$ there exists $n$ such that $F^n(x,y)\in \mathcal X.$

Lastly we consider the case $b\ge 10.$ Now $F(P_{4})\in \overline {X_1Z_2},$ and the intervals $B_0$ and $C_0$ disappear. We rename $B=\overline{P_1X_1}$ and $C=\overline{X_2Y_1}.$  Moreover, the intervals $A_0,D,E,I,G$ and $H$ collapse after some iterations. Indeed all of them reach the orbit $\mathcal X.$ Now the resulting partition is Markov and the associated oriented graph is:

\begin{center}
 \begin{tikzcd}
	A\arrow[r]& B\arrow[r]
	& C\arrow[ll,bend right] \\
\end{tikzcd}
\end{center}
This graph has zero entropy. Thus $h(F\vert_{\Gamma})=0.$ Also note that now $F^3(P_4)=X_3\in \mathcal X$ and the only points that do not meet $\mathcal X$ are the points of $\mathcal Q,$ the $3$-periodic orbit forced by the loop $ABC$ that is repulsive. This ends the proof of the proposition.
\end{proof}

\subsection{Proof of Theorem D}
The proof of Theorem D follows from collecting all the results in Propositions \ref{primer}--\ref{sis}, \ref{sotasota1},  \ref{sota1}, \ref{exac1},  \ref{b12},  \ref{b24}, \ref{b48}, \ref{bmes8}, in the preceding Sections.

\section{Future research directions}\label{s:future}

Throughout our research we have found that, when $a < 0$, and for all values of $b$, the dynamics of the family of maps $F_{a,b}$  across the entire plane is captured by a compact graph with a finite number of edges. We believe that this situation may occur in other piecewise continuous maps, for example, those with open regions to which all orbits access except, perhaps, a finite set, and whose image collapses into a $1$-dimensional set due to the fact that the Jacobian matrix has a rank~$1$ on the full region. This is the mechanism for the formation of invariant graphs in the case of our maps, as it is explained in Section~\ref{sec:4}. However, it remains to have a better understanding of the mechanism that provokes that the final graph contains only a finite number of edges. We believe this issue warrants further investigation, which in this work  we have simply addressed by a case by case study.

Finally, Theorem \ref{t:teoC} establishes that in each of the invariant graphs,  which capture the dynamics in the plane, there exists an open and dense set of initial conditions with at most three $\omega$-limits, which moreover, when the parameter ratio is rational, are periodic orbits. As we have suggested, this could partially explain why numerical simulations only exhibit periodic behavior. However, to demonstrate this fact conclusively, it would be necessary to show that the set of initial conditions $\mathcal{V} \subset \mathbb{R}^2$ that converge to these three $\omega$-limits has full Lebesgue measure. An intermediate result would be to prove that this set $\mathcal{V}$ is open and dense in the entire plane. To elucidate whether each one of these facts holds  remains as an interesting topic for future research.

\section*{Acknowledgments} 
\addcontentsline{toc}{section}{Acknowledgments}

We sincerely thank the anonymous reviewers for their thorough reading of our manuscript and their insightful suggestions, which have improved the quality of this article.

This work is supported by
Ministry of Science and Innovation--State Research Agency of the
Spanish Government through grants PID2022-136613NB-I00   and PID2023-146424NB-I00. It is also supported by the grants 2021-SGR-00113 
and 
2021-SGR-01039 from AGAUR of Generalitat de Catalunya. 

\vfill
\newpage

\bibliographystyle{plain}

\begin{thebibliography}{10}

\bibitem{AKM}
R.~L. Adler, A.~G. Konheim, M.~H. McAndrew.
Topological entropy. {\em Trans. Amer. Math. Soc.}, 114 (1965), 309--319.


\bibitem{ABK21}
B.~Aiewcharoen, R.~Boonklurb, N.~Konglawan.
Global and local behavior of the system of piecewise linear difference equations $x_{n+1} = |x_n|-y_n-b$ and $y_{n+1} = x_n-|y_n|+1$ where $b \leq 4$. {\em Mathematics}, 9(12) (2021), 1390-27pp.

\bibitem{ALM}
L. Alsed\`a, J.~Llibre, M.~Misiurewicz. {\em Combinatorial dynamics and entropy in dimension one}, volume~5 of {\em Advanced Series in Nonlinear Dynamics}. World Scientific Publishing Co., Inc., River Edge, NJ, second edition, 2000.

\bibitem{ban1999}
S.~Banerjee, G.~C. Verghese. {\em Nonlinear phenomena in power electronics}. IEEE Piscataway Township, NJ, USA, 1999.

\bibitem{BGMY} L.~Block, J.~Guckenheimer, M.~Misiurewicz, L.~S.~Young. Periodic points and topological entropy of one-dimensional maps.
In {\em Global theory of dynamical systems ({P}roc. {I}nternat.{C}onf., {N}orthwestern {U}niv., {E}vanston, {I}ll., 1979)}, volume 819 of {\em Lecture Notes in Math.}, pp. 18--34, Berlin, 1980. Springer.

\bibitem{Bo3}
R.~Bowen. Entropy for group endomorphisms and homogeneous spaces. {\em Trans. Amer. Math. Soc.}, 153 (1971), 401--414.

\bibitem{B}
B.~Brogliato. {\em Nonsmooth mechanics. Models, dynamics and control.} Communications and Control Engineering Series. Springer, third edition, 2016.
  
\bibitem{BMT}
K. M.~Brucks, M.~Misiurewicz, C.~Tresser.
Monotonicity Properties of the Family of Trapezoidal Maps. {\em Communications in Mathematical Physics}, 137 (1991), 1--12.

\bibitem{BS21}
 I.~Bula, A.~S\={\i}le. About a System of Piecewise Linear Difference Equations with
Many Periodic Solutions. In  S. Olaru et al. (eds.), {\em Difference Equations, Discrete
Dynamical Systems and Applications}, Springer Proceedings in Mathematics
\& Statistics, 444 (2024), 29--50.
 

\bibitem{BK} I.~Bula, J. L.~Kristone. 
Behavior of Systems of Piecewise Linear Difference Equations with Many Periodic Solutions. Communication at 29th International Conference on Difference Equations and Applications, Paris 2024.

https://icdea2024.sciencesconf.org/data/pages/Book\_of\_Abstracts\_ICDEA\_2025.pdf
(Retrieved 2024/07/08)

\bibitem{CGMM} A.~Cima, A.~Gasull, V.~Ma\~{n}osa, F.~Ma\~{n}osas. Supplementary results for a family of continuous piecewise linear planar maps. {\em Preprint} (2025).

\bibitem{D84}
R. L. Devaney.
  A piecewise linear model for the zones of instability of an
  area-preserving map.
  {\em Physica D}, 10(3) (1984), 387--393.

\bibitem{Gant}
F.~R. Gantmacher.
  {\em The theory of matrices. {V}ols. 1, 2}.
  Translated by K. A. Hirsch. Chelsea Publishing Co., New York, 1959.

\bibitem{GSM18}
L.~Gardini, I.~Sushko, K.~Matsuyama.
  {2D discontinuous piecewise linear map: Emergence of fashion cycles}. {\em Chaos},
  28(5) (2018), 055917-05.


\bibitem{GT20} L.~Gardini, W.~Tikjha.  Dynamics in the transition case invertible/non-invertible in a 2{D} piecewise linear map. {\em Chaos, Solitons \& Fractals}, 137 (2020), 109813, 8.


\bibitem{GQ}
A.~Goetz, A.~Quas.
  Global properties of a family of piecewise isometries. {\em Ergodic Theory Dynam. Systems}, 29(2) (2009), 545--568.

\bibitem{GL}
E. A. Grove, G.~Ladas.
  {\em Periodicities in nonlinear difference equations}, volume~4 of
  {\em Advances in Discrete Mathematics and Applications}.
  Chapman \& Hall/CRC, Boca Raton, FL, 2005.


\bibitem{H}
M. R. Herman. Sur les courbes invariantes par les difféomorphismes de l'anneau.
{\em Astérisque}, no. 144 (1986), 256 pp.

\bibitem{LY}
T. Li, J. A. Yorke.
 Period three implies chaos. 
{\em American Mathematical Monthly}, 10(82) (1975), 985--992.


\bibitem{LN22}
A.~Linero Bas, D.~Nieves Rold\'an.
  On the relationship between Lozi maps and max-type difference
  equations.
  {\em Journal of Difference Equations and Applications}, 23 (2023), 1--30.

\bibitem{L78}
{R.~Lozi.}
  Un attracteur \'etrange (?) du type attracteur de H\'enon.
  {\em J. Phys. Colloques}, 39 (1978), C5--9--C5--10.
  
\bibitem{L23}
{R.~Lozi.} Survey of Recent Applications of the Chaotic Lozi Map.
{\em Algorithms}, 16  (2023),  491, 63 pages.  

\bibitem{MZ}
M.~Misiurewicz, K.~Ziemian.
  Horseshoes and entropy for piecewise continuous piecewise monotone
  maps.
  In {\em From phase transitions to chaos}, pp. 489--500. World Sci.
  Publ., River Edge, NJ, 1992.
  

\bibitem{Nic88} J. L.~Nicolas. On highly composite numbers. In G.E. Andrews et al. (eds.) {\em Ramanujan revisited}, pp. 216--244. Academic Press, 1988. 



\bibitem{NicRob} J. L.~Nicolas, G.~Robin. Majorations explicites pour le nombre de diviseurs de N. {\em Canad. Math. Bull.} 26
(1983), 485--492.

  
\bibitem{Ni} Z.~Nitecki.
Differentiable dynamics. An introduction to the orbit structure of diffeomorphisms.
The MIT Press, Cambridge MA, 1971. 





\bibitem{MS18}
D.~J.~W.~Simpson, J.~D.~Meiss.
  Neimark--Sacker bifurcations in planar, piecewise-smooth, continuous
  maps.
  {\em SIAM Journal on Applied Dynamical Systems}, 7(3) (2008), 795--824.



\bibitem{S19}
L.~D. Smith, P.~B. Umbanhowar, R.~M. Lueptow,  J.~M. Ottino.
  The geometry of cutting and shuffling: An outline of possibilities
  for piecewise isometries.
  {\em Physics Reports}, 802 (2019), 1--22.
  
\bibitem{St} J.-M. Strelcyn.
The ``coexistence problem'' for conservative dynamical systems: A review. 
{\em Colloq. Math.} 62 (1991), 331--345.   

\bibitem{TG20}
W.~Tikjha, L.~Gardini.
  Bifurcation sequences and multistability in a two-dimensional
  piecewise linear map.
  {\em Internat. J. Bifur. Chaos Appl. Sci. Engrg.}, 30(6) 2030014 (2020), 1--21.
  
  

\bibitem{TLL13}
W.~Tikjha, Y.~Lenbury, E.~Lapierre.
  Equilibriums and periodic solutions of related systems of piecewise
  linear difference equations.
  {\em Int. J. Math. Comput. Sim.}, 7, (2013), 323--335.

\bibitem{TLS17}
W.~Tikjha, E.~Lapierre, T.~Sitthiwirattham.
The stable equilibrium of a system of piecewise linear difference equations.{\em Adv. Difference Equ.}, (2017), Paper No. 67, 10pp.

\bibitem{ZM}
Z.~T.~Zhusubaliyev, E.~Mosekilde.
  {\em Bifurcations and chaos in piecewise-smooth dynamical systems}, volume~44. World Scientific, 2003.

\end{thebibliography}

\newpage
\appendix


\section*{Appendix: invariant graphs for the case $a=-1$}\label{app:A}
\addcontentsline{toc}{section}{\appendixname}

\begin{figure}[H]
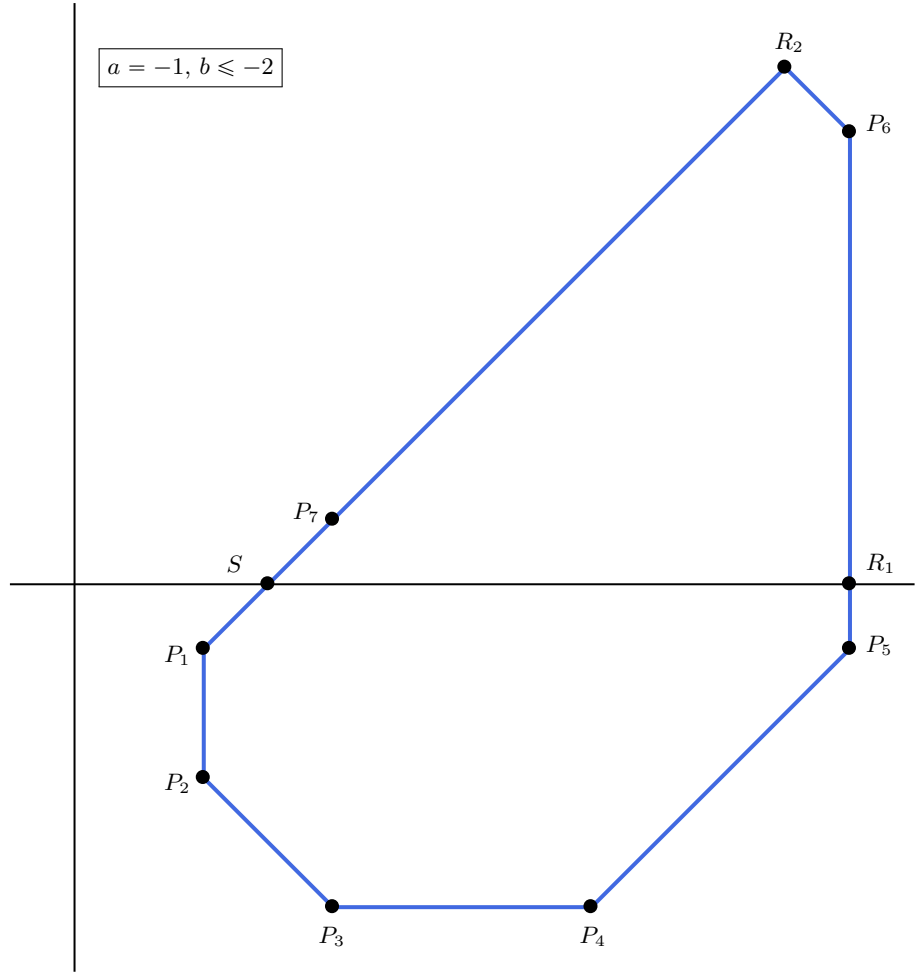

\footnotesize
\centering
\begin{lpic}[l(2mm),r(2mm),t(2mm),b(2mm)]{am-cas-1-especial(0.68)}
\lbl[l]{27,180; $\boxed{a=-1,\,b\leq -2}$}
\lbl[l]{52,83; $S$}
\lbl[r]{45,65; $P_1$}
\lbl[r]{45,40; $P_2$}
\lbl[l]{70,10; $P_3$}
\lbl[l]{121,10; $P_4$}
\lbl[l]{177,67; $P_5$}
\lbl[l]{177,83; $R_1$}
\lbl[l]{177,169; $P_6$}
\lbl[c]{162,185; $R_2$}
\lbl[l]{65,93; $P_7$}
\end{lpic}
\caption{The graph $\Gamma$ for $a=-1$ and $b\leq -2$. Here $P_{1}=(-b -2, -1)$, $P_{2}=(-b -2, -3)$, $P_{3}=(-b, -5)$, $P_{4}=(-b +4, -5)$, $P_{5}=(-b +8, -1)$, $P_{6}=(-b +8, 7)$, $P_{7}=(-b, 1)$, $R_{1}=(-b +8, 0)$, $R_{2}=(-b +7, 8)$, and $S=(-b -1, 0)$. For $b=-2$ the points $P_1=(-b -2, -1)$ and $P_2=(-b -2, -3)$ are located on the $y$-axis.}\label{ff:1}
\end{figure}

\newpage

\begin{figure}[H]
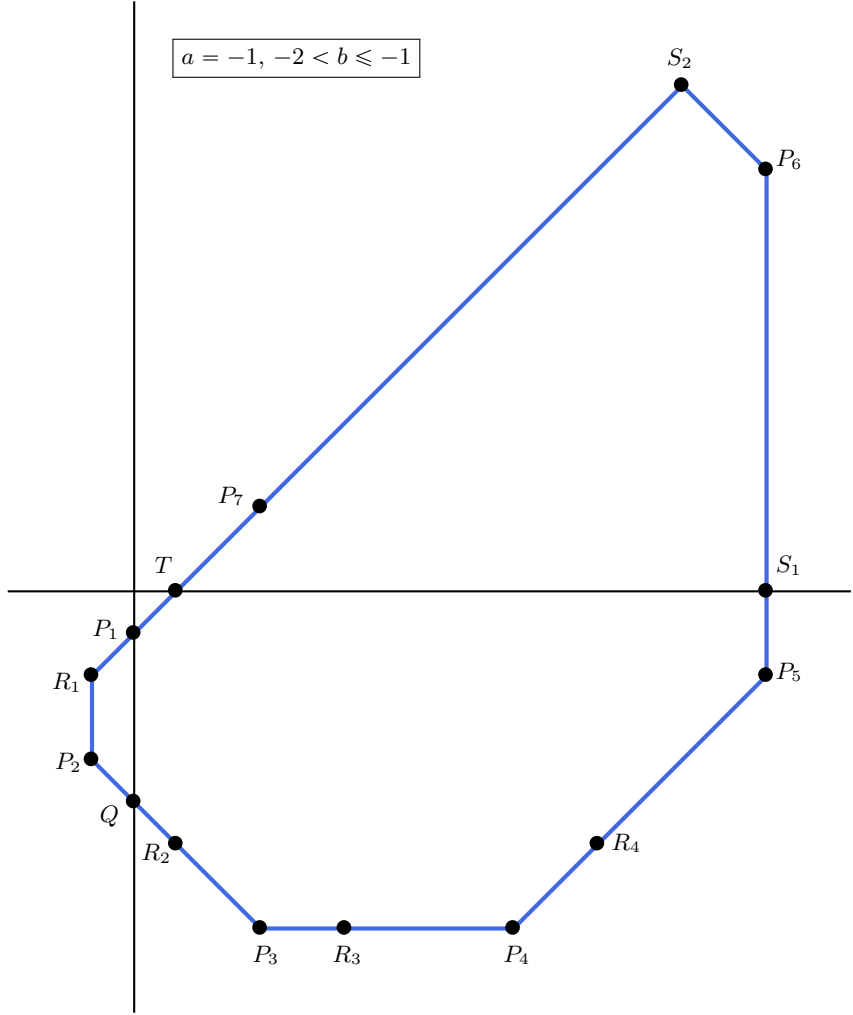

\footnotesize
\centering
\begin{lpic}[l(2mm),r(2mm),t(2mm),b(2mm)]{am-cas-2-especial(0.71)}
\lbl[l]{50,182; $\boxed{a=-1,\,-2< b\leq -1}$}
\lbl[r]{50,87; $T$}
\lbl[r]{40,75; $P_1$}
\lbl[r]{33,65; $R_1$}
\lbl[r]{33,50; $P_2$}
\lbl[r]{40,40; $Q$}
\lbl[c]{47,33; $R_2$}
\lbl[l]{65,14; $P_3$}
\lbl[l]{80,14; $R_3$}
\lbl[l]{112,14; $P_4$}
\lbl[c]{135,35; $R_4$}
\lbl[l]{163,67; $P_{5}$}
\lbl[l]{163,87; $S_{1}$}
\lbl[l]{163,163; $P_{6}$}
\lbl[c]{145,182; $S_{2}$}
\lbl[c]{61,100; $P_{7}$}

\end{lpic}
\caption{The graph $\Gamma$ for $a=-1$ and $2< b\leq -1$.
Here $P_{1}=(0, b +1)$, $P_{2}=(-b -2, 1+2 b)$, $P_{3}=(-b, -1+2 b)$, $P_{4}=(-3 b, -1+2 b)$, $P_{5}=(-5 b, -1)$, $P_{6}=(-5 b, -4 b -1)$,
$P_{7}=(-b, 1)$, $Q=(0, b -1)$, $R_{1}=(-b -2, -1)$, $R_2=(b +2, -3)$,  $R_3=(b +4, -1+2 b)$,  $R_{4}=(-b +4, 4 b +3)$, 
$S_{1}=(-5 b, 0)$,  $S_{2}=(-5 b -1, -4 b)$  and $T=(-b -1, 0)$. When $b=-1$ the points $P_{1}=(0,b+1)$ and $T=(-b -1, 0)$, as well as the points
$R_1=(-b -2, -1)$ and $P_{2}=(-b -2, 2b+1)$, collide.}\label{ff:2}
\end{figure}

\begin{figure}[H]
\footnotesize
\centering
\begin{lpic}[l(2mm),r(2mm),t(2mm),b(2mm)]{am-cas-3(0.71)}
\lbl[l]{-2,185; $\boxed{a=-1,\,-1< b\leq -3/4}$}
\lbl[r]{45,94; $R_1$}
\lbl[r]{50,99; $P_1$}
\lbl[r]{21,67; $P_2$}
\lbl[l]{33,67; $R_2$}
\lbl[r]{48,42; $Q$}
\lbl[r]{69,22; $R_3$}
\lbl[c]{78,12; $P_3$}
\lbl[c]{116,16; $R_4$}
\lbl[l]{132,22; $P_4$}
\lbl[l]{165,67; $R_5$}
\lbl[l]{165,94; $T_1$}
\lbl[c]{178,82; $P_5$}
\lbl[l]{165,155; $R_6$}
\lbl[c]{161,182; $P_6$}
\lbl[c]{134,184; $T_2$}
\end{lpic}
\caption{The graph $\Gamma$ for $a=-1$ and $-1< b\leq -3/4$. Here $P_{1}=(0, b +1)$, $P_2=(-b -2, -1)$,  $P_3=(b +2, -3)$, $P_4=(b +4, -1+2 b)$,  $P_5=(-b +4, 4 b +3)$, $P_{6}=(-5b, 4b+7)$,
$Q=(0, b -1)$, $R_1=(-b-1,0)$,  $R_2=(b,-1)$,  $R_{3}=(-b,-1+2 b)$, $R_{4}=(-3 b, -1+2 b)$,  $R_{5}=(-5 b, -1)$,  $R_{6}=(-5 b, -4 b -1)$, $T_{1}=(-5 b, 0)$  and $T_{2}=(-5 b -1, -4 b)$.}\label{f:3}
\end{figure}

\begin{figure}[H]
\footnotesize
\centering
\begin{lpic}[l(2mm),r(2mm),t(2mm),b(2mm)]{am-cas-4(0.71)}
\lbl[l]{1,185; $\boxed{a=-1,\,-3/4< b\leq -1/4}$}
\lbl[r]{55,112; $P_1$}
\lbl[r]{42,102; $R_1$}
\lbl[r]{23,65; $P_2$}
\lbl[l]{38,65; $R_2$}
\lbl[r]{55,55; $Q$}
\lbl[r]{73,38; $R_3$}
\lbl[c]{97,12; $P_3$}
\lbl[c]{95,38; $R_4$}
\lbl[c]{152,38; $P_4$}
\lbl[l]{130,70; $R_5$}
\lbl[l]{130,102; $T_1$}
\lbl[l]{152,102; $S$}
\lbl[c]{180,130; $P_5$}
\lbl[l]{130,125; $R_6$}
\lbl[l]{130,180; $P_6$}
\lbl[c]{100,157; $T_2$}
\end{lpic}
\caption{The graph $\Gamma$ for $a=-1$ and $-3/4< b\leq -1/4$.  Here $P_{1}=(0, b +1)$, $P_2=(-b -2, -1)$, $P_3=(b +2, -3)$, $P_4=(b +4, -1+2 b)$,
$P_5=(-b +4, 4 b +3)$, $P_{6}=(-5b, 1-4b)$,
$Q=(0, b -1)$, 
$R_1=(-b-1,0)$,  $R_2=(b,-1)$,  $R_{3}=(-b, 
	-1+2 b)$,  $R_{4}=(-3 b, -1+2 b)$,  $R_{5}=(-5 b, -1)$, $R_{6}=(-5 b, -4 b -1)$,  $S=(1-5 b, 0)$,  $T_{1}=(-5 b, 0)$  and $T_{2}=(-5 b -1, -4 b)$.  }\label{f:4}
\end{figure}

\vspace{-0.3cm}
\begin{figure}[H]
\footnotesize
\centering
\vspace{-1cm}
\begin{lpic}[l(2mm),r(2mm),t(2mm),b(2mm)]{am-cas-5(0.68)}
\lbl[l]{3,175; $\boxed{a=-1,\,-1/4< b\leq -2/9}$}
\lbl[l]{35,114; $R_1$}
\lbl[r]{61,133; $P_1$}
\lbl[r]{13,77; $P_2$}
\lbl[r]{58,77; $R_2$}
\lbl[l]{65,77; $Q$}
\lbl[r]{70,65; $R_3$}
\lbl[c]{115,21; $P_3$}
\lbl[l]{81,65; $R_4$}
\lbl[c]{172,65; $P_4$}
\lbl[l]{94,77; $R_5$}
\lbl[l]{97,114; $T_1$}
\lbl[r]{89,114; $X_1$}
\lbl[l]{130,114; $S$}
\lbl[c]{184,175; $P_5$}
\lbl[l]{97,107; $R_6$}
\lbl[l]{97,167; $P_6$}
\lbl[l]{63,142; $T_2$}
\lbl[l]{72,138; $R_7$}
\lbl[r]{62,138; $X_2$}
\end{lpic}

\begin{lpic}[l(2mm),r(2mm),t(2mm),b(2mm),figframe(0.20mm)]{am-cas-5-zoom(0.3)}
\lbl[c]{36,176; $T_2$}
\lbl[r]{14,150; $P_1$}
\lbl[r]{177,26; $R_6$}
\lbl[l]{181,44; $T_1$}
\lbl[r]{161,44; $X_1$}
\lbl[l]{52,160; $R_7$}
\lbl[r]{24,160; $X_2$}
\end{lpic}
\caption{The graph $\Gamma$ for $a=-1$ and $-1/4< b\leq -2/9$. Below, a detail. Here $P_1=(0, b +1)$,  $P_2=(-b -2, -1)$, $P_3=(b +2, -3)$, $P_4=(b +4, 2 b -1)$, $P_5=(-b +4, 4 b +3)$, $P_6=(-5 b, -4 b +1)$, $Q=(0, b -1)$, $R_{1}=(-b -1, 0)$, $R_{2}=(b, -1)$,  $R_{3}=(-b, 2 b -1)$,  $R_{4}=(-3 b, 2 b -1)$,  $R_{5}=(-5 b, -1)$, $R_{6}=(-5 b, -4 b -1)$, $R_{7}=(
-b, -8 b -1)$, $S=(-5 b +1, 0)$, $T_1=(-5 b, 0)$,   $T_2=(-5 b -1, -4 b)$,  $X_1=(-9 b -1, 0)$ and $X_2=(-9 b -2, -8 b -1)$.}\label{f:5}
\end{figure}

\newpage
\begin{figure}[H]
\footnotesize
\centering

\begin{lpic}[l(2mm),r(2mm),t(2mm),b(2mm)]{am-cas-6(0.71)}
\lbl[l]{3,175; $\boxed{a=-1,\,-2/9< b\leq -1/5}$}
\lbl[r]{42,113; $R_1$}
\lbl[r]{62,135; $P_1$}
\lbl[r]{15,77; $P_2$}
\lbl[l]{16,77; $X_3$}
\lbl[r]{13,87; $Z_2$}
\lbl[r]{60,77; $R_2$}
\lbl[l]{67,77; $Q$}
\lbl[r]{70,65; $R_3$}
\lbl[c]{112,21; $P_3$}
\lbl[l]{81,65; $R_4$}
\lbl[c]{170,65; $P_4$}
\lbl[l]{97,77; $R_5$}
\lbl[l]{97,113; $T_1$}
\lbl[r]{88,107; $X_1$}
\lbl[l]{130,113; $S$}
\lbl[c]{182,176; $P_5$}
\lbl[l]{97,105; $R_6$}
\lbl[c]{92,167; $P_6$}
\lbl[c]{68,138; $T_2$}
\lbl[l]{72,129; $R_7$}
\lbl[r]{58,129; $X_2$}
\lbl[c]{67,125; $Z_1$}
\end{lpic}
\caption{The graph $\Gamma$ for $a=-1$ and $-2/9< b\leq -1/5$. Here $P_1=(0, b +1)$, $P_2=(-b -2, -1)$, $P_3=(b +2, -3)$, $P_4=(b +4, 2 b -1)$,
$P_5=(-b +4, 4 b +3)$, $P_6=(-5 b, -4 b +1)$,
$Q=(0, b -1)$, $R_{1}=(-b -1, 0)$,  $R_{2}=(b, -1)$, $R_{3}=(-b, 2 b -1)$, $R_{4}=(-3 b, 2 b -1)$, $R_{5}=(-5 b, -1)$,  $R_{6}=(-5 b, -4 b -1)$, $R_{7}=(-b, -8 b -1)$, $S=(-5 b +1, 0)$,  $T_1=(-5 b, 0)$, $T_2=(-5 b -1, -4 b)$, $X_1=(-9 b -1, 0)$, $X_2=(-9 b -2, -8 b -1)$,  $X_3=(17b+2,-1)$, $Z_1=(0,-8b-1)$ and $Z_2=(8b,9b+1).$}\label{f:6}
\end{figure}

\begin{figure}[H]
\footnotesize
\centering
\begin{lpic}[l(2mm),r(2mm),t(2mm),b(2mm)]{am-cas-7(0.71)}
\lbl[l]{3,175; $\boxed{a=-1,\,-1/5< b\leq -1/8}$}
\lbl[r]{38,110; $R_1$}
\lbl[r]{13,75; $P_1$}
\lbl[r]{45,75; $X_3$}
\lbl[l]{20,95; $Z_2$}
\lbl[r]{62,75; $R_2$}
\lbl[l]{59,70; $Q$}
\lbl[r]{70,65; $R_3$}
\lbl[c]{118,17; $P_{2}$}
\lbl[l]{75,65; $R_{4}$}
\lbl[c]{175,65; $P_{3}$}
\lbl[l]{90,76; $R_{5}$}
\lbl[l]{90,110; $T_{1}$}
\lbl[r]{85,110; $X_{1}$}
\lbl[l]{122,110; $S$}
\lbl[c]{184,179; $P_{4}$}
\lbl[l]{90,97; $R_{6}$}
\lbl[c]{88,159; $P_{5}$}
\lbl[c]{58,130; $T_{2}$}
\lbl[l]{71,117; $R_{7}$}
\lbl[l]{40,117; $X_{2}$}
\lbl[c]{61,112; $Z_{1}$}
\lbl[r]{20,90; $Y_2$}
\lbl[l]{66,122; $Y_{1}$}
\lbl[c]{23,75; $T_3$}
\end{lpic}
\caption{The graph $\Gamma$ for $a=-1$ and $-1/5< b\leq -1/8$. Here $P_{1}=(-b -2, -1)$, $P_{2}=(b +2, -3)$, $P_{3}=(b +4, 2 b -1)$, $P_{4}=(-b +4, 4 b +3)$, $P_{5}=(-5 b, -4 b +1)$, $Q=(0, b -1)$,
$R_{1}=(-b -1, 0)$, $R_{2}=(b, -1)$, $R_{3}=(-b, 2 b -1)$,  $R_{4}=(-3 b, 2 b -1)$,  $R_{5}=(-5 b, -1)$, $R_{6}=(-5 b, -4 b -1)$, $R_{7}=(-b, -8 b -1)$, $S=(-5 b +1, 0)$,  $T_{1}=(-5 b, 0)$, $T_{2}=(-5 b -1, -4 b)$, $T_{3}=(9 b, -1)$, $X_{1}=(-9 b -1, 0)$, $X_{2}=(-9 b -2, -8 b -1)$, $X_{3}=(17 b +2, -1)$, $Y_{1}=(0, -9 b -1)$, $Y_{2}=(9 b, 10 b +1)$, $Z_{1}=(0, -8 b -1)$ and $Z_{2}=(8 b, 9 b +1)$.}\label{f:7}
\end{figure}

\newpage

\begin{figure}[H]
\footnotesize
\centering
\vspace{-3cm}
\begin{lpic}[l(2mm),r(2mm),t(2mm),b(2mm)]{am-cas-8(0.71)}
\lbl[l]{3,175; $\boxed{a=-1,\,-1/8< b\leq -1/9}$}

\lbl[r]{42,108; $R_1$}
\lbl[r]{20,72; $P_1$}

\lbl[r]{65,72; $R_2$}
\lbl[l]{67,75; $Q$}
\lbl[r]{70,65; $R_3$}
\lbl[c]{117,16; $P_2$}
\lbl[l]{75,65; $R_4$}
\lbl[c]{173,65; $P_3$}
\lbl[l]{85,76; $R_5$}
\lbl[l]{85,108; $T_1$}

\lbl[l]{71,107;  $X_1$}

\lbl[r]{71,110; { $Y_1$}}

\lbl[l]{117,106; $S$}
\lbl[c]{182,181; $P_4$}
\lbl[l]{85,90; $R_6$}
\lbl[c]{85,150; $P_5$}
\lbl[c]{52,122; $T_2$}

\lbl[r]{37,102; {\tiny $X_2$}}
\lbl[r]{46,99; $R_8$}
\lbl[l]{61,99; $Z_1$}
\lbl[l]{67,99; $R_7$}

\lbl[l]{37,95; $Z_2$}
\lbl[r]{32,97; $Y_2$}

\lbl[c]{35,72; $T_3$}
\end{lpic}
\begin{lpic}[l(2mm),r(2mm),t(2mm),b(2mm),figframe(0.2mm)]{am-cas-8-zoom(0.33)}
\lbl[c]{90,152; $T_2$}
\lbl[l]{140,108; $Y_1$}
\lbl[l]{150,97; $X_1$}
\lbl[r]{35,97; $R_1$}
\lbl[r]{11,70; $Y_2$}
\lbl[l]{35,69; $Z_2$}
\lbl[r]{27,83; $X_2$}
\lbl[l]{43,78; $R_8$}
\lbl[l]{128,78; $Z_1$}
\lbl[l]{146,78; $R_7$}

\end{lpic}
\caption{The graph $\Gamma$ for $a=-1$ and $-1/8< b\leq -1/9$. Here $P_1=(-b -2, -1),$ $P_2=(b+2,-3),$ $P_3=(b+4,2b-1),$ $P_4=(-b+4,4b+3),$ $P_5=(-5b, -4b+1)$, $Q=(0,b-1)$,
$R_1=(-b -1, 0)$, $R_2=(b,-1)$, $R_3=(-b,2b-1)$,
$R_4=(-3b,2b-1)$,  $R_5=(-5b, -1)$, $R_6=(-5b, -4b-1)$, $R_7=(-b,-8b-1)$, $R_8=(7b,-8b-1)$, 
$S=(-5b+1, 0),$  $T_1=(-5b, 0),$ $T_2=(-5b-1, -4b),$ $T_3=(9b,-1)$,  $X_1=(-9b-1,0)$,
$X_2=(-9b-2,-8b-1)$,  $Y_1=(0,-9b-1)$, $Y_2=(9b,10b+1),$     $Z_1=(0,-8b-1)$ and   $Z_2=(8b,-7b-1)$.  Below, a detailed view.}\label{f:8}
\end{figure}

\newpage

\begin{figure}[H]
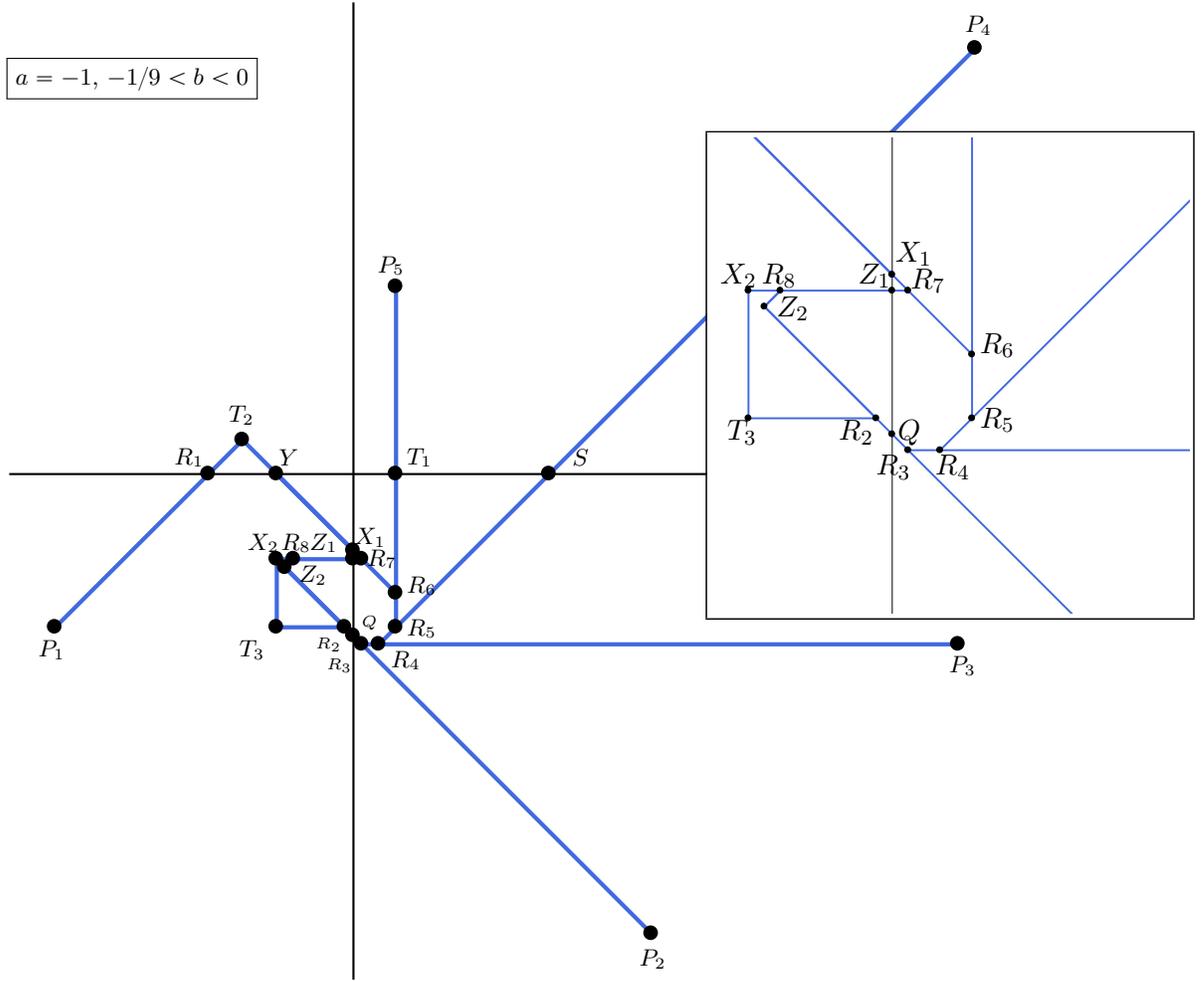

\footnotesize
\centering

\begin{lpic}[l(2mm),r(2mm),t(2mm),b(2mm)]{am-cas-9(0.71)}
\lbl[l]{3,175; $\boxed{a=-1,\,-1/9< b< 0}$}

\lbl[r]{40,104; $R_1$}
\lbl[l]{54,104; $Y$}
\lbl[l]{78,104; $T_1$}
\lbl[l]{109,104; $S$}

\lbl[c]{47,112; $T_2$}
\lbl[c]{185,185; $P_4$}
\lbl[c]{75,140; $P_5$}

\lbl[r]{54,88; { $X_2$}}
\lbl[r]{60,88; { $R_8$}}
\lbl[r]{65,88; { $Z_1$}}
\lbl[l]{69,85; { $R_7$}}
\lbl[r]{74,89;{ $X_1$}}

\lbl[r]{63,82; { $Z_2$}}

\lbl[l]{78,80; $R_6$}
\lbl[l]{78,72; $R_5$}

\lbl[r]{14,68; $P_{1}$}
\lbl[c]{49,68; $T_3$}

\lbl[c]{124,10; $P_{2}$}
\lbl[c]{182,65; $P_{3}$}

\lbl[l]{61,69; {\tiny $R_{2}$}}
\lbl[c]{71,73; {\tiny $Q$}}
\lbl[l]{63,65; {\tiny $R_{3}$}}

\lbl[l]{75,66; $R_{4}$}
\end{lpic}
\caption{The graph $\Gamma$ for $a=-1$ and $-1/9< b< 0$. Here $P_{1}=( -b-2,-1 )$,  $P_{2}=( b+2,-3 )$,  $P_{3}=( b+4,2b-1 )$,  $P_{4}=( -b+4,4b+3)$, $P_{5}=( -5b,-4b+1 )$, $Q=( 0,b-1)$, $R_{1}=( -b-1,0 )$, $R_2=(b,-1)$,  $R_{3}=( -b,2b-1 )$,  $R_{4}=( -3b,2b-1 )$,  $R_{5}=( -5b,-1)$,  $R_{6}=( -5b,-4b-1 )$, $R_7=(-b,-8b-1)$, $R_8=(7b,-8b-1)$, $S=(-5b+1,0)$, $T_{1}=( -5b,0)$,  $T_{2}=( -5b-1, -4b)$,    $T_{3}=( 9b,-1)$,
$X_1=(0,-9b-1)$, $X_2=(9b,-8b-1)$,  $Y=(-9b-1,0 )$, $Z_1=(0,-8b-1)$ and  $Z_2=(8b,-7b-1)$. Beside, a detailed view.}\label{f:9}
\end{figure}
\vspace{-16.5cm}\hspace{9.4cm}\begin{lpic}[l(2mm),r(2mm),t(2mm),b(2mm),figframe(0.2mm)]{am-cas-9-zoom(0.33)}

\lbl[r]{20,138; $X_2$}
\lbl[c]{29,138; $R_8$}
\lbl[l]{61,138; $Z_1$}
\lbl[l]{82,136; $R_7$}

\lbl[l]{72,147;{ $X_1$}}

\lbl[l]{28,125; $Z_2$}

\lbl[r]{67,75; {$R_2$}}
\lbl[l]{73,75; { $Q$}}
\lbl[r]{82,61; { $R_3$}}

\lbl[l]{92,61; $R_4$}

\lbl[r]{20,75; $T_3$}
\lbl[l]{110,80; $R_5$}

\lbl[l]{110,110; $R_6$}

\end{lpic}

\newpage

\begin{figure}[H]
\footnotesize
\centering
\begin{lpic}[l(2mm),r(2mm),t(2mm),b(2mm)]{am-cas-10(0.71)}
\lbl[l]{3,185; $\boxed{a=-1,\,0\leq b\leq 1/10}$}

\lbl[c]{52,143; $P_1$}
\lbl[r]{50,115; $Z_1$}
\lbl[l]{57,91; $Y_2$}

\lbl[r]{45,95; $Z_2$}
\lbl[c]{20,60; $P_2$}

\lbl[r]{64,76; $X_2$}
\lbl[r]{70,70; $p$}
\lbl[r]{74,64; $Q$}

\lbl[c]{126,10; $P_3$}
\lbl[c]{175,68; $P_4$}

\lbl[l]{104,113; $S$}
\lbl[l]{76,84; $Y_1$}

\lbl[l]{76,77; $X_1$}

\lbl[c]{168,184; $P_5$}
\end{lpic}

\caption{The graph $\Gamma$ for $a=-1$ and $0\leq b\leq 1/10$. Here $p=(-b,2b-1)$, $P_1=(-5b, -4b+1)$, $P_2=(9b-2,-1)$, $P_3=(-9b+2,10b-3)$, $P_4=(-19b+4,2b-1)$,  $P_5=(-21b+4,-16b+3)$, $Q=(0,b-1)$, $S=(-5b+1,0)$, $X_1=(0,2b-1)$, $X_2=(-2b,3b-1)$,  $Y_1=(0,5b-1)$, $Y_2=(-5b, 6b-1)$, $Z_1=(-5b,0)$ and $Z_2=(5b-1, -4b)$.	
}\label{f:10}
\end{figure}

\begin{figure}[H]
\footnotesize
\centering
\begin{lpic}[l(2mm),r(2mm),t(2mm),b(2mm)]{am-cas-11(0.71)}
\lbl[l]{3,185; $\boxed{a=-1,\,1/10< b\leq 1/7}$}

\lbl[c]{42,151; $P_1$}
\lbl[l]{45,122; $Z_1$}
\lbl[l]{45,104; $Y_2$}

\lbl[r]{50,92; $Z_2$}
\lbl[c]{23,58; $P_2$}

\lbl[r]{61,82; $X_2$}
\lbl[r]{66,75; $p$}
\lbl[r]{74,69; $Q$}

\lbl[c]{126,15; $P_3$}
\lbl[c]{168,70; $P_4$}

\lbl[l]{104,122; $S$}
\lbl[l]{78,95; $Y_1$}

\lbl[l]{78,80; $X_1$}

\lbl[c]{155,181; $P_5$}
\end{lpic}

\caption{The graph $\Gamma$ for $a=-1$ and $1/10< b\leq 1/7$. Here $p=(-b,2b-1)$, $P_1=(-5b, -4b+1)$, $P_2=(9b-2,-1)$, $P_3=(-9b+2,10b-3)$, $P_4=(-19b+4,2b-1)$, $P_5=(-21b+4,-16b+3)$, 
$Q=(0,b-1)$, $S=(-5b+1,0)$, $X_1=(0,2b-1)$,  
$X_2=(-2b,3b-1)$, $Y_1=(0,5b-1)$, $Y_2=(-5b, 6b-1)$, $Z_1=(-5b,0)$ and $Z_2=(5b-1, -4b)$.	}\label{f:11}
\end{figure}

\newpage

\begin{figure}[H]
\footnotesize
\centering
\begin{lpic}[l(2mm),r(2mm),t(2mm),b(2mm)]{am-cas-12(0.71)}
\lbl[l]{3,185; $\boxed{a=-1,\,1/7< b\leq 1/6}$}

\lbl[c]{25,169; $P_1$}
\lbl[l]{28,138; $Z_1$}
\lbl[l]{28,130; $Y_2$}

\lbl[l]{72,88; $Z_2$}
\lbl[c]{38,50; $P_2$}

\lbl[r]{61,89; $X_2$}
\lbl[r]{75,75; $p$}
\lbl[r]{85,65; $Q$}

\lbl[c]{135,15; $P_3$}
\lbl[c]{171,74; $P_4$}

\lbl[l]{109,138; $S$}
\lbl[l]{91,117; $Y_1$}

\lbl[l]{91,83; $X_1$}

\lbl[c]{150,181; $P_5$}
\end{lpic}

\caption{The graph $\Gamma$ for $a=-1$ and $1/7< b\leq 1/6$.  Here $p=(-b,2b-1)$, $P_1=(-5b, -4b+1)$, $P_2=(9b-2,-1)$, $P_3=(-9b+2,10b-3)$,
$P_4=(-19b+4,2b-1)$, $P_5=(-21b+4,-16b+3)$, $Q=(0,b-1)$, $S=(-5b+1,0)$, $X_1=(0,2b-1)$, $,X_2=(-2b,3b-1)$, $Y_1=(0,5b-1)$, $Y_2=(-5b, 6b-1)$, $Z_1=(-5b,0)$ and $Z_2=(5b-1, -4b)$.
}\label{f:12}
\end{figure}

\newpage

\begin{figure}[H]
\footnotesize
\centering

\begin{lpic}[l(2mm),r(2mm),t(2mm),b(2mm)]{am-cas-13(0.71)}
\lbl[l]{3,185; $\boxed{a=-1,\,1/6< b\leq 3/16}$}

\lbl[c]{21,171; $P_1$}
\lbl[l]{23,149; $Y_2$}
\lbl[l]{30,143; $Z$}

\lbl[c]{65,43; $P_2$}

\lbl[r]{67,95; $X_2$}
\lbl[r]{82,80; $p$}
\lbl[r]{100,65; $Q$}

\lbl[c]{139,22; $P_3$}
\lbl[c]{162,75; $P_4$}

\lbl[l]{106,143; $S$}
\lbl[r]{100,130; $Y_1$}

\lbl[r]{100,85; $X_1$}

\lbl[c]{130,160; $P_5$}

\lbl[r]{82,70; $Y_3$}
\end{lpic}

\caption{The graph $\Gamma$ for $a=-1$ and $1/6< b\leq 3/16$.  Here $p=(-b,2b-1)$, $P_1=(-5b, -4b+1)$, $P_2=(9b-2,-1)$, $P_3=(-9b+2,10b-3)$,
$P_4=(-19b+4,2b-1)$, $P_5=(-21b+4,-16b+3)$, $Q=(0,b-1)$, $S=(-5b+1,0)$, $X_1=(0,2b-1)$, $X_2=(-2b,3b-1)$, $Y_1=(0,5b-1)$, $Y_2=(-5b, 6b-1)$, $Y_3=(-b,-10b+1)$ and $Z=(b-1,0)$.	
}\label{f:13}
\end{figure}

\vspace{1.3cm}

\footnotesize{$\boxed{a=-1,\,3/16< b<4/15}$}

\vspace{0.2cm}

\begin{figure}[H]
\caption{In the case $a=-1$ and $3/16< b<4/15$ the graph $\Gamma$ reduces to the fixed  point  $(-b,2b-1)$ and there is no need to plot the figure.}\label{f:14}
\end{figure}

%
%
%
%
%

\newpage

\begin{figure}[H]
\footnotesize
\centering

\begin{lpic}[l(2mm),r(2mm),t(2mm),b(2mm)]{am-cas-17-new(0.71)}
\lbl[l]{3,185; $\boxed{a=-1,\,4/15\le b\leq 2/7}$}

\lbl[r]{103,90; $p$}

\lbl[r]{124,68; $Q$}

\lbl[l]{156,28; $P_2$}

\lbl[l]{129,85; $X_1$}

\lbl[l]{171,85; $P_3$}

\lbl[l]{87,112; $X_2$}

\lbl[r]{97,130; $Y_1$}

\lbl[l]{129,155; $S$}

\lbl[l]{131,167; $P_4$}

\lbl[c]{18,113; $P_1$}
\lbl[r]{77,113; $Y_2$}

\end{lpic}

\caption{The graph $\Gamma$ for $a=-1$ and $4/15< b\leq 2/7$. Here  $p=(-b,2b-1)$, $P_1=(-5b,-4b+1)$, $P_2=(9b-2,-8b+1)$, $P_3=(17b-4,2b-1)$, $P_4=(15b-4,20b-5)$, $Q=(0,b-1)$, $S=(0,5b-1)$,
$X_1=(0,2b-1)$, $X_2=(-2b,3b-1)$, $Y_1=(-5b+1,0)$ and $Y_2=(5b-2,-4b+1)$.}\label{f:17}
\end{figure}

\newpage

\begin{figure}[H]
\footnotesize
\centering

\begin{lpic}[l(2mm),r(2mm),t(2mm),b(2mm)]{am-cas-18-new(0.71)}
\lbl[l]{3,185; $\boxed{a=-1,\,2/7< b\leq 1/3}$}

\lbl[r]{87,80; $p$}

\lbl[r]{112,66; $Q$}

\lbl[c]{154,15; $P_2$}

\lbl[l]{108,87; $X_1$}

\lbl[l]{174,87; $P_3$}

\lbl[r]{67,102; $X_2$}

\lbl[r]{72,110; $Y_1$}

\lbl[l]{110,139; $S$}

\lbl[c]{145,180; $P_4$}

\lbl[c]{15,87; $P_1$}
\lbl[r]{83,87; $Y_2$}

\end{lpic}

\caption{The graph $\Gamma$ for $a=-1$ and $2/7< b\leq 1/3$. Here  $p=(-b,2b-1)$, $P_1=(-5b,-4b+1)$, $P_2=(9b-2,-8b+1)$, $P_3=(17b-4,2b-1)$, $P_4=(15b-4,20b-5)$, $Q=(0,b-1)$, $S=(0,5b-1)$, 
$X_1=(0,2b-1)$, $X_2=(-2b,3b-1)$, $Y_1=(-5b+1,0)$ and $Y_2=(5b-2,-4b+1)$.}\label{f:18}
\end{figure}

\newpage

\begin{figure}[H]
\footnotesize
\centering

\begin{lpic}[l(2mm),r(2mm),t(2mm),b(2mm)]{am-cas-19(0.71)}
\lbl[l]{3,185; $\boxed{a=-1,\,1/3< b\leq 1/2}$}

\lbl[r]{66,76; $p$}

\lbl[l]{86,64; $Q$}

\lbl[l]{140,9; $P_2$}

\lbl[l]{86,72; $X_1$}

\lbl[l]{175,72; $P_3$}

\lbl[r]{53,90; $X_2$}

\lbl[r]{70,86; $Y$}

\lbl[l]{86,116; $S$}

\lbl[l]{155,187; $P_4$}

\lbl[c]{18,57; $P_1$}
\lbl[r]{75,57; $X_3$}

\end{lpic}

\caption{The graph $\Gamma$ for $a=-1$ and $1/3< b\leq 1/2$. Here  $p=(-b,2b-1)$, $P_1=(-5b,-4b+1)$, $P_2=(9b-2,-8b+1)$, $P_3=(17b-4,2b-1)$, $P_4=(15b-4,20b-5)$, $Q=(0,b-1)$, $S=(0,5b-1)$, 
$X_1=(0,2b-1)$, $X_2=(-2b,3b-1)$, $X_3=(-b,-4b+1)$ and $Y=(-5b+1,0)$.}\label{f:19}
\end{figure}

\newpage

\begin{figure}[H]
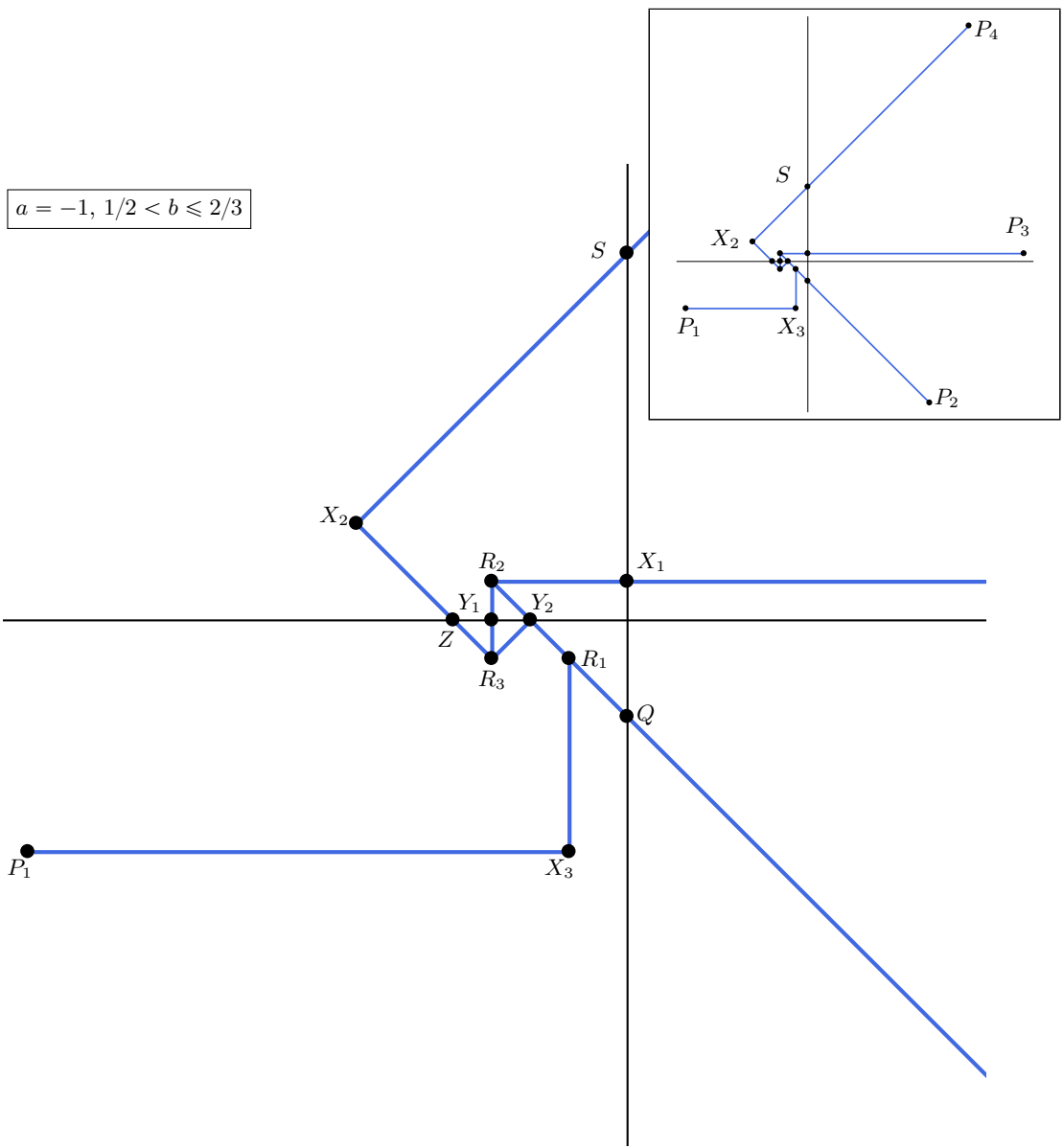

\footnotesize
\centering

\begin{lpic}[l(2mm),r(2mm),t(2mm),b(2mm)]{am-cas-20-zoom-new(0.71)}
\lbl[l]{3,185; $\boxed{a=-1,\,1/2< b\leq 2/3}$}

\lbl[l]{117,177; $S$}
\lbl[l]{64,125; $X_2$}
\lbl[l]{87,101; $Z$}
\lbl[l]{95,93; $R_3$}
\lbl[l]{95,116; $R_2$}
\lbl[l]{91,108; $Y_1$}
\lbl[r]{110,108; $Y_2$}
\lbl[l]{115,97; $R_1$}
\lbl[l]{126,86; $Q$}
\lbl[l]{126,116; $X_1$}
\lbl[l]{108,56; $X_3$}
\lbl[l]{3,56; $P_{1}$}

\end{lpic}
\caption{Detail of the the graph $\Gamma$ for $a=-1$ and $1/2< b\leq 2/3$. Here $P_{1}=(-b -2, -1)$, $P_{2}=(b +2, -3)$, $P_{3}=(b +4, 2 b -1)$, $P_{4}=(-b +4, 5)$, $Q=(0, b -1)$, $R_1=(3 b -2, -2 b +1)$, $R_2=(-b, 2 b -1)$, $R_3=(-b, -2 b +1)$, $S=(0, b +1)$, $X_1=(0, 2 b -1)$,
$X_2=(-2 b, -b +1)$,  $X_3=(3 b -2, -1)$, $Y_1=(-b,0)$,
$Y_2=(b -1, 0)$ and 
 $Z=(-3 b +1, 0)$.    Beside, larger view.}\label{f:20}
\end{figure}
\vspace{-19,5cm}\hspace{9cm}\begin{lpic}[l(2mm),r(2mm),t(2mm),b(2mm),figframe(0.2mm)]{am-cas-20-new(0.29)}

\lbl[l]{60,117; $S$}
\lbl[l]{29,86; $X_{2}$}

\lbl[r]{75,44; $X_{3}$}

\lbl[r]{25,44; $P_{1}$}
\lbl[l]{136,10; $P_{2}$}
\lbl[l]{170,92; $P_{3}$}
\lbl[l]{155,186; $P_{4}$}
\end{lpic}

\newpage

\begin{figure}[H]
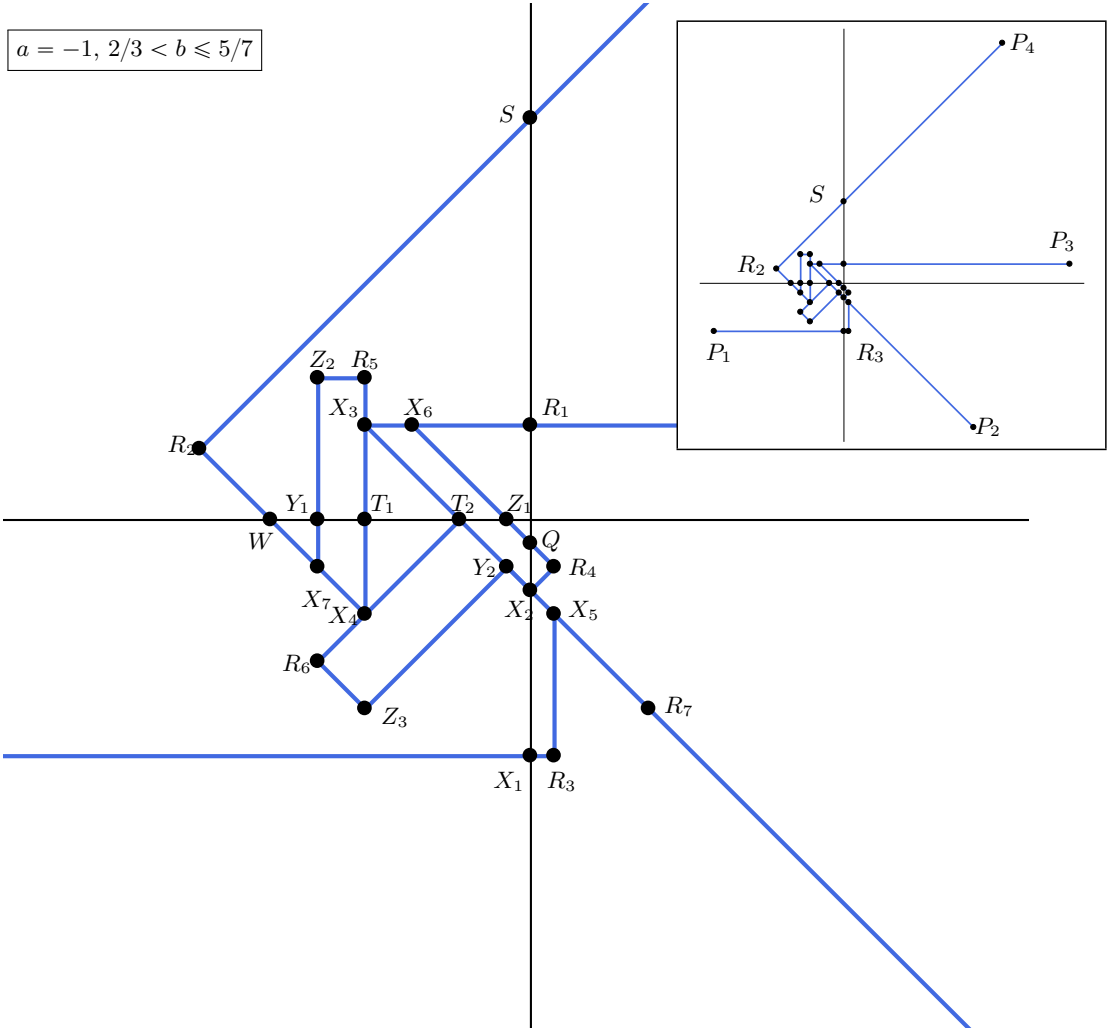

\footnotesize
\centering

\begin{lpic}[l(2mm),r(2mm),t(2mm),b(2mm)]{am-cas-A-zoom-v3b(0.71)}
\lbl[l]{3,185; $\boxed{a=-1,\,2/3< b\leq 5/7}$}

\lbl[l]{95,173; $S$}
\lbl[l]{33,111; $R_2$}
\lbl[l]{48,93; $W$}
\lbl[l]{58,82; $X_7$}
\lbl[l]{63,79; $X_4$}
\lbl[r]{60,70; $R_6$}
\lbl[l]{73,60; $Z_3$}
\lbl[l]{90,88; $Y_2$}
\lbl[l]{96,80; $X_2$}
\lbl[l]{108,80; $X_5$}
\lbl[l]{104,48; $R_3$}
\lbl[l]{94,48; $X_1$}
\lbl[l]{108,88; $R_4$}
\lbl[l]{103,93; $Q$}
\lbl[c]{99,100; $Z_1$}
\lbl[c]{80,118; $X_6$}
\lbl[c]{66,118; $X_3$}
\lbl[c]{70,127; $R_5$}
\lbl[c]{62,127; $Z_2$}
\lbl[l]{55,100; $Y_1$}
\lbl[l]{103,118; $R_1$}
\lbl[l]{71,100; $T_1$}
\lbl[l]{86,100; $T_2$}
\lbl[l]{126,62; $R_7$}

\end{lpic}

\caption{Detail of the the graph $\Gamma$ for $a=-1$ and $2/3< b\leq 5/7$. Here $P_{1}=(-b -2, -1)$, $P_{2}=(b +2, -3)$, $P_{3}=(b +4, 2 b -1
)$, $P_{4}=(-b +4, 5)$, $Q=(0, 7 b -5)$, $R_1=(0, 2 b -1)$, $R_2=(-2 b, 1-b)$, $R_3=(3 b -2, -1)$,   $R_4=(3 b -2, 4 b -3)$,  $R_5=(-b
, 8 b -5)$, $R_6=(-7 b +4, -8 b +5)$,  $R_7=(15b-10,-14b+9)$, $S=(0, b +1)$, $T_1=(-b, 0)$, $T_2=(b -1, 0)$, $W=(1-3 b, 0)$, 
$X_1=(0, -1)$,
$X_2=(0, b -1)$, $X_3=(-b, 2 b -1)$,
$X_4=(-b, 1-2 b)$, $X_5=(3 b -2
, 1-2 b)$, $X_6=(5 b -4, 2 b -1)$, $X_7=(-7 b +4, 4 b -3)$, $Y_1=(-7 b +4, 0)$, $Y_2=(7 b -5, -6 b +4)$,   $Z_1=(7 b -5, 0)$, $Z_{2}=(-7 b +4, 8 b -5)$ and
$Z_3=(-b, 9-14 b)$.  Beside, larger view. }\label{f:A}
\end{figure}
\vspace{-19,5cm}\hspace{9cm}\begin{lpic}[l(2mm),r(2mm),t(2mm),b(2mm),figframe(0.2mm)]{am-cas-A(0.29)}

\lbl[l]{60,117; $S$}
\lbl[l]{27,84; $R_2$}

\lbl[r]{95,44; $R_3$}

\lbl[r]{25,44; $P_{1}$}
\lbl[l]{136,10; $P_{2}$}
\lbl[l]{170,95; $P_{3}$}

\lbl[l]{152,186; $P_{4}$}
\end{lpic}

\newpage

\begin{figure}[H]
\footnotesize
\centering

\begin{lpic}[l(2mm),r(2mm),t(2mm),b(2mm)]{am-cas-B-zoom-new(0.71)}
\lbl[l]{3,185; $\boxed{a=-1,\,5/7< b\leq 19/26}$}

\lbl[l]{111,164; $S$}
\lbl[l]{70,125; $Y_2$}
\lbl[c]{63,115; $Z_2$}
\lbl[l]{44,99; $R_2$}
\lbl[l]{56,88; $Q$}
\lbl[r]{65,80; $X_7$}
\lbl[l]{69,72; $T_4$}
\lbl[c]{79,65; $X_4$}
\lbl[c]{62,49; $R_6$}
\lbl[l]{72,36; $Y_3$}

\lbl[l]{93,36; $Z_3$}
\lbl[l]{110,36; $X_1$}
\lbl[l]{123,36; $R_3$}

\lbl[l]{128,66; $X_5$}
\lbl[c]{126,72; $T_2$}
\lbl[c]{111,64; $W_1$}
\lbl[c]{111,74; $X_2$}
\lbl[c]{105,80; $\Pi_2$}
\lbl[c]{131,82; $R_4$}
\lbl[c]{126,89; $Y_1$}
\lbl[c]{121,93; $Z_1$}

\lbl[c]{102,110; $X_6$}
\lbl[c]{94,110; $T_3$}
\lbl[c]{80,110; $X_3$}

\lbl[c]{121,110; $R_1$}

\lbl[c]{99,97; $W_2$}
\lbl[c]{89,89; $\Pi_1$}
\lbl[c]{89,82; $W_3$}
\lbl[c]{72,89; $T_1$}
\lbl[c]{89,125; $R_5$}

\end{lpic}

\caption{Detail of the the graph $\Gamma$ for $a=-1$ and $5/7< b\leq 19/26$. Here 
$P_{1}=(-b -2, -1)$, $P_{2}=(b +2, -3)$, $P_3=(b +4, 2 b -1)$,  $P_{4}=(-b +4, 5)$, $Q=(1-3 b
, 0)$, $R_1=(0, 2 b -1)$, $R_2=(-2 b, 1-b)$, $R_3=(3 b -2, -1)$, $R_4=(3 b -2, 4 b -3)$, $R_5=(-b, 8 b -5)$, $R_6=(-7 b +4, 
-8 b +5)$, $S=(0, b +1)$, $T_1=(-7 b +4, 0)$, 
$T_2=(7 b -5, -6 b +4)$, $T_3=(13 b -10, 2 b -1)$, $T_4=(-15 b +10, 12 b -9)$, 
$W_1=(0, -13 b +9)$, 
$W_2=(13 b -10, -12 b +9)$, $W_3=(-b, 26 b -19)$, 
$X_1=(0, -1)$, $X_2=(0, b -1)$, $X_3=(-b, 2 b -1)$, $X_4=(-b, 1-2 b)$, $X_5=(3 b -2, 1-2 b)$, $X_6=(5 b -4, 2 b -1)$, 
$X_7=(-7 b +4, 4 b -3)$, 
$Y_1=(7 b -5, 0)$, 
$Y_{2}=(7 b -6, 8 b -5)$, $Y_3=(-15 b +10, -1)$, 
$Z_1=(0, 7 b -5)$, 
$Z_2=(-7 b +4, -6 b +5)$, $Z_3=(13 b -10, -1)$, 
$\Pi_1=(-b, 0)$ and $\Pi_2=(b -1, 0)$. Beside, larger view.}\label{f:B}
\end{figure}
\vspace{-19cm}\hspace{9.3cm}\begin{lpic}[l(2mm),r(2mm),t(2mm),b(2mm),figframe(0.2mm)]{am-cas-B-new(0.29)}

\lbl[l]{60,117; $S$}
\lbl[l]{27,84; $R_2$}

\lbl[r]{95,44; $R_3$}


\lbl[r]{25,44; $P_{1}$}
\lbl[l]{138,10; $P_{2}$}
\lbl[l]{170,95; $P_{3}$}

\lbl[l]{151,186; $P_{4}$}
\end{lpic}

\newpage

\begin{figure}[H]
\footnotesize
\centering

\begin{lpic}[l(2mm),r(2mm),t(2mm),b(2mm)]{am-cas-C-zoom-new(0.71)}
\lbl[l]{3,185; $\boxed{a=-1,\,19/26< b\leq 20/27}$}

\lbl[l]{114,174; $S$}
\lbl[l]{77,137; $Y_2$}
\lbl[c]{66,124; $Z_2$}
\lbl[l]{49,108; $R_2$}
\lbl[l]{60,97; $Q$}
\lbl[r]{69,92; $X_7$}
\lbl[l]{68,88; $T_4$}
\lbl[c]{84,74; $X_4$}
\lbl[c]{66,56; $R_6$}
\lbl[l]{72,46; $Y_3$}

\lbl[l]{99,46; $Z_3$}
\lbl[l]{114,46; $X_1$}
\lbl[l]{128,46; $R_3$}

\lbl[l]{132,76; $X_5$}
\lbl[c]{132,81; $T_2$}
\lbl[c]{116,69; $W_1$}
\lbl[c]{117,81; $X_2$}
\lbl[c]{111,98; $\Pi_2$}
\lbl[c]{126,92; $R_4$}
\lbl[c]{132,98; $Y_1$}
\lbl[c]{126,103; $Z_1$}

\lbl[c]{107,120; $X_6$}
\lbl[c]{99,120; $T_3$}
\lbl[c]{85,120; $X_3$}

\lbl[c]{117,120; $R_1$}

\lbl[c]{99,100; $W_2$}
\lbl[c]{93,97; $\Pi_1$}
\lbl[c]{93,104; $W_3$}
\lbl[c]{74,97; $T_1$}
\lbl[c]{93,137; $R_5$}


\end{lpic}

\caption{Detail of the the graph $\Gamma$ for $a=-1$ and $19/26< b\leq 20/27$. Here 
$P_{1}=(-b -2, -1)$, $P_{2}=(b +2, -3)$, $P_3=(b +4, 2 b -1)$, $P_{4}=(-b +4, 5)$, 
$Q=(1-3 b, 0)$, $R_1=(0, 2 b -1)$, $R_2=(-2 b, 1-b)$, $R_3=(3 b -2, -1)$, $R_4=(3 b -2, 4 b -3)$, $R_5=(-b, 8 b -5)$, $R_6=(-7 b +4,-8 b +5)$, $S=(0, b +1)$, 
$T_1=(-7 b +4, 0)$, $T_2=(7 b -5, -6 b +4)$, $T_3=(13 b -10, 2 b -1)$, $T_4=(-15 b +10, 12 b -9)$, $W_1=(0, -13 b +9)$, 	$W_2=(13 b -10, -12 b +9)$, $W_3=(-b, 26 b -19)$, 
	$X_1=(0, -1)$, $X_2=(0, b -1)$, 	$X_3=(-b, 2 b -1)$, $X_4=(-b, 1-2 b)$, $X_5=(3 b -2, 1-2 b)$, $X_6=(5 b -4, 2 b -1)$, $X_7=(-7 b +4, 4 b -3)$, 
$Y_1=(7 b -5, 0)$, $Y_{2}=(7 b -6, 8 b -5)$,
$Y_3=(-15 b +10, -1)$, 
$Z_1=(0, 7 b -5)$, $Z_2=(-7 b +4, -6 b +5)$, $Z_3=(13 b -10, -1)$, $\Pi_1=(-b, 0)$ and 
$\Pi_2=(b -1, 0)$. Beside, larger view.}\label{f:C}
\end{figure}
\vspace{-20.5cm}\hspace{9.5cm}\begin{lpic}[l(2mm),r(2mm),t(2mm),b(2mm),figframe(0.2mm)]{am-cas-C-new(0.29)}

\lbl[l]{60,117; $S$}
\lbl[l]{27,84; $R_2$}

\lbl[r]{95,44; $R_3$}


\lbl[r]{25,44; $P_{1}$}
\lbl[l]{138,10; $P_{2}$}
\lbl[l]{170,95; $P_{3}$}
\lbl[l]{151,186; $P_{4}$}
\end{lpic}

\newpage

\begin{figure}[H]
\footnotesize
\centering

\begin{lpic}[l(2mm),r(2mm),t(2mm),b(2mm)]{am-cas-D-zoom-new(0.71)}
\lbl[l]{3,185; $\boxed{a=-1,\,20/27< b\leq 3/4 }$}

\lbl[l]{111,167; $S$}
\lbl[l]{76,132; $Y_2$}
\lbl[c]{59,113; $Z_2$}
\lbl[l]{45,101; $R_2$}
\lbl[l]{55,91; $Q$}
\lbl[r]{63,85; $X_7$}
\lbl[l]{63,82; $T_4$}
\lbl[c]{79,66; $X_4$}
\lbl[c]{61,48; $R_6$}
\lbl[l]{67,40; $Y_3$}

\lbl[l]{99,40; $Z_3$}
\lbl[l]{114,40; $X_1$}
\lbl[l]{128,40; $R_3$}

\lbl[l]{132,66; $X_5$}
\lbl[c]{132,71; $T_2$}
\lbl[c]{113,59; $W_1$}
\lbl[c]{113,78; $X_2$}
\lbl[c]{111,91; $\Pi_2$}
\lbl[c]{133,85; $R_4$}
\lbl[c]{131,92; $Y_1$}
\lbl[c]{123,99; $Z_1$}

\lbl[c]{109,114; $X_6$}
\lbl[c]{102,114; $T_3$}
\lbl[c]{89,114; $X_3$}

\lbl[c]{123,114; $R_1$}

\lbl[c]{99,91; $W_2$}
\lbl[c]{90,91; $\Pi_1$}
\lbl[c]{80,105; $W_3$}
\lbl[c]{67,92; $T_1$}
\lbl[c]{91,132; $R_5$}

\end{lpic}

\caption{Detail of the the graph $\Gamma$ for $a=-1$ and $20/27< b\leq 3/4$. Here $P_{1}=(-b -2, -1)$, $P_{2}=(b +2, -3)$, $P_3=(b +4, 2 b -1)$, $P_{4}=(-b +4, 5)$, $Q=(1-3 b, 0)$, 
$R_1=(0, 2 b -1)$,
$R_2=(-2 b, 1-b)$, 
$R_3=(3 b -2, -1)$,
$R_4=(3 b -2, 4 b -3)$, 
$R_5=(-b, 8 b -5)$, 
$R_6=(-7 b +4, -8 b +5)$, 
$S=(0, b +1)$, 
$T_1=(-7 b +4, 0)$, $T_2=(7 b -5, -6 b +4)$, 
$T_3=(13 b -10, 2 b -1)$, $T_4=(-15 b +10, 12 b -9)$, $W_1=(0, -13 b +9)$, $W_2=(13 b -10, -12 b +9)$, $W_3=(-b, 26 b -19)$, $X_1=(0, -1)$, $X_2=(0, b -1)$, $X_3=(-b, 2 b -1)$, 	
$X_4=(-b, 1-2 b)$, $X_5=(3 b -2, 1-2 b)$, 	$X_6=(5 b -4, 2 b -1)$, $X_7=(-7 b +4, 4 b -3)$, $Y_1=(7 b -5, 0)$, $Y_{2}=(7 b -6, 8 b -5)$, $Y_3=(-15 b +10, -1)$, $Z_1=(0, 7 b -5)$, 
$Z_2=(-7 b +4, -6 b +5)$, $Z_3=(13 b -10, -1)$,
$\Pi_1=(-b, 0)$ and 	$\Pi_2=(b -1, 0)$.
	Beside, larger view.}\label{f:D}
\end{figure}
\vspace{-20.5cm}\hspace{9.5cm}\begin{lpic}[l(2mm),r(2mm),t(2mm),b(2mm),figframe(0.2mm)]{am-cas-D-new(0.29)}

\lbl[l]{60,117; $S$}
\lbl[l]{27,84; $R_2$}

\lbl[r]{95,44; $R_3$}


\lbl[r]{25,44; $P_{1}$}
\lbl[l]{138,10; $P_{2}$}
\lbl[l]{170,95; $P_{3}$}

\lbl[l]{151,186; $P_{4}$}
\end{lpic}

\newpage

\begin{figure}[H]
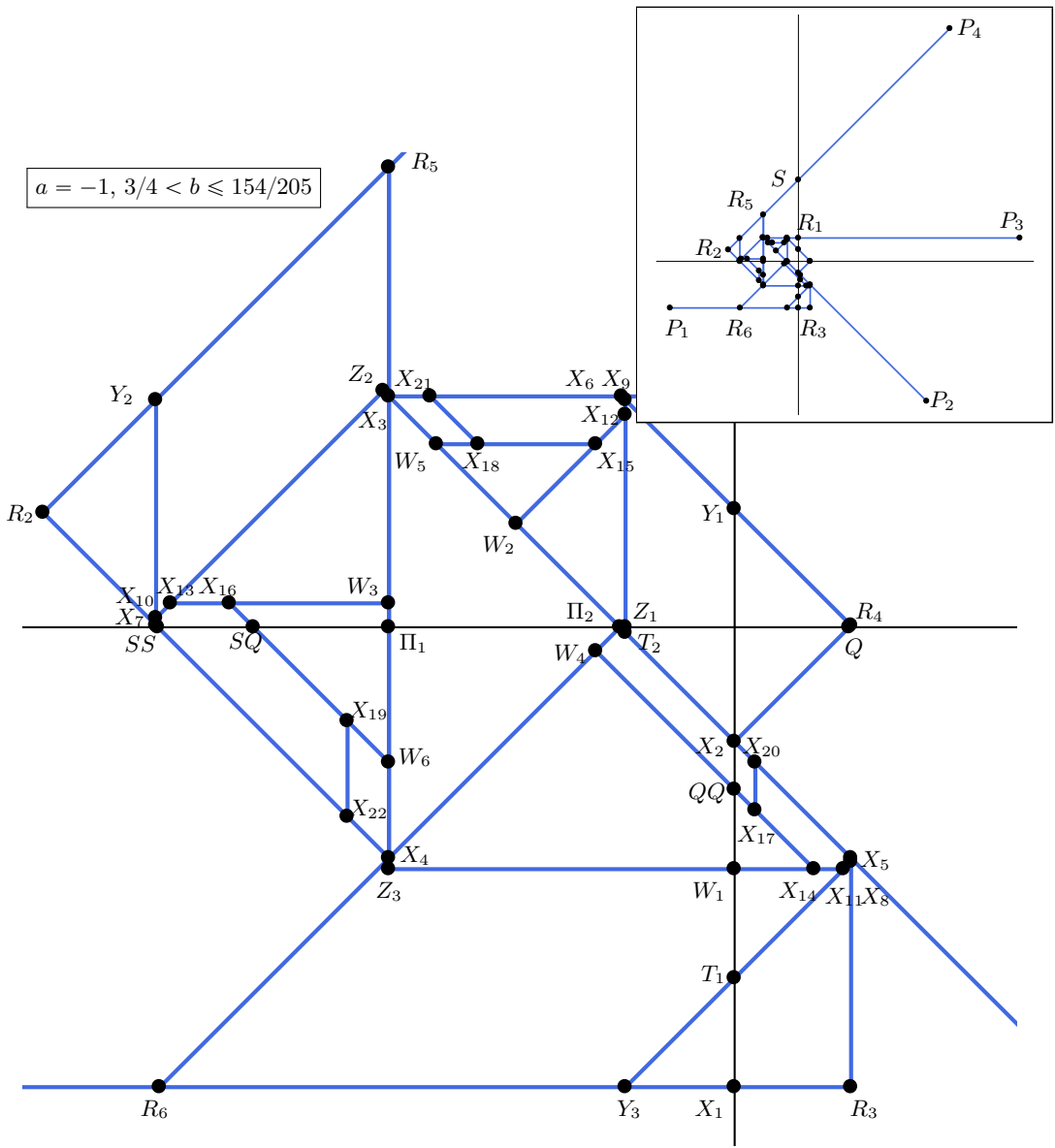

	
\footnotesize
\centering
\vspace{-1.5cm}
\begin{lpic}[l(2mm),r(2mm),t(2mm),b(2mm)]{am-cas-E0b(0.71)}
\lbl[l]{3,187; $\boxed{a=-1,\,3/4<  b\leq 154/205 }$}

\lbl[l]{-1,124; $R_{2}$}
\lbl[c]{78,58; $X_{4}$}
\lbl[l]{25,9; $R_{6}$}
\lbl[l]{162,9; $R_{3}$}
\lbl[l]{164,57; $X_{5}$}
\lbl[l]{65,151; $Z_{2}$}
\lbl[l]{117,9; $Y_{3}$}
\lbl[l]{164,50; $X_{8}$}
\lbl[c]{161,50; $X_{11}$}
\lbl[c]{73,51; $Z_{3}$}
\lbl[c]{80,192; $R_{5}$}
\lbl[c]{108,97; $W_{4}$}
\lbl[c]{152,51; $X_{14}$}
\lbl[c]{145,79; $X_{20}$}
\lbl[c]{144,62; $X_{17}$}
\lbl[c]{110,105; $\Pi_2$}
\lbl[c]{24,108; $X_{10}$}
\lbl[l]{19,147; $Y_{2}$}
\lbl[c]{23,104; $X_{7}$}

\lbl[l]{28,110; $X_{13}$}
\lbl[c]{68,110; $W_{3}$}
\lbl[c]{40,110; $X_{16}$}
\lbl[c]{78,77; $W_{6}$}
\lbl[c]{69,86; $X_{19}$}
\lbl[c]{69,67; $X_{22}$}
\lbl[l]{67,142; $X_{3}$}
\lbl[l]{130,152; $R_1$}
\lbl[l]{107,150; $X_6$}
\lbl[l]{163,105; $R_{4}$}
\lbl[5]{135,79; $X_2$}
\lbl[l]{121,99; $T_2$}
\lbl[l]{114,150; $X_9$}
\lbl[l]{110,143; $X_{12}$}
\lbl[l]{91,118; $W_2$}
\lbl[l]{113,134; $X_{15}$}
\lbl[l]{74,134; $W_{5}$}
\lbl[l]{87,134; $X_{18}$}
\lbl[l]{74,150; $X_{21}$}
\lbl[c]{25,99; $SS$}
\lbl[l]{75,99; $\Pi_{1}$}
\lbl[r]{138,35; $T_{1}$}
\lbl[r]{138,51; $W_1$}
\lbl[r]{138,124; $Y_1$}
\lbl[l]{161,98; $Q$}
\lbl[l]{120,105; $Z_{1}$}
\lbl[l]{132,9; $X_{1}$}

\lbl[r]{138,70; $QQ$}
\lbl[l]{42,99; $SQ$}
\end{lpic}
\caption{The graph $\Gamma$ for $a=-1$ and $3/4<  b\leq 154/205$. Here 
$P_{1}=(-b-2, -1)$, 
$P_{2}=(b +2, -3)$, 
$P_{3}=(b +4, 2 b -1)$, 
$P_{4}=(-b +4, 5)$, 
$Q=(-b+1, 0)$, 
$R_1=(0, 2 b -1)$, 
$R_{2}=(-2 b, -b+1)$, 
$R_{3}=(3 b -2, -1)$, 
$R_{4}=(3 b -2, 4 b -3)$, 
$R_{5}=(-b, 1)$, 
$R_{6}=(b -2, -1)$, 
$S=(0, b +1)$, 
$T_{1}=(0, -13b+9)$, 
$T_{2}=(13 b -10, -12 b+9)$, 
$W_1=(0, -26 b+19)$, 
$W_{2}=(26 b -20, -25 b +19)$,
$W_{3}=(-b, 52 b -39)$, 
$W_{4}=(-51 b +38, -52 b +39)$, 
$W_{5}=(103 b -78, -102 b +77)$, 
$W_{6}=(-b, 206 b -155)$, 
$X_1=(0,-1)$,
$X_{2}=(0, b -1)$, 
$X_{3}=(-b, 2 b -1)$, 
$X_{4}=(-b, -2 b+1)$, 
$X_5=(3 b -2, -2 b +1)$,
$X_6=(5 b -4, 2 b -1)$, 
$X_{7}=(-7 b+4, 4 b -3)$, 
$X_{8}=(3 b -2, -10b+7)$, 
$X_{9}=(13 b -10, -6 b+5)$, 
$X_{10}=(-7 b+4, 20 b -15)$, 
$X_{11}=(10-13 b, -26 b+19)$, 
$X_{12}=(13 b -10, -38 b+29)$, 
$X_{13}=(25 b -20, 52 b -39)$, 
$X_{14}=(-77 b +58, -26 b+19)$, 
$X_{15}=(-51 b +38, -102 b +77)$, 
$X_{16}=(153 b -116, 52 b -39)$
$X_{17}=(-205 b +154, 102 b -77)$, 
$X_{18}=(-307 b +230, -102 b +77)$, 
$X_{19}=(409 b -308, -204 b +153)$, 
$X_{20}=(-205 b +154, 206 b -155)$, 
$X_{21}=(-411 b +308, 2 b -1)$, 
$X_{22}=(409 b -308, -412 b +309)$, 
$Y_{1}=(0, 7 b -5)$, 
$Y_{2}=(-7 b+4, -6 b+5)$, 
$Y_{3}=(13 b -10, -1)$, 
$Z_{1}=(13b-10,0)$,
$Z_{2}=(-13b+9, 14 b -10)$, 
$Z_{3}=(-b, -26 b+19)$, 
$\Pi_{1}=(-b, 0)$, 
$\Pi_{2}=(b -1, 0)$, 
$QQ=(0,-103b+77)$,
$SQ=(205b-155,0)$ and 
$SS=(-3 b +1, 0)$.
}\label{f:E}
~\vspace{-25.8cm}

\hspace{4.2cm}$\phantom{espaiespaiespaiespaiespaiespaie}$
{\begin{lpic}[l(2mm),r(2mm),t(2mm),b(2mm),figframe(0.2mm)]{am-cas-E(0.29)}

\lbl[l]{63,115; $S$}
\lbl[l]{27,82; $R_2$}

\lbl[r]{55,44; $R_6$}
\lbl[r]{90,44; $R_3$}

\lbl[l]{42,105; $R_5$}

\lbl[l]{75,94; $R_{1}$}


\lbl[r]{25,44; $P_{1}$}
\lbl[l]{138,10; $P_{2}$}
\lbl[l]{171,95; $P_{3}$}
\lbl[l]{151,186; $P_{4}$}
\end{lpic}}

\end{figure}

\newpage

\begin{figure}[H]
\vspace{-0.5cm}
\footnotesize
\centering
\begin{lpic}[l(2mm),r(2mm),t(2mm),b(2mm)]{am-cas-F0b(0.71)}
\lbl[l]{2,194; $\boxed{a=-1,\,154/205<  b\leq 155/206 }$}

\lbl[l]{0,129; $R_{2}$}
\lbl[c]{75,58; $X_4$}
\lbl[l]{24,11; $R_6$}
\lbl[l]{158,11; $R_3$}
\lbl[l]{161,61; $X_5$}
\lbl[l]{65,153; $Z_2$}
\lbl[l]{114,11; $Y_3$}
\lbl[l]{161,58; $X_8$}
\lbl[c]{157,50; $X_{11}$}
\lbl[c]{71,50; $Z_3$}
\lbl[c]{77,191; $R_5$}
\lbl[c]{104,98; $W_4$}
\lbl[c]{147,50; $X_{14}$}
\lbl[c]{129,95; $X_{20}$}
\lbl[c]{129,77; $X_{17}$}
\lbl[c]{108,106; $\Pi_{2}$}
\lbl[c]{22,110; $X_{10}$}
\lbl[l]{20,150; $Y_2$}
\lbl[c]{18,106; $X_7$}

\lbl[l]{26,116; $X_{13}$}
\lbl[c]{66,116; $W_3$}
\lbl[c]{51,116; $X_{16}$}
\lbl[l]{72,95; $X_{19}$}

\lbl[l]{63,145; $X_{3}$}
\lbl[l]{130,152; $R_1$}
\lbl[l]{110,152; $X_6$}
\lbl[l]{160,106; $R_4$}
\lbl[5]{131,81; $X_2$}
\lbl[l]{120,100; $T_2$}
\lbl[l]{109,145; $X_9$}
\lbl[l]{107,141; $X_{12}$}
\lbl[l]{89,120; $W_2$}
\lbl[l]{109,130; $X_{15}$}
\lbl[l]{78,130; $X_{18}$}

\lbl[c]{25,100; $SS$}
\lbl[l]{72,100; $\Pi_1$}
\lbl[r]{135,36; $T_1$}
\lbl[r]{135,50; $W_1$}
\lbl[r]{135,126; $Y_1$}
\lbl[l]{159,100; $Q$}

\lbl[l]{120,106; $Z_1$}
\lbl[l]{129,11; $X_1$}

\lbl[r]{135,62; $QQ$}
\lbl[l]{56,100; $SQ$}
\end{lpic}
\caption{The graph $\Gamma$ for $a=-1$ and $154/205<  b\leq 155/206$. Here 
$P_{1}=(-b -2, -1)$, $P_{2}=(b +2, 
-3)$, $P_{3}=(b +4, 2 b -1)$, $P_{4}=(-b +4, 5)$,  
$Q=(-b +1, 0)$, 
$R_{1}=(0, 2 b -1)$, 
$R_{2}=(-2 b, -b +1)$, 
$R_3=(3 b -2, -1)$, 
$R_{4}=(3 b -2, 4 b -3)$, 
$R_5=(-b, 1)$, 
$R_6=(b -2, -1)$, 
$S=(0, b +1)$, 
$T_{1}=(0, -13 b+9)$, 
$T_{2}=(13 b -10, -12 b +9)$, 
$W_{1}=(0, -26 b +19)$, 
$W_{2}=(26 b -20, -25 b +19)$, 
$W_{3}=(-b, 52 b -39)$, 
$W_4=(-51 b +38, -52 b +39)$, 
$X_{1}=(0,-1)$, 
$X_{2}=(0, b -1)$, 
$X_{3}=(-b, 2 b -1)$, 
$X_4=(-b, -2 b +1)$, 
$X_5=(3 b -2, -2 b +1)$,
$X_{6}=(5 b -4, 2 b -1)$, 
$X_{7}=(-7b+4, 4 b -3)$, 
$X_8=(3 b -2, -10 b +7)$, 
$X_{9}=(13 b -10, -6 b +5)$, 
$X_{10}=(-7b+4, 20 b -15)$, 
$X_{11}=(10-13 b, -26 b +19)$, 
$X_{12}=(13 b -10, -38 b +29)$, 
$X_{13}=(25 b -20, 52 b -39)$, 
$X_{14}=(-77 b +58, -26 b +19)$, 
$X_{15}=(-51 b +38, -102 b +77)$, 
$X_{16}=(153 b -116, 52 b -39)$, 
$X_{17}=(-205 b +154, 102 b -77)$, 
$X_{18}=(103 b -78, -102 b +77)$, 
$X_{19}=(-b, 206 b -155)$, 
$X_{20}=(-205 b +154, 206 b -155)$, 
$Y_1=(0, 7 b -5)$, 
$Y_{2}=(-7 b+4, -6 b+5)$, 
$Y_3=(13 b -10, -1)$, 
$Z_{1}=(13b-10,0)$, 
$Z_2=(-13 b+9, 14 b -10)$, 
$Z_3=(-b, -26 b +19)$, 
$\Pi_{1}=(-b, 0)$, 
$\Pi_{2}=(b -1, 0)$, 
$QQ=(0,-103b+77)$, $SQ=(205b-155,0)$ and 
$SS=(-3 b +1, 0)$. For $b=\frac{82}{109}$, $X_{20}=T_2$.}\label{f:F}
~\vspace{-25cm}

\hspace{4.1cm}$\phantom{espaiespaiespaiespaiespaiespaie}$
{\begin{lpic}[l(2mm),r(2mm),t(2mm),b(2mm),figframe(0.2mm)]{am-cas-F(0.29)}

\lbl[l]{60,115; $S$}
\lbl[l]{27,82; $R_{2}$}

\lbl[r]{55,44; $Y_{3}$}
\lbl[r]{90,44; $R_{3}$}

\lbl[l]{40,105; $R_{5}$}

\lbl[l]{75,94; $R_1$}

\lbl[l]{82,81; $R_{4}$}

\lbl[r]{25,44; $P_{1}$}
\lbl[l]{138,10; $P_{2}$}
\lbl[l]{171,95; $P_{3}$}
\lbl[l]{151,186; $P_{4}$}
\end{lpic}}

\end{figure}

\newpage

\begin{figure}[H]
\footnotesize
\centering

\begin{lpic}[l(2mm),r(2mm),t(2mm),b(2mm)]{am-cas-G0(0.71)}
\lbl[l]{2,191; $\boxed{a=-1,\,155/206<  b\leq 58/77 }$}

\lbl[l]{0,126; $R_{2}$}
\lbl[c]{78,54; $X_4$}
\lbl[l]{25,4; $R_6$}
\lbl[l]{162,4; $R_3$}
\lbl[l]{166,55; $X_5$}
\lbl[l]{66,153; $Z_2$}
\lbl[l]{119,4; $Y_3$}
\lbl[l]{165,48; $X_8$}
\lbl[c]{161,44; $X_{11}$}
\lbl[c]{73,44; $Z_3$}
\lbl[c]{78,190; $R_5$}
\lbl[c]{99,87; $W_4$}
\lbl[c]{146,44; $X_{14}$}
\lbl[c]{116,91; $X_{20}$}
\lbl[c]{116,81; $X_{17}$}
\lbl[c]{112,102; $\Pi_{2}$}
\lbl[c]{21,107; $X_{10}$}
\lbl[l]{20,148; $Y_{2}$}
\lbl[c]{20,102; $X_{7}$}

\lbl[l]{29,117; $X_{13}$}
\lbl[l]{74,113; $W_3$}
\lbl[r]{67,117; $X_{16}$}
\lbl[l]{74,107; $X_{19}$}

\lbl[l]{74,149; $X_{3}$}
\lbl[l]{130,152; $R_1$}
\lbl[l]{111,149; $X_{6}$}
\lbl[l]{165,104; $R_4$}
\lbl[r]{139,76; $X_2$}
\lbl[l]{123,97; $T_2$}
\lbl[l]{114,142; $X_9$}
\lbl[l]{114,138; $X_{12}$}
\lbl[l]{99,112; $W_2$}
\lbl[l]{108,119; $X_{15}$}
\lbl[l]{90,119; $X_{18}$}

\lbl[c]{31,102; $SS$}
\lbl[l]{74,102; $\Pi_{1}$}
\lbl[r]{139,29; $T_{1}$}
\lbl[r]{140,44; $W_1$}
\lbl[r]{139,124; $Y_1$}
\lbl[l]{156,102; $Q$}
\lbl[l]{123,102; $Z_1$}
\lbl[l]{134,4; $X_1$}

\end{lpic}
\caption{The graph $\Gamma$ for $a=-1$ and $155/206<  b\leq 58/77$. Here 
$P_{1}=(-b -2, -1)$, $P_{2}=(b +2, -3)$, 
$P_{3}=(b +4, 2 b -1)$, $P_{4}=(-b +4, 5)$,
$Q=(-b +1, 0)$, 
$R_{1}=(0, 2 b -1)$, 
$R_{2}=(-2 b, -b +1)$, 
$R_3=(3 b -2, -1)$, 
$R_{4}=(3 b -2, 4 b -3)$, 
$R_5=(-b, 1)$, 
$R_6=(b -2, -1)$, 
$S=(0, b +1)$, 
$T_{1}=(0, -13 b+9)$, 
$T_{2}=(13 b -10, -12 b +9)$, 
$W_{1}=(0, -26 b +19)$, 
$W_{2}=(26 b -20, -25 b +19)$, 
$W_{3}=(-b, 52 b -39)$, 
$W_4=(-51 b +38, -52 b +39)$, 
$X_{1}=(0,-1)$, 
	$X_{2}=(0, b -1)$, 
	$X_{3}=(-b, 2 b -1)$, 
	$X_4=(-b, -2 b +1)$, 
    $X_5=(3 b -2, -2 b +1)$, 
	$X_{6}=(5 b -4, 2 b -1)$, 
	$X_{7}=(-7b+4, 4 b -3)$, 
	$X_8=(3 b -2, -10 b +7)$, 
$X_{9}=(13 b -10, -6 b +5)$, 
$X_{10}=(-7b+4, 20 b -15)$, 	
	$X_{11}=(10-13 b, -26 b +19)$, 
	$X_{12}=(13 b -10, -38 b +29)$, 
	$X_{13}=(25 b -20, 52 b -39)$, 
	$X_{14}=(-77 b +58, -26 b +19)$,
	$X_{15}=(-51 b +38, -102 b +77)$, 
	$X_{16}=(153 b -116, 52 b -39)$,
	$X_{17}=(-205 b +154, 102 b -77)$, 
	$X_{18}=(103 b -78, -102 b +77)$, 
	$X_{19}=(-b, 206 b -155)$,
	$X_{20}=(-205 b +154, -206 b +155)$, 
$Y_1=(0, 7 b -5)$, $Y_{2}=(-7 b+4, -6 b+5)$, 
	$Y_3=(13 b -10, -1)$, $Z_{1}=(13b-10,0)$, $Z_2=(-13 b+9, 14 b -10)$, $Z_3=(-b, -26 b +19)$, 
	$\Pi_{1}=(-b, 0)$,	$\Pi_{2}=(b -1, 0)$ 
	 and $SS=(-3 b +1, 0)$.}\label{f:G}
~\vspace{-24cm}

\hspace{4.4cm}$\phantom{espaiespaiespaiespaiespaiespaie}$
{\begin{lpic}[l(2mm),r(2mm),t(2mm),b(2mm),figframe(0.2mm)]{am-cas-G(0.29)}

\lbl[l]{60,115; $S$}
\lbl[l]{27,82; $R_2$}

\lbl[r]{55,44; $R_6$}
\lbl[r]{90,44; $R_3$}




\lbl[r]{25,44; $P_{1}$}
\lbl[l]{138,10; $P_{2}$}
\lbl[l]{171,95; $P_{3}$}
\lbl[l]{151,186; $P_{4}$}
\end{lpic}}

\end{figure}

\newpage

\begin{figure}[H]
\footnotesize
\centering
\begin{lpic}[l(2mm),r(2mm),t(2mm),b(2mm)]{am-cas-H0(0.71)}
\lbl[l]{2,191; $\boxed{a=-1,\,58/77<  b\leq 19/25 }$}

\lbl[l]{1,127; $R_{2}$}
\lbl[l]{75,53; $X_{4}$}
\lbl[l]{26,5; $R_{6}$}
\lbl[l]{165,5; $R_{3}$}
\lbl[l]{169,55; $X_5$}
\lbl[l]{63,159; $Z_2$}
\lbl[l]{123,5; $Y_3$}
\lbl[l]{168,47; $X_8$}
\lbl[c]{159,35; $X_{11}$}
\lbl[l]{70,35; $Z_3$}
\lbl[l]{75,191; $R_5$}
\lbl[c]{84,72; $W_4$}
\lbl[r]{122,35; $X_{14}$}

\lbl[c]{113,103; $\Pi_{2}$}
\lbl[c]{19,113; $X_{10}$}
\lbl[l]{19,147; $Y_2$}
\lbl[c]{19,103; $X_7$}

\lbl[l]{34,132; $X_{13}$}
\lbl[l]{75,132; $W_3$}

\lbl[l]{75,150; $X_3$}
\lbl[l]{144,144; $R_1$}
\lbl[l]{113,150; $X_6$}
\lbl[l]{169,104; $R_4$}
\lbl[5]{136,78; $X_2$}
\lbl[l]{129,94; $T_2$}
\lbl[l]{119,142; $X_9$}
\lbl[l]{118,124; $X_{12}$}
\lbl[l]{103,108; $W_2$}

\lbl[c]{31,103; $SS$}
\lbl[l]{75,103; $\Pi_{1}$}
\lbl[r]{140,25; $T_1$}

\lbl[c]{138,35; $W_1$}
\lbl[r]{140,127; $Y_1$}
\lbl[l]{155,103; $Q$}
\lbl[l]{129,103; $Z_1$}

\lbl[l]{135,5; $X_1$}

\end{lpic}
\caption{The graph $\Gamma$ for $a=-1$ and $58/77<  b\leq 19/25$. Here $P_{1}=(-b -2, -1)$, $P_{2}=(b +2, -3)$, $P_{3}=(b +4, 2 b -1)$, $P_{4}=(-b +4, 5)$, $Q=(-b +1, 0)$, 
$R_{1}=(0, 2 b -1)$, 
$R_{2}=(-2 b, -b +1)$, 
$R_3=(3 b -2, -1)$, 
$R_{4}=(3 b -2, 4 b -3)$
$R_5=(-b, 1)$, 
$R_6=(b -2, -1)$, 
$S=(0, b +1)$, 
$T_{1}=(0, -13 b+9)$, 
$T_{2}=(13 b -10, -12 b +9)$, 
$W_{1}=(0, -26 b +19)$, 
$W_{2}=(26 b -20, -25 b +19)$,  
$W_{3}=(-b, 52 b -39)$,  
$W_4=(-51 b +38, -52 b +39)$, 
$X_{1}=(0,-1)$, 
$X_{2}=(0, b -1)$, 
$X_{3}=(-b, 2 b -1)$,  
$X_4=(-b, -2 b +1)$, 
$X_5=(3 b -2, -2 b +1)$, 
$X_{6}=(5 b -4, 2 b -1)$,
$X_{7}=(-7b+4, 4 b -3)$, 
$X_8=(3 b -2, -10 b +7)$, 
$X_{9}=(13 b -10, -6 b +5)$, 
$X_{10}=(-7b+4, 20 b -15)$, 
$X_{11}=(10-13 b, -26 b +19)$, 
$X_{12}=(13 b -10, -38 b +29)$, 
$X_{13}=(25 b -20, 52 b -39)$, 
$X_{14}=(-77 b +58, -26 b +19)$, 
$Y_1=(0, 7 b -5)$,	
$Y_{2}=(-7 b+4, -6 b+5)$, 
$Y_3=(13 b -10, -1)$, 
$Z_{1}=(13b-10,0)$, 
$Z_2=(-13 b+9, 14 b -10)$, 	
$Z_3=(-b, -26 b +19)$, 
$\Pi_{1}=(-b, 0)$,
$\Pi_{2}=(b -1, 0)$, 
and $SS=(-3 b +1, 0)$.}\label{f:H}
~\vspace{-24.1cm}

\hspace{4.2cm}$\phantom{espaiespaiespaiespaiespaiespaie}$
{\begin{lpic}[l(2mm),r(2mm),t(2mm),b(2mm),figframe(0.2mm)]{am-cas-H(0.29)}

\lbl[l]{60,115; $S$}
\lbl[l]{27,82; $R_{2}$}

\lbl[r]{55,44; $R_{6}$}
\lbl[r]{90,44; $R_{3}$}

\lbl[l]{40,105; $R_5$}

\lbl[l]{75,95; $R_1$}

\lbl[l]{85,83; $R_4$}

\lbl[r]{25,44; $P_{1}$}
\lbl[l]{138,10; $P_{2}$}
\lbl[l]{171,95; $P_{3}$}
\lbl[l]{151,186; $P_{4}$}
\end{lpic}}

\end{figure}

\newpage

\begin{figure}[H]
\footnotesize
\centering

\begin{lpic}[l(2mm),r(2mm),t(2mm),b(2mm)]{am-cas-I0(0.71)}
\lbl[l]{2,191; $\boxed{a=-1,\,19/25<  b\leq 29/38 }$}

\lbl[l]{1,125; $R_{2}$}
\lbl[l]{75,52; $X_{4}$}
\lbl[l]{28,6; $R_{6}$}
\lbl[l]{163,6; $R_{3}$}
\lbl[l]{168,53; $X_{5}$}
\lbl[l]{58,162; $Z_{2}$}
\lbl[l]{129,6; $Y_{3}$}
\lbl[l]{168,44; $X_{8}$}
\lbl[l]{152,29; $X_{11}$}
\lbl[c]{73,23; $Z_{3}$}
\lbl[c]{80,189; $R_{5}$}
\lbl[l]{75,40; $\Delta_3$}
\lbl[c]{86,32; $X_{14}$}

\lbl[c]{111,103; $\Pi_{2}$}
\lbl[c]{16,120; $X_{10}$}
\lbl[l]{18,142; $Y_2$}
\lbl[c]{16,103; $X_7$}

\lbl[l]{48,156; $X_{13}$}
\lbl[l]{66,156; $\Delta_2$}

\lbl[l]{74,150; $X_3$}
\lbl[l]{142,142; $R_1$}
\lbl[l]{112,150; $X_6$}
\lbl[l]{168,104; $R_4$}
\lbl[l]{133,76; $X_2$}
\lbl[l]{133,88; $T_2$}
\lbl[l]{122,138; $X_9$}
\lbl[l]{133,108; $X_{12}$}
\lbl[l]{115,93; $W_2$}

\lbl[c]{31,103; $SS$}
\lbl[l]{74,103; $\Pi_1$}
\lbl[l]{132,20; $T_1$}
\lbl[l]{133,32; $W_1$}
\lbl[l]{133,127; $Y_1$}
\lbl[l]{154,103; $Q$}
\lbl[l]{133,103; $Z_1$}
\lbl[r]{126,103; $\Delta_1$}

\lbl[l]{134,6; $X_1$}

\end{lpic}
\caption{The graph $\Gamma$ for $a=-1$ and $19/25<  b\leq 29/38$. Here $P_{1}=(-b -2, -1)$, $P_{2}=(b +2,-3)$, $P_{3}=(b +4, 2 b -1)$, $P_{4}=(-b +4, 5)$,  $Q=(-b +1, 0)$,
$R_{1}=(0, 2 b -1)$, 
$R_{2}=(-2 b, -b +1)$, 
$R_3=(3 b -2, -1)$, 	
$R_{4}=(3 b -2, 4 b -3)$, 
$R_5=(-b, 1)$,  
$R_6=(b -2, -1)$, 
$S=(0, b +1)$, 
$T_{1}=(0,-13 b+9)$,
$T_{2}=(13 b -10, -12 b +9)$, 	
	 $W_{1}=(0, -26 b +19)$, 
	$W_{2}=(26 b -20, -25 b +19)$,
    $X_{1}=(0,-1)$, 	
	$X_{2}=(0, b -1)$, 
	$X_{3}=(-b, 2 b -1)$, 
	$X_4=(-b, -2 b +1)$, 
    $X_5=(3 b -2, -2 b +1)$, 
	$X_{6}=(5 b -4, 2 b -1)$,
	$X_{7}=(-7b+4, 4 b -3)$, 
	$X_8=(3 b -2, -10 b +7)$, 
	$X_{9}=(13 b -10, -6 b +5)$, 
$X_{10}=(-7b+4, 20 b -15)$, 	
$X_{11}=(10-13 b, -26 b +19)$, 
	$X_{12}=(13 b -10, -38 b +29)$, 
	$X_{13}=(25 b -20, 52 b -39)$, 
	$X_{14}=(-77 b +58, -26 b +19)$,  
	$Y_1=(0, 7 b -5)$,
    $Y_{2}=(-7 b+4, -6 b+5)$, 
	$Y_3=(13 b -10, -1)$,	
	$Z_{1}=(13b-10,0)$, 
	$Z_2=(-13 b+9, 14 b -10)$, 
	$Z_3=(-b, -26 b +19)$, 
$\Pi_{1}=(-b, 0)$,
$\Pi_{2}=(b -1, 0)$, 	
$\Delta_1=(51b-39,0)$, $\Delta_2=(-b, 52 b -39)$, $\Delta_{3}=(-51 b +38
, -52 b +39)$ and $SS=(-3 b +1, 0)$.}\label{f:I}
~\vspace{-24cm}

\hspace{4.2cm}$\phantom{espaiespaiespaiespaiespaiespaie}$
{\begin{lpic}[l(2mm),r(2mm),t(2mm),b(2mm),figframe(0.2mm)]{am-cas-I(0.29)}

\lbl[l]{62,115; $S$}
\lbl[l]{27,82; $R_2$}

\lbl[r]{55,44; $R_6$}
\lbl[r]{90,44; $R_3$}

\lbl[l]{40,105; $R_5$}

\lbl[l]{75,94; $R_1$}

\lbl[l]{85,81; $R_4$}

\lbl[r]{25,44; $P_{1}$}
\lbl[l]{138,10; $P_{2}$}
\lbl[l]{171,95; $P_{3}$}
\lbl[l]{151,186; $P_{4}$}
\end{lpic}}

\end{figure}

\newpage

\begin{figure}[H]
\footnotesize
\centering

\begin{lpic}[l(2mm),r(2mm),t(2mm),b(2mm)]{am-cas-J0(0.71)}
\lbl[l]{2,191; $\boxed{a=-1,\,29/38<  b\leq 10/13}$}

\lbl[l]{1,125; $R_{2}$}
\lbl[l]{75,52; $X_4$}
\lbl[l]{28,6; $R_6$}
\lbl[l]{167,6; $R_3$}
\lbl[l]{170,54; $X_5$}
\lbl[l]{57,167; $Z_2$}
\lbl[l]{133,6; $Y_3$}
\lbl[l]{170,44; $X_8$}
\lbl[l]{148,18; $X_{11}$}
\lbl[l]{75,21; $Z_3$}
\lbl[l]{75,188; $R_5$}

\lbl[c]{113,103; $\Pi_{2}$}
\lbl[c]{16,127; $X_{10}$}
\lbl[l]{16,141; $Y_2$}
\lbl[c]{16,103; $X_7$}

\lbl[l]{75,151; $X_3$}
\lbl[l]{143,143; $R_1$}
\lbl[l]{119,151; $X_6$}
\lbl[l]{170,107; $R_4$}
\lbl[l]{144,79; $X_2$}
\lbl[l]{130,81; $T_{2}$}
\lbl[l]{128,136; $X_9$}
\lbl[l]{130,93; $X_{12}$}
\lbl[l]{125,86; $W_2$}

\lbl[c]{31,103; $SS$}
\lbl[l]{75,103; $\Pi_{1}$}
\lbl[l]{132,15; $T_1$}
\lbl[l]{132,21; $W_1$}
\lbl[l]{143,135; $Y_1$}
\lbl[l]{154,103; $Q$}
\lbl[l]{129,103; $Z_1$}

\lbl[l]{143,6; $X_1$}

\end{lpic}
\caption{The graph $\Gamma$ for $a=-1$ and $29/38<  b\leq 10/13$. Here 	$P_{1}=(-b -2, -1)$, $P_{2}=(b +2, -3)$, $P_{3}=(b +4, 2 b -1)$, $P_{4}=(-b +4, 5)$, $Q=(-b +1, 0)$,
$R_{1}=(0, 2 b -1)$, 
$R_{2}=(-2 b, -b +1)$, 
$R_3=(3 b -2, -1)$, 
$R_{4}=(3 b -2, 4 b -3)$, 
$R_5=(-b, 1)$, 
$R_6=(b -2, -1)$, 
$S=(0, b +1)$, 
$T_{1}=(0,-13 b+9)$, 
$T_{2}=(13 b -10, -12 b +9)$, 
$W_{1}=(0, -26 b +19)$, 
$W_{2}=(26 b -20, -25 b +19)$, 
$X_{1}=(0,-1)$, 
$X_{2}=(0, b -1)$, 
$X_{3}=(-b, 2 b -1)$, 
$X_4=(-b, -2 b +1)$, 
$X_5=(3 b -2, -2 b +1)$, 
$X_{6}=(5 b -4, 2 b -1)$, 
$X_{7}=(-7b+4, 4 b -3)$, 
$X_8=(3 b -2, -10 b +7)$, 
$X_{9}=(13 b -10, -6 b +5)$, 
$X_{10}=(-7b+4, 20 b -15)$, 
$X_{11}=(10-13 b, -26 b +19)$, 
$X_{12}=(13 b -10, -38 b +29)$, 
$Y_1=(0, 7 b -5)$, 
$Y_{2}=(-7 b+4, -6 b+5)$, 
$Y_3=(13 b -10, -1)$, 	
$Z_{1}=(13b-10,0)$, 
$Z_2=(-13 b+9, 14 b -10)$, 
$Z_3=(-b, -26 b +19)$, 
$\Pi_{1}=(-b, 0)$,    
$\Pi_{2}=(b -1, 0)$ and	
$SS=(-3 b +1, 0)$. }\label{f:J}
~\vspace{-23.9cm}

\hspace{4.2cm}$\phantom{espaiespaiespaiespaiespaiespaie}$
{\begin{lpic}[l(2mm),r(2mm),t(2mm),b(2mm),figframe(0.2mm)]{am-cas-J(0.29)}

\lbl[l]{62,115; $S$}
\lbl[l]{27,82; $R_{2}$}

\lbl[r]{57,44; $R_6$}
\lbl[r]{90,44; $R_3$}

\lbl[l]{40,105; $R_5$}

\lbl[l]{75,94; $R_1$}

\lbl[l]{85,81; $R_4$}

\lbl[r]{25,44; $P_{1}$}
\lbl[l]{138,10; $P_{2}$}
\lbl[l]{171,95; $P_{3}$}
\lbl[l]{151,186; $P_{4}$}
\end{lpic}}

\end{figure}

\newpage

\begin{figure}[H]
\footnotesize
\centering
\begin{lpic}[l(2mm),r(2mm),t(2mm),b(2mm)]{am-cas-K0(0.71)}
\lbl[l]{2,191; $\boxed{a=-1,\,10/13<  b\leq 11/14}$}

\lbl[l]{1,123; $R_{2}$}
\lbl[l]{77,49; $X_{4}$}
\lbl[l]{31,5; $R_{6}$}
\lbl[l]{172,5; $R_3$}
\lbl[l]{174,50; $X_5$}
\lbl[l]{57,179; $Z_2$}
\lbl[l]{151,5; $Y_3$}
\lbl[l]{174,30; $X_8$}
\lbl[l]{47,33; $Y_6$}
\lbl[l]{51,5; $Z_3$}
\lbl[l]{75,186; $R_5$}

\lbl[c]{116,101; $\Pi_2$}
\lbl[l]{75,165; $Y_5$}
\lbl[l]{10,133; $Y_2$}
\lbl[l]{10,103; $X_7$}

\lbl[l]{75,151; $X_3$}
\lbl[l]{144,143; $R_1$}
\lbl[l]{127,143; $X_6$}
\lbl[l]{175,108; $R_4$}
\lbl[l]{146,78; $X_2$}
\lbl[l]{154,85; $Y_4$}
\lbl[l]{154,130; $X_9$}

\lbl[c]{30,101; $T$}
\lbl[l]{75,101; $\Pi_1$}

\lbl[l]{134,136; $Y_1$}
\lbl[l]{167,101; $Q$}
\lbl[l]{154,101; $Z_1$}

\lbl[l]{144,5; $X_1$}

\end{lpic}
\caption{The graph $\Gamma$ for $a=-1$ and $10/13<  b\leq 11/14$. Here $P_{1}=(-b -2, -1)$, 
	$P_{2}=(b +2, -3)$, $P_{3}=(b +4, 2 b -1)$, 
	$P_{4}=(-b +4, 5)$, $Q=(-b +1, 0)$, 
$R_{1}=(0, 2 b -1)$, 
$R_{2}=(-2 b, -b +1)$, 
$R_3=(3 b -2, -1)$, 
$R_{4}=(3 b -2, 4 b -3)$, 
$R_5=(-b, 1)$,   
$R_6=(b -2, -1)$, 
$S=(0, b +1)$, 
$T=(-3 b +1, 0)$, 
$X_{1}=(0,-1)$, 
$X_{2}=(0, b -1)$, 
$X_{3}=(-b, 2 b -1)$, 
$X_4=(-b, -2 b +1)$, 
$X_5=(3 b -2, -2 b +1)$, 
$X_{6}=(5 b -4, 2 b -1)$, 
$X_{7}=(-7b+4, 4 b -3)$,  
$X_8=(3 b -2, -10 b +7)$, 
$X_{9}=(13 b -10, -6 b +5)$, 
$Y_1=(0, 7 b -5)$,
$Y_{2}=(-7 b+4, -6 b+5)$, 
$Y_3=(13 b -10, -1)$, 
$Y_4=(13 b -10, 14 b -11)$
$Y_5=(-b, 28 b -21)$,  
$Y_6=(-27 b +20, -28 b +21)$, 
$Z_1=(13 b -10, 0)$, 
$Z_2=(13 b -11, 14 b -10)$, 
$Z_3=(-27 b +20, -1)$, 
$\Pi_{1}=(-b, 0)$,  and	 
$\Pi_{2}=(b -1, 0)$.}\label{f:K}
~\vspace{-22.9cm}

\hspace{4.2cm}$\phantom{espaiespaiespaiespaiespaiespaie}$
{\begin{lpic}[l(2mm),r(2mm),t(2mm),b(2mm),figframe(0.2mm)]{am-cas-K(0.29)}

\lbl[l]{60,115; $S$}
\lbl[l]{27,82; $R_2$}

\lbl[r]{55,44; $R_6$}
\lbl[r]{90,44; $R_3$}

\lbl[l]{40,105; $R_5$}

\lbl[l]{75,93; $R_1$}

\lbl[l]{85,81; $R_4$}

\lbl[r]{25,44; $P_{1}$}
\lbl[l]{138,10; $P_{2}$}
\lbl[l]{171,95; $P_{3}$}
\lbl[l]{151,186; $P_{4}$}
\end{lpic}}

\end{figure}

\newpage

\begin{figure}[H]
\footnotesize
\centering

\begin{lpic}[l(2mm),r(2mm),t(2mm),b(2mm)]{am-cas-L0(0.71)}
\lbl[l]{2,191; $\boxed{a=-1,\,11/14<  b\leq 4/5}$}

\lbl[l]{-1,117; $R_{2}$}
\lbl[l]{77,43; $X_4$}
\lbl[l]{36,2; $R_6$}
\lbl[l]{178,2; $R_3$}
\lbl[l]{172,43; $X_5$}

\lbl[l]{170,2; $Y_3$}
\lbl[l]{172,15; $X_8$}

\lbl[l]{77,184; $R_5$}

\lbl[c]{118,99; $\Pi_2$}

\lbl[l]{4,122; $Y_2$}
\lbl[l]{5,106; $X_7$}

\lbl[l]{77,151; $X_3$}
\lbl[l]{147,145; $R_1$}
\lbl[l]{136,145; $X_6$}
\lbl[l]{181,111; $R_4$}
\lbl[l]{148,77; $X_2$}
\lbl[l]{175,104; $Y_4$}
\lbl[l]{175,119; $X_9$}

\lbl[c]{28,99; $T$}
\lbl[l]{77,99; $\Pi_{1}$}

\lbl[l]{136,140; $Y_1$}
\lbl[l]{156,99; $Q$}

\lbl[l]{147,2; $X_1$}

\end{lpic}
\caption{The graph $\Gamma$ for $a=-1$ and $11/14<  b\leq 4/5$. Here $P_{1}=(-b -2, -1)$, $P_{2}=(b +2, -3)$, $P_{3}=(b +4, 2 b -1)$, $P_{4}=(-b +4, 5)$, $Q=(-b +1, 0)$,  
$R_{1}=(0, 2 b -1)$,
$R_{2}=(-2 b, -b +1)$, 
$R_3=(3 b -2, -1)$, 
$R_{4}=(3 b -2, 4 b -3)$, 
$R_5=(-b, 1)$, 
$R_6=(b -2, -1)$, 
$S=(0, b +1)$, 
$T=(-3 b +1, 0)$, 
$X_{1}=(0,-1)$, 
$X_{2}=(0, b -1)$,
$X_{3}=(-b, 2 b -1)$, 
$X_4=(-b, -2 b +1)$, 
$X_5=(3 b -2, -2 b +1)$,
$X_{6}=(5 b -4, 2 b -1)$,  
$X_{7}=(-7b+4, 4 b -3)$, 
$X_8=(3 b -2, -10 b +7)$, 
$X_{9}=(13 b -10, -6 b +5)$, 
$Y_1=(0, 7 b -5)$, 
$Y_{2}=(-7 b+4, -6 b+5)$, 
$Y_3=(13 b -10, -1)$, 
$Y_4=(13 b -10, 14 b -11)$,	
$\Pi_{1}=(-b, 0)$ and	  
$\Pi_{2}=(b -1, 0)$.
}\label{f:L}
~\vspace{-22.2cm}

\hspace{4.2cm}$\phantom{espaiespaiespaiespaiespaiespaie}$
{\begin{lpic}[l(2mm),r(2mm),t(2mm),b(2mm),figframe(0.2mm)]{am-cas-L(0.29)}

\lbl[l]{60,115; $S$}
\lbl[l]{27,82; $R_{2}$}

\lbl[r]{55,44; $R_6$}
\lbl[r]{90,44; $R_{3}$}

\lbl[l]{40,105; $R_5$}

\lbl[l]{75,94; $R_1$}

\lbl[l]{85,81; $R_4$}

\lbl[r]{25,44; $P_{1}$}
\lbl[l]{138,10; $P_{2}$}
\lbl[l]{171,95; $P_{3}$}
\lbl[l]{151,186; $P_{4}$}
\end{lpic}}

\end{figure}

\newpage
\begin{figure}[H]
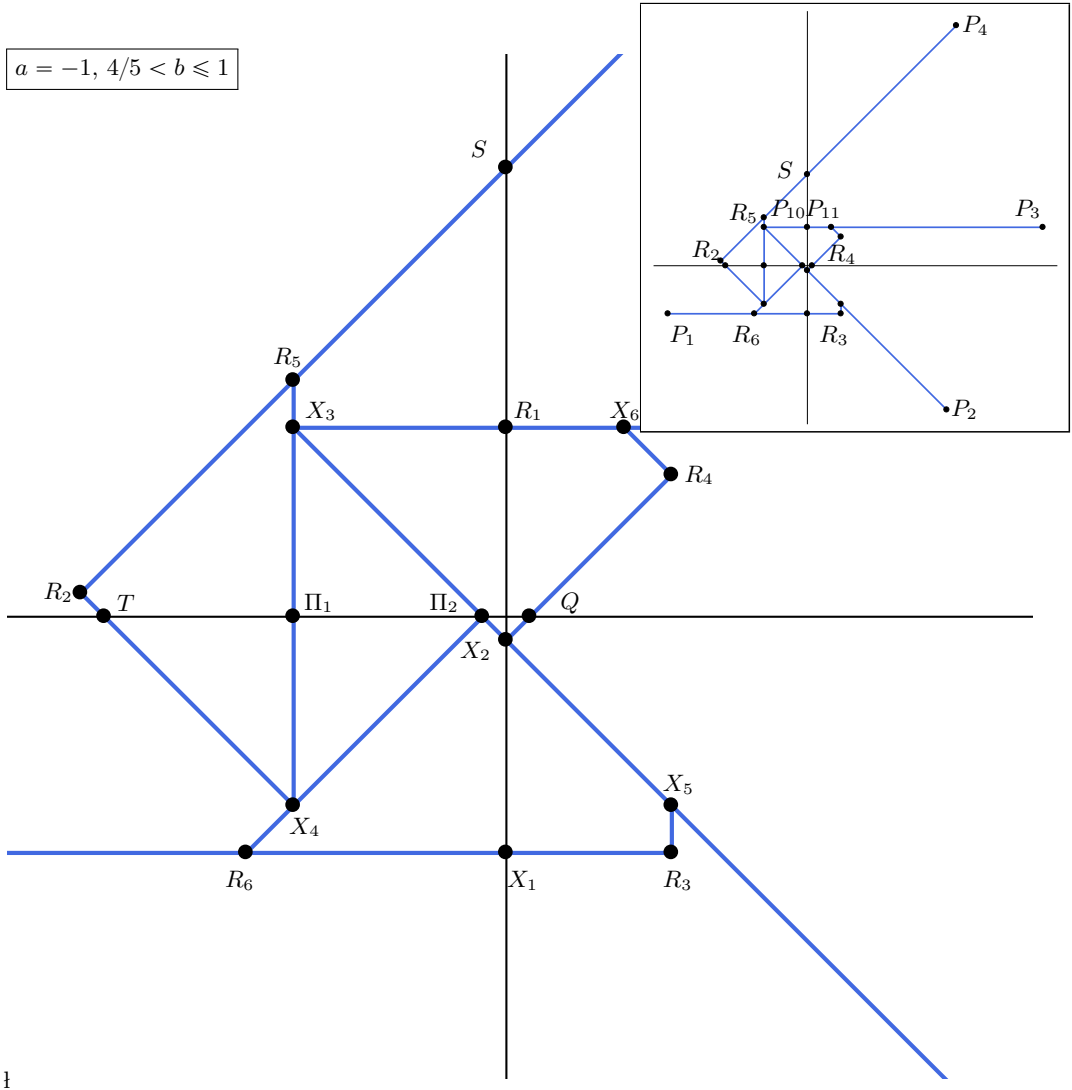

\footnotesize
\centering
\begin{lpic}[l(2mm),r(2mm),t(2mm),b(2mm)]{am-cas-M0(0.71)}
\lbl[l]{2,191; $\boxed{a=-1,\,4/5<  b\leq 1}$}

\lbl[l]{89,176; $S$}
\lbl[l]{9,93; $R_{2}$}
\lbl[l]{55,49; $X_{4}$}
\lbl[l]{43,39; $R_{6}$}
\lbl[l]{125,39; $R_{3}$}
\lbl[l]{125,57; $X_{5}$}

\lbl[l]{52,137; $R_5$}

\lbl[c]{84,91; $\Pi_{2}$}

\lbl[l]{58,127; $X_3$}
\lbl[l]{97,127; $R_1$}
\lbl[l]{115,127; $X_6$}
\lbl[l]{129,115; $R_4$}
\lbl[l]{87,82; $X_2$}
\l

\lbl[l]{20,91; $T$}
\lbl[l]{55,91; $\Pi_{1}$}

\lbl[l]{103,91; $Q$}

\lbl[l]{93,39; $X_1$}

\end{lpic}
\caption{The graph $\Gamma$ for $a=-1$ and $4/5<  b\leq 1$. Here $P_{1}=(-b -2, -1)$, 
$P_{2}=(b +2, -3)$, $P_{3}=(b +4, 2 b -1)$,$P_{4}=(-b +4, 5)$, $Q=(-b +1, 0)$, 
$R_1=(0, 2 b -1)$, 
$R_{2}=(-2 b, -b +1)$, 
$R_3=(3 b -2, -1)$, 
$R_4=(3 b -2, 4 b -3)$, 
$R_5=(-b, 1)$, 
$R_6=(b -2, -1)$, 
$S_{1}=(0, b +1)$, 
$T=(-3 b +1, 0)$, 
$X_1=(0, -1)$, 
$X_2=(0, b -1)$, 
$X_3=(-b, 2 b -1)$, 
$X_{4}=(-b, -2 b +1)$, 
$X_5=(3 b -2, -2 b +1)$, 
$X_6=(5 b -4, 2 b -1)$, 
$\Pi_1=(-b, 0)$ and
$\Pi_{2}=(b -1, 0)$.}\label{f:M}
~\vspace{-19.1cm}

\hspace{4.2cm}$\phantom{espaiespaiespaiespaiespaiespaie}$
{\begin{lpic}[l(2mm),r(2mm),t(2mm),b(2mm),figframe(0.2mm)]{am-cas-M(0.29)}

\lbl[l]{62,120; $S$}
\lbl[l]{23,83; $R_{2}$}

\lbl[r]{55,44; $R_6$}
\lbl[r]{95,44; $R_3$}

\lbl[l]{40,100; $R_5$}

\lbl[l]{75,102; $P_{11}$}
\lbl[l]{59,102; $P_{10}$}

\lbl[l]{85,81; $R_4$}

\lbl[r]{25,44; $P_{1}$}
\lbl[l]{142,10; $P_{2}$}
\lbl[l]{171,102; $P_{3}$}
\lbl[l]{147,186; $P_{4}$}
\end{lpic}}

\end{figure}

\newpage

\begin{figure}[H]
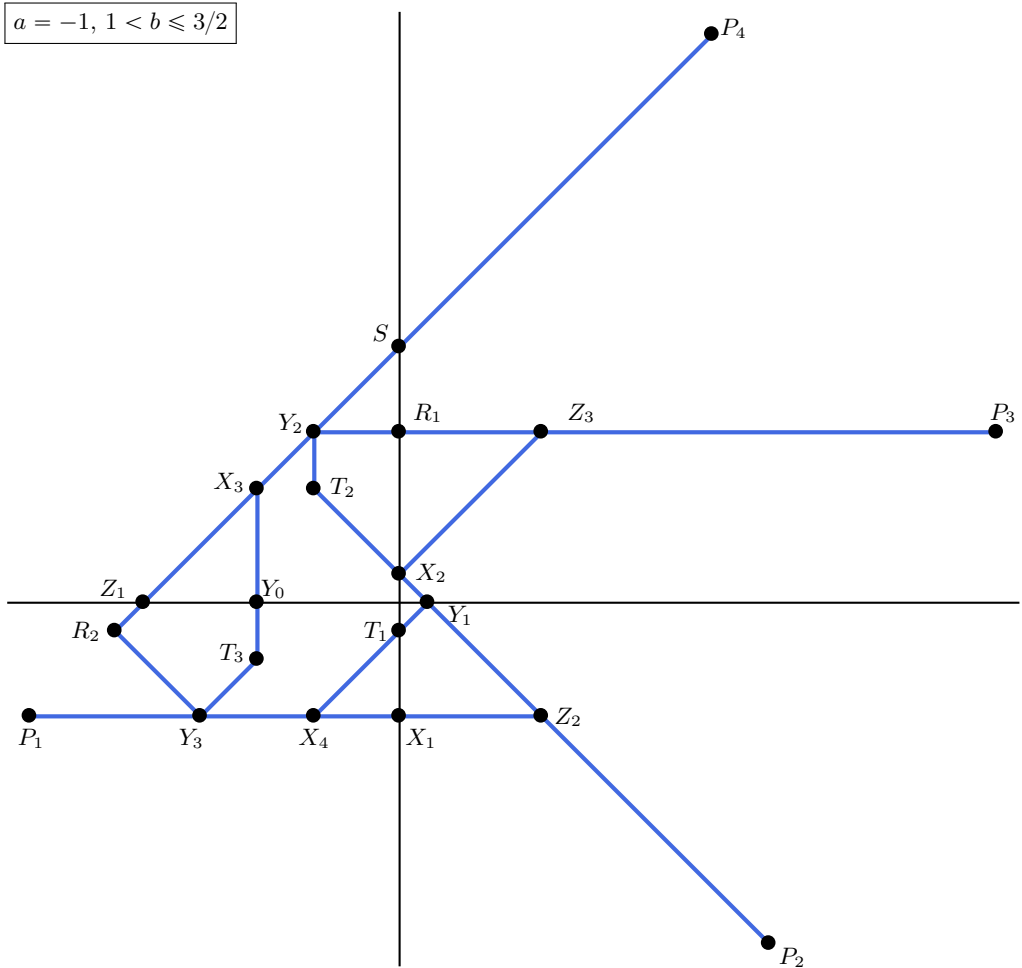

\footnotesize
\centering

\begin{lpic}[l(2mm),r(2mm),t(2mm),b(2mm)]{am-cas-21-a(0.71)}
\lbl[l]{3,185; $\boxed{a=-1,\,1< b\leq 3/2}$}

\lbl[l]{137,184; $P_{4}$}
\lbl[r]{75,127; $S$}
\lbl[r]{59,110; $Y_2$}
\lbl[r]{48,99; $X_3$}

\lbl[r]{26,79; $Z_1$}
\lbl[r]{21,71; $R_2$}

\lbl[c]{8,51; $P_{1}$}
\lbl[c]{38,51; $Y_3$}
\lbl[c]{61,51; $X_4$}
\lbl[c]{81,51; $X_1$}
\lbl[l]{106,55; $Z_2$}

\lbl[l]{64,98; $T_{2}$}

\lbl[l]{148,10; $P_{2}$}

\lbl[l]{80,82; $X_{2}$}

\lbl[l]{86,74; $Y_1$}

\lbl[l]{51,79; $Y_0$}

\lbl[c]{190,112;  $P_{3}$}
\lbl[c]{111,112; $Z_3$}
\lbl[l]{78,112; { $R_1$}}

\lbl[r]{75,71; { $T_1$}}

\lbl[r]{48,67; { $T_3$}}

\end{lpic}

\caption{The graph $\Gamma$ for $a=-1$ and $1/< b\leq 3/2$. Here $P_{1}=(
-b -2, -1)$, $P_{2}=(b +2, -3)$, $P_{3}=(b +4, 2 b -1)$, $P_4=(-b +4, 5)$
$R_1=(0, 2 b -1)$, 
$R_2=(-2 b, -b +1)$, 
$S=(0, b +1)$, 
$T_1=(0, -b +1)$,
$T_2=(b -2, 1)$,
$T_3=(-b, 2 b -3)$, 
$X_1=(0, -1)$, 
$X_2=(0, b -1)$, 
$X_3=(-b, 1)$, 
$X_4=(b -2, -1)$, 
$Y_0=(-b, 0)$, 
$Y_1=(b -1, 0)$, 
$Y_2=(b -2, 2 b -1)$, 
$Y_3=(-3 b +2, -1)$, 
$Z_1=(-b -1, 0)$, 
$Z_2=(b, -1)$ and
$Z_3=(b, 2 b -1)$.}\label{f:21a}
\end{figure}

\begin{figure}[H]
\footnotesize
\centering

\begin{lpic}[l(2mm),r(2mm),t(2mm),b(2mm)]{am-cas-21-b(0.71)}
\lbl[l]{3,185; $\boxed{a=-1,\,3/2< b\leq 8/5}$}

\lbl[r]{77,131; $S$}

\lbl[r]{67,123; $Y_2$}

\lbl[r]{45,99; $X_3$}

\lbl[r]{26,81; $Z_1$}

\lbl[r]{14,68; $R_2$}

\lbl[c]{25,53; $Y_3$}

\lbl[c]{68,53; $X_4$}

\lbl[c]{81,53; $X_1$}

\lbl[l]{101,53; $Z_2$}

\lbl[l]{91,81; $Y_1$}

\lbl[l]{80,88; $X_2$}

\lbl[c]{109,124; $Z_3$}

\lbl[l]{80,123; $R_1$}

\lbl[l]{49,81; $T_3$}

\lbl[l]{45,74; $W_1$}

\lbl[l]{32,62; $T_6$}

\lbl[r]{18,62; $W_4$}

\lbl[r]{76,67; $T_1$}

\lbl[r]{85,75; $T_4$}

\lbl[l]{94,75; $W_2$}

\lbl[c]{75,115; $T_5$}

\lbl[c]{75,123; $W_3$}

\lbl[c]{75,98; $T_2$}

\lbl[c]{8,53; $P_{1}$}

\lbl[l]{148,13; $P_{2}$}

\lbl[c]{190,124;  $P_{3}$}

\lbl[l]{130,178; $P_{4}$}
\end{lpic}

\caption{The graph $\Gamma$ for $a=-1$ and $3/2< b\leq 8/5$. Here $P_{1}=(-b -2, -1)$, $P_{2}=(b +2, -3)$, $P_{3}=(b +4, 2 b -1)$, $P_4=(-b +4, 5)$,
$R_1=(0, 2 b -1)$, 
$R_2=(-2 b, -b +1)$, 
$S=(0, b +1)$, $T_1=(0, -b +1)$,
$T_2=(b -2, 1)$, $T_3=(-b, 2 b -3)$, 
 $T_4=(-b +2, -2 b +3)$, $T_5=(b -2, -2 b +5)$,
$T_6=(-4+b, 4 b -7)$,
$W_1=(-3 b +3, 0)$, $W_2=(3 b -4, -2 b +3)$,
$W_3=(5 b -8, 2 b -1)$, $W_4=(
-7 b +8, 4 b -7)$,
$X_1=(0, -1)$, $X_2=(0, b -1)$, 
$X_3=(-b, 1)$, $X_4=(b -2, -1)$,
$Y_1=(b -1, 0)$, 
$Y_2=(b -2, 2 b -1)$, $Y_3=(-3 b +2, -1)$, 
$Z_1=(-b -1, 0)$, $Z_2=(b, -1)$ and
$Z_3=(b, 2 b -1)$.}\label{f:21b}
\end{figure}

\begin{figure}[H]
\footnotesize
\centering

\begin{lpic}[l(2mm),r(2mm),t(2mm),b(2mm)]{am-cas-21-c(0.71)}
\lbl[l]{3,185; $\boxed{a=-1,\,8/5< b\leq 7/4}$}

\lbl[r]{77,132; $S$}

\lbl[r]{69,125; $Y_2$}

\lbl[r]{44,100; $X_3$}

\lbl[r]{26,82; $Z_1$}

\lbl[r]{11,65; $R_2$}

\lbl[c]{20,55; $Y_3$}

\lbl[c]{72,55; $X_4$}

\lbl[c]{82,55; $X_1$}

\lbl[l]{109,55; $Z_2$}

\lbl[l]{91,82; $Y_1$}

\lbl[l]{81,92; $X_2$}

\lbl[c]{111,128; $Z_3$}

\lbl[l]{80,128; $R_1$}

\lbl[l]{49,85; $T_3$}

\lbl[l]{40,75; $W_1$}

\lbl[l]{32,69; $T_6$}

\lbl[r]{18,73; $W_4$}

\lbl[r]{76,67; $T_1$}

\lbl[c]{83,75; $T_4$}

\lbl[c]{103,75; $W_2$}

\lbl[r]{71,111; $ T_5$}

\lbl[l]{81,117; $ Q$}

\lbl[l]{88,128; $ W_3$}

\lbl[r]{71,98; $ T_2$}

\lbl[c]{8,55; $P_{1}$}

\lbl[l]{148,16; $P_{2}$}

\lbl[c]{189,128;  $P_{3}$}

\lbl[l]{126,176; $P_{4}$}

\end{lpic}

\caption{The graph $\Gamma$ for $a=-1$ and $8/5< b\leq 7/4$. Here $P_{1}=(-b -2, -1)$, $P_{2}=(b +2, -3)$, $P_{3}=(b +4, 2 b -1)$, $P_4=(-b +4, 5)$, $Q=(0, -3 b +7)$
$R_1=(0, 2 b -1)$,
$R_2=(-2 b, -b +1)$, 
$S=(0, b +1)$,  $T_1=(0, -b +1)$, $T_2=(b -2, 1)$, $T_3=(-b, 2 b -3)$, $T_4=(-b +2, -2 b +3)$, $T_5=(b -2, -2 b +5)$, $T_6=(-4+b, 4 b -7)$, 
$W_1=(-3 b +3, 0)$, $W_2=(3 b -4, -2 b +3)$,  $W_3=(5 b -8, 2 b -1)$, $W_4=(
3 b -8, 4 b -7).$
$X_1=(0, -1)$, $X_2=(0, b -1)$, 
$X_3=(-b, 1)$, $X_4=(b -2, -1)$, 
$Y_1=(b -1, 0)$, 
$Y_2=(b -2, 2 b -1)$, $Y_3=(-3 b +2, -1)$, 
$Z_1=(-b -1, 0)$, $Z_2=(b, -1)$ and $Z_3=(b, 2 b -1)$.}\label{f:21c}
\end{figure}

\begin{figure}[H]
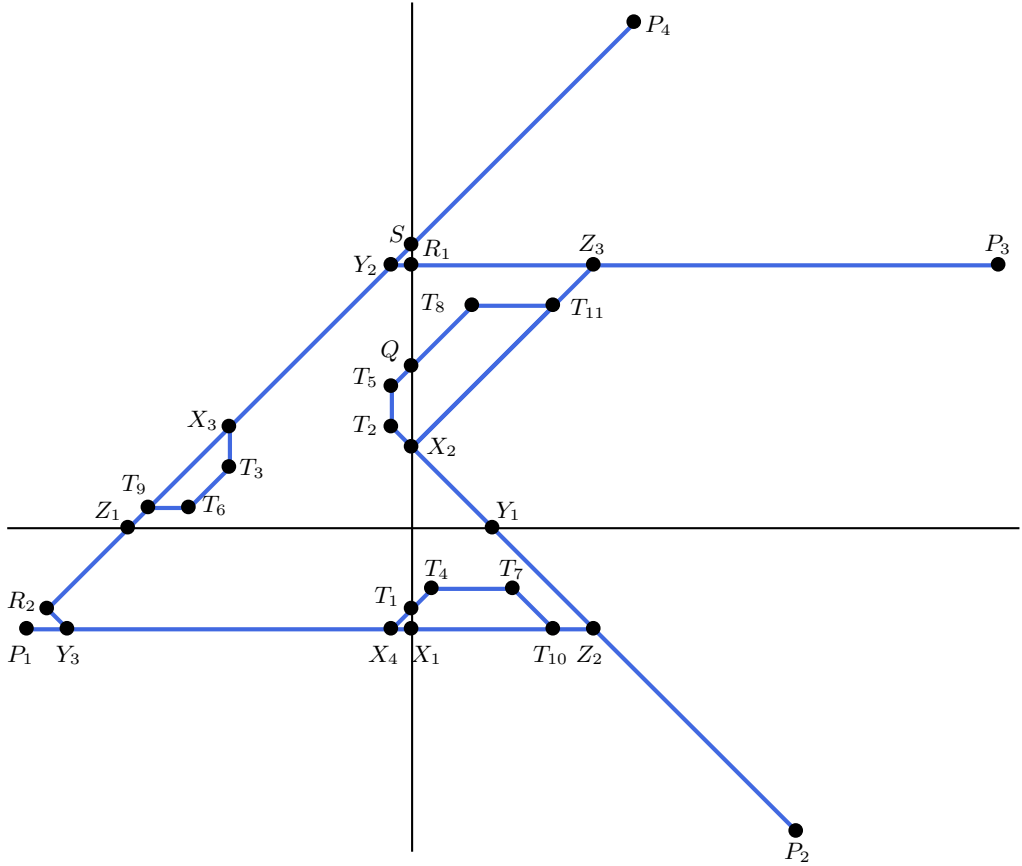

\footnotesize
\centering

\begin{lpic}[l(2mm),r(2mm),t(2mm),b(2mm)]{am-cas-21-d(0.71)}
\lbl[l]{3,185; $\boxed{a=-1,\,7/4< b< 2}$}

\lbl[r]{78,134; $S$}

\lbl[r]{73,128; $Y_2$}

\lbl[r]{43,99; $X_3$}

\lbl[r]{25,82; $Z_1$}

\lbl[r]{9,65; $R_2$}

\lbl[c]{15,55; $Y_3$}

\lbl[c]{74,55; $X_4$}

\lbl[c]{82,55; $X_1$}

\lbl[l]{110,55; $Z_2$}

\lbl[l]{95,82; $Y_1$}

\lbl[l]{82,94; $X_2$}

\lbl[c]{113,132; $Z_3$}

\lbl[c]{84,131; $R_1$}

\lbl[l]{47,90; $T_3$}

\lbl[l]{40,83; $T_6$}

\lbl[l]{25,87; $T_9$}

\lbl[r]{77,66; $T_1$}

\lbl[c]{84,71; $T_4$}

\lbl[c]{98,71; $T_7$}

\lbl[l]{102,55; $T_{10}$}

\lbl[r]{73,98; $T_2$}

\lbl[r]{73,107; $ T_5$}

\lbl[r]{77,112; $Q$}

\lbl[l]{81,121; $T_8$}

\lbl[l]{109,120; $T_{11}$}

\lbl[c]{6,55; $P_{1}$}

\lbl[l]{149,18; $P_{2}$}

\lbl[c]{189,132;  $P_3$}

\lbl[l]{123,173; $P_4$}

\end{lpic}

\caption{The graph $\Gamma$ for $a=-1$ and $7/4< b< 2$. Here $P_{1}=(
	-b -2, -1)$, $P_{2}=(b +2, -3)$, $P_{3}=(b +4, 2 b -1)$, $P_4=(-b +4, 5)$, $Q=(0, -3 b +7)$, 
	$R_1=(0, 2 b -1)$, $R_2=(-2 b, -b +1)$,
$S=(0, b +1)$, 
$T_1=(0, -b +1)$, $T_2=(b -2, 1)$,
	  $T_3=(-b, 2 b -3)$, $T_4=(-b +2, -2 b +3)$,  $T_5=(	b -2, -2 b +5)$,
	$T_6=(-4+b, 4 b -7)$,   $T_7=(-5 b +10, -2 b +3)$, $T_{8}=(-3 b +6, -6 b +13)$,  
   $T_9=(3 b -8, 4 b -7)$, $T_{10}=(-7 b +14, -1)$, $T_{11}=(-7 b +14, -6 b +13)$
$X_1=(0, -1)$, $X_2=(0, b -1)$, 
$X_3=(-b, 1)$, $X_4=(b -2, -1)$, 
$Y_1=(b -1, 0)$, 
$Y_2=(b -2, 2 b -1)$, 
$Y_3=(-3 b +2, -1)$, 
$Z_1=(-b -1, 0)$, $Z_2=(b, -1)$ and $Z_3=(b, 
	2 b -1)$.}\label{f:21d}
\end{figure}

\begin{figure}[H]
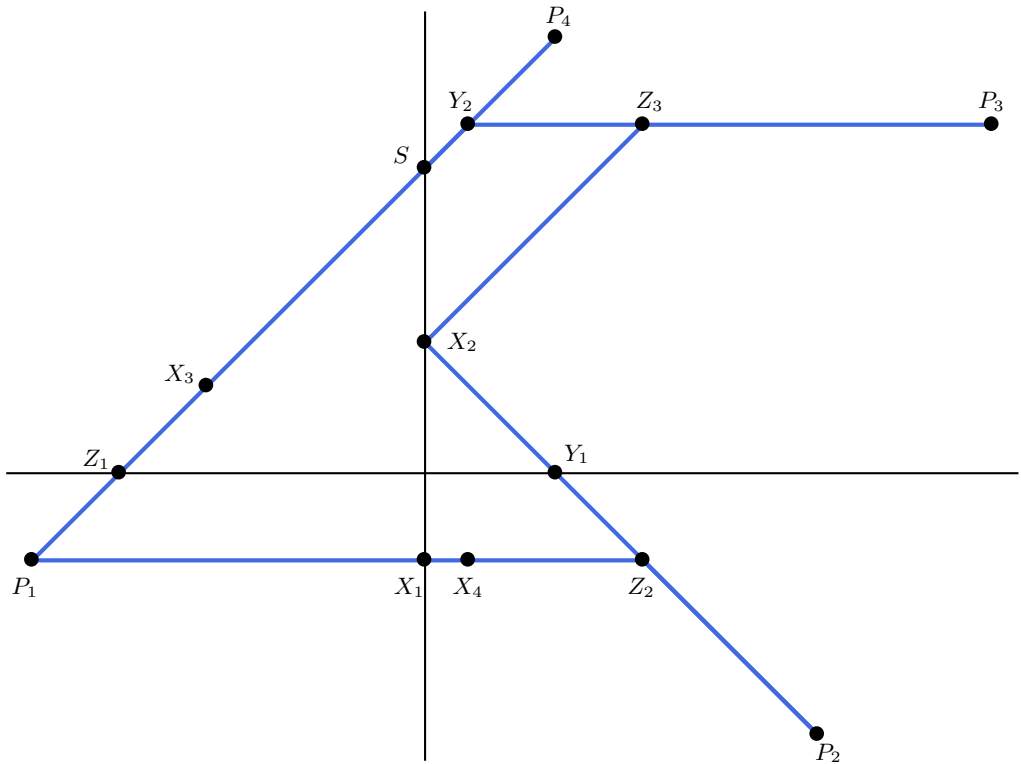

\footnotesize
\centering

\begin{lpic}[l(2mm),r(2mm),t(2mm),b(2mm)]{am-cas-22-espb(0.71)}
\lbl[l]{3,185; $\boxed{a=-1,\,2\leq b<3}$}

\lbl[r]{79,141; $S$}

\lbl[r]{23,84; $Z_1$}

\lbl[c]{7,60; $P_1$}

\lbl[c]{79,60; $X_1$}
\lbl[r]{125,60; $Z_2$}

\lbl[l]{155,29; $P_2$}

\lbl[l]{86,106; $X_2$}

\lbl[l]{108,85; $Y_1$}

\lbl[c]{188,151;  $P_3$}
\lbl[c]{124,151; $Z_3$}
\lbl[r]{91,151; { $Y_2$}}

\lbl[c]{107,167;  $P_{4}$}

\lbl[c]{36,100; $X_3$}
\lbl[c]{90,60; $X_4$}

\end{lpic}

\caption{The graph $\Gamma$ for $a=-1$ and $2\leq b<3$. Here $P_{1}=(-b -2, -1)$, $P_{2}=(b +2, -3)$,  $P_{3}=(b +4, 2 b -1)$, $P_{4}=(
-b +4, 5)$, $S=(0, b +1)$,  $X_1=(0, -1)$,
$X_2=(0, b -1)$, $X_{3}=(-b, 1)$, $X_{4}=(b -2, -1)$, $Y_{1}=(
b -1, 0)$, $Y_2=(b -2, 2 b -1)$,  
$Z_1=(-b -1, 0)$,   $Z_2=(b, -1)$  
and 
 $Z_3=(b, 2 b -1)$.}\label{ff:22}
\end{figure}

\begin{figure}[H]
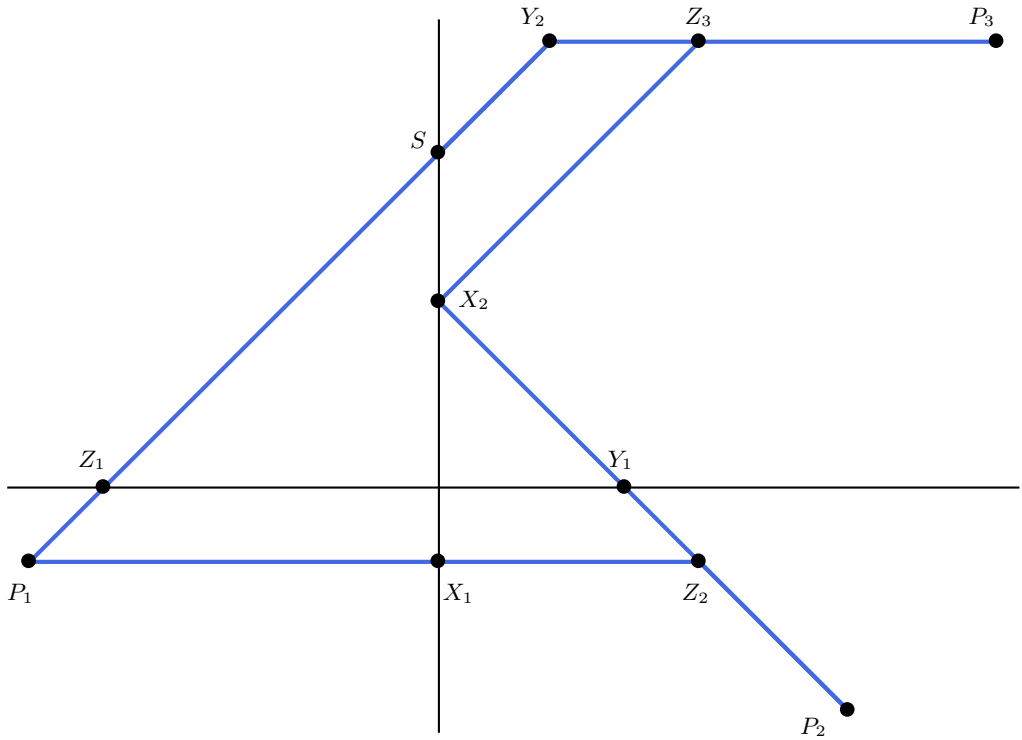

\footnotesize
\centering
\begin{lpic}[l(2mm),r(2mm),t(2mm),b(2mm)]{am-cas-23(0.71)}
\lbl[l]{3,185; $\boxed{a=-1,\,b\geq 3}$}

\lbl[r]{82,142; $S$}

\lbl[r]{22,82; $Z_1$}

\lbl[c]{6,57; $P_1$}

\lbl[c]{88,57; $X_1$}
\lbl[r]{135,57; $Z_2$}

\lbl[l]{152,32; $P_2$}

\lbl[l]{88,112; $X_2$}

\lbl[l]{116,82; $Y_1$}

\lbl[c]{186,165;  $P_3$}
\lbl[c]{133,165; $Z_3$}
\lbl[c]{102,165; $Y_2$}
\end{lpic}

\caption{The graph $\Gamma$ for $a=-1$ and $b\geq 3$. Here 
$P_1=(-b -2, -1)$,  $P_2=(b+2,-3)$, $P_3=(b +4, 2 b -1)$, $S=(0, b +1)$, 
$X_1=(0, -1)$, $X_2=(0,b-1)$, 
$Y_1=(b-1,0)$, $Y_2=(b-2, 2 b -1)$,
$Z_1=(-b-1,0)$, 
$Z_2=(b,-1)$, 
and
$Z_3=(b, 2 b -1)$.}\label{f:23}
\end{figure}

\end{document}